\documentclass[12pts]{amsart}
\usepackage{amsthm}
\usepackage{amsfonts}

\usepackage{graphics,psfrag,graphicx,subfigure,epsfig, xypic}
\usepackage{amsmath,amsfonts,amssymb,amstext,amsthm,amscd,mathrsfs}
\usepackage[spanish,english]{babel}
\usepackage[all]{xy}
\usepackage{multicol}
\usepackage{rotating}

\newtheorem{theorem}{Theorem}[section]
\newtheorem{corollary}[theorem]{Corollary}
\newtheorem{definition}[theorem]{Definition}
\newtheorem{lemma}[theorem]{Lemma}
\newtheorem{proposition}[theorem]{Proposition}
\newtheorem{remark}[theorem]{Remark}

\newcommand{\noi}{{\noindent}}

\newcommand{\C}{\mathbb{C}}
\newcommand{\R}{\mathbb{R}}
\newcommand{\Z}{\mathbb{Z}}

\newcommand{\q} {\mathbf{q}}
\newcommand{\ii} {\mathbf{i}}
\newcommand{\jj} {\mathbf{j}}
\newcommand{\kk} {\mathbf{k}}
\newcommand{\pp} {\mathbf{p}}
\newcommand{\Hy} {\mathbf{H}_{\mathbb{H}}^1}

\usepackage{latexsym} \usepackage{amsmath} \usepackage{amssymb}

\textheight=24cm \textwidth=18.5cm \topmargin=-2cm
\oddsidemargin=-1cm \evensidemargin=-1cm

\begin{document}
\title[Modular Groups and Orbifolds over Quaternions]{Quaternionic Kleinian Modular Groups and Arithmetic Hyperbolic Orbifolds over the Quaternions.}

\author[J.P. Diaz -- A. Verjovsky -- F. Vlacci]
{Juan  Pablo D\'iaz, Alberto Verjovsky and 
Fabio Vlacci.}
 \address{J.P. Diaz Gonz\'alez, 
A. Verjovsky --
Instituto de Matem\'aticas, Unidad Cuernavaca, Universidad Nacional Aut\'onoma de M\'exico
 Av. Universidad s/n. Col. Lomas de Chamilpa
C\'odigo Postal 62210, Cuernavaca, Morelos,  M\'exico}

\email{juanpablo@matcuer.mx}
\email{alberto@matcuer.mx}
\address{F. Vlacci -- Dipartimento
di Matematica e Informatica DiMaI ``U. Dini'', 
Universit\`a di Firenze
Viale Morgagni 67/A, 50134 - Firenze, Italy.} 

\email{vlacci@math.unifi.it}
 \maketitle 
 
\begin{center}\emph{In memory of Marco Brunella (1964-2012)}
\end{center}

\begin{abstract} Using the rings of Lipschitz and Hurwitz integers $\mathbb{H}(\mathbb{Z})$ and  $\mathbb{H}ur(\mathbb{Z})$
in the quaternion division algebra $\mathbb{H}$, we define several Kleinian discrete subgroups of $PSL(2,\mathbb{H})$.
We define first a Kleinian subgroup $PSL(2,\mathfrak{L})$ of
$PSL(2,\mathbb{H}(\mathbb{Z}))$. This
group is a generalization of the modular group $PSL(2,\Z)$. Next we define a discrete subgroup $PSL(2,\mathfrak{H})$ of
$PSL(2,\mathbb{H})$ which is obtained by using Hurwitz integers and
in particular the subgroup of order 24 consisting of Hurwitz units. It
contains as a subgroup $PSL(2,\mathfrak{L})$. In analogy
with the classical modular case, these groups act properly and
discontinuously on the hyperbolic half space
$\mathbf{H}^1_{\mathbb{H}}:=\{\q\in\mathbb{H}\,:\, \Re(\mathbf{q})>0   \}$. We exhibit 
fundamental domains of the actions of these groups and determine the
isotropy groups of the fixed points and describe the orbifold quotients
$\mathbf{H}_{\mathbb{H}}^1/PSL(2,\mathfrak{L})$ and $\mathbf{H}_{\mathbb{H}}^1/PSL(2,\mathfrak{H})$ which are quaternionic
versions of the classical modular orbifold and they are of finite
volume.   We give a thorough study
of the Iwasawa decompositions, affine subgroups, and
their descriptions by Lorentz transformations in the Lorentz-Minkowski
model of hyperbolic 4-space. We give abstract finite presentations of these modular groups in terms of
generators and relations via the Cayley graphs associated to the
fundamental domains. We also describe a set of Selberg covers (corresponding to finite-index subgroups acting freely)
which are quaternionic hyperbolic manifolds of finite volume with
cusps whose sections are 3-tori. These hyperbolic arithmetic 4-manifolds
are topologically the complement of linked 2-tori in the 4-sphere, in
analogy with the complement in the 3-sphere of the Borromean rings and are related to the ubiquitous hyperbolic 24-cell. Finally we study the Poincar\'e
extensions of these Kleinian groups to arithmetic Kleinian groups acting on hyperbolic 5-space and described in the quaternionic
setting. In particular $PSL(2,\mathbb{H}(\mathbb{Z}))$ and $PSL(2,\mathbb{H}ur(\mathbb{Z}))$ are discrete subgroups of isometries of 
$\mathbf{H}_\mathbb{R}^5$ and $\mathbf{H}_\mathbb{R}^5/PSL(2,\mathbb{H}(\mathbb{Z}))$,  
$\mathbf{H}_\mathbb{R}^5/PSL(2,\mathbb{H}ur(\mathbb{Z}))$ are examples of arithmetic 5-dimensional hyperbolic orbifolds of finite volume.

\end{abstract}

\noi {\bf Keywords:} Quaternionic modular groups, arithmetic hyperbolic 4-manifolds and orbifolds.

\noi  {\bf AMS subject classification:} Primary 20H10, 57S30, 11F06. Secondary 30G35, 30F45.

\tableofcontents 

\section{Introduction}

\noi  Since the time of Carl Friedrich Gauss and the foundational papers by Richard Dedekind and Felix Klein the classical modular group $PSL(2,\mathbb Z)$ and its action on
the hyperbolic (complex) upper half plane $\{z\in\mathbb{C}\ :\ \Im(z)>0\}$
have played a central role in different branches
of Mathematics and Physics like Number Theory, Riemann surfaces, Elliptic Curves,
Hyperbolic Geometry, Cristallography, String Theory and others.  Similarly, discrete
subgroups of $PSL(2,\mathbb C)$ are very important in the
construction of lattices to study of arithmetic hyperbolic
3-orbifolds (see, for instance, \cite{MR}) and many other fields of Mathematics. Another very important subject derived from the modular group is the theory of modular forms and automorphic forms in general  (see for example, \cite{BGHZ}, \cite{FK}, \cite{Ma}, \cite{Ser}). 

 \noi  In this paper, we introduce two generalizations of the modular group in
  the setting of the quaternions and the rings of Lipschitz and Hurwitz integers (\cite{H1} \cite{H2} \cite{L}) and then focus our attention to their
  actions on hyperbolic (quaternionic) half space
  $\mathbf{H}_{\mathbb{H}}^1:=\{\mathbf{q}\in\mathbb{H}\ :\ \Re(\mathbf{q})>0\}$
  with metric $\frac{d\mathbf|{q}|^2}{(\Re{\mathbf{q})}^2}$. This can
  be done thanks to a recent algebraic characterization for linear
  transformations to leave invariant the hyperbolic half space (\cite{BisGen}).  We
  then explore new results which give a very detailed description of
  the orbifolds associated to the corresponding quaternionic modular groups.
  Generators of these groups are obtained in complete analogy with the
  classical modular group $PSL(2,\mathbb{Z})$ generated by translations by purely imaginary Lipschitz
  integers and the inversion given by $T(\mathbf{q})={\mathbf{q}}^{-1}=\overline{\mathbf{q}}/|\mathbf{q}|^2$; furthermore, 
  concrete (finite) representations of the groups are exhibited via the
  Cayley graphs. Secondly we define the group generated by translations by the imaginary parts of
  Lipschitz integers, the inversion $T$ and a finite group associated to the Hurwitz units. We give a thorough description of its properties and the corresponding orbifolds.
  
  \noi Furthermore, using some results due to Ratcliffe and
  Tschantz, we describe several Selberg covers of the orbifolds which
  turn out to be the complements of links of knotted 2-tori in the
  4-sphere. These are 4-dimensional noncompact hyperbolic manifolds
  with finite volume.  These results, beside its own geometrical
  importance and interest, have great consequences in many other
  research subjects such as Number Theory and Representation
  Theory. We also give a generalization of Iwasawa decomposition in
  the quaternionic case, a geometric description of the fundamental
  domains for the actions of the quaternionic modular groups and a
  detailed analysis of the topology and of the isotropy groups of the
  singularities of the orbifolds introduced. 
  \noi We study the corresponding modular Lipschitz and Hurwitz groups in the Lorentz-Minkowski model
  and the corresponding Iwasawa decomposition. We describe a congruence group and a principal congruence subgroup
  which both consist of Lorentz matrices in $SO_+(4,1)$ with integer coefficients. 
  Finally, using the notion of {\em Dieudonn\'e determinant} we describe the Poincar\'e extensions of the different modular groups to hyperbolic 5-spaces and
  describe the corresponding honeycombs. To make this paper self-contained we also provide 
a basic introduction to the geometric properties of orbifolds in the Appendix.

\section{The Quaternionic hyperbolic 4-space $\mathbf{H}^{1}_{\mathbb{H}}$ and its isometries.}

\subsection{Group of Quaternionic  Automorphisms}

\noi Let $GL(2,\mathbb{H})$ denote the group of $2\times 2$
invertible\footnote{By this we mean that a $2\times 2$ quaternionic
  matrix $A$ has a right and left inverse; in \cite{BisGen} it is
  shown that this is equivalent for $A$ to have non zero Dieudonn\'e
  determinant.}  matrices with entries in the quaternions:
\[\mathbb{H}=\{x_0+x_1\mathbf{i}+x_2\mathbf{j}+x_3\mathbf{k}\,  :\,
x_n\in\mathbb{R},\, n=0,1,2,3,\mathbf{i}^2=\mathbf{j}^2=\mathbf{k}^2=-1, \mathbf{ij=-ji=k}\}.
\]  
\noi If
$\mathbf{q}=x_0+x_1\mathbf{i}+x_2\mathbf{j}+x_3\mathbf{k}\in\mathbb{H}$ then $\overline
{\mathbf{q}}:=x_0-x_1\mathbf{i}-x_2\mathbf{j}-x_3\mathbf{k}\in\mathbb{H}$ 
and $|\mathbf{q}|^2:=\mathbf{q}\overline{\mathbf{q}}\in\mathbb{R}^+$.
We notice here that the multiplication in $\mathbb{H}$ is not commutative.

\begin{definition}
\noi A {\em Lipschitz quaternion} (or Lipschitz integer) is a quaternion whose
 components are all integers. The ring of all Lipschitz quaternions $\mathbb{H}(\mathbb{Z})$ is the subset of quaternions with integer coefficients:

 $$  \mathbb{H}(\mathbb{Z}) := \left\{a+b\mathbf{i}+c\mathbf{j}+d\mathbf{k} \in \mathbb{H} \ :\ a,b,c,d \in \mathbb{Z}\right\}$$
\end{definition}
\noi This is a subring of the  ring of {\em Hurwitz quaternions}: 

$$ \mathbb{H}ur(\mathbb{Z}) := \left\{a+b\mathbf{i}+c\mathbf{j}+d\mathbf{k} \in \mathbb{H}\ :\ a,b,c,d \in \mathbb{Z}
 \;\mbox{ or }\, a,b,c,d \in \mathbb{Z} + \dfrac{1}{2}\right\}.$$ 
Indeed it can be proven that $\mathbb{H}ur(\mathbb{Z})$ is closed under quaternion multiplication and
 addition, which makes it a subring of the ring of all quaternions
 $\mathbb{H}$.

\noi As a group, $\mathbb{H}ur(\mathbb{Z})$ is free abelian with generators ${1/2(1+\mathbf{i+j+k), i,
  j, k}}$. Therefore $\mathbb{H}ur(\mathbb{Z})$ forms a lattice in $\mathbb{R}^4$. This lattice is known as
the $\mathcal{F}_4$ lattice since it is the root lattice of the semisimple Lie
algebra  $\mathcal{F}_4$. The Lipschitz quaternions $\mathbb{H}(\mathbb{Z})$ form an index 2 sublattice of
$\mathbb{H}ur(\mathbb{Z})$ and it is a subring of the ring of quaternions.

\noi 
 \begin{definition}
For any $A=\begin{pmatrix}
a&b\\
c&d
\end{pmatrix}
\in GL(2,\mathbb{H})$, the associated real analytic function

$$F_A:\mathbb{H}\cup \{\infty\}\to \mathbb{H}\cup \{\infty\}$$
defined by 
\begin{equation}
F_A(\mathbf{q})=(a\mathbf{q}+b)\cdot{(c\mathbf{q}+d)^{-1}}
\end{equation}
is called the $\mathrm{linear \ fractional \ transformation}$ associated to $A$. We set $F_A(\infty)=\infty$ if $c=0$, 
$F_A(\infty)=ac^{-1} $ if $c\neq 0$ and $F_A(-c^{-1}d)=\infty$. 

\noi Let $\mathbb{F}:=\{F_A \quad{\mathrm with\ } A {\ \mathrm in\ } GL(2,\mathbb{H})\}$ the set of linear  fractional  transformations.

\end{definition}

\noi  Since $\mathbb{H}\times\mathbb{H}=\mathbb H^2:= \big \{(\mathbf{q}_0,\mathbf{q}_1)\ :\ \mathbf{q}_0,\,\mathbf{q}_1 \in \mathbb H \big \}$ as a real vector space is
$\mathbb{R}^8$, the group $GL(2,\mathbb{H})$ can be thought as a
subgroup of $GL(8,\mathbb{R})$. Using this identification we define:

\begin{definition} $SL(2,\mathbb{H}):=SL(8,\mathbb{R})\cap{GL(2,\mathbb{H})}$ and 
$$PSL(2,\mathbb{H}):=(SL(8,\mathbb{R})\cap{GL(2,\mathbb{H})})/\{\pm \mathcal{I}\},$$
where $\mathcal{I }$ denotes the identity matrix. 
\end{definition}

\noi The next result can be found in  \cite{BisGen}:
\begin{theorem}\label{Thphi} 
The set $\mathbb{F}$ is a group with
respect to the composition operation and the map

$$\Phi: GL(2,\mathbb{H})\to \mathbb{F}$$ 
defined as $\Phi(A)=F_A$ is a surjective group antihomomorphism with
$\ker(\Phi)=\{t\mathcal{I }\ :\ t\in\mathbb{R}\setminus\{0\}\}$.
Furthermore, the restriction of $\Phi$ to the special linear group 
$SL(2,\mathbb{H})$ is still surjective and has kernel $\{\pm\mathcal{I}\}$.

\end{theorem}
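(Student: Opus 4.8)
The plan is to establish the theorem in several stages, separating the purely algebraic statements about $\mathbb{F}$ being a group and $\Phi$ being an antihomomorphism from the more delicate kernel computations. First I would verify that $\mathbb{F}$ is closed under composition by a direct calculation: given $A, B \in GL(2,\mathbb{H})$, I would compute $F_A \circ F_B$ and show it equals $F_{AB}$ (up to the antihomomorphism ordering, so really $F_B \circ F_A = F_{AB}$). This is the key structural identity, and it simultaneously proves closure, associativity (inherited from matrix multiplication), and the antihomomorphism property $\Phi(AB) = \Phi(A)\Phi(B)$ read with reversed composition order. The identity element is $F_{\mathcal{I}}(\mathbf{q}) = \mathbf{q}$, and inverses come from $F_{A^{-1}}$, so the group axioms for $\mathbb{F}$ follow once closure is in hand. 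Surjectivity of $\Phi$ is immediate from the definition of $\mathbb{F}$ as the set of all $F_A$.

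The \textbf{main obstacle} is the noncommutativity of $\mathbb{H}$ in verifying the composition identity $F_A \circ F_B = F_{AB}$. In the classical complex case one clears denominators freely, but here one must track left versus right multiplication carefully: writing $F_B(\mathbf{q}) = (a'\mathbf{q} + b')(c'\mathbf{q} + d')^{-1}$ and substituting into $F_A$, the cancellation of the inner factor $(c'\mathbf{q}+d')^{-1}$ requires that it be inserted on the correct side of each product. I would handle this by passing to homogeneous coordinates: the action of $A$ on the column vector $\binom{\mathbf{q}}{1}$ up to right scalar multiplication by a nonzero quaternion recovers $F_A$, and in these coordinates the composition law becomes ordinary matrix multiplication, making the identity $F_A \circ F_B = F_{AB}$ transparent and sidestepping the noncommutative bookkeeping. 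The reversed order (antihomomorphism rather than homomorphism) arises precisely because the projective action is on the right.

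For the kernel of $\Phi$ on $GL(2,\mathbb{H})$, I would argue that $F_A = \mathrm{id}$ forces $(a\mathbf{q}+b)(c\mathbf{q}+d)^{-1} = \mathbf{q}$ for all $\mathbf{q}$, equivalently $a\mathbf{q} + b = \mathbf{q}(c\mathbf{q}+d)$ as an identity in $\mathbf{q}$. Evaluating at $\mathbf{q}=0$ gives $b=0$; examining the quadratic term $\mathbf{q} c \mathbf{q}$ forces $c = 0$; and then $a\mathbf{q} = \mathbf{q}d$ for all $\mathbf{q}$ forces $a = d$ to be a real scalar (since an element commuting with all quaternions lies in the center $\mathbb{R}$), giving $A = t\mathcal{I}$ with $t \neq 0$. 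This establishes $\ker(\Phi) = \{t\mathcal{I} : t \in \mathbb{R}\setminus\{0\}\}$.

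Finally, for the restriction to $SL(2,\mathbb{H})$, I would intersect this kernel with the special linear group. Since $SL(2,\mathbb{H})$ is defined via $SL(8,\mathbb{R})$, the scalar matrix $t\mathcal{I}$ acts on $\mathbb{H}^2 \cong \mathbb{R}^8$ as multiplication by $t$, whose real determinant is $t^8$; requiring this to equal $1$ forces $t^8 = 1$ with $t \in \mathbb{R}$, hence $t = \pm 1$. Thus $\ker(\Phi|_{SL(2,\mathbb{H})}) = \{\pm \mathcal{I}\}$. Surjectivity of the restriction requires showing every $F_A$ has a representative in $SL(2,\mathbb{H})$; given arbitrary $A \in GL(2,\mathbb{H})$, I would rescale by a suitable real $t$ so that $tA$ has real determinant $1$ (possible since $\det_{\mathbb{R}}(tA) = t^8 \det_{\mathbb{R}}(A)$ and $\det_{\mathbb{R}}(A) > 0$ for $A \in GL(2,\mathbb{H})$, as the quaternionic general linear group is connected and hence lies in the positive-determinant component), and then $F_{tA} = F_A$ lies in the image of the restriction.
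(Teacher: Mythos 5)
First, a point of comparison: the paper does not actually prove this theorem; it is quoted verbatim from \cite{BisGen}, so your proposal can only be judged on its own merits. Much of it is sound: the kernel computation on $GL(2,\mathbb{H})$ (evaluate at $\mathbf{q}=0$ to get $b=0$, kill the quadratic term to get $c=0$, then use that the center of $\mathbb{H}$ is $\mathbb{R}$), the computation $t^8=1\Rightarrow t=\pm1$ for the kernel on $SL(2,\mathbb{H})$, and the rescaling argument for surjectivity of the restriction (using that the real determinant is positive on the connected group $GL(2,\mathbb{H})$) are all correct.

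The genuine gap is in the central composition identity, where your proposal contradicts itself. In your first paragraph you assert $F_B\circ F_A=F_{AB}$ (the antihomomorphism form), while in the second you say the homogeneous-coordinate argument makes ``the identity $F_A\circ F_B=F_{AB}$ transparent''; these cannot both hold. With the paper's definition $F_A(\mathbf{q})=(a\mathbf{q}+b)(c\mathbf{q}+d)^{-1}$, the correct identity is $F_A\circ F_B=F_{AB}$: writing $B=\begin{pmatrix}a'&b'\\c'&d'\end{pmatrix}$ and clearing denominators gives $F_A(F_B(\mathbf{q}))=\left[(aa'+bc')\mathbf{q}+(ab'+bd')\right]\left[(ca'+dc')\mathbf{q}+(cb'+dd')\right]^{-1}$, whose coefficients are those of $AB$, not $BA$. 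Equivalently, in your homogeneous-coordinate picture the matrices act on the \emph{left} of column vectors while scalars act on the right, and since $A(v\lambda)=(Av)\lambda$ these two actions commute; right projectivization therefore does \emph{not} reverse the order of composition, so your stated mechanism for the ``anti'' property is false. Concretely, taking $A=\begin{pmatrix}\mathbf{i}&0\\0&1\end{pmatrix}$ and $B=\begin{pmatrix}\mathbf{j}&0\\0&1\end{pmatrix}$, one has $F_B(F_A(\mathbf{q}))=\mathbf{j}\mathbf{i}\mathbf{q}=-\mathbf{k}\mathbf{q}$ whereas $F_{AB}(\mathbf{q})=\mathbf{k}\mathbf{q}$, so your claimed identity $F_B\circ F_A=F_{AB}$ fails. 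Consequently the argument you outline proves that $\Phi$ is a \emph{homomorphism}, and it cannot establish the antihomomorphism statement of the theorem as written. To get a genuine antihomomorphism one must switch conventions: let matrices act on the right of row vectors $(\mathbf{q},1)$ modulo \emph{left} scalars, which produces transformations of the form $G_A(\mathbf{q})=(\mathbf{q}b+d)^{-1}(\mathbf{q}a+c)$ and yields $G_{AB}=G_B\circ G_A$; alternatively one reads composition in the reversed (diagrammatic) order. This convention mismatch, inherited from \cite{BisGen}, is precisely the point a proof of this statement must confront, and your proposal papers over it by asserting both identities and attributing the reversal to a mechanism that does not produce one.
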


\subsection{Isometries in the disk model of the hyperbolic 4-space.}\label{ISOD}
\noi Let $\mathbf{B}$ denote the open unit ball in $\mathbb{H}$ and
$\mathcal{M}_{\mathbf{B}}$ the 
set of linear  (\textit{M\"obius})
transformations that leave invariant $\mathbf{B}$. 
This is the set
$$\mathcal{M}_{\mathbf{B}}:=\{F\in\mathbb{F}\ :
F(\mathbf{B})=\mathbf{B}\}.$$ 
In \cite{BisGen} there is an interesting characterization of these
transformations 

\begin{theorem}
Given $A\in GL(2,\mathbb{H})$, then the linear fractional transformation
$F_A\in\mathcal{M}_{\mathbf{B}}$ if and only if there
exist $u,v\in\partial \mathbf{B}$, $q_0\in \mathbf{B}$  (i.e., $|u|=|v|=1$ and $|q_0|<1$) such that
\begin{equation}
F_A(\mathbf{q})=v(\mathbf{q}-q_0)(1-\overline{q_0}\mathbf{q})^{-1}u
\end{equation}
for $\mathbf{q}\in\mathbf{B}$. In particular, the antihomomorphism $\Phi$
defined in Theorem \ref{Thphi} can be restricted to a surjective 
group antihomomorphism $\Phi:Sp(1,1)\to\mathcal{M}_{\mathbf{B}}$ whose kernel is 
$\{\pm\mathcal{I}\}$.
\end{theorem}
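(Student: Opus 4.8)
The plan is to prove the two implications separately and then read off the statement about $Sp(1,1)$ from the explicit matrices produced along the way. For the \emph{if} direction I would first record the quaternionic Blaschke identity
\begin{equation}
|1-\overline{q_0}\mathbf{q}|^2-|\mathbf{q}-q_0|^2=(1-|q_0|^2)(1-|\mathbf{q}|^2),
\end{equation}
which is the only place where non-commutativity could cause trouble. Expanding both norms, the surviving cross terms combine into $-2\Re(\overline{\mathbf{q}}q_0)+2\Re(\mathbf{q}\overline{q_0})$, and this vanishes because $\Re(ab)=\Re(ba)$ together with $\Re(\overline{x})=\Re(x)$ forces $\Re(\overline{\mathbf{q}}q_0)=\Re(\mathbf{q}\overline{q_0})$; the remaining terms are already the right-hand side. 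Since the quaternionic modulus is multiplicative, the identity shows that the Blaschke factor $B(\mathbf{q}):=(\mathbf{q}-q_0)(1-\overline{q_0}\mathbf{q})^{-1}$ maps $\mathbf{B}$ into $\mathbf{B}$ and $\partial\mathbf{B}$ onto $\partial\mathbf{B}$ whenever $|q_0|<1$. Left- and right-multiplication by the unit quaternions $v$ and $u$ preserve modulus, so any map of the stated form preserves $\mathbf{B}$; a direct rearrangement shows it equals $F_A$ for $A=\begin{pmatrix} v & -vq_0\\ -u^{-1}\overline{q_0} & u^{-1}\end{pmatrix}$, and since the inverse has the same shape one gets $F_A(\mathbf{B})=\mathbf{B}$.

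For the \emph{only if} direction, assume $F_A(\mathbf{B})=\mathbf{B}$ and set $q_0:=F_A^{-1}(0)\in\mathbf{B}$. By the part just proved, the Blaschke factor $B$ for this $q_0$ lies in $\mathcal{M}_{\mathbf{B}}$ and satisfies $B(q_0)=0$; since $\mathbb{F}$ is a group (Theorem \ref{Thphi}) so is $\mathcal{M}_{\mathbf{B}}$, hence $G:=F_A\circ B^{-1}$ is again a linear fractional transformation in $\mathcal{M}_{\mathbf{B}}$, now fixing $0$. Writing $G(\mathbf{q})=(a\mathbf{q}+b)(c\mathbf{q}+d)^{-1}$, the condition $G(0)=0$ forces $b=0$ (and $d\neq0$). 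As $G$ is a homeomorphism of $\mathbb{H}\cup\{\infty\}$ preserving $\mathbf{B}$, it sends $\partial\mathbf{B}$ to $\partial\mathbf{B}$, so $|c\mathbf{q}+d|=|a|$ is constant for $|\mathbf{q}|=1$. Expanding $|c\mathbf{q}+d|^2=|c|^2+|d|^2+2\Re(\overline{d}c\,\mathbf{q})$ for $|\mathbf{q}|=1$, the functional $\mathbf{q}\mapsto\Re(\overline{d}c\,\mathbf{q})$ is constant on $S^3$ only if it vanishes identically (compare $\mathbf{q}$ and $-\mathbf{q}$), which gives $\overline{d}c=0$ and hence $c=0$. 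Therefore $G(\mathbf{q})=a\mathbf{q}d^{-1}$ with $|a|=|d|$, i.e. $G(\mathbf{q})=v\mathbf{q}u$ with $v=a/|a|$ and $u=\overline{d}/|d|$, and composing back, $F_A=G\circ B$ has the claimed form.

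Finally, to obtain the statement on $Sp(1,1)$ I would factor the matrix found above as $A=\begin{pmatrix}v&0\\0&u^{-1}\end{pmatrix}\begin{pmatrix}1&-q_0\\-\overline{q_0}&1\end{pmatrix}=:DM$ and work with the Hermitian form $J=\mathrm{diag}(1,-1)$, whose associated form $|z_1|^2-|z_2|^2$ is negative exactly on the homogeneous coordinates $[\mathbf{q}:1]$ with $\mathbf{q}\in\mathbf{B}$. A short computation gives $D^{*}JD=J$ and $M^{*}JM=(1-|q_0|^2)J$, hence $A^{*}JA=(1-|q_0|^2)J$, so $A/\sqrt{1-|q_0|^2}\in Sp(1,1)$ and induces the same $F_A$; together with the necessity direction this shows $\Phi|_{Sp(1,1)}$ is onto $\mathcal{M}_{\mathbf{B}}$, while conversely any $A\in Sp(1,1)$ preserves $J$ and therefore $\mathbf{B}$. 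The kernel is $\{t\mathcal{I}:t\in\mathbb{R}\setminus\{0\}\}\cap Sp(1,1)$, and $(t\mathcal{I})^{*}J(t\mathcal{I})=t^2J=J$ forces $t=\pm1$, giving $\{\pm\mathcal{I}\}$. The main obstacle throughout is bookkeeping the non-commutative algebra so that the classical complex arguments survive: the delicate points are the cancellation of the cross terms in the Blaschke identity and the classification of the stabilizer of $0$, where one uses that two-sided multiplication by unit quaternions realizes precisely the rotations of $\mathbb{H}\cong\mathbb{R}^4$.
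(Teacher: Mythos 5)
Your proof is correct, and it takes a genuinely different route from the paper's. The paper deliberately avoids the classical argument: after checking (as you do) that maps of the form $(2)$ preserve $\mathbf{B}$, it concludes by a dimension count --- the family $(2)$ is parametrized by $\mathbb{S}^3\times\mathbb{S}^3\times\mathbf{B}$, hence has $10$ real parameters, while the isometry group of the hyperbolic ball is the $10$-dimensional manifold $SO(4)\times\mathbf{B}$, so the two families coincide, with the kernel read off as the triples $(\pm1,\pm1,0)$. Your argument instead runs the classical Schwarz-lemma-style scheme (essentially the Bisi--Gentili route the paper says it is departing from): the quaternionic Blaschke identity gives invariance of $\mathbf{B}$; transitivity of the Blaschke factors reduces the converse to classifying the stabilizer of $0$, which your $\Re(\overline{d}c\,\mathbf{q})$ computation pins down as $\mathbf{q}\mapsto v\mathbf{q}u$; and the explicit factorization $A=DM$ with $A^{*}JA=(1-|q_0|^2)J$ settles the $Sp(1,1)$ claim. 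What each buys: the paper's count is short and conceptual, but it leaves implicit the facts needed to upgrade ``equal dimensions'' to equality of the two groups (connectedness, closedness of the image, and that $\mathcal{M}_{\mathbf{B}}$ exhausts the orientation-preserving isometries), and it does not actually exhibit $Sp(1,1)$ matrices; your proof is longer but fully self-contained, proves the ``only if'' direction rigorously, and produces an explicit $Sp(1,1)$ representative and kernel computation, which is exactly what the last sentence of the theorem asks for.
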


\begin{proof} We give a slightly different proof from the one  in
  \cite{BisGen}; our proof is simply based on the evaluation of real
  dimensions of the parameters involved in the description of $F_A$.  
A simple calculation implies that when $|\mathbf{q}|=1$ one has $|F_A(\mathbf{q})|=1$
and since $|q_0|<1$, $F_A$ preserves $\mathbf B$.  The elements of the
form (2) are parametrized by $(u,v, {q}_0)$ with $|u|=|v|=1$, and
$|q_0|<1$, i.e.  as a manifold ${\mathcal M}_{\mathbf B}$ is
diffeomorphic to $\mathbb S^3\times \mathbb S^3\times \mathbf B$ which
depends on 10 real parameters. On the other hand the group of isometries of
$\mathbf B$ with the hyperbolic metric is diffeomorphic to the
10-dimensional manifold ${SO(4)\times\mathbf B}=\mathbb
S^3\times{SO(3)\times\mathbf B}$ since it acts simply and transitively
on the frame bundle of $\mathbf B$ which is ${SO(4)\times\mathbf B}$. The kernel
of $\Phi$ corresponds to the triples $(1,1,0)$ and $(-1,-1,0)$.
\end{proof} 

\noi As observed,  $\mathcal{M}_{\mathbf{B}}$
depends on 10 real parameters. 
The compactification $\widehat{\mathbb H}:= \mathbb{H}\cup \{\infty\}$  of $\mathbb{H}$
can be identified with $\mathbb{S}^4$ via the stereographic projection.  The elements of $\mathcal{M}_{\mathbf{B}}$
act conformally on the 4-sphere with respect to the standard metric
and they also preserve orientation. Therefore we conclude that

$$\mathcal{M}_{\mathbf{B}}\subset{Conf_+(\mathbb{S}^4
)},$$
where $Conf_+(\mathbb{S}^4)$ is the group of conformal and orientation-preserving 
diffeomorphisms of the 4-sphere $\mathbb{S}^{4}$.
 As a manifold, $Conf_+(\mathbb S^4)$ is diffeomorphic to
 $SO(5) \times
\mathbf{H}^5_{\R}$ with
$\mathbf{H}^5_{\R}
=\{(x_1,x_2,x_3,x_4,x_5)\in\mathbb{R}^5\ :\ x_1>0\}$, 
so  $Conf_+(\mathbb S^4)$ has real  dimension 15.
Let us give a different description of this group.  We recall that  $\mathbb S^4$ 
can be thought of as being the projective quaternionic line
 $\mathbf{P}_{\mathbb H}^1 \cong {\mathbb S}^4$. This is the space of {\it right
quaternionic lines}  in $\mathbb H^2$, {\it i.e.},  subspaces 
 of the form 
$$\mathcal{L}_{\mathbf{q}} := \{\mathbf{q}\lambda \,: \, \lambda \in \mathbb H\}\,\,\,, \, \,\, 
\mathbf{q} \, \in \,\mathbb H^2\setminus\{(0,0)\}.$$

\noi We recall that $\mathbb H^2$ is a right module over $ \mathbb H$ and the action 
of $GL(2,\mathbb H)$ on $\mathbb H^2$ commutes with multiplication on
the right, i.e. for every $\lambda \in \mathbb H$ and $A \in GL(2,\mathbb H)$ one has,
$$
A R_{\lambda} = R_{\lambda} {A}
$$
where $R_{\lambda}$ is the multiplication on the right by
 $\lambda\in\mathbb{H}$.  Thus $GL(2,\mathbb H)$  carries right quaternionic 
lines into right quaternionic lines, and in this way
an action of $GL(2,\mathbb H)$  on $\mathbf{P}_{\mathbb H}^1$ is defined.
 
\noi Any $F_A\in\mathbb{F}$ lifts canonically to an automorphism 
$\widetilde {F_A}$ of $\mathbf{P}^3_{\mathbb{C}}$, the complex projective 3-space and the map $\Psi: F_A\mapsto \widetilde {F_A}$ 
injects $\mathbb{F}$ into the complex projective group $PSL(4,\mathbb{C}):=
SL(4, \mathbb{C})/\{\pm \mathcal{I}\}$.
Hence (see \cite{A1, A2}), we conclude that

$$\mathcal{M}_{\mathbf{B}}\subset {PSL(2,\mathbb{H})}:=GL(2,\mathbb H)/\{t{\mathcal{I}}, \, t\neq0\}
\simeq Conf_+(\mathbb{S}^4
).$$

\noi We also notice that 
the  map 
$$\mathbf{q}\mapsto \overline {\mathbf{q}}$$ 
is conformal and maps the unit ball $\mathbf{B}$ onto itself but it
reverses the orientation of $\mathbf{B}$.

\noi We consider now the group $\mathbb{M}_{\mathbf{B}}$ of {\em extended
M\"obius transformations} defined as the union of all the 
M\"obius transformations $\mathcal{M}_{\mathbf{B}}$ and all maps $\phi$ which reverse the orientation obtained
as $\phi(\mathbf{q})=F_A(\overline{\mathbf{q}})=(a\overline{\mathbf{q}}+b)(c\overline{\mathbf{q}}+d)^{-1}$ for some  $F_A\in\mathcal{M}_{\mathbf{B}}$.

\noi In \cite{BisGen} the {\em Poincar\'e distance} $d_{\mathbf{B}}$ is defined in
$\mathbf{B}$ in terms of the quaternionic cross--ratio and this distance coincides with the standard hyperbolic metric $\frac{4|d\mathbf{q}|^2}{(1-|\mathbf{q}|^2)^2}$; furthermore, in \cite{BisGen} it is also
proved the following:
\begin{proposition}
The Poincar\'e distance $d_{\mathbf{B}}$ in
$\mathbf{B}$ is invariant under the action of 
the group $\mathbb{M}_{\mathbf{B}}$ of  extended 
M\"obius transformations. 

\noi In other words: 

$$\mathbb{M}_{\mathbf{B}}= Isom_{ d_{\mathbf{B}}}(\mathbf{B}).$$ 
\end{proposition}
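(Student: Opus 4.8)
\emph{Proof sketch.} The plan is to prove the two inclusions $\mathbb{M}_{\mathbf{B}}\subseteq Isom_{d_{\mathbf{B}}}(\mathbf{B})$ and $Isom_{d_{\mathbf{B}}}(\mathbf{B})\subseteq \mathbb{M}_{\mathbf{B}}$ separately. Since $d_{\mathbf{B}}$ is the Riemannian distance of the metric $ds^2=\frac{4|d\mathbf{q}|^2}{(1-|\mathbf{q}|^2)^2}$, a smooth diffeomorphism of $\mathbf{B}$ is a $d_{\mathbf{B}}$-isometry if and only if it preserves $ds^2$; so the first inclusion reduces to checking that each element of $\mathbb{M}_{\mathbf{B}}$ pulls $ds^2$ back to itself. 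By the characterization of $\mathcal{M}_{\mathbf{B}}$ proved above, every element of $\mathcal{M}_{\mathbf{B}}$ factors as $\mathbf{q}\mapsto v\,T_{q_0}(\mathbf{q})\,u$ with $|u|=|v|=1$, $|q_0|<1$, where $T_{q_0}(\mathbf{q})=(\mathbf{q}-q_0)(1-\overline{q_0}\mathbf{q})^{-1}$, and $\mathbb{M}_{\mathbf{B}}$ is generated by these together with conjugation $\mathbf{q}\mapsto\overline{\mathbf{q}}$. The maps $\mathbf{q}\mapsto v\mathbf{q}u$ and $\mathbf{q}\mapsto\overline{\mathbf{q}}$ are linear isometries of $\mathbb{R}^4=\mathbb{H}$ fixing the origin, so they leave both $|\mathbf{q}|$ and $|d\mathbf{q}|$ invariant and hence preserve $ds^2$ trivially. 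It therefore remains to treat the transformations $T_{q_0}$.

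For $T_{q_0}$ I would establish the two quaternionic identities
\begin{equation}
1-|T_{q_0}(\mathbf{q})|^2=\frac{(1-|q_0|^2)(1-|\mathbf{q}|^2)}{|1-\overline{q_0}\mathbf{q}|^2},\qquad |dT_{q_0}(\mathbf{q})|=\frac{1-|q_0|^2}{|1-\overline{q_0}\mathbf{q}|^2}\,|d\mathbf{q}|,
\end{equation}
which together give $\frac{4|dT_{q_0}|^2}{(1-|T_{q_0}|^2)^2}=\frac{4|d\mathbf{q}|^2}{(1-|\mathbf{q}|^2)^2}$, i.e.\ invariance of $ds^2$. The first identity is an algebraic manipulation using $|\mathbf{ab}|=|\mathbf{a}||\mathbf{b}|$ and $\overline{\mathbf{ab}}=\overline{\mathbf{b}}\,\overline{\mathbf{a}}$. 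The main obstacle is the second (conformal factor) identity, which is genuinely more delicate than in the commutative case, since differentiating $(\mathbf{q}-q_0)(1-\overline{q_0}\mathbf{q})^{-1}$ must respect the order of the factors and the derivative of $\mathbf{q}\mapsto\mathbf{q}^{-1}$ is not simply $-\mathbf{q}^{-2}$. Here I would exploit the already-established fact that $F_A\in\mathcal{M}_{\mathbf{B}}\subset Conf_+(\mathbb{S}^4)$ is conformal: conformality guarantees a priori that $dT_{q_0}(\mathbf{q})$ is a positive scalar times an orthogonal map, so $|dT_{q_0}(\mathbf{q})|=\mu(\mathbf{q})|d\mathbf{q}|$ for a scalar factor $\mu$, and it then suffices to compute $\mu$ along a single tangent direction, reducing the non-commutative differentiation to one scalar computation that yields $\mu(\mathbf{q})=(1-|q_0|^2)/|1-\overline{q_0}\mathbf{q}|^2$.

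For the reverse inclusion I would use a dimension argument. Since $(\mathbf{B},ds^2)$ is real hyperbolic $4$-space, its orientation-preserving isometry group $Isom_+(\mathbf{B})$ is a connected Lie group of dimension $\binom{5}{2}=10$ (it acts simply transitively on the oriented orthonormal frame bundle $SO(4)\times\mathbf{B}$, as recalled in the proof of the preceding characterization theorem). By the first part $\mathcal{M}_{\mathbf{B}}$ is a subgroup of $Isom_+(\mathbf{B})$, and it was shown above that $\mathcal{M}_{\mathbf{B}}$ is itself a connected $10$-dimensional manifold, diffeomorphic to $\mathbb{S}^3\times\mathbb{S}^3\times\mathbf{B}$. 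A subgroup that is a $10$-dimensional submanifold of the $10$-dimensional connected ambient group is open, hence closed, hence everything; therefore $\mathcal{M}_{\mathbf{B}}=Isom_+(\mathbf{B})$. Finally, $\mathbf{q}\mapsto\overline{\mathbf{q}}$ is an orientation-reversing element of $\mathbb{M}_{\mathbf{B}}$, so adjoining it realizes the non-identity component of $Isom(\mathbf{B})$; consequently $\mathbb{M}_{\mathbf{B}}=Isom_{d_{\mathbf{B}}}(\mathbf{B})$, as claimed.
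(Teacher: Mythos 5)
Your proof is correct, but be aware that the paper itself contains no proof of this proposition: it is quoted from \cite{BisGen}, where the Poincar\'e distance $d_{\mathbf{B}}$ is \emph{defined} through the quaternionic cross--ratio, so that invariance under $\mathbb{M}_{\mathbf{B}}$ is essentially built into the definition (M\"obius maps and conjugation preserve the norm of the cross--ratio), and the identification with the full isometry group is carried out in that reference. Your route is instead self-contained within the paper's toolkit: it uses the stated coincidence of $d_{\mathbf{B}}$ with the Riemannian distance of $4|d\mathbf{q}|^2/(1-|\mathbf{q}|^2)^2$, the normal form $\mathbf{q}\mapsto v\,T_{q_0}(\mathbf{q})\,u$ from the characterization of $\mathcal{M}_{\mathbf{B}}$, and your two conformal-factor identities, which are indeed true. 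In fact the noncommutative differentiation you worry about can be done in closed form:
$$dT_{q_0}=\bigl(1+T_{q_0}(\mathbf{q})\overline{q_0}\bigr)\,(d\mathbf{q})\,(1-\overline{q_0}\mathbf{q})^{-1},\qquad 1+T_{q_0}(\mathbf{q})\overline{q_0}=(1-|q_0|^2)\,(1-\mathbf{q}\overline{q_0})^{-1},$$
and since left and right quaternion multiplications are similarities of $\mathbb{R}^4$ and $|1-\mathbf{q}\overline{q_0}|=|1-\overline{q_0}\mathbf{q}|$, this gives your factor $\mu(\mathbf{q})=(1-|q_0|^2)/|1-\overline{q_0}\mathbf{q}|^2$ directly, without even appealing a priori to conformality on $\mathbb{S}^4$. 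For the reverse inclusion, your dimension count echoes the one the paper uses in proving the characterization of $\mathcal{M}_{\mathbf{B}}$; to make the step ``full-dimensional subgroup is open'' airtight you should invoke invariance of domain (the parametrization is two-to-one, so $\mathcal{M}_{\mathbf{B}}$ is a $\mathbb{Z}/2\Z$-quotient of $\mathbb{S}^3\times\mathbb{S}^3\times\mathbf{B}$, still a connected $10$-manifold), or, more cleanly, note that $\mathcal{M}_{\mathbf{B}}$ acts transitively on $\mathbf{B}$ (via the maps $T_{q_0}$) and contains the whole stabilizer $SO(4)$ of the origin (the maps $\mathbf{q}\mapsto v\mathbf{q}u$), which forces $\mathcal{M}_{\mathbf{B}}=Isom_+(\mathbf{B})$; adjoining the orientation-reversing conjugation then yields $\mathbb{M}_{\mathbf{B}}=Isom_{d_{\mathbf{B}}}(\mathbf{B})$. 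In summary, the cross--ratio route of \cite{BisGen} buys invariance with no differentiation at all, while your route buys a proof internal to the paper that simultaneously produces the equality of groups.
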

\subsection{Isometries in the half-space model of the hyperbolic 4-space $\mathbf{H}^{1}_{\mathbb{H}}$.}

\noi Let $\mathbf{H}^{1}_{\mathbb{H}}$ be the half-space
model of the \emph{one-dimensional quaternionic hyperbolic space}

$$
\mathbf{H}^{1}_{\mathbb{H}}:=\{\mathbf{q}\in\mathbb{H}\ :\ \Re({\mathbf{q}})>0\}.
$$

\noi Since the unit ball $\mathbf{B}$ in
$\mathbb{H}$ can be identified with the lower hemisphere of $\mathbb{S}^4
$ and
any transformation $F_A\in\mathcal{M}_{\mathbf{B}}$ is conformal and preserves
orientation, we conclude that  (see also \cite{A1, A2}) 

$$\mathcal{M}_{\mathbf{B}}\simeq Conf_+(\mathbf{H}^{1}_{\mathbb{H}})$$
where $Conf_+(\mathbf{H}^{1}_{\mathbb{H}})$ represents the group of
conformal diffeomorphisms orientation--preserving of the half-space
model $\mathbf{H}^{1}_{\mathbb{H}}$.

\begin{remark}
The choice of describing $\mathbf{H}^{1}_{\mathbb{H}}$ using the
quaternionic notation is motivated by the main point of view of this
paper;  however this set precisely is isometric to the hyperbolic
real space (in four dimensions), namely
\[\mathbf{H}^{1}_{\mathbb{H}} \cong\mathbf{H}^{4}_{\mathbb{R}}=\{(x_0,x_1,x_2,x_3)\in\mathbb{R}^4 :\ x_0>0\}\]
with the element  of hyperbolic metric given by $(ds)^2=\frac{(dx_0)^2+(dx_1)^2+(dx_2)^2+(dx_3)^2}{x_0^2}$ where $s$ measures length along a parametrized curve. Even though the (natural) algebraic structures carried by the two sets are deeply different.
\end{remark}

\noi Via the Cayley transformation $\Psi:\mathbf{B}\to \mathbf{H}^{1}_{\mathbb{H}}$
defined as $\Psi(\mathbf{q})=(1+\mathbf{q})(1-\mathbf{q})^{-1}$ one
can show explicitely (see \cite{BisGen}) that the unit ball
$\mathbf{B}$ of $\mathbb{H}$ is diffeomorphic to
$\mathbf{H}^{1}_{\mathbb{H}}$ and introduce a Poincar\'e distance in
$\mathbf{H}^{1}_{\mathbb{H}}$ in such a way that the Cayley
transformation $\Psi:\mathbf{B}\to \mathbf{H}^{1}_{\mathbb{H}}$ is an
isometry; moreover the Poincar\'e distance in
$\mathbf{H}^{1}_{\mathbb{H}}$ is invariant under the action of the
group
$\Psi\mathbb{M}_{\mathbf{B}} \Psi^{-1}$ which is denoted by $\mathbb{M}_{\mathbf{H}^{1}_{\mathbb{H}}}$.

\noi Finally we recall the conditions found by Bisi and Gentili (see
\cite{BisGen} again):

\begin{proposition} \label{GB} $\mathbf{[Conditions \, (BG)]}$
The subgroup of $PSL(2,{\mathbb{H}})$ whose elements are
associated to invertible linear fractional transformations which preserve $\mathbf{H}^{1}_{\mathbb{H}}$ can be characterized as the group induced by matrices which satisfy
one of the following (equivalent) conditions:

\begin{equation*}\label{BG}
\mathrm{(BG)}\quad\left\{\begin{array}{l}
\left\{A=\begin{pmatrix}a&b\\c&d\end{pmatrix} a,b,c,d\in\mathbb{H}\ :\  \ \overline{A}^tKA=K\right\}
\mathrm{with}\
K=\begin{pmatrix}
0&1\\
1&0
\end{pmatrix}\\\\
 \left\{A=\begin{pmatrix}a&b\\c&d\end{pmatrix} a,b,c,d\in\mathbb{H}\ :\  \ \Re(a\overline{c})=0,\ \Re(b\overline{d})=0,\ \overline{b}c+\overline{d}a=1
\right\}\\
\\\left\{A=\begin{pmatrix}a&b\\c&d\end{pmatrix} a,b,c,d\in\mathbb{H}\ :\  \ \Re(c\overline{d})=0,\ \Re(a\overline{b})=0,\ a\overline{d}+b\overline{c}=1
\right\}.
\end{array}
\right.
\end{equation*}
\end{proposition}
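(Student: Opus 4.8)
The plan is to realise $\Hy$ as the positive cone of a quaternionic Hermitian form and then to read off the three systems in (BG) as the single statement that an admissible matrix preserves that form up to a positive real scalar. First I would attach to $K=\begin{pmatrix}0&1\\1&0\end{pmatrix}$ the Hermitian form $H(v,w)=\overline{v}^{t}Kw$ and evaluate it on the homogeneous lift $v=(\mathbf{q},1)^{t}$ of an affine point $\mathbf{q}\in\mathbb{H}$. A one-line computation gives $H(v,v)=\overline{\mathbf{q}}+\mathbf{q}=2\Re(\mathbf{q})$, and under the right scaling $v\mapsto v\lambda$ that defines points of $\mathbf{P}^{1}_{\mathbb{H}}$ one has $H(v\lambda,v\lambda)=|\lambda|^{2}H(v,v)$. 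Hence the sign of $H$ descends to $\mathbf{P}^{1}_{\mathbb{H}}$, the region $\Hy$ is exactly $\{H>0\}$, and its boundary $\{\Re(\mathbf{q})=0\}\cup\{\infty\}$ is the null cone $\{H=0\}$.

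Next I would establish the key equivalence: $F_{A}$ preserves $\Hy$ if and only if $\overline{A}^{t}KA=K$. Writing $A(\mathbf{q},1)^{t}=(a\mathbf{q}+b,\,c\mathbf{q}+d)^{t}$ and setting $\lambda=c\mathbf{q}+d$, a short manipulation using $F_{A}(\mathbf{q})=(a\mathbf{q}+b)\lambda^{-1}$ gives $H(Av,Av)=|\lambda|^{2}\,2\Re(F_{A}(\mathbf{q}))$, so $\Re(F_{A}(\mathbf{q}))$ and $H(Av,Av)$ have the same sign. Thus if $\overline{A}^{t}KA=\mu K$ with $\mu\in\mathbb{R}$ and $\mu>0$, then $A$ preserves the positive cone and $F_{A}$ preserves $\Hy$; dividing $A$ by $\sqrt{\mu}$ (legitimate in $PSL(2,\mathbb{H})$) normalises $\mu=1$. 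For the converse, since $F_{A}$ is a homeomorphism of $\widehat{\mathbb{H}}$ preserving the open region $\Hy$, it must carry the null cone of $H$ onto itself, and the two half-spaces $\{\Re>0\}$, $\{\Re<0\}$ cannot be interchanged. I expect this converse — passing from \emph{$A$ preserves the null cone} to \emph{$A$ preserves $H$ up to a positive scalar} — to be the main obstacle. The cleanest way to discharge it is to transport the already-established ball-model description through the Cayley transform $\Psi(\mathbf{q})=(1+\mathbf{q})(1-\mathbf{q})^{-1}$, which is $F_{C}$ for $C=\begin{pmatrix}1&1\\-1&1\end{pmatrix}$: since $\mathcal{M}_{\mathbf{B}}$ is induced by $Sp(1,1)$, i.e.\ by the matrices preserving $J=\begin{pmatrix}1&0\\0&-1\end{pmatrix}$, the computation $\overline{C}^{t}JC=2K$ shows that conjugation by $C$ intertwines the $J$-unitary condition with the $K$-unitary condition, yielding $\overline{A}^{t}KA=K$ up to a positive scalar and hence the first form of (BG).

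Finally I would obtain the two remaining systems by direct expansion. Multiplying out $\overline{A}^{t}KA$ produces the Hermitian matrix with diagonal entries $2\Re(a\overline{c})$ and $2\Re(b\overline{d})$ and lower-left entry $\overline{b}c+\overline{d}a$ (using $\Re(\overline{a}c)=\Re(a\overline{c})$); equating it to $K$ gives precisely $\Re(a\overline{c})=0$, $\Re(b\overline{d})=0$, $\overline{b}c+\overline{d}a=1$, the upper-right identity being the conjugate of the last equation and hence automatic. For the third system I would note that $K^{-1}=K$, so $\overline{A}^{t}KA=K$ is equivalent to $AK\overline{A}^{t}=K$; expanding $AK\overline{A}^{t}$ gives diagonal entries $2\Re(a\overline{b})$ and $2\Re(c\overline{d})$ and upper-right entry $a\overline{d}+b\overline{c}$, so equating to $K$ yields $\Re(a\overline{b})=0$, $\Re(c\overline{d})=0$, $a\overline{d}+b\overline{c}=1$. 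This shows all three systems characterise the same subgroup and completes the proof.
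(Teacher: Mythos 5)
Your proof is correct, but note that the paper itself contains no proof of this proposition: it is simply recalled from \cite{BisGen} (``we recall the conditions found by Bisi and Gentili''), so your argument is a genuine reconstruction rather than a parallel of anything in the text. Your route --- realising $\Hy$ as the positivity region of the Hermitian form $H(v,w)=\overline{v}^{t}Kw$, checking that $\overline{A}^{t}KA=\mu K$ with $\mu>0$ forces preservation of $\{H>0\}$, and getting the converse by transporting the ball-model theorem (surjectivity of $\Phi\colon Sp(1,1)\to\mathcal{M}_{\mathbf{B}}$, which the paper does prove) through the Cayley transform --- is in substance the Bisi--Gentili argument, and the pivotal computation $\overline{C}^{t}JC=2K$ is exactly right. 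The concluding expansions are also correct: $\overline{A}^{t}KA$ has diagonal entries $2\Re(a\overline{c})$, $2\Re(b\overline{d})$ and lower-left entry $\overline{b}c+\overline{d}a$, giving the second system, while $K^{-1}=K$ makes $\overline{A}^{t}KA=K$ equivalent to $AK\overline{A}^{t}=K$, whose expansion gives the third. What your approach buys, compared with the paper's bare citation, is a self-contained derivation of (BG) from material already established earlier in the section.

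Two points deserve to be made explicit, though neither is a gap. First, the identity $H(Av,Av)=|\lambda|^{2}\,2\Re(F_{A}(\mathbf{q}))$ presupposes $\lambda=c\mathbf{q}+d\neq0$; this is automatic in your argument, since $\lambda=0$ forces $H(Av,Av)=0$, whereas for $\mathbf{q}\in\Hy$ and $\overline{A}^{t}KA=\mu K$ one has $H(Av,Av)=\mu H(v,v)>0$, so $F_{A}(\mathbf{q})\neq\infty$ and the sign argument applies. Second, because $\Phi$ is an \emph{anti}homomorphism ($F_{AB}=F_{B}\circ F_{A}$), the relation $F_{A}=\Psi\circ F_{M}\circ\Psi^{-1}$ yields $A=t\,C^{-1}MC$ for some real $t\neq0$, not $t\,CMC^{-1}$; the intertwining still goes through because $\overline{C}^{t}JC=2K$ gives $\overline{(C^{-1})}^{t}KC^{-1}=\tfrac12 J$, hence $\overline{A}^{t}KA=t^{2}\,\overline{C}^{t}\overline{M}^{t}(\tfrac12 J)MC=t^{2}K$, and your normalisation by $\sqrt{\mu}$ finishes as stated. (In fact conjugation in the other direction also works, since $\overline{C}^{t}KC=-2J$, so the ambiguity is harmless --- but it should be acknowledged.)
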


\begin{definition}
The group of invertible linear transformations satisfying (BG) conditions consists of orientation preserving hyperbolic
isometries of $\mathbf{H}^1_{\mathbb{H}}$, therefore it be 
denoted $Isom_+(\mathbf{H}^1_{\mathbb{H}})$; however,
in analogy with the previous notations, we will also called it the \it{M}\"o\it{bius group of} $\mathbf{H}^{1}_{\mathbb{H}}$ and we denote alternatively this group
as $\mathcal{M}_{\mathbf{H}^{1}_{\mathbb{H}}}:=\Psi\mathcal{M}_{\mathbf
  B}\Psi^{-1}$. 
  
  \noi Moreover, we have $Isom(\mathbf{H}^1_{\mathbb{H}}) =\mathbb{M}_{\mathbf{H}^{1}_{\mathbb{H}}}$, where $Isom(\mathbf{H}^1_{\mathbb{H}})$ is the full group of isometries of $\mathbf{H}^1_{\mathbb{H}}$. \end{definition}

\begin{remark} The group $\mathcal{M}_{\mathbf{H}^{1}_{\mathbb{H}}}$ acts by orientation-preserving conformal transformations on the
sphere at infinity of the hyperbolic 4-space defined as follows 
$$
\mathbb{S}^3=\partial\mathbf{H}^{1}_{\mathbb{H}}:=\{\mathbf{q}\in\mathbb
H\,\,:\,\Re(\mathbf{q})=0\}\cup\{\infty\}.$$
\noi In other words $\mathcal{M}_{\mathbf{H}^{1}_{\mathbb{H}}} \cong{Conf_+({\mathbb{S}}^3})$.
\end{remark}

\noi Among the elements of $\mathcal{M}_{\mathbf{H}^{1}_{\mathbb{H}}}$ and 
of $\mathbb{M}_{\mathbf{H}^{1}_{\mathbb{H}}}$,  translations,
(hyperbolic) rotations and inversion
 will be of fundamental importance in the sequel and will be
 specifically studied in a dedicated section.
 
 \begin{remark}  Since the only ambiguity is a change of sign, by abuse of language, in all that follows we will sometime identify a matrix with quaternionic entries satisfying (BG) conditions with the induced  M$\ddot{o}$bius transformation.
 \end{remark}

\subsection{The  affine subgroup $ \mathcal{A}(\mathbb{H})$ of the isometries of $\mathbf{H}^{1}_{\mathbb{H}}$.}

\vskip 0.5  cm
Consider now the \emph{affine subgroup} $\mathcal{A}(\mathbb{H})$ of $PSL(2, \mathbb{H})$  consisting of transformations which are induced by matrices of the form

$$\begin{pmatrix}
\lambda{a}&b\\
0&\lambda^{-1}{a}
\end{pmatrix} \quad \textit{i.e.} \quad 
\mathbf{q}\mapsto ((\lambda{a}){\mathbf{q}}+b)(\lambda^{-1}{a})^{-1}
$$

\noi with $|a|=1, \, \lambda>0 $ and $\Re(\overline{b}{a})=0$. 
Such matrices  satisfy (BG) conditions of Proposition \ref{GB} and
therefore are in $\mathcal{M}_{\mathbf{H}^{1}_{\mathbb{H}}}$.  The group  $ \mathcal{A}(\mathbb{H})$ is the maximal subgroup of  $\mathcal{M}_{\mathbf{H}^{1}_{\mathbb{H}}}$ which fixes the point at infinity.

\noi The group $ \mathcal{A}(\mathbb{H})$ is a Lie group of real dimension 7 and each
matrix in $ \mathcal{A}(\mathbb{H})$ acts as a conformal transformation on the
hyperplane at infinity $\partial
\mathbf{H}^{1}_{\mathbb{H}}.$

\noi Therefore $ \mathcal{A}(\mathbb{H})$ is the
group  of conformal and orientation preserving transformation 
acting on the space of pure imaginary quaternions at
infinity which can be identified with $\R^3$ so that this group is isomorphic to the conformal group
$Conf_+(\mathbb{R}^3)$.

\subsection{Iwasawa decomposition of the isometries of $\mathbf{H}^{1}_{\mathbb{H}}$.}

\noi In analogy with the complex and real case, 
we can state a generalization of Iwasawa
 decomposition for any element  of $\mathcal{M}_{\mathbf{H}^{1}_{\mathbb{H}}}$   as follows

\begin{proposition} \label{Iwasawa} Every element of  $\mathcal{M}_{\mathbf{H}^{1}_{\mathbb{H}}}$ i.e., elements in $PSL(2,\mathbb{H})$ which
  satisfies (BG) conditions
 and which is represented by the matrix $M=\begin{pmatrix}
a&b\\
c&d 
\end{pmatrix}$ can be written in an unique way as follows 

\begin{equation}\label{Iwa} 
M=\begin{pmatrix}
\lambda&0\\
0&{\lambda}^{-1} 
\end{pmatrix} \begin{pmatrix}
1&\omega\\
0&1 
\end{pmatrix} 
\begin{pmatrix}
\alpha&\beta\\
\beta&\alpha 
\end{pmatrix},
\end{equation}
with $\lambda>0$, $\Re{(\omega)}=0$,
$|\alpha|^2+|\beta|^2=1$ and $\Re{(\alpha\overline\beta)}=0$. 
\end{proposition}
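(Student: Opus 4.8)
The plan is to treat the right-hand side of \eqref{Iwa} as a quaternionic Iwasawa/$QR$-type factorization $M = U S$, where $U = \begin{pmatrix}\lambda&0\\0&\lambda^{-1}\end{pmatrix}\begin{pmatrix}1&\omega\\0&1\end{pmatrix} = \begin{pmatrix}\lambda&\lambda\omega\\0&\lambda^{-1}\end{pmatrix}$ is the upper-triangular (``$AN$'') factor and $S = \begin{pmatrix}\alpha&\beta\\\beta&\alpha\end{pmatrix}$ plays the role of the maximal compact factor. The first observation is that $S$ is completely read off from the bottom row of $M$: since the bottom row of $US$ is $(\lambda^{-1}\beta,\ \lambda^{-1}\alpha)$, matching it with $(c,d)$ forces $\beta = \lambda c$ and $\alpha = \lambda d$, and then the normalization $|\alpha|^2+|\beta|^2=1$ forces $\lambda = (|c|^2+|d|^2)^{-1/2}$, with the positive root selected by $\lambda>0$. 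Here $(c,d)\neq(0,0)$ because the $(BG)$ relation $\overline b c + \overline d a = 1$ forbids it, so $\lambda$ is well defined. Finally $\omega$ is pinned down by the top row. This string of forced choices is exactly the uniqueness statement, so I would record it first.

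For existence I would run the same computation in reverse. Defining $\lambda,\ \alpha=\lambda d,\ \beta=\lambda c$ as above and $S=\begin{pmatrix}\alpha&\beta\\\beta&\alpha\end{pmatrix}$, the constraint $|\alpha|^2+|\beta|^2=1$ holds by construction, and $\Re(\alpha\overline\beta)=\lambda^2\Re(d\overline c)=0$ comes straight from the $(BG)$ identity $\Re(c\overline d)=0$ (using $\Re(d\overline c)=\Re(c\overline d)$). A one-line computation then gives $S\overline S=\mathcal I$ (its off-diagonal entries are $2\Re(\alpha\overline\beta)$), so $S$ itself satisfies $(BG)$ and, crucially, $S^{-1}=\overline S$. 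I would then simply set $U:=M\overline S$ and read off its entries using $(BG)$: the lower-left entry is $2\lambda\Re(c\overline d)=0$, the lower-right is $\lambda(|c|^2+|d|^2)=\lambda^{-1}$, and the upper-left is $\lambda(a\overline d+b\overline c)=\lambda$ by the relation $a\overline d+b\overline c=1$. Thus $U$ is automatically upper triangular with the correct diagonal $\lambda,\lambda^{-1}$, and $M = U S$.

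The one point that deserves care---and the real heart of the argument---is the constraint $\Re(\omega)=0$. Here I would avoid computing $\omega$ head-on and instead use closure: $M$ and $S^{-1}$ satisfy $(BG)$, which is a group condition, so $U$ does too; applying the $(BG)$ relation $\Re(a\overline b)=0$ to the triangular matrix $U=\begin{pmatrix}\lambda&u\\0&\lambda^{-1}\end{pmatrix}$ yields $\lambda\Re(u)=0$, hence $\Re(u)=0$ since $\lambda>0$, and $\omega:=\lambda^{-1}u$ inherits $\Re(\omega)=0$. I expect the only friction to be organizational: keeping track of which of the three equivalent $(BG)$ forms of Proposition \ref{GB} produces each of the four constraints ($\lambda>0$, $\Re(\omega)=0$, $|\alpha|^2+|\beta|^2=1$, $\Re(\alpha\overline\beta)=0$). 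Once the factorization is phrased as $M=(M\overline S)\,S$ using $S^{-1}=\overline S$, each constraint drops out of a single $(BG)$ identity, and no genuinely hard estimate or case analysis is required.
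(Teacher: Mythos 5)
Your proposal is correct and takes essentially the same route as the paper: both read $\beta=\lambda c$, $\alpha=\lambda d$ off the bottom row, force $\lambda=(|c|^2+|d|^2)^{-1/2}$ from the normalization $|\alpha|^2+|\beta|^2=1$, and let (BG) supply the remaining constraints, the only cosmetic difference being that the paper computes $\omega=a\overline{c}+b\overline{d}$ explicitly and deduces $\Re(\omega)=0$ from the identities $\Re(a\overline{c})=\Re(b\overline{d})=0$, whereas you obtain it from closure of (BG) under products and inverses applied to $U=M\overline{S}$. Your write-up in fact fills in details the paper leaves as ``direct computations'' (that $(c,d)\neq(0,0)$, and that $S^{-1}=\overline{S}$), so it is, if anything, slightly more complete.
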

\begin{proof}
We'll give  explicit expressions for  $\alpha,\ \beta,\ \lambda $ and $\omega$ in terms of $a,b,c,$ and $d$.
Indeed, from direct computations, one easily obtains that
$\lambda d=\alpha$ and $\lambda c=\beta$;  therefore, from the equations 
$ a=\lambda^2(d+\omega c) \quad b=\lambda^2(c+\omega d)$ 
it is a matter of calculations to conclude that
$$\lambda=\dfrac{1}{\sqrt{|c|^2+|d|^2}}\quad\mathrm{and}\quad \omega=a\overline{c}+b\overline{d}.$$
Therefore, from  (BG) conditions of Proposition \ref{GB}, 
it follows that $\Re{(\omega)}=0$ and $\Re{(\alpha\overline\beta)}=0$.
\end{proof}

\subsection{Isotropy subgroup of the isometries of $\Hy$ which fixed one point.}

\noi We notice
that the set $\mathcal{M}_{\mathbf{H}^{1}_{\mathbb{H}}}$ of matrices inducing elements in $PSL(2,\mathbb{H})$ satisfying (BG) conditions has real
dimension ten.

\noi Let $\mathcal{K}:=\Big\{ \begin{pmatrix}
\alpha&\beta\\
\beta&\alpha 
\end{pmatrix} \in \mathcal{M}_{\mathbf{H}^{1}_{\mathbb{H}}} \Big\}$ be the subgroup of symmetric matrices in $\mathcal{M}_{\mathbf{H}^{1}_{\mathbb{H}}} $. For the matrix $\begin{pmatrix}
\alpha&\beta\\
\beta&\alpha 
\end{pmatrix}$ the conditions  $|\alpha|^2+|\beta|^2=1$ and $\Re{(\alpha\overline\beta)}=0$ are equivalent to 
(BG) conditions in Proposition \ref{GB}:
$$
\overline{\begin{pmatrix}
\alpha&\beta\\
\beta&\alpha 
\end{pmatrix}}\ \begin{pmatrix}
0&1\\
1&0 
\end{pmatrix} \begin{pmatrix}
\alpha&\beta\\
\beta&\alpha 
\end{pmatrix}= \begin{pmatrix}
0&1\\
1&0 
\end{pmatrix}. 
$$

\noi We have the following:

\begin{proposition}
 The group  $\mathcal{K}$ is a compact Lie group isomorphic to the special orthogonal group $SO(4)$.
Indeed this group is precisely the isotropy subgroup at $1\in\mathbf H_{\mathbb H}^1$ of the action of $PSL(2,\mathbb{H})$  by orientation preserving
isometries on $\mathbf {H}_{\mathbb H}^1$. Moreover the group $\mathcal{K}$ is a maximal compact subgroup of $\mathcal{M}_{\mathbf{H}^{1}_{\mathbb{H}}}$. 
\end{proposition}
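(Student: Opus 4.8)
The plan is to prove the three assertions --- that $\mathcal{K}$ equals the isotropy group of $1$, that it is isomorphic to $SO(4)$, and that it is maximal compact --- by transporting the whole situation to the ball model $\mathbf{B}$ via the Cayley transformation, where the isotropy of the center is completely transparent. First I would record the easy inclusion: a matrix $\begin{pmatrix}\alpha&\beta\\\beta&\alpha\end{pmatrix}$ with $|\alpha|^2+|\beta|^2=1$ and $\Re(\alpha\overline\beta)=0$ induces the map $\mathbf{q}\mapsto(\alpha\mathbf{q}+\beta)(\beta\mathbf{q}+\alpha)^{-1}$, which sends $1$ to $(\alpha+\beta)(\beta+\alpha)^{-1}=1$; hence $\mathcal{K}$ is contained in the isotropy subgroup of $1\in\mathbf{H}^1_{\mathbb{H}}$.

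The key step is to show this inclusion is an equality by conjugation. The Cayley map $\Psi(\mathbf{q})=(1+\mathbf{q})(1-\mathbf{q})^{-1}$ satisfies $\Psi(0)=1$ and is induced by $C=\begin{pmatrix}1&1\\-1&1\end{pmatrix}$, so (keeping track of the antihomomorphism $\Phi$, under which $\Psi F\Psi^{-1}$ corresponds at the matrix level to $C^{-1}MC$) conjugation by $\Psi$ carries the isotropy of $1$ in $\mathbf{H}^1_{\mathbb{H}}$ onto the isotropy of $0$ in $\mathbf{B}$. By the parametrization $F_A(\mathbf{q})=v(\mathbf{q}-q_0)(1-\overline{q_0}\mathbf{q})^{-1}u$ of $\mathcal{M}_{\mathbf{B}}$, the stabilizer of $0$ is exactly $\{\mathbf{q}\mapsto v\mathbf{q}u\ :\ |u|=|v|=1\}$, represented by the diagonal matrices $\mathrm{diag}(v,u^{-1})$. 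A direct computation $C\,\mathrm{diag}(v,w)\,C^{-1}=\tfrac12\begin{pmatrix}v+w&w-v\\w-v&v+w\end{pmatrix}$ shows these are carried precisely onto the symmetric matrices with $\alpha=\tfrac12(v+w)$, $\beta=\tfrac12(w-v)$; moreover $|v|=|w|=1$ translates exactly into $|\alpha|^2+|\beta|^2=1$ and $\Re(\alpha\overline\beta)=0$, and conversely $v=\alpha-\beta$, $w=\alpha+\beta$ recover $|v|=|w|=1$. This identifies $\mathcal{K}$ with the full isotropy of $1$ and simultaneously with the stabilizer of $0$ in the ball model.

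The isomorphism $\mathcal{K}\cong SO(4)$ then follows from the classical fact that $(v,u)\mapsto(\mathbf{q}\mapsto v\mathbf{q}u)$ is the universal double cover $Sp(1)\times Sp(1)=Spin(4)\to SO(4)$ with kernel $\{(1,1),(-1,-1)\}$. Since passing to $PSL(2,\mathbb{H})$ identifies $\mathrm{diag}(v,u^{-1})$ with its negative, i.e.\ $(v,u)\sim(-v,-u)$, the stabilizer in the projective group is exactly $(Sp(1)\times Sp(1))/\{\pm1\}\cong SO(4)$; being a closed subgroup of a Lie group and isomorphic to the compact group $SO(4)$, the group $\mathcal{K}$ is a compact Lie group. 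For maximality, since $\mathbf{H}^1_{\mathbb{H}}\cong\mathbf{H}^4_{\mathbb{R}}$ is a Hadamard manifold, any compact subgroup $G\supseteq\mathcal{K}$ of $\mathcal{M}_{\mathbf{H}^1_{\mathbb{H}}}$ fixes a point $p$ by the Cartan fixed-point theorem, so $G$ lies in the isotropy of $p$; as $G\supseteq\mathcal{K}$, the group $\mathcal{K}$ fixes both $1$ and $p$, but $\mathcal{K}$ acts on $T_1\mathbf{H}^1_{\mathbb{H}}\cong\mathbb{R}^4$ as the full $SO(4)$, which has no nonzero fixed vector, forcing $p=1$ and hence $G\subseteq\mathcal{K}$.

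I expect the main obstacle to be the bookkeeping in the conjugation step: tracking the antihomomorphism so that the conjugation happens on the correct side, verifying that the two defining constraints of $\mathcal{K}$ match $|u|=|v|=1$ exactly (no extra or missing components), and confirming that the $\pm$-ambiguity inherent in $PSL(2,\mathbb{H})$ coincides precisely with the $Spin(4)$ kernel rather than a larger identification. Once that dictionary between $(\alpha,\beta)$ and $(v,u)$ is pinned down, the $SO(4)$ identification and the maximality argument are routine.
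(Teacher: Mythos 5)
Your proof is correct, and it takes a genuinely different route from the paper. The paper works entirely at the matrix level: given a matrix satisfying the (BG) conditions and fixing $1$, it writes $a-d=c-b$, computes $|a-d|^2+|c-b|^2$, and uses the (BG) relations $\Re(a\overline{c})=\Re(b\overline{d})=\Re(c\overline{d})=\Re(a\overline{b})=0$ to conclude this sum vanishes, forcing $a=d$, $b=c$; this is a short, self-contained algebraic argument, but it only establishes the identification of $\mathcal{K}$ with the isotropy subgroup of $1$ --- the $SO(4)$ isomorphism and the maximality are asserted in the statement without further argument. You instead conjugate by the Cayley transformation to the ball model, read off the stabilizer of $0$ from the Bisi--Gentili normal form $F_A(\mathbf{q})=v(\mathbf{q}-q_0)(1-\overline{q_0}\mathbf{q})^{-1}u$ (which forces $q_0=0$), and translate the diagonal matrices $\mathrm{diag}(v,w)$ back into the symmetric matrices of $\mathcal{K}$, verifying that $|v|=|w|=1$ corresponds exactly to $|\alpha|^2+|\beta|^2=1$ and $\Re(\alpha\overline{\beta})=0$. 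This buys you the two claims the paper leaves unproved: the isomorphism $\mathcal{K}\cong SO(4)$ via the classical double cover $Sp(1)\times Sp(1)\to SO(4)$ with kernel $\{\pm(1,1)\}$ (which matches precisely the $\pm\mathcal{I}$ identification in $PSL(2,\mathbb{H})$), and maximality via the Cartan fixed-point theorem combined with the observation that $\mathcal{K}$ acts on $T_1\mathbf{H}^1_{\mathbb{H}}$ with no nonzero fixed vector. Two harmless bookkeeping slips, which do not affect the conclusion: you announce that conjugation corresponds to $M\mapsto C^{-1}MC$ but then compute $C\,\mathrm{diag}(v,w)\,C^{-1}$ (the two differ only in replacing $\beta$ by $-\beta$, and both land in $\mathcal{K}$); and the assignment $(v,u)\mapsto(\mathbf{q}\mapsto v\mathbf{q}u)$ is an antihomomorphism in the second variable, so the honest double cover is $(v,u)\mapsto(\mathbf{q}\mapsto v\mathbf{q}u^{-1})$ --- composing with the inversion $u\mapsto u^{-1}$ fixes this without changing the image or the kernel.
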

\begin{proof}
\noi Let $\begin{pmatrix}
a&b\\
c&d 
\end{pmatrix}$ be a matrix satisfying (BG) conditions and fixing 1. Then $a+b=c+d$ or $a-d=c-b$ and 
$$
|a-d|^2=(c-b)(\overline{a}-\overline{d})=c\overline{a}-c\overline{d}-b\overline{a}+b\overline{d}
$$

$$
|c-b|^2=(a-d)(\overline{c}-\overline{b})=a\overline{c}-a\overline{b}-d\overline{c}+d\overline{b}.
$$

\noi Then (BG) conditions imply:  
$$
|a-d|^2+|c-b|^2=(c\overline{a}+a\overline{c})+(b\overline{d}+d\overline{b})-((c\overline{d}+d\overline{c})+(b\overline{a}+a\overline{b}))=0.
$$  
\noi Finally, $|a-d|^2=|c-b|^2=0$ implies that $a=d$ and $c=b$ and again (BG) conditions imply $\Re (\overline{a}b)=0$ and $|a|^2+|b|^2=1$. 
\end{proof}

\noi Let $\mathcal{D}:=\Big\{ \begin{pmatrix}
\alpha&0\\
0&\alpha 
\end{pmatrix} \in \mathcal{M}_{\mathbf{H}^{1}_{\mathbb{H}}} \Big\}$ be the subgroup of $\mathcal{K}$ whose elements are diagonal matrices in
$\mathcal{M}_{\mathbf{H}_\mathbb{H}}^1$. Then the  (BG) conditions imply that $|\alpha|=1$. The action at infinity is given by $\q\mapsto{\alpha\q\bar\alpha}$, which is the usual action of $SO(3)$ on the purely imaginary quaternions.

\noi Therefore:

\begin{corollary} 
The group $\mathcal{D}$ is isomorphic to $SO(3)$. \end{corollary}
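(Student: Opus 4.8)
The plan is to realize $\mathcal{D}$ as the projectivized group of unit quaternions acting by conjugation, and then identify this with the classical spin double cover $\mathbb{S}^3\to SO(3)$. First I would record the explicit form of the action: an element of $\mathcal{D}$ is the class in $PSL(2,\mathbb{H})$ of a matrix $\begin{pmatrix}\alpha&0\\0&\alpha\end{pmatrix}$ with $|\alpha|=1$, and its associated linear fractional transformation is $F(\mathbf{q})=\alpha\mathbf{q}\alpha^{-1}=\alpha\mathbf{q}\overline{\alpha}$, as already noted before the statement. I would introduce the group $Sp(1):=\{\alpha\in\mathbb{H}\ :\ |\alpha|=1\}$ of unit quaternions (topologically the $3$-sphere $\mathbb{S}^3$) together with the assignment $\theta:Sp(1)\to\mathcal{D}$, $\alpha\mapsto\big[\begin{pmatrix}\alpha&0\\0&\alpha\end{pmatrix}\big]$. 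Using the composition law for the transformations $F_A$ from Theorem \ref{Thphi}, one checks that $\theta$ is a group homomorphism, and it is surjective by the very definition of $\mathcal{D}$.

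Next I would pass to the action at infinity. For each unit quaternion $\alpha$ the conjugation $c_\alpha:\mathbf{q}\mapsto\alpha\mathbf{q}\overline{\alpha}$ fixes $\mathbb{R}\subset\mathbb{H}$ pointwise, since $\Re(\alpha\mathbf{q}\overline{\alpha})=\Re(\mathbf{q})$, and it preserves $|\mathbf{q}|$; hence it restricts to a norm-preserving linear map of the $3$-dimensional space of purely imaginary quaternions $\Im\mathbb{H}\cong\mathbb{R}^3$, i.e. an element of $O(3)$. Because $Sp(1)$ is connected and $c_1=\mathrm{id}$, the image lies in the identity component $SO(3)$, producing a homomorphism $\pi:Sp(1)\to SO(3)$. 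Since $c_\alpha=c_\beta$ exactly when the full transformations $F_{\mathrm{diag}(\alpha,\alpha)}$ and $F_{\mathrm{diag}(\beta,\beta)}$ agree on all of $\mathbb{H}$, the map $\pi$ descends through $\theta$ to a well-defined homomorphism $\overline{\pi}:\mathcal{D}\to SO(3)$.

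For injectivity I would compute the kernel twice and match them: $c_\alpha=\mathrm{id}$ forces $\alpha$ to commute with every quaternion, so $\alpha\in\mathbb{R}$ and thus $\alpha=\pm1$; this is precisely $\ker\theta$, since $\begin{pmatrix}\alpha&0\\0&\alpha\end{pmatrix}=t\mathcal{I}$ in $PSL(2,\mathbb{H})$ again forces $\alpha\in\mathbb{R}$, whence $\alpha=\pm1$. Therefore $\mathcal{D}\cong Sp(1)/\{\pm1\}$ and $\overline{\pi}$ is injective. For surjectivity I would invoke the classical description of rotations: any rotation of $\mathbb{R}^3$ through angle $2\vartheta$ about a unit imaginary axis $\mathbf{u}$ equals $c_\alpha$ with $\alpha=\cos\vartheta+(\sin\vartheta)\mathbf{u}$; alternatively, observe that $\mathcal{D}\cong\mathbb{S}^3/\{\pm1\}$ and $SO(3)$ are both compact connected smooth $3$-manifolds, so the injective smooth homomorphism $\overline{\pi}$ is automatically a diffeomorphism, giving $\mathcal{D}\cong SO(3)$. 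The only point requiring genuine care is the bookkeeping that the two sources of the sign ambiguity—the projectivization in $PSL(2,\mathbb{H})$ and the triviality of the conjugation action—coincide to give exactly the kernel $\{\pm1\}$; once this is matched, the statement reduces to the well-known two-to-one cover $\mathbb{S}^3\to SO(3)$, and no deeper obstacle remains.
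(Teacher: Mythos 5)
Your proof is correct and follows essentially the same route as the paper, which simply observes that the action at infinity $\mathbf{q}\mapsto\alpha\mathbf{q}\bar{\alpha}$ is the usual conjugation action of unit quaternions on $\Im\mathbb{H}\cong\mathbb{R}^3$ and cites the classical two-to-one cover $\mathbb{S}^3\to SO(3)$. What you add is the explicit bookkeeping (kernel of the projectivization versus kernel of the conjugation action both equal $\{\pm 1\}$) that the paper leaves implicit.
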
 

\section{The quaternionic modular groups.}
\noi In this section we investigate a class of linear transformations 
which will play a crucial role in the definition of the quaternionic
modular groups.
\subsection{Quaternionic Translations}

\noi  We recall that a translation $\tau_{\omega}: \mathbf{H}^{1}_{\mathbb{H}} \to
\mathbf{H}^{1}_{\mathbb{H}}$  defined as  $\mathbf{q}\mapsto \mathbf{q}+\omega$ 
is a  hyperbolic isometry in $\mathbf{H}^{1}_{\mathbb{H}}$ 
if it is a transformation associated to the matrix $\left(\begin{array}{cc} 1 &
  \omega \\0 & 1\end{array}\right) \in\mathcal{M}_{\mathbf{H}^{1}_{\mathbb{H}}}$, i.e. if it is such that
  $\Re(\omega)=0$. 
  
  \noi In what follows  we consider translations where $\omega$ is the imaginary part
  of a Lipschitz or Hurwitz integer. We remark that the imaginary part of a Lipschitz integer is still a 
  Lipschitz integer but the imaginary part of a Hurwitz integer is not necessarily a  Hurwitz integer.

\begin{definition} An {\em imaginary Lipschitz quaternion} (or imaginary Lipschitz integer) is the imaginary part of a Lipschitz quaternion, a quaternion whose real part is 0 and the others components are all integers. The set of all imaginary Lipschitz quaternions is

 $$\Im \mathbb{H}(\mathbb{Z}) = \left\{b\mathbf{i}+c\mathbf{j}+d\mathbf{k} \in \mathbb{H} \ :\ b,c,d \in \mathbb{Z}\right\}.$$

\end{definition}

\noi We denote by $\mathcal{T}_{\Im \mathbb{H}(\mathbb{Z})}$ the abelian group of translations 
by the imaginary Lipschitz group $\Im \mathbb{H}(\mathbb{Z})$, i.e. such that
$\mathbf{q}\mapsto \mathbf{q}+\omega$, $\omega=n_2\mathbf{i}+n_3\mathbf{j}+n_4\mathbf{k}$
where the $n$'s are all integers; 
equivalently $\mathbf{q}\mapsto \mathbf{q}+\omega$ belongs to $\mathcal{T}_{\Im \mathbb{H}(\mathbb{Z})}$
if and only if $\omega\in \Im \mathbb{H}(\mathbb{Z})$. Thus the elements of  $\mathcal{T}_{\Im \mathbb{H}(\mathbb{Z})}$
are given by matrices of the form:
\[\left(\begin{array}{cc} 1 & \omega \\0 & 1\end{array}\right)\,\,\, \,  \rm{with} \,\, \Re(\omega)=0 \]

\noi The group  $\mathcal{T}_{\Im \mathbb{H}(\mathbb{Z})}$ acts freely on $ \mathbf{H}^{1}_{\mathbb{H}}$ as a representation of the abelian group with 3 free generators $\Z \oplus \Z \oplus \Z$. A fundamental domain is the following set $\{\q=x_0+x_1\mathbf{i}+x_2\mathbf{j}+x_3\mathbf{k}\in \mathbf{H}^{1}_{\mathbb{H}}\,\,:
\,\,|x_n|\leq1/2,\,n=1,\cdots,3\}.$ This set is referred as the \emph{chimney} in figure 1 in section 5. It has two ends, one of finite volume which is asymptotic at the point at infinity. The other end has infinite volume, it is called a \emph{hyperbolic trumpet}.   
 
\subsection{Inversion} 

\noi Let us consider now  \[T(\mathbf{q})=\mathbf{q}^{-1}=\dfrac{\overline
  {\mathbf{q}}}{|\mathbf{q}|^2}.\] 
Clearly $T$ is
a linear fractional tranformation of $\mathbf{H}^{1}_{\mathbb{H}}$
and its representative
 matrix is $\left(\begin{array}{ccc}0 & 1 \\1 &
  0 \end{array}\right)$.
  
\noi The only fixed point of $T$ in $\mathbf{H}^{1}_{\mathbb{H}}$ is 1
since the other fixed point of $T$ in $\mathbb{H}$ is $-1$ which is
not in $\mathbf{H}^{1}_{\mathbb{H}}$. We also notice here that in the topological closure of
$\mathbf{H}^{1}_{\mathbb{H}}$ (denoted by $\overline{\mathbf{H}^{1}_{\mathbb{H}}}$) the points 0
 and $\infty$ are periodic (of period 2) for $T$. Furthermore 
$T$ is an isometric involution \footnote {In the following sense; $T$ sends every point of a hyperbolic  geodesic
  parametrized by arc length $\gamma(s)$,
 passing through 1 at time 0 (i.e. such that $\gamma(0)=1$), to its opposite
$\gamma(-s)$. In other words, $T$ is a hyperbolic symmetry around 1.}  of $\mathbf{H}^{1}_{\mathbb{H}}$ because it satisfies 
(BG) conditions of Proposition \ref{GB}. 
In particular $T$ is an inversion on $\mathbb{S}^3$ which becomes the
antipodal map on any copy of $\mathbb{S}^2$  obtained as intersection of 
$\mathbb{S}^3$ with a plane perpendicular to the line passing through
0 and 1.
Finally, this
isometry $T$ leaves invariant 
the hemisphere (which is a hyperbolic 3-dimensional hyperplane)
$\Pi:=\{\mathbf{q}\in \mathbf{H}^{1}_{\mathbb{H}}\,\,\,\, :\,\,\,\, |\mathbf{q}|=1\}.$
Each point of $\Pi$  different from 1 (which is fixed by $T$) is a periodic point of $T$ of period 2.

\begin{definition} Let 
$$\mathcal{C}=\{\q=x_0+x_1\mathbf{i}+x_2\mathbf{j}+x_3\mathbf{k}\in \mathbf{H}^{1}_{\mathbb{H}}\,\,:
\,\,|\q|=1,\,|x_n|\leq1/2,\,n=1,\cdots,3 \}.
$$
\end{definition}
\noi Then,  $\mathcal{C}$ is a regular hyperbolic cube in $\Pi$. The points of the form
$ \frac{1}{2}\pm \frac{1}{2} \mathbf{i}\pm \frac{1}{2} \mathbf{j}\pm \frac{1}{2} \mathbf{k},$ are the vertices of $\mathcal{C}$ and in particular they are periodic of period 2 for $T$. These eight points
are Hurwitz units (but not Lipschitz units).

 \subsection{Composition of translations and inversion} 

\noi We observe that if $\tau_{\omega}(\mathbf{q}):=\mathbf{q}+\omega$, $\omega\in\mathbb{H}$,
then $L_{\omega}:=\tau_{\omega} T 
$ has as corresponding matrix
$$\begin{pmatrix}
\omega&1\\
1&0
\end{pmatrix}=\begin{pmatrix}
1&\omega\\
0&1
\end{pmatrix} \begin{pmatrix}
0&1\\
1&0
\end{pmatrix};
$$
\noi similarly $R_{\omega}:=T \tau_{\omega}$ has as corresponding matrix
$$\begin{pmatrix}
0&1\\
1&\omega
\end{pmatrix}=\begin{pmatrix}
0&1\\
1&0
\end{pmatrix} \begin{pmatrix}
1&\omega\\
0&1
\end{pmatrix}.$$ 
 
\noi Therefore $R_{\omega}$ is represented by interchanging the elements on the diagonal 
of the matrix which represents $L_{\omega}$.
In the following table, we list the matrices associated to iterates of 
$L_\omega=\tau_\omega   T$ with  suitable choices of $\omega$.  \emph{In the table all possible choices of signs are allowed}.

\vskip.5cm

\begin{center}
\noi \begin{tabular}{|c|c|c|}\hline\label {table}

$\omega=\pm \mathbf{i} $   or $\omega= \pm \mathbf{j}$ or $\omega=\pm \mathbf{k}$ &  $\omega=\pm \mathbf{i \pm j}$ or $\omega=\pm \mathbf{i
  \pm k}$&  
$\omega=\mathbf{\pm i \pm j \pm k}$ \\
&or $\omega=\mathbf{\pm j \pm k}$ &  \\ 
 $\omega^2=-1$ &  $\omega^2=-2$ &  $\omega^2=-3$
\\\hline  
& & \\
 $L_\omega^2=\begin{pmatrix}
0&\omega\\
\omega&1
\end{pmatrix}$&  
$L_\omega^2=\begin{pmatrix}
-1&\omega\\
\omega&1
\end{pmatrix}$
& $L_\omega^2=\begin{pmatrix}
-2&\omega\\
\omega&1
\end{pmatrix}$ \\  
& & \\\hline & & \\
 $L_\omega^3=\begin{pmatrix}
\omega&0\\
0&\omega
\end{pmatrix}$ & $L_\omega^3=\begin{pmatrix}
0&-1\\
-1&\omega
\end{pmatrix}$ & $L_\omega^3=\begin{pmatrix}
-\omega&-2\\
-2&\omega
\end{pmatrix}$ \\ & &  \\\hline  & &\\ $L_\omega^4=\begin{pmatrix}
-1&\omega\\
\omega&0
\end{pmatrix}$ & $L_\omega^4=\begin{pmatrix}
-1&0\\
0&-1
\end{pmatrix}$ & $L_\omega^4=\begin{pmatrix}
1&-\omega\\
-\omega&-2
\end{pmatrix}$ \\   
& & \\\hline & & \\
 $L_\omega^5=\begin{pmatrix}
0&-1\\
-1&\omega
\end{pmatrix}$ &  & $L_\omega^5=\begin{pmatrix}
0&1\\
1&-\omega
\end{pmatrix}$ \\ & & \\\hline & & \\
 $L_\omega^6=\begin{pmatrix}
-1&0\\
0&-1
\end{pmatrix}$ &  & $L_\omega^6=\begin{pmatrix}
1&0\\
0&1
\end{pmatrix}$ \\ & & \\\hline \end{tabular}

\end{center}

\vskip1cm

\noi We can see that the order of $L_\omega$ depends on
$\omega$; in particular, each of the six transformations $L_\omega$ with $\omega=\mathbf{\pm i, \pm j, \pm k}$, has
order 6 but when restricted to the plane
$S_\omega:=\{\mathbf{q}=x_1+x_{\mathbf{i}}\mathbf{i}+x_{\mathbf{j}}\mathbf{j}+x_{\mathbf{k}}\mathbf{k}
\in\mathbf{H}^{1}_{\mathbb{H}} \ : \ x_{\alpha}= 0\ \mathrm{if} \ \alpha\neq \omega, 0\}$,
with $\omega=\mathbf{i,j,k}$ has order 3.  \noi Furthermore $\mathbf{q}_0$
is a fixed point for $L_{\omega}= \tau_\omega   T$ with $\omega=0,\mathbf{\pm i,\pm
  j,\pm k}$, if and only if $\mathbf{q}_0$ is a root of $
\mathbf{q}^2-\omega \mathbf{q}-1=0$. If $\omega=0$ there is only one root in
$\mathbf{H}^{1}_{\mathbb{H}}$ (and so only one fixed point for $T$),
namely $\mathbf{q}_0=1$.  If $\omega=\mathbf{\pm i,\pm j,\pm k}$, then it
is easily verified that if $\alpha$ and $\beta$ are two roots of
$
\mathbf{q}^2-\omega \mathbf{q}-1=0$, it follows that $\Re (\alpha+\beta)=0$
or $\Re (\alpha)=-\Re(\beta)$.  Since a root of $
\mathbf{q}^2-\omega \mathbf{q}-1=0$  is $\alpha=\frac{\sqrt{3}}{2}+\frac{\omega}{2}$
($\alpha=\frac{\sqrt{3}}{2}-\frac{\omega}{2}$) any other possible root
$\beta$ of the above given equation would not sit in
$\mathbf{H}^{1}_{\mathbb{H}}$. In the same way $\mathbf{q}_0$
is a fixed point for $R_{\omega}=T  \tau_\omega$ with $\omega=0,\mathbf{\pm i,\pm
  j,\pm k}$, if and only if $\mathbf{q}_0$ is a root of $
\mathbf{q}^2+\mathbf{q}\omega-1=0$. If $\omega=0$ there is only one root in
$\mathbf{H}^{1}_{\mathbb{H}}$ (and so only one fixed point for $T$),
namely $\mathbf{q}_0=1$.  If $\omega=\mathbf{\pm i,\pm j,\pm k}$, then it
is easily verified that if $\alpha$ and $\beta$ are two roots of
$
\mathbf{q}^2+\mathbf{q}\omega-1=0$, it follows that $\Re (\alpha)=-\Re(\beta)$.  Since a root of $
\mathbf{q}^2+\mathbf{q}\omega-1=0$ is $\alpha=\frac{\sqrt{3}}{2}-\frac{\omega}{2}$ any other possible root
$\beta$ of the above given equation would not sit in
$\mathbf{H}^{1}_{\mathbb{H}}$.

\noi Briefly, the only fixed point of $L_{\omega}$ in $\mathbf{H}^{1}_{\mathbb{H}}$ is $\frac{\sqrt{3}}{2}+\frac{\omega}{2}$ and the only fixed point of $R_{\omega}$ is $\frac{\sqrt{3}}{2}-\frac{\omega}{2}$. 

\subsection{The Lipschitz Quaternionic modular group $PSL(2,\mathfrak{L})$.} 
\noi We are now in the position of introducing the following:

\begin{definition} 
The {\em Lipschitz quaternionic modular group} is the group
generated by the inversion $T$
and the translations $\mathcal{T}_{\Im \mathbb{H}(\mathbb{Z})}$.
 It will be denoted by $PSL(2, \mathfrak{L}) $.
\end{definition}

\begin{remark} The group $PSL(2,\mathfrak{L})$ is obviously a discrete subgroup of  $PSL(2,\mathbb{H})$ (more precisely, of $\mathcal{M}_{\mathbf{H}^{1}_{\mathbb{H}}}$). It is important to emphasize that the quaternionic
  modular group is a proper subgroup of $PSL(2, \mathbb{H}(\mathbb{Z}))$;
indeed, the subgroup  generated by (proper) translations and by the inversion $T$
 in $\mathbf{H}_\mathbb{H}^1$ has elements which are represented by matrices 
 with Lipschitz  integers as entries, but  
in general an arbitrary element in  $PSL(2, \mathbb{H}(\mathbb{Z}))$ 
 does not satisfy (BG) conditions of Proposition 1.5 and 
therefore it does not preserve $\mathbf{H}_\mathbb{H}^1$. 

\end{remark}

\vskip 0.2  cm

\subsection{Lipschitz unitary and  affine subgroups of   $PSL(2,\mathfrak{L})$.}

\noi Let $\mathfrak{L}_u$ the group (of order 8) of \textit{Lipschitz  units}  
   $$\mathfrak{L}_u:=
\{\pm 1,\ \pm \mathbf{i},\ \pm\mathbf{j},\pm\mathbf{k} :\ \mathbf{i}^2=\mathbf{j}^2=\mathbf{k}^2=\mathbf{ijk}=-1 \}.$$

\noi This group is the {\it quaternion group} which is a non-abelian group of order eight. Moreover, its elements are the 8 vertices of a 16-cell in the 3-sphere $\mathbb{S}^3$ and the 8 barycentres of the faces of its dual polytope which is a hypercube also called 8-cell.

\begin{definition}
The subgroup $\mathcal{U}(\mathfrak{L})$ of $PSL(2,\mathfrak{L})$ whose elements are
the 4 diagonal matrices
$$D_\mathbf{u}:=\begin{pmatrix}
\mathbf{u}&0\\
0&\mathbf{u}
\end{pmatrix}$$
with $\mathbf{u}$ a Lipschitz unit is called {\em Lipschitz unitary group}.
\end{definition}

\noi The  Lipschitz unitary group is isomorphic to the so called \emph{Klein group} of order 4 which is isomorphic to
$\Z/2\Z\oplus \Z/2\Z$, since $\mathbf{ij=k}$. 
Moreover, we observe that the action on  $\mathbf{H}^{1}_{\mathbb{H}}$ of the transformation
associated to 
$$\begin{pmatrix}
\mathbf{u}&0\\
0&\mathbf{u}
\end{pmatrix},
$$
where $\mathbf{u}=\mathbf{i},\mathbf{j}$ or $\mathbf{k}$ is for conjugation and sends a quaternion $\mathbf{q}\in \mathbf{H}^{1}_{\mathbb{H}}$ to $\mathbf{uqu}^{-1}$. It acts as a rotation of angle $\pi$ with axis the vertical hyperbolic 2-plane
$$
S_{\mathbf{u}}=\{ x+y\mathbf{u}\,\, :\,\, x,y \in \R, x>0 \}
.$$

\begin{definition} The Lipschitz affine subgroup (or the Lipschitz parabolic subgroup)  $\mathcal{A}(\mathfrak{L})$ is
the group generated by the unitary  group $\mathcal{U}(\mathfrak{L})$ 
and the group of translations $\mathcal{T}_{\Im \mathbb{H}(\mathbb{Z})}$. Equivalently

\begin{subequations}
\begin{align}
\mathcal{A}(\mathfrak{L}) &=\left\{\begin{pmatrix}
\mathbf{u}&\mathbf{u}b\\
0&\mathbf{u}
\end{pmatrix}
:\ \mathbf{u}\in{\mathfrak{L}_u}   ,\ \Re(b)=0  \right\}\\
&=\left\{\begin{pmatrix}
\mathbf{u}&b\mathbf{u}\\
0&\mathbf{u}
\end{pmatrix}
:\ \mathbf{u}\in{\mathfrak{L}_u}   ,\ \Re(b)=0  \right\}.
\end{align}
\end{subequations}
\end{definition}

\begin{remark} The Lipschitz affine subgroup $\mathcal{A}(\mathfrak{L})$ is the maximal Lipschitz parabolic
  subgroup of $PSL(2,\mathfrak{L})$. Moreover $\mathcal{A}(\mathfrak{L}) \subset PSL(2, \mathfrak{L}) \cap \mathcal{A}(\mathbb{H})$.
Furthermore, this subgroup leaves invariant  the horizontal horospheres $\Re(\mathbf{q})=x_0>0$ and also
the horoball  $\Re(\mathbf{q})>x_0>0$. 
\end{remark}
  
 \noi Evidently $\mathcal{A}(\mathfrak{L})$  is a subgroup  of $PSL(2,\mathfrak{L})$ and, since $\mathbf{ij=k}$, 
it is generated by hyperbolic isometries associated to the matrices

$$\begin{pmatrix}
\mathbf{i}&0\\
0&\mathbf{i}
\end{pmatrix}, \quad \begin{pmatrix}
\mathbf{j}&0\\
0&\mathbf{j}
\end{pmatrix}, \quad\begin{pmatrix}
1&\mathbf{u}\\
0&1
\end{pmatrix}
$$
where $\mathbf{u}=\mathbf{i},\mathbf{j}$ and $\mathbf{k}$. 

\noi 

\begin{remark}
In particular,  since the transformation represented by the matrix $\begin{pmatrix}
\mathbf{u}&0\\
0&\mathbf{u}
\end{pmatrix}$
is a rotation of angle $\pi$ which keeps fixed each point of the plane 
 $S_\mathbf{u}$ (the ``axis of rotation''),
 the combination of such a rotation and the inversion leads
 to a transformation represented by the matrix
$\begin{pmatrix}
0&\mathbf{u}\\
\mathbf{u}&0
\end{pmatrix}$
with $\mathbf{u}=\mathbf{i,j,k}$.
For these trasformations the plane 
 $S_\mathbf{u},$ with $\mathbf{u}=\mathbf{i,j,k}$ is invariant.
Both rotations and inversion composed with a rotation of the plane leave invariant the sphere $\Pi$ and have 1 as a fixed point. 
\end{remark}

\noi We have the following properties:

 \begin{enumerate}

   \item The inverse of a matrix $\left(\begin{array}{cc}a & b \\ 0 &
     d\end{array}\right) \in \mathcal{A}(\mathbb{H})$ is the matrix 

$$\left(\begin{array}{cc}a^{-1} &  -a^{-1}bd^{-1}  \\ 0 & d^{-1} \end{array}\right) \in \mathcal{A}(\mathbb{H}).$$
   
\item  If we consider the group $\mathfrak{L}_u$ of order 8 of Lipschitz units, then the map 
$$
\mathcal{A}(\mathfrak{L}) \to
\mathfrak{L}_u\, , \,\, \,\,\,
 \left(\begin{array}{cc}\mathbf{u} & \mathbf{u}b \\ 0 & \mathbf{u}\end{array}\right) \mapsto \mathbf{u}$$
  is an epimorphism whose kernel is:
$$
\mathcal{T}_{\Im \mathbb{H}(\mathbb{Z})}=\left\{ \left(\begin{array}{cc}1 & \omega \\ 0 & 1\end{array}\right)\, :\,\, \omega \in \Im\mathbb{H}(\mathbb{Z})  \right\}
$$

  \end{enumerate}

\noi Thus we have the exact sequence 
$$
0\longrightarrow{ \mathcal{T}_{\Im \mathbb{H}(\mathbb{Z})} }\longrightarrow{ \mathcal{A} }(\mathfrak{L})\longrightarrow\mathcal{U}(\mathfrak{L})=\Z/2\Z\oplus \Z/2\Z\longrightarrow0
$$
\noi This sequence splits and the group $\mathcal{A}(\mathfrak{L})$ is the semi-direct product of $\mathcal{T}_{\Im \mathbb{H}(\mathbb{Z})}$ with $\mathcal{U}(\mathfrak{L})$.

\subsection{A congruence subgroup of  $PSL(2,\mathfrak{L})$.}

\noi Let $\mathcal{A}(2,\mathfrak{L})$ denote the finite-index subgroup of
$\mathcal{A}(\mathfrak{L})$ generated by the 12 translations
$$
\{\tau_{\mathbf{u}+\mathbf{v}}\quad:\quad \mathbf{u}\neq \mathbf{v} \quad \rm{and} \quad  \mathbf{u,v}= \pm \mathbf{i}, \pm\mathbf{j}, \pm \mathbf{k} \}. 
$$

\noi In fact we only need the three translations $\tau_{\mathbf{i}+\mathbf{j}},\tau_{\mathbf{i}+\mathbf{k}}$ and $\tau_{\mathbf{j}+\mathbf{k}}$ to generate $\mathcal{A}(2,\mathfrak{L})$ but the twelve  translations are important to describe its fundamental domain (see section 10). 

\noi Thus $\mathcal{A}(2,\mathfrak{L})$ consists of elements corresponding to matrices in $PSL(2,\mathfrak{L})$ associated to the general Lipschitz translation
$\tau_{x,y,z}=\left(\begin{array}{cc}1 &
  x\mathbf{i}+y\mathbf{j}+z\mathbf{k} \\0 & 1 \end{array}\right)$ such that $x+y+z \equiv 0$ (mod 2).
  
 \begin{definition} Let $\Gamma(2,\mathfrak{L})$ be the group generated by
 $\mathcal{A}(2,\mathfrak{L})$ and the inversion $T$.
 \end{definition}

 \noi This group plays the role of a congruence group modulo two and in fact it is a subgroup of index two of $PSL(2, \mathfrak{L})$. It corresponds to a subgroup of Lorentz transformations with integer entries which will become particularly important in section \ref{Lor}.
 
 \section{The Hurwitz modular group and its unitary and affine subgroups.}

\noi In analogy with the introduction of the unitary, affine and modular groups in
the Lipschitz integers setting of the previous sections, we give the following generalization

\begin{definition} Let $\mathfrak{H}_u$ be the group of {\em Hurwitz units} 

   $$\mathfrak{H}_u:=
\{\pm 1,\ \pm \mathbf{i},\ \pm\mathbf{j},\pm\mathbf{k},\ \frac{1}{2} (\pm 1 \pm \mathbf{i} \pm\mathbf{j} \pm \mathbf{k}) :\ \mathbf{i}^2=\mathbf{j}^2=\mathbf{k}^2=-1, \mathbf{ij=k} \}$$
\noi where in $\frac{1}{2} (\pm 1 \pm \mathbf{i} \pm\mathbf{j} \pm \mathbf{k})$ all 16 possible combinations of signs are allowed.
\end{definition} 

\noi This group is of order 24 and it is known as the {\it binary tetrahedral} group. Its elements can be seen as the vertices of the 24-cell. We recall that the 24-cell is a convex regular 4-polytope, whose boundary is composed of 24 octahedral cells with six meeting at each vertex, and three at each edge. Together they have 96 triangular faces, 96 edges, and 24 vertices. It is possible to give an (ideal) model of the 24-cell by considering the convex hull (of the images) of the 24 unitary Hurwitz numbers via the Cayley transformation $\Psi(\mathbf{q})=(1+\mathbf{q})(1-\mathbf{q})^{-1}$. 

\begin{definition}
The subgroup $\mathcal{U}(\mathfrak{H})$ of $PSL(2,\mathbb{H})$ given by the 12 diagonal matrices 
$$D_\mathbf{u}:=\begin{pmatrix}
\mathbf{u}&0\\
0&\mathbf{u}
\end{pmatrix}$$
with $\mathbf{u}$ a Hurwitz unit is called {\em Hurwitz unitary group}. 
\end{definition}

\noi The epimorphism $\mathfrak{H}_u\to \mathcal{U}(\mathfrak{H})$ given by $\mathbf{u}\mapsto{D_\mathbf{u}}$ has kernel $\{1,-1\}$ so it is of order two. Any matrix in $\mathcal{U}(\mathfrak{H})$ satisfies the (BG) conditions and is an isometry which represents a rotation in $\mathbf{H}^1_{\mathbb{H}}$. Moreover, we observe that the action on  $\mathbf{H}^{1}_{\mathbb{H}}$ of the transformation $D_{\mathbf{u}}$ where $\mathbf{u}=\mathbf{i},\mathbf{j},\mathbf{k}$ or $\mathbf{u}=\frac{1}{2}(\pm \mathbf{i} \pm \mathbf{j} \pm \mathbf{k})$ is for conjugation and sends a quaternion $\mathbf{q}\in \mathbf{H}^{1}_{\mathbb{H}}$ to $\mathbf{uqu}^{-1}$. If $\mathbf{u}=\mathbf{i},\mathbf{j},\mathbf{k}$ it acts as a rotation of angle $\pi$ and if $\mathbf{u}=\frac{1}{2}(\pm \mathbf{i} \pm \mathbf{j} \pm \mathbf{k})$ it acts as a rotation of angle $\frac{2\pi}{3}$. The axis of rotation of the transformation $D_{\mathbf{u}}$ is the vertical hyperbolic 2-plane
$$
S_{\mathbf{u}}=\{ x+y\mathbf{u}\,\, :\,\, x,y \in \R, x>0 \}
.$$

\noi The  group $\mathcal{U}(\mathfrak{H})$ is of order 12 and in fact it is isomorphic to the group of orientation preserving isometries  
 of the regular tetrahedron. It clearly contains $\mathcal{U}(\mathfrak{L})$ as a subgroup but is not contained in 
 the Lipschitz modular group $PSL(2, \mathfrak{L})$.

 \begin{definition}
The {Hurwitz modular group}  is the group generated
by the inversion $T$, by the translations $\mathcal{T}_{\Im \mathbb{H}(\mathbb{Z})} $ and by $\mathcal{U}(\mathfrak{H})$. It will be denoted by $PSL(2,\mathfrak{H})$.
\end{definition}
\begin{proposition} The group $PSL(2,\mathfrak{L})
$ is a subgroup of index three of the group $PSL(2,\mathfrak{H})$.
\end{proposition}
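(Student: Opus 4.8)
The plan is to realize $PSL(2,\mathfrak{H})$ as an extension of $PSL(2,\mathfrak{L})$ whose quotient is the order-$3$ group $\mathcal{U}(\mathfrak{H})/\mathcal{U}(\mathfrak{L})$, and then read off the index from the second isomorphism theorem. First I would record two structural facts already available: by definition $PSL(2,\mathfrak{H})$ is generated by the generators $T$ and $\mathcal{T}_{\Im\mathbb{H}(\mathbb{Z})}$ of $PSL(2,\mathfrak{L})$ together with the Hurwitz unitary group $\mathcal{U}(\mathfrak{H})$; and $\mathcal{U}(\mathfrak{L})\subset PSL(2,\mathfrak{L})$, since each diagonal matrix $D_\mathbf{u}$ with $\mathbf{u}\in\mathfrak{L}_u$ equals $(\tau_\omega T)^3=L_\omega^3$ for a suitable $\omega\in\Im\mathbb{H}(\mathbb{Z})$ by the table in the previous section.

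Next I would show that $PSL(2,\mathfrak{L})$ is normal in $PSL(2,\mathfrak{H})$. Since $PSL(2,\mathfrak{H})$ is generated by $PSL(2,\mathfrak{L})$ and by $\mathcal{U}(\mathfrak{H})$, it suffices to check that conjugation by each $D_\mathbf{u}$, $\mathbf{u}\in\mathfrak{H}_u$, preserves $PSL(2,\mathfrak{L})$, and for this it is enough to see its effect on the generators. As transformations $T$ and $D_\mathbf{u}$ commute, because $T(D_\mathbf{u}(\mathbf{q}))=(\mathbf{u}\mathbf{q}\mathbf{u}^{-1})^{-1}=\mathbf{u}\mathbf{q}^{-1}\mathbf{u}^{-1}=D_\mathbf{u}(T(\mathbf{q}))$; and a direct computation gives $D_\mathbf{u}\,\tau_\omega\,D_\mathbf{u}^{-1}=\tau_{\mathbf{u}\omega\mathbf{u}^{-1}}$. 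The key point is that conjugation by a Hurwitz unit acts on $\Im\mathbb{H}\cong\R^3$ as an element of the rotation group $\mathcal{U}(\mathfrak{H})$, realized by a signed permutation matrix, hence an integral one; it therefore preserves the lattice $\Im\mathbb{H}(\mathbb{Z})=\Z^3$, so $\mathbf{u}\omega\mathbf{u}^{-1}\in\Im\mathbb{H}(\mathbb{Z})$ whenever $\omega\in\Im\mathbb{H}(\mathbb{Z})$. Thus each $D_\mathbf{u}$ normalizes $PSL(2,\mathfrak{L})$, normality follows, $PSL(2,\mathfrak{H})=PSL(2,\mathfrak{L})\cdot\mathcal{U}(\mathfrak{H})$, and the second isomorphism theorem yields $PSL(2,\mathfrak{H})/PSL(2,\mathfrak{L})\cong\mathcal{U}(\mathfrak{H})/(\mathcal{U}(\mathfrak{H})\cap PSL(2,\mathfrak{L}))$.

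It then remains to identify $\mathcal{U}(\mathfrak{H})\cap PSL(2,\mathfrak{L})$. Because $\mathcal{U}(\mathfrak{H})$ is isomorphic to the tetrahedral group $A_4$, which contains $\mathcal{U}(\mathfrak{L})\cong V_4$ as its unique subgroup of order $4$ and has no subgroup of order $6$, the only subgroups between $\mathcal{U}(\mathfrak{L})$ and $\mathcal{U}(\mathfrak{H})$ are these two. Hence it is enough to produce a single order-$3$ element of $\mathcal{U}(\mathfrak{H})$ that does not lie in $PSL(2,\mathfrak{L})$ to force the intersection to be exactly $\mathcal{U}(\mathfrak{L})$ and the index to be $|A_4/V_4|=3$.

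This last step is the main obstacle, and I would settle it by an integrality argument. Every word in the generators $T,\tau_\omega^{\pm1}$ is a matrix lying in $GL(2,\mathbb{H}(\mathbb{Z}))$, the group of units of $M_2(\mathbb{H}(\mathbb{Z}))$ (both the matrix and its inverse have Lipschitz-integer entries), since $T,\tau_\omega$ and their inverses $T,\tau_{-\omega}$ do; so every element of $PSL(2,\mathfrak{L})$ has a representative in $GL(2,\mathbb{H}(\mathbb{Z}))$. Take $\mathbf{u}_0=\tfrac12(1+\mathbf{i}+\mathbf{j}+\mathbf{k})$, so $D_{\mathbf{u}_0}$ has order $3$. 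If $D_{\mathbf{u}_0}$ were in $PSL(2,\mathfrak{L})$ it would have a representative $W\in GL(2,\mathbb{H}(\mathbb{Z}))$; as representatives in $PSL(2,\mathbb{H})$ differ only by a nonzero real scalar (the kernel of $\Phi$ in Theorem \ref{Thphi}), one would have $W=\mathrm{diag}(t\mathbf{u}_0,t\mathbf{u}_0)$ with $t\in\R\setminus\{0\}$, forcing $t\mathbf{u}_0$ to be a unit of $\mathbb{H}(\mathbb{Z})$. But the units of $\mathbb{H}(\mathbb{Z})$ are exactly the eight Lipschitz units $\mathfrak{L}_u$, since a Lipschitz integer and its inverse have positive integer norms with product $1$, whence norm $1$; and $t\mathbf{u}_0=\tfrac{t}{2}(1+\mathbf{i}+\mathbf{j}+\mathbf{k})$ can never be such a unit, because norm $1$ gives $t=\pm1$ and then $t\mathbf{u}_0\notin\mathbb{H}(\mathbb{Z})$. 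This contradiction shows $D_{\mathbf{u}_0}\notin PSL(2,\mathfrak{L})$, completing the proof that the index is $3$.
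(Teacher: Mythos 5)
Your proof is correct, and its governing idea --- that the index is $[\mathcal{U}(\mathfrak{H}):\mathcal{U}(\mathfrak{L})]=12/4=3$ --- is exactly the paper's. The difference is that the paper's entire proof is a single sentence asserting that this index transfers from the unitary subgroups to the modular groups, while you actually prove the two facts that make that transfer legitimate. First, normality: you check that conjugation by $D_{\mathbf{u}}$ fixes $T$ and sends $\tau_{\omega}$ to $\tau_{\mathbf{u}\omega\mathbf{u}^{-1}}$, and that conjugation by a Hurwitz unit acts on $\Im\mathbb{H}(\mathbb{Z})$ as an integral rotation, so $PSL(2,\mathfrak{L})$ is normal in $PSL(2,\mathfrak{H})$, $PSL(2,\mathfrak{H})=PSL(2,\mathfrak{L})\cdot\mathcal{U}(\mathfrak{H})$, and the second isomorphism theorem reduces the index to $[\mathcal{U}(\mathfrak{H}):\mathcal{U}(\mathfrak{H})\cap PSL(2,\mathfrak{L})]$. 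Second --- and this is the genuine gap in the paper's one-liner --- you show that this intersection is $\mathcal{U}(\mathfrak{L})$ and not all of $\mathcal{U}(\mathfrak{H})$: a priori an order-3 rotation such as $D_{\mathbf{u}_0}$ with $\mathbf{u}_0=\frac{1}{2}(1+\mathbf{i}+\mathbf{j}+\mathbf{k})$ could be a word in $T$ and the translations, which would make the index $1$ rather than $3$. The paper only asserts, when introducing $\mathcal{U}(\mathfrak{H})$, that it ``is not contained in the Lipschitz modular group,'' with no argument. Your integrality argument --- every element of $PSL(2,\mathfrak{L})$ is induced by a matrix which, together with its inverse, has entries in $\mathbb{H}(\mathbb{Z})$; two matrices inducing the same M\"obius map differ by a nonzero real scalar; and no real multiple of $\mathbf{u}_0$ is a unit of $\mathbb{H}(\mathbb{Z})$ --- settles this cleanly. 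So your route is the paper's argument made complete, and the two auxiliary lemmas (normality, and $D_{\mathbf{u}_0}\notin PSL(2,\mathfrak{L})$) are precisely what the paper leaves unproved.
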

\begin{proof} This is so since the order of the group of transformations induced by the diagonal matrices with entries in the Lipschitz units is of index three in the group of transformations induced by diagonal matrices with entries in the 
Hurwitz units.\end{proof}

\begin{definition} The Hurwitz affine subgroup (or the Hurwitz parabolic subgroup) $\mathcal{A}(\mathfrak{H})$ is the group generated by the unitary Hurwitz group $\mathcal{U}(\mathfrak{H})$ and the group of translations $\mathcal{T}_{\Im \mathbb{H}(\mathbb{Z})} $.
Thus,
\begin{subequations}
\begin{align}
\mathcal{A}(\mathfrak{H}) &=\left\{\begin{pmatrix}
\mathbf{u}&\mathbf{u}b\\
0&\mathbf{u}
\end{pmatrix}
:\ \mathbf{u}\in{\mathfrak{H}_u}   ,\ \Re(b)=0  \right\}\\
&=\left\{\begin{pmatrix}
\mathbf{u}&b\mathbf{u}\\
0&\mathbf{u}
\end{pmatrix}
:\ \mathbf{u}\in{\mathfrak{H}_u}   ,\ \Re(b)=0  \right\}.
\end{align}
\end{subequations}

\end{definition}

\noi It follows from the definition that $PSL(2,\mathfrak{L})\subset
PSL(2,\mathfrak{H})$. It is worth observing here that 
using the Cayley transformations $\Psi(\mathbf{q})=(1+\mathbf{q})(1-\mathbf{q})^{-1}$
one can represent the actions (in terms of mutiplication/rotations) of the Hurwitz units
on the unitary sphere $\mathbb{S}^3$ as
transformations of $\mathbf{H}^1_{\mathbb{H}}$.
Indeed, to any such a transformation it is possible to associate one of the following 24 matrices 
$$P_{\mathbf{u}}:=\dfrac{1}{2}\begin{pmatrix}
\mathbf{u}+1&\mathbf{u}-1\\
\mathbf{u}-1&\mathbf{u}+1\\
\end{pmatrix}\in{PSL(2,\mathfrak{H})}
$$
(with $\mathbf{u}$ a Hurwitz unit); each of these matrices represents a rotation around 1 given by the formula 
\begin{equation}\label{rHur}
\mathbf{q}\mapsto ((\mathbf{u}+1)\mathbf{q+u}-1)((\mathbf{u}-1)\mathbf{q+u}+1)^{-1}.
\end{equation}
\noi This way of representing (the group of) Hurwitz units in terms of
matrices can be considered as a way to generalize Pauli matrices. Let
$P(\mathfrak{H})\subset{PSL(2,\mathbb{H})}$ be the group of order 24 of rotations
as in (\ref{rHur}). This group is obviously isomorphic to
$\mathfrak{H}_u$. 
The orbit of 0 under the action of $\mathfrak{H}_u$ on the boundary  $\partial \mathbf{H}^1_{\mathbb{H}} \cup \{\infty\}$ are the vertices of the 24-cell given by
(\ref{v24cell}) in section 10 and are the images under the Cayley transformation of the Hurwitz units.

\noi If we consider the group $\mathfrak{H}_u$ of order 24 of Hurwitz units, then the map 
$$
\mathcal{A}(\mathfrak{H}) \to
\mathfrak{H}_u\, , \,\, \,\,\,
 \left(\begin{array}{cc}\mathbf{u} & \mathbf{u}b \\ 0 & \mathbf{u}\end{array}\right) \mapsto \mathbf{u}
 $$
\noi  is an epimorphism whose kernel is:
$$
\mathcal{T}_{\Im \mathbb{H}(\mathbb{Z})}=\left\{ \left(\begin{array}{cc}1 & \omega \\ 0 & 1\end{array}\right)\, :\,\, \omega \in \Im\mathbb{H}(\mathbb{Z})  \right\}
$$ 

\noi Thus we have the exact sequence 
$$
0\longrightarrow{ \mathcal{T}_{\Im \mathbb{H}(\mathbb{Z})} }\longrightarrow{ \mathcal{A} }(\mathfrak{H})\longrightarrow\mathcal{U}(\mathfrak{H})\longrightarrow0
$$
\noi This sequence splits and the group $\mathcal{A}(\mathfrak{H})$ is the semi-direct product of $\mathcal{T}_{\Im \mathbb{H}(\mathbb{Z})}$ with $\mathcal{U}(\mathfrak{H}).$

\noi The group $\mathcal{U}(\mathfrak{L}) \subset \mathcal{U}(\mathfrak{H})$ is a normal subgroup and we have the exact sequence
 
$$
0\longrightarrow{ \mathcal{U}(\mathfrak{L})}\longrightarrow{\mathcal{U}(\mathfrak{H})}\longrightarrow {\mathbb{Z}/3\mathbb{Z}}\longrightarrow0
$$ 
\begin{definition}
Let $\hat{\mathcal{U}}(\mathfrak{L})$ and  $\hat{\mathcal{U}}(\mathfrak{H})$ be the subgroups of $PSL(2,\mathfrak{L})$ and $PSL(2,\mathfrak{H})$ which fix 1
(in fact these subgroups are the maximal subgroups  which also preserve the cube $\mathcal{C}$ and the hyperplane $\Pi$). 
\end{definition}

\noi We have the following proposition as a conseguence of all previous results: 
\begin{proposition} 
The groups $\hat{\mathcal{U}}(\mathfrak{L})$ and $\hat{\mathcal{U}}(\mathfrak{H})$ are the subgroups generated by $T$ and $\mathcal{U}(\mathfrak{L})$ and $T$  and $\mathcal{U}(\mathfrak{H})$, respectively. We write
$$
\hat{\mathcal{U}}(\mathfrak{L})=\langle T, \mathcal{U}(\mathfrak{L}) \rangle \quad\quad \rm{and}\quad\quad \hat{\mathcal{U}}(\mathfrak{H})=\langle T, \mathcal{U}(\mathfrak{H})\rangle.
$$

\noi Since $T^2=\mathcal{I}$ and $T$ commutes with all of the elements of $\mathcal{U}(\mathfrak{L})$ and  $\mathcal{U}(\mathfrak{L})$  we have:
$$
\hat{\mathcal{U}}(\mathfrak{L})=\Z/2\Z\oplus\mathcal{U}(\mathfrak{L})\quad\quad \rm{and}\quad\quad \hat{\mathcal{U}}(\mathfrak{H})=\Z/2\Z\oplus\mathcal{U}(\mathfrak{H}).
$$ 
\end{proposition}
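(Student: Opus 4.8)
The plan is to prove each group equality by a double inclusion, running the Lipschitz and Hurwitz cases in parallel, and then to read off the direct-sum structure from the fact that the two families of generators commute. The inclusion $\langle T,\mathcal{U}(\mathfrak{L})\rangle\subseteq\hat{\mathcal{U}}(\mathfrak{L})$ is immediate: the inversion $T$ fixes its unique fixed point $1\in\mathbf{H}^1_{\mathbb{H}}$, each diagonal matrix $D_{\mathbf{u}}$ acts by the conjugation $\mathbf{q}\mapsto\mathbf{u}\mathbf{q}\mathbf{u}^{-1}$ and so fixes the real quaternion $1$, and all of these generators already belong to $PSL(2,\mathfrak{L})$; hence the subgroup they generate is contained in the stabilizer of $1$. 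The same remark holds verbatim with $\mathfrak{H}$ in place of $\mathfrak{L}$.

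For the reverse inclusion the essential preliminary, and the step I expect to be the main obstacle, is to show that every element of $PSL(2,\mathfrak{L})$ (resp. $PSL(2,\mathfrak{H})$) admits a representative whose entries lie in $\mathbb{H}(\mathbb{Z})$ (resp. $\mathbb{H}ur(\mathbb{Z})$) and which at the same time satisfies the (BG) normalization $\overline{b}c+\overline{d}a=1$. I would check this on generators: the matrices of $T$, of the integer translations, and of the diagonal units $D_{\mathbf{u}}$ are all integral and each already satisfies $\overline{b}c+\overline{d}a=1$; integrality is preserved under matrix products because $\mathbb{H}(\mathbb{Z})$ and $\mathbb{H}ur(\mathbb{Z})$ are rings, and the (BG) conditions are preserved because they cut out the group $\mathcal{M}_{\mathbf{H}^{1}_{\mathbb{H}}}$. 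This simultaneous integrality-and-normalization is exactly what rules out the spurious rotations fixing $1$ that would otherwise enlarge the stabilizer: for instance the rotations $P_{\mathbf{u}}$ with non-real $\mathbf{u}$, whose only (BG)-normalized representatives $\tfrac12\begin{pmatrix}\mathbf{u}+1&\mathbf{u}-1\\\mathbf{u}-1&\mathbf{u}+1\end{pmatrix}$ are not integral, do not belong to the modular group.

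Granting this, take $M\in\hat{\mathcal{U}}(\mathfrak{L})$ and fix such an integral, (BG)-normalized representative $\begin{pmatrix}a&b\\c&d\end{pmatrix}$. Because $M$ fixes $1$, the computation already carried out in the proof of the Proposition identifying the isotropy group at $1$ with $\mathcal{K}\cong SO(4)$ applies unchanged and forces $a=d=:\alpha$, $b=c=:\beta$ with $|\alpha|^2+|\beta|^2=1$. Since $\alpha,\beta\in\mathbb{H}(\mathbb{Z})$, the norms $|\alpha|^2$ and $|\beta|^2$ are nonnegative integers, so $|\alpha|^2+|\beta|^2=1$ leaves only $(|\alpha|^2,|\beta|^2)=(1,0)$ or $(0,1)$. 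In the first case $\beta=0$ and $\alpha$ is a Lipschitz unit, so $M=D_{\alpha}\in\mathcal{U}(\mathfrak{L})$; in the second $\alpha=0$ and $\beta$ is a Lipschitz unit, so $M=\begin{pmatrix}0&\beta\\\beta&0\end{pmatrix}=D_{\beta}T$. Either way $M\in\langle T,\mathcal{U}(\mathfrak{L})\rangle$, which gives the equality. The Hurwitz case is identical once one notes that the norm of a Hurwitz integer is again a nonnegative integer (for a half-integer quaternion $\tfrac12(a+b\mathbf{i}+c\mathbf{j}+d\mathbf{k})$ with $a,b,c,d$ odd one has $\tfrac14(a^2+b^2+c^2+d^2)\in\Z$) and that the norm-one Hurwitz integers are exactly the $24$ Hurwitz units.

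The direct-sum decomposition then follows formally. Since $(\mathbf{u}\mathbf{q}\mathbf{u}^{-1})^{-1}=\mathbf{u}\mathbf{q}^{-1}\mathbf{u}^{-1}$, equivalently since $TD_{\mathbf{u}}=D_{\mathbf{u}}T=\begin{pmatrix}0&\mathbf{u}\\\mathbf{u}&0\end{pmatrix}$, the involution $T$ commutes with every $D_{\mathbf{u}}$, and $T^2=\mathcal{I}$ gives $\langle T\rangle\cong\Z/2\Z$. As every element of $\mathcal{U}(\mathfrak{L})$ has vanishing off-diagonal part whereas $T$ does not, we have $\langle T\rangle\cap\mathcal{U}(\mathfrak{L})=\{\mathcal{I}\}$; being two commuting subgroups with trivial intersection that together generate $\hat{\mathcal{U}}(\mathfrak{L})$, they exhibit it as their internal direct product, so $\hat{\mathcal{U}}(\mathfrak{L})\cong\Z/2\Z\oplus\mathcal{U}(\mathfrak{L})$, and identically $\hat{\mathcal{U}}(\mathfrak{H})\cong\Z/2\Z\oplus\mathcal{U}(\mathfrak{H})$.
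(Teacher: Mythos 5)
Your proof is correct. There is no detailed argument in the paper to compare it with: the proposition is stated ``as a consequence of all previous results,'' and the only justification on record is the commutativity remark embedded in the statement itself, which settles the direct-sum decomposition but not the identification $\hat{\mathcal{U}}(\mathfrak{L})=\langle T,\mathcal{U}(\mathfrak{L})\rangle$ and $\hat{\mathcal{U}}(\mathfrak{H})=\langle T,\mathcal{U}(\mathfrak{H})\rangle$. Your argument supplies exactly the missing content, using ingredients the paper already has: the easy inclusion uses that $T$ and the $D_{\mathbf{u}}$ fix $1$ and lie in the modular groups (in the Lipschitz case via $D_{\mathbf{u}}=(\tau_{\mathbf{u}}T)^{3}$), and the reverse inclusion rests on your key lemma that every element of $PSL(2,\mathfrak{L})$ (resp. $PSL(2,\mathfrak{H})$) has a (BG)-normalized representative with entries in $\mathbb{H}(\mathbb{Z})$ (resp. $\mathbb{H}ur(\mathbb{Z})$) --- true because the generators and their inverses have this property, the integer rings are closed under multiplication, the (BG) matrices form a group, and the (BG) normalization determines the representative up to sign. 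Feeding this integrality into the paper's computation of the isotropy group $\mathcal{K}$ at $1$ (which yields $a=d=\alpha$, $b=c=\beta$, $|\alpha|^{2}+|\beta|^{2}=1$) leaves only $(|\alpha|^{2},|\beta|^{2})=(1,0)$ or $(0,1)$, hence $M=D_{\alpha}$ or $M=D_{\beta}T$; this is the same norm mechanism that powers the later fundamental-domain Theorem \ref{Domain}, but it is nowhere invoked at this point of the text. Your aside about the rotations $P_{\mathbf{u}}$ is also correct and is a genuine consistency check: it shows that the paper's passing claim $P_{\mathbf{u}}\in PSL(2,\mathfrak{H})$ must be read as $P_{\mathbf{u}}\in PSL(2,\mathbb{H})$ (as the paper itself writes for $P(\mathfrak{H})$ immediately afterwards), for otherwise $\hat{\mathcal{U}}(\mathfrak{H})\cong\Z/2\Z\oplus\mathcal{U}(\mathfrak{H})$, which has no element of order $4$, would contain $P_{\mathbf{i}}$, whose square is $T$. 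The closing internal-direct-product argument (commuting subgroups, trivial intersection, jointly generating) is standard and correctly applied.
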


\begin{remark} The groups  $PSL(2,\mathfrak{L})$ and $PSL(2,\mathfrak{H})$ are discrete and preserve the half-space  $\mathbf{H}^1_{\mathbb{H}}$ and the hyperbolic metric $ds$ (as introduced in Remark 2.7) so they are \emph{4-dimensional hyperbolic Kleinian groups} in the sense of Henri Poincar\'e (see the book of M. Kapovich \cite{Ka}).
\end{remark} 

\section{Examples of polyhedra and orbifolds modeled on
  $\mathbf{H}^{1}_{\mathbb{H}}$.}
 \noi Let us start by recalling some basic facts about group
 actions.
\begin{definition}
Given a group $\Gamma_{\Omega}$ acting continuously on a metric space $\Omega$, we say that a subset
$\mathcal{D}$ of $\Omega$ is a {\it fundamental domain} 
for $\Gamma_{\Omega}$ if it contains exactly one point from each of the 
 images of a single point under the  action of $\Gamma_{\Omega}$ (the so called
 {\em orbits} of $\Gamma_{\Omega}$).
\end{definition} 

\noi There are many ways to choose a fundamental domain for a group of
transformations of $\Omega$  although  it generally
serves as a geometric realization for the abstract set of
representatives of the orbits. Typically, a
fundamental domain is required to be a convex subset with some
restrictions on its boundary, for example, smooth or polyhedral. The
images of a chosen fundamental domain under the group action then tessellate
the space $\Omega$. 

\noi In this paper we'll mainly deal with groups of
matrices whose entries are quaternions and therefore acting on quaternionic hyperbolic spaces;  
we then investigate  their fundamental domains and the quotient
spaces. In this section we  investigate  
the geometric properties  of some
hyperbolic orbifolds obtained as quotients 
of $\mathbf{H}_{\mathbb{H}}^{1}$ under the action of some discrete subgroups of isometries generated by reflections
on the faces of two convex polyhedra of finite volume.

\noi For an introduction to the theory of orbifolds we refer to the
appendix of this paper.
One way to construct examples of good orbifolds of finite volume
which are modeled on  $ PSL(2, \mathfrak{L})$ 
is to consider ideal convex polytopes in $\mathbf{H}^{1}_{\mathbb{H}}$  
with a finite number of points at infinity and
with some faces identified in pairs 
by elements of $ PSL(2, \mathfrak{L})$
and others which admit a subdivision in subfaces 
identified in pairs 
by elements of $ PSL(2, \mathfrak{L})$. 

\subsection{A quaternionic kaleidoscope.}

\noi We begin with the ideal convex hyperbolic polytope $\mathcal
P$ with one vertex at infinity which is the intersection of the
half-spaces which contain 2 and which are determined by the following
set of hyperbolic hyperplanes

\begin{equation*}\label{P}
\mathrm{Hyperplanes}\ \mathrm{of}\ \mathrm{the}\ \mathrm{faces}\ \mathrm{of}\ \mathcal{P}=\quad\left\{\begin{array}{l}
  \Pi:=\{\mathbf{q}\in\mathbf{H}^{1}_{\mathbb{H}}\,\,\,:\,|\mathbf{q}|=1
  \}\\ \\ \Pi_{-\frac{\mathbf{i}}{2}}:=\{\mathbf{q}\in\mathbf{H}^{1}_{\mathbb{H}}\,\,\,:\,\mathbf{q}=x_0-\frac12{\mathbf{i}}+x_2\mathbf{j}+x_3\mathbf{k},\,\,x_0>0,\,\,x_2,x_3\in\mathbb
  R \}\\\\
   \Pi_{\frac{\mathbf{i}}{2}}:=\{\mathbf{q}\in\mathbf{H}^{1}_{\mathbb{H}}\,\,\,:\,\mathbf{q}=x_0+\frac12{\mathbf{i}}+x_2\mathbf{j}+x_3\mathbf{k},\,\,x_0>0,\,\,x_2,x_3\in\mathbb
  R \}\\\\
   \Pi_{-\frac{\mathbf{j}}{2}}:=\{\mathbf{q}\in\mathbf{H}^{1}_{\mathbb{H}}\,\,\,:\,\mathbf{q}=x_0+x_1\mathbf{i}-\frac12{\mathbf{j}}+x_3\mathbf{k},\,\,x_0>0,\,\,x_1,x_3\in\mathbb
  R \}\\\\
   \Pi_{\frac{\mathbf{j}}{2}}:=\{\mathbf{q}\in\mathbf{H}^{1}_{\mathbb{H}}\,\,\,:\,\mathbf{q}=x_0+x_1\mathbf{i}+\frac12{\mathbf{j}}+x_3\mathbf{k},\,\,x_0>0,\,\,x_1,x_3\in\mathbb
  R \}\\\\
   \Pi_{-\frac{\mathbf{k}}{2}}:=\{\mathbf{q}\in\mathbf{H}^{1}_{\mathbb{H}}\,\,\,:\,\mathbf{q}=x_0+x_1\mathbf{i}+x_2\mathbf{j}-\frac12{\mathbf{k}},\,\,x_0>0,\,\,x_1,x_2\in\mathbb
  R\}\\\\ 
  \Pi_{\frac{\mathbf{k}}{2}}:=\{\mathbf{q}\in\mathbf{H}^{1}_{\mathbb{H}}\,\,\,:\,\mathbf{q}=x_0+x_1\mathbf{i}+x_2\mathbf{j}+\frac12{\mathbf{k}},\,\,x_0>0,\,\,x_1,x_2\in\mathbb
  R \}\\\\
 \end{array}
\right.
\end{equation*}

\begin{figure}
\centering
\includegraphics[scale=0.3]{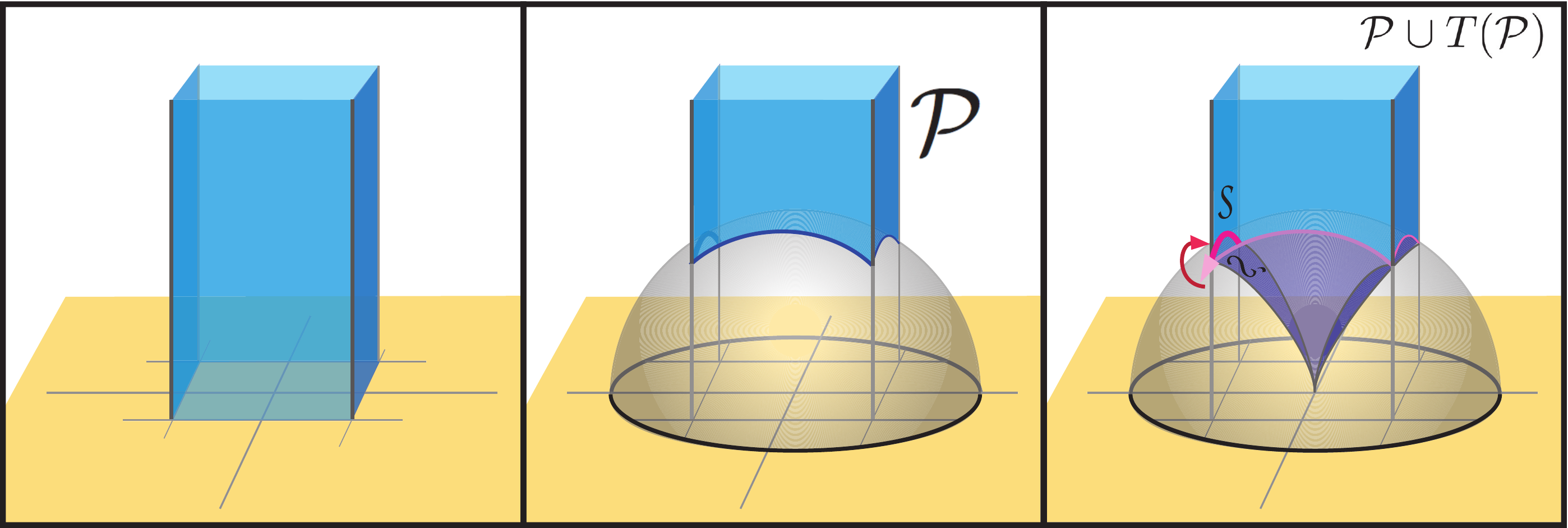}
\begin{center}
{{\bf Figure 1.} Schematic picture of the chimney which is the fundamental domain of the parabolic group  $\mathcal{T}_{\Im \mathbb{H(Z)}}$ (generated by the translations $\tau_{\mathbf{i}},$ $\tau_{\mathbf{j}}$ and $\tau_{\mathbf{k}}$), the polytope $\mathcal P$ and the polytope $\mathcal P$ and its inversion $T(\mathcal P)$. The horizontal plane represents the purely imaginary quaternions that forms the ideal boundary $\partial \mathbf{H}^1_{\mathbb{H}}$ and above it the open half-space of quaternions with positive real part $ \mathbf{H}^1_{\mathbb{H}}$}. 
\end{center}
\end{figure}

\noi The polytope $\mathcal P$ is  bounded by the hemisphere $\Pi$ and the six hyperplanes $\Pi_n$ ($n=\frac{\mathbf{i}}{2},
-\frac{\mathbf{i}}{2}, \frac{\mathbf{j}}{2},-\frac{\mathbf{j}}{2},\frac{\mathbf{k}}{2},-\frac{\mathbf{k}}{2}$) that are orthogonal to the ideal boundary and 
pass through the point at infinity that is denoted by $\infty$.

\noi The only ideal vertex of $\mathcal P$ is the point at infinity. The (non ideal) vertices of $\mathcal P$ 
are the eight points $\frac{1}{2}(\pm \ii\pm\jj \pm \kk)$ which are the vertices of the cube $\mathcal{C}\subset\Pi$ which was defined in subsection 3.2.

\noi The politope $\mathcal P$ has seven 3-dimensional faces:
one compact cube $\mathcal{C}$ and six pyramids with one ideal vertex at
$\infty$ as their common apex and the six squares of the cube $\mathcal{C}$ as their bases. Moreover $\mathcal P$ has 20 2-dimensional faces (6 compact squares and
12 triangles with one ideal vertex) and 20 edges (12 compact and 8 with one
ideal vertex).

\noi The Euler characteristic of $\mathcal{P}$ is
$$\chi(\mathcal{P})=c_0-c_1+c_2-c_3+c_4=8-20+20-7+1=2.$$

\noi The convex polytope $\mathcal P$ satisfies the conditions of the
Poincar\'e's polyhedron theorem, therefore the group generated by
reflections on the faces of $\mathcal P$ is a discrete subgroup of
hyperbolic isometries of $\mathbf{H}_\mathbb{H}^1$. We denote this subgroup by $G(3)$. The index-two
subgroup generated by composition of an even number of reflections has
as fundamental domain the convex polytope $\mathcal{P}\cup{T(\mathcal
  {P)}}$. This subgroup of  $PSL(2,\mathbb{H}(\mathbb{Z}))$ which consists of orientation-preserving isometries will denoted by $G(3)_+$. We will see below in section 6  that $\mathcal P$ can be tessellated by four copies of the
fundamental domain of the action of $PSL(2,\mathfrak{L})$ and by twelve copies of the
fundamental domain of the action of $PSL(2,\mathfrak{H})$ on $\mathbf{H}^{1}_{\mathbb{H}}$. The
quotient space $\mathbf{H}^{1}_{\mathbb{H}}/ G(3)$ is a quaternionic kaleidoscope which is a good non-orientable orbifold. Since the polytope
$\mathcal P$ is of finite volume the non-orientable orbifold obtained is
finite and has the same volume. If we imagine we are inside $\mathbf{H}^{1}_{\mathbb{H}}/ G(3)$ for a moment and open our eyes we see 4-dimensional images very similar to the 3-dimensional honeycombs of Roice Nelson of the figures 9 and 10.

\noi The orientable orbifold $\mathbf{H}^{1}_{\mathbb{H}}/ G(3)_+$ is obtained from the double pyramid $\mathcal{P}\cup {T(\mathcal {P})}$ by identifying in pairs the faces with an ideal vertex at infinity with corresponding faces with an ideal vertex at zero.  These 3-dimensional faces meet at the square faces of the cube $\mathcal{C}$ in $\Pi$ and they are identifying in pairs by a rotation of angle $2\pi /3$ around the hyperbolic plane that contains the square faces. The underlying space is $\mathbb{R}^4$ and the singular locus of $\mathcal{O}_{G(3)_+}$ is a cube. This group is generated by the six rotations of angle $2\pi/3$
around the hyperbolic planes that contain the square faces of the cube $\mathcal{C}$.

\subsection{An orientable bad orbifold $\mathcal{O}_{
K^3}$.}
\noi Now we will consider an orientable bad orbifold $\mathcal{O}_{
K^3}$ obtained by identifying pairs of parallel faces of $\mathcal{P}$ with a vertex at infinity by the translations $\tau_{\mathbf{i}},\tau_{\mathbf{j}}$, and $\tau_{\mathbf{k}}$ and identifying pairs of points of the cube $\mathcal{C}$ by $T$.
Although this orbifold is not good it is also very interesting since it has as a deformation retract the real Kummer 3-fold $K^3$, which is defined in the appendix. This Kummer 3-fold is the set of singularities of the orbifold.

\begin{proposition}
The orbifold  $\mathcal{O}_{K^3}$ 
has the following properties: 

\begin{enumerate}

\item $\mathcal{O}_{K^3}$  has one cusp at infinity; 

\item $\mathcal{O}_{K^3}$  has finite volume;

\item the map $F: \mathcal{O}_{K^3} \times [0,1]\to \mathcal{O}_{K^3} \times[0,1]$ given by
$$F(([z_1,z_2,z_3], t), s)=(([z_1,z_2,z_3], t),(1-s)t)$$ 
\noi is well-defined and gives a strong deformation retract of
 $\mathcal{O}_{K^3}$ to the real Kummer variety $K_3$;
 
\item 
$\mathcal{O}_{K^3}-\{[z_1,z_2,z_3,0]\in\mathcal{O}_{K^3} \}=
\mathbf{T}^3\times(0,1)$;

\begin{figure}
\centering
\includegraphics[scale=0.3]{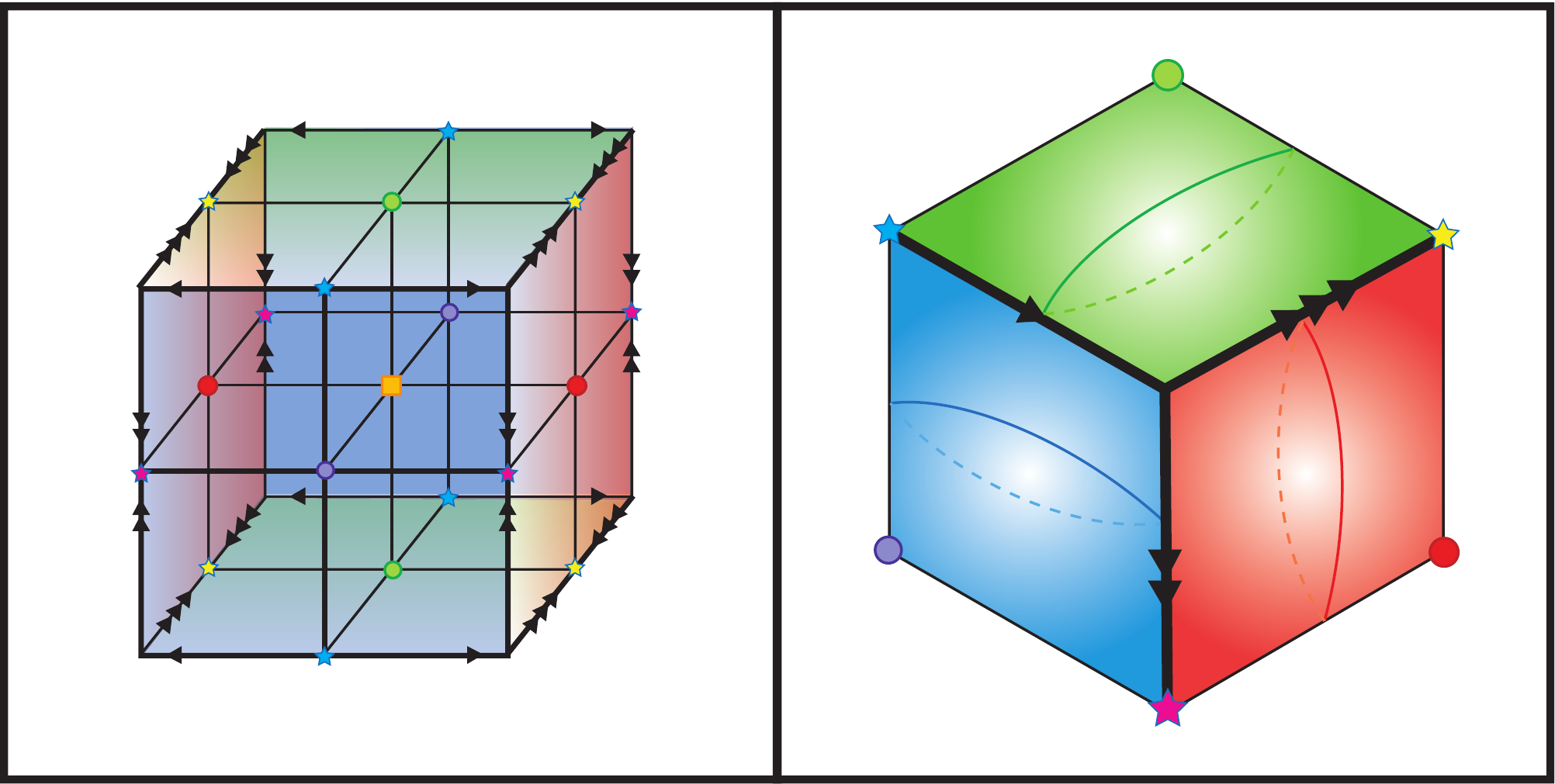}
\begin{center}
{{\bf Figure 2.} The boundary of the hyperbolic cube $\mathcal{C}$ with the identifications by $T$ and the translations $\tau_{\mathbf{i}}$, $\tau_{\mathbf{j}}$ and $\tau_{\mathbf{k}}$ is a hyperbolic 3-Kummer space. The picture at the right is its singular locus: a bouquet of three copies of $\mathbb{S}^2$} (``pillows'') attached to a tripod.
\end{center}
\end{figure}

\item The orbifold $\mathcal{O}_{K^3}$ is simply connected and 

\item its homology groups are: 
$$H_{0}(\mathcal{O}_{K^3})=\Z \quad  H_{1}(\mathcal{O}_{K^3})=0\quad
H_{2}(\mathcal{O}_{K^3})=\Z^{3}\oplus\Z/2\Z
\quad H_{3}(\mathcal{O}_{K^3})=0.$$ 
 \end{enumerate}
\end{proposition}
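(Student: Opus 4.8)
The plan is to route every topological claim through the real Kummer $3$-fold $K_3=\mathbf{T}^3/\iota$ and to read off (1)--(2) directly from the polytope $\mathcal{P}$. First I would fix the geometry: gluing the three pairs of opposite side faces of $\mathcal{P}$ by $\tau_{\mathbf{i}},\tau_{\mathbf{j}},\tau_{\mathbf{k}}$ turns the cube $\mathcal{C}\subset\Pi$ into a $3$-torus $\mathbf{T}^3$ (opposite squares identified), while on $\mathcal{C}$ the inversion acts by $\mathbf{q}=x_0+x_1\mathbf{i}+x_2\mathbf{j}+x_3\mathbf{k}\mapsto\overline{\mathbf{q}}$, i.e. $(x_1,x_2,x_3)\mapsto(-x_1,-x_2,-x_3)$. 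Hence the image of $\mathcal{C}$ in $\mathcal{O}_{K^3}$ is exactly $K_3=\mathbf{T}^3/\iota$ with $\iota(x)=-x$. I would then introduce a height coordinate $t\in[0,1)$ with $t=0$ the slice $K_3$ and $t\to 1$ the ideal vertex $\infty$; for $t\in(0,1)$ only the (level-preserving) translations act, so every such slice is a copy of $\mathbf{T}^3$, and a point of $\mathcal{O}_{K^3}$ is written $([z_1,z_2,z_3],t)$.

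Claims (1)--(2) are then immediate: the translations generate $\mathcal{T}_{\Im\mathbb{H}(\mathbb{Z})}\cong\Z^3$ acting on each horosphere at $\infty$ with quotient $\mathbf{T}^3$, so there is a single cusp with $3$-torus section; and $\mathcal{P}$ has one ideal vertex and finitely many faces, hence finite hyperbolic volume, which is unchanged by the isometric boundary identifications, giving $\mathrm{vol}(\mathcal{O}_{K^3})=\mathrm{vol}(\mathcal{P})<\infty$. For (3)--(4) I would verify that $F$ is well defined: the translations preserve $t$, so on each level $t>0$ they yield the same $\mathbf{T}^3$ and the straight-line retraction $t\mapsto(1-s)t$ descends, while the inversion identifies points only within the slice $t=0$, so as $t\to 0$ a torus point maps to its single image under the folding $\mathbf{T}^3\to K_3$. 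Thus $F$ is a continuous strong deformation retraction onto $K_3$, and deleting that slice leaves exactly the level-preserving family $\mathbf{T}^3\times(0,1)$.

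For (5), by (3) it suffices to prove $\pi_1(K_3)=0$. I would write $K_3=\mathbb{R}^3/\Gamma$ with $\Gamma=\Z^3\rtimes\langle\iota\rangle$ the crystallographic group generated by the integer translations and $x\mapsto-x$, acting properly discontinuously on the simply connected space $\mathbb{R}^3$. Armstrong's theorem gives $\pi_1(K_3)\cong\Gamma/N$, where $N$ is generated by the elements of $\Gamma$ having fixed points. Each involution $x\mapsto-x+n$ fixes $n/2$ and so lies in $N$, and the products $(x\mapsto-x+n)(x\mapsto-x+m)=(x\mapsto x+(n-m))$ show $\Z^3\subset N$; hence $N=\Gamma$ and $\pi_1(K_3)=1$.

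For (6) I would again reduce to $H_*(K_3)$ and compute by Mayer--Vietoris adapted to the fundamental domain $\{0\le x_1\le1/2\}$ of $\iota$: its interior is a product $\mathbf{T}^2\times(0,1/2)$, while each end torus $x_1=0,\,x_1=1/2$ is folded by $(x_2,x_3)\mapsto(-x_2,-x_3)$ into a pillowcase $\cong\mathbb{S}^2$. Taking $A,B$ to be neighborhoods of the two folded ends (so $A\simeq B\simeq\mathbb{S}^2$) and $A\cap B\simeq\mathbf{T}^2$, the inclusions induce the pillowcase map $f\colon\mathbf{T}^2\to\mathbb{S}^2$, a degree-$2$ branched double cover, so $f_*$ is multiplication by $2$ on $H_2$ and zero on $H_1$. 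The relevant portion of the sequence reads
\[
0\to H_3(K_3)\to \Z \xrightarrow{(2,-2)} \Z^2 \to H_2(K_3)\to \Z^2 \to 0,
\]
together with $H_1(K_3)=\ker\bigl(\Z\xrightarrow{(1,-1)}\Z^2\bigr)=0$ and $H_0(K_3)=\Z$. Injectivity of $(2,-2)$ yields $H_3(K_3)=0$, and since $\operatorname{coker}(2,-2)\cong\Z\oplus\Z/2\Z$ with the remaining free quotient $\Z^2$ splitting off, $H_2(K_3)\cong\Z^3\oplus\Z/2\Z$; the free ranks agree with the rational invariants $H_*(\mathbf{T}^3;\mathbb{Q})^{\iota}$. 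The hard part will be the integral torsion: the $\Z/2\Z$ appears only over $\Z$ (the rational computation misses it) and hinges on the fold $f$ having degree exactly $2$, which uses that the $2$-dimensional involution $(x_2,x_3)\mapsto(-x_2,-x_3)$ is orientation-preserving while the full $\iota$ on $\mathbf{T}^3$ is orientation-reversing (consistently forcing $H_3=0$). A secondary point requiring care is the well-definedness of $F$ across the fold at the slice $t=0$.
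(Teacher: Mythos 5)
Your proposal is correct, and on the two substantive items --- simple connectivity and the homology computation --- it follows a genuinely different route from the paper's. The paper decomposes $K_3=A\cup B$ where $A$ is the cone over $\mathbf{P}^2_{\mathbb{R}}$ (the image of a small round ball about the fixed point at the center of the cube) and $B$ is its complement, which retracts radially onto the identified boundary of the cube, a bouquet of three pillowcase $2$-spheres attached to a tripod; Seifert--van Kampen on this pair yields $\pi_1(K_3)=1$, and Mayer--Vietoris yields the exact sequence $0\to\Z^3\to H_2(K_3)\to\Z/2\Z\to 0$, from which the paper reads off $H_2(K_3)\cong\Z^{3}\oplus\Z/2\Z$. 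You instead prove simple connectivity group-theoretically, via Armstrong's theorem applied to the crystallographic group $\Gamma=\Z^3\rtimes\langle -\mathrm{id}\rangle$ (every point symmetry $x\mapsto -x+n$ has a fixed point, and pairwise products of such elements give all translations, so the normal subgroup generated by elements with fixed points is all of $\Gamma$), and you compute homology from the slab decomposition with two pillowcase ends $A\simeq B\simeq \mathbb{S}^2$, $A\cap B\simeq \mathbf{T}^2$, and inclusion maps of degree $2$ on $H_2$. Each approach buys something. The paper's decomposition exhibits the singular geometry (cone on $\mathbf{P}^2_{\mathbb{R}}$, pillows on a tripod) that the figure and later sections use, and needs no appeal to Armstrong's theorem or to branched-covering degrees. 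Your decomposition produces the torsion as the explicit cokernel $\Z^2/\langle (2,-2)\rangle\cong\Z\oplus\Z/2\Z$, and the remaining $\Z^2$ splits off because it is free; this actually settles an extension problem that the paper's argument leaves open, since the sequence $0\to\Z^3\to H_2\to\Z/2\Z\to 0$ alone does not exclude $H_2\cong\Z^3$ (which also surjects onto $\Z/2\Z$ with kernel $\Z^3$), and your degree-$2$ justification via orientability of the two-dimensional fold is the right one. Items (1)--(4) are in substance the same in both treatments (the structure $\mathbf{T}^3\times[0,1)/\mathfrak{t}$, the torus cusp section, finite volume); one small caution on your volume argument: ``finitely many faces and one ideal vertex'' should be phrased as ``finite-sided with closure meeting the sphere at infinity in a single point,'' since the chimney also has finitely many faces yet infinite volume, its closure meeting the ideal boundary in a whole cube.
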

\begin{proof}
The volume of $\mathcal{O}_{K^3}$ is a rational function of $\pi$ by a theorem of Coxeter 
and it was computed explicitely in \cite{Vin}. Furthermore, as a topological space, 
 $\mathcal{O}_{K^3}$  is homeomorphic to
 $\mathbf{T}^3\times[0,1)/\mathfrak t$, 
where 
$$\mathbf{T}^3=\{(z_1,z_2,z_3) \in \mathbb{C}^{3} \ : \ |z_1|=|z_2|=|z_3|=1\}$$ and $\mathfrak t(z_1,z_2,z_3, 0)=(\overline{z_1},\overline{z_2},\overline{z_3},0)$ so that the homology groups of $\mathcal{O}_{K^3}$ and $K^{3}$
are isomorphic, i.e.
$H_{p}(K^{3})=H_{p}(\mathcal{O}_{K^3})$ $p=0,1,2,3,4$.
In particular, since the Kummer variety $K_3$ has real dimension 3 it 
can be triangulated as a compact 
 non orientable pseudomanifold; therefore, it  is clear that 
$H_{0}(\mathcal{O}_{K^3})=\Z$, $H_{3}(\mathcal{O}_{K^3})=0$ and $ H_{4}(\mathcal{O}_{K^3})=0$.

\noi On the other hand,  $K_3$  can be obtained as a quotient of the cube $[-1/2,1/2]\times [-1/2,1/2]\times [-1/2,1/2]$ 
by identifying opposite faces by translations and 
symmetric points with respect to the origin which is the center of the cube.
After this identification, the boundary of the cube has the homotopy type of a bouquet of three copies of $\mathbb{S}^2$ and the round ball in $\mathbb{R}^3$ 
centered at the origin and radius $1/4$ becomes the cone $A$ over the projective real plane $\mathbf{P}^2_{\mathbb{R}}$. Let $B$ 
be the complement in $K_3$ of the interior of  $A$. 
We can retract radially $B$ to the boundary of the cube with identification and observe that 
$A\cap B=\mathbf{P}^2_{\mathbb{R}}$. 
Then $K_3=A\cup B$ with $A$ contractible and $B$ homotopically equivalent to a bouquet of three 2-spheres. 
Hence it follows from the celebrated Seifert--van Kampen Theorem that  $K_3$ is simply connected. 
Finally from Mayers--Vietoris sequence we have

$$\longrightarrow H_3(K_3)\longrightarrow H_2(A\cap B)\longrightarrow H_2(A)\oplus H_2(B)
\longrightarrow H_2(K_3)\longrightarrow H_1(A\cap B)\longrightarrow 0;$$
since 
$$H_2(A\cap B)=H_2(\mathbf{P}^2_{\mathbb{R}})=0,$$
$$H_1(A\cap B)=H_1(\mathbf{P}^2_{\mathbb{R}})=\Z/2\Z,$$ 
$$H_2(A)\oplus H_2(B)=H_2(B)=\Z\times\Z\times\Z,$$ 
we obtain  the exact sequence 

$$0\longrightarrow  \Z\times\Z\times\Z\longrightarrow H_2(K_3)\longrightarrow \Z/2\Z\longrightarrow 0$$
so we conclude that $H_{2}(\mathcal{O}_{K^3})=\Z^{3}\oplus\Z/2\Z$.
\noi Finally, if we remove the Kummer variety $K_3$ we obtain the structure of the cusp at infinity.

\end{proof}

\section{Fundamental domains of the quaternionic modular groups $ PSL(2, \mathfrak{L})$ and $ PSL(2, \mathfrak{H})$.}

\noi We start from  the following important lemma:

 \begin{lemma}\label{re} Let $\gamma\in{PSL(2,\mathbb{H}})$ 
 satisfy (BG) conditions. If
 $\mathbf{q}\in{\mathbf{H}^{1}_{\mathbb{H}}}$, then   
\begin{equation}
 \Re (\gamma(\mathbf{q}))=\frac{\Re(\mathbf{q})}{|\mathbf{q}c+d|^2}
\end{equation}
 
 \end{lemma}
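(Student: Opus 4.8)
The plan is to compute $\Re(\gamma(\mathbf{q}))$ directly from the explicit form of the linear fractional transformation and then invoke the three (BG) relations of Proposition \ref{GB} to collapse the numerator. Writing $\gamma$ as the matrix $\begin{pmatrix} a & b \\ c & d\end{pmatrix}$, I would start from $\gamma(\mathbf{q}) = (a\mathbf{q}+b)(c\mathbf{q}+d)^{-1}$ and rationalize the quaternionic denominator using $p^{-1}=\overline{p}/|p|^2$, obtaining
$$\gamma(\mathbf{q})=\frac{(a\mathbf{q}+b)\,\overline{(c\mathbf{q}+d)}}{|c\mathbf{q}+d|^2}.$$
Since $|c\mathbf{q}+d|^2$ is a positive real scalar it factors out of the real part, so the whole statement reduces to the single identity $\Re\big[(a\mathbf{q}+b)\,\overline{(c\mathbf{q}+d)}\big]=\Re(\mathbf{q})$, the squared modulus of the automorphy factor $c\mathbf{q}+d$ of $F_A$ then furnishing the denominator.

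Next I would expand the numerator using $\overline{(c\mathbf{q}+d)}=\overline{\mathbf{q}}\,\overline{c}+\overline{d}$, producing the four terms $|\mathbf{q}|^2\,a\overline{c}$, $a\mathbf{q}\overline{d}$, $b\overline{\mathbf{q}}\,\overline{c}$ and $b\overline{d}$, where I have used $\mathbf{q}\overline{\mathbf{q}}=|\mathbf{q}|^2\in\R$. Taking real parts, the first and last terms vanish at once: since $|\mathbf{q}|^2$ is real, $\Re(|\mathbf{q}|^2\,a\overline{c})=|\mathbf{q}|^2\Re(a\overline{c})=0$, and likewise $\Re(b\overline{d})=0$, both by the (BG) condition $\Re(a\overline{c})=\Re(b\overline{d})=0$.

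For the two surviving terms I would use only the two elementary scalar identities $\Re(xy)=\Re(yx)$ (cyclicity of the real part) and $\Re(\overline{x})=\Re(x)$. Conjugating gives $\Re(b\overline{\mathbf{q}}\,\overline{c})=\Re(c\mathbf{q}\overline{b})=\Re(\overline{b}c\,\mathbf{q})$, while cyclicity alone gives $\Re(a\mathbf{q}\overline{d})=\Re(\overline{d}a\,\mathbf{q})$. Hence the real part of the numerator equals $\Re\big((\overline{d}a+\overline{b}c)\mathbf{q}\big)$, and the remaining (BG) relation $\overline{b}c+\overline{d}a=1$ turns this into $\Re(\mathbf{q})$. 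Reinstating the scalar denominator yields the claimed formula.

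The only delicate point is the bookkeeping forced by noncommutativity: one must resist simplifying $a\mathbf{q}\overline{d}$ or $b\overline{\mathbf{q}}\,\overline{c}$ factor-by-factor and instead route both through cyclic reordering (and one conjugation) so that the single factor $\mathbf{q}$ is pushed to the right, where the accumulated coefficient becomes exactly $\overline{d}a+\overline{b}c$. The three (BG) conditions then enter in a complementary fashion—the two vanishing conditions annihilate the ``diagonal'' terms $a\overline{c}$ and $b\overline{d}$, while the normalization $\overline{b}c+\overline{d}a=1$ supplies the clean coefficient $1$ in front of $\mathbf{q}$—so the substance of the proof is simply selecting the reordering that exposes precisely this combination.
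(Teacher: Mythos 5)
Your proof is correct, and it takes a genuinely different route from the paper's at the decisive step. Both arguments begin the same way: rationalize $(c\mathbf{q}+d)^{-1}=\overline{(c\mathbf{q}+d)}/|c\mathbf{q}+d|^2$, expand the numerator into $|\mathbf{q}|^2a\overline{c}+a\mathbf{q}\overline{d}+b\overline{\mathbf{q}}\,\overline{c}+b\overline{d}$, and kill the first and last terms with $\Re(a\overline{c})=\Re(b\overline{d})=0$. From there the paper splits $\mathbf{q}=x+yI$ with $I$ a unit imaginary quaternion: the $x$-part is handled by the normalization $a\overline{d}+b\overline{c}=1$ (the third set of (BG) conditions), while the $y$-part requires a separate trick --- the product of $\gamma$ with the translation matrix $\left(\begin{smallmatrix}1 & I\\ 0 & 1\end{smallmatrix}\right)$ again satisfies (BG), and writing out one of its conditions yields exactly the identity $aI\overline{d}-dI\overline{a}+cI\overline{b}-bI\overline{c}=0$ that annihilates the imaginary contribution. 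Your argument dispenses with both the decomposition and the auxiliary matrix: using only $\Re(xy)=\Re(yx)$ and $\Re(\overline{x})=\Re(x)$, you push the single factor $\mathbf{q}$ to the right in both cross terms, so their sum becomes $\Re\bigl((\overline{d}a+\overline{b}c)\mathbf{q}\bigr)$, and the second (BG) normalization $\overline{b}c+\overline{d}a=1$ finishes in one stroke. This is shorter, invokes only one of the equivalent (BG) characterizations rather than two, and makes transparent where the constant $1$ comes from. One incidental point in your favor: the denominator you carry, $|c\mathbf{q}+d|^2$, is the correct one; the paper's notation $|\mathbf{q}c+d|^2$ must be read as $|c\mathbf{q}+d|^2$, since in the noncommutative setting the two moduli genuinely differ and only the latter arises from $F_A(\mathbf{q})=(a\mathbf{q}+b)(c\mathbf{q}+d)^{-1}$.
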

 \begin{proof}
\noi We recall that if $\mathbf{q} \in \mathbf{H}^1_{\mathbb{H}}$ 
the action of $\gamma$ in $\mathbf{H}^1_{\mathbb{H}}$ is given by the rule
\begin{eqnarray*}
 \nonumber \gamma(\mathbf{q}) & = &
 (a\mathbf{q}+b)(c\mathbf{q}+d)^{-1}\\ & = &
 (a\mathbf{q}+b)(\overline{\mathbf{q}}
 \overline{c}+\overline{d})\Big(\frac{1}{|\mathbf{q}c+d|^2}\Big).
\end{eqnarray*} 
 \noi Then:
 \begin{eqnarray*}
  \nonumber \Re (\gamma(\mathbf{q}))& = &\frac{\Re
    (a\mathbf{q}+b)(\overline{\mathbf{q}}\,\overline{c}+\overline{d})+(c\mathbf{q}+d)(\overline{\mathbf{q}}\,\overline{a}+\overline{b})}{2|\mathbf{q}c+d|^2}\\ &
  =
  &\frac{|\mathbf{q}|^2a\overline{c}+a\mathbf{q}\overline{d}+b\overline{\mathbf{q}}\overline{c}+b\overline{d}
    +|\mathbf{q}|^2c\overline{a}+c\mathbf{q}\overline{b}+d\overline{\mathbf{q}}\overline{a}+d\overline{b}}{2|\mathbf{q}c+d|^2}\\ &
  =
  &\frac{\Re(b\overline{\mathbf{q}}\,\overline{c}+a\mathbf{q}\,\overline{d})}{|\mathbf{q}c+d|^2}.
\end{eqnarray*} 
  
\noi Let $\mathbf{q}=x+yI$, where $x>0$, $y\in \mathbb{R}$ and
$I^2=-1$. 
Then $\overline{\mathbf{q}}=x-yI$ and 
\begin{eqnarray*}
 \nonumber \Re (\gamma(\mathbf{q}))& = &\frac{\Re
   (b(x-yI)\overline{c}+a(x+yI) \overline{d})}{|\mathbf{q}c+d|^2}\\ &
 = &\frac{\Re
   (xb\overline{c}-ybI\overline{c}+xa\overline{d}+yaI\overline{d})}{|\mathbf{q}c+d|^2}\\ &
 = &\frac{x+\Re
   (-ybI\overline{c}+yaI\overline{d})}{|\mathbf{q}c+d|^2}\\ & =
 &\frac{x-ybI\overline{c}+ycI\overline{b}+yaI\overline{d}
   -ydI\overline{a})}{|\mathbf{q}c+d|^2}\\ & =
 &\frac{x+y(-bI\overline{c}+cI\overline{b}+aI\overline{d}
   -dI\overline{a})}{|\mathbf{q}c+d|^2}\\
\end{eqnarray*}

\noi On the other hand, 
since $\left(\begin{array}{cc} a & b \\ c & d\end{array}\right)\in
  PSL(2,\mathbb{H})$ and $\left(\begin{array}{cc} 1 & I \\ 0 & 1\end{array}\right)\in
  PSL(2,\mathbb{H})$ and both satisfy (BG) conditions, then 
$$ \left(\begin{array}{cc} a & b \\ c & d\end{array}\right) \cdot
    \left(\begin{array}{cc} 1 & I \\ 0 &
      1\end{array}\right)=\left(\begin{array}{cc} a & aI+b \\ c &
        cI+d\end{array}\right) \in PSL(2,\mathbb{H})
$$
and satisfies (BG) conditions.
Therefore (BG) conditions imply:
$$(aI+b)(-I\overline{c}+\overline{d})+(cI+d)(-I\overline{a}+\overline{b})=0,$$
Then,
$$-aI^2\overline{c}+aI\overline{d}-bI\overline{c}+b\overline{d}-cI^2\overline{a}+cI\overline{b}-dI\overline{a}+d\overline{b}=0,$$
 
$$aI\overline{d}-dI\overline{a}+cI\overline{b}-bI\overline{c}=0.$$

\noi Finally, we have

$$\Re (\gamma(\mathbf{q}))=\frac{x}{|\mathbf{q}c+d|^2}=\frac{\Re(\mathbf{q})}{|\mathbf{q}c+d|^2}.$$ 
\end{proof}

\noi We notice that if one restricts the entries of the matrices to the set
$\mathbb{H}(\mathbb{Z})$ or $\mathbb{H}ur$, then there are only a finite number of
possibilities for $c$ and $d$ in such a way that $|\mathbf{q}c+d|$ is
less than a given number; therefore we obtain the following important

\begin{corollary} For every $\mathbf{q}\in{\mathbb H}$ one has 
$$\underset{\gamma\in PSL(2,\mathfrak{L})}\sup{\Re(\gamma(\mathbf{q}))}<\infty \qquad  \textrm{and} \qquad \underset{\gamma\in PSL(2,\mathfrak{H})}\sup{\Re(\gamma(\mathbf{q}))}<\infty .$$
\end{corollary}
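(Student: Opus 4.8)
The plan is to reduce the statement to the identity of Lemma \ref{re}, which gives $\Re(\gamma(\q)) = \Re(\q)/|\q c + d|^2$ for every $\gamma$ satisfying the (BG) conditions. Since $\Re(\q)$ is fixed, it suffices to bound the denominator $|\q c + d|^2$ uniformly away from $0$ as $\gamma$ ranges over the group. I would treat the case $\Re(\q)>0$, i.e.\ $\q\in\Hy$, which is the one relevant for the sequel; when $\Re(\q)\le 0$ every finite image has non-positive real part, so the supremum is immediately $\le 0$.

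The heart of the argument is the inequality
\[ |\q c + d|^2 \;\ge\; \Re(\q)^2\,|c|^2, \]
valid for every matrix satisfying the (BG) conditions of Proposition \ref{GB}. To establish it I would write $\q = x_0 + \vec v$ with $x_0 = \Re(\q)$ and $\vec v = \Im(\q)$, and expand
\[ |\q c + d|^2 = |c|^2|\q|^2 + |d|^2 + 2\Re(\q c \overline d). \]
Invoking the (BG) relation $\Re(c\overline d)=0$ (third form of Proposition \ref{GB}) one replaces $\Re(\q c\overline d)$ by $\Re(\vec v\, c\,\overline d)$, bounds the cross term by $|2\Re(\vec v c\overline d)| \le 2|\vec v|\,|c|\,|d|$, and uses $|\q|^2 = x_0^2 + |\vec v|^2$ to rewrite the right-hand side as $x_0^2|c|^2 + (|\vec v|\,|c| - |d|)^2$, which is $\ge x_0^2|c|^2$ after discarding the square.

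With this inequality the proof splits according to whether $c=0$. If $c\neq 0$, then since $c$ is a nonzero element of the lattice $\mathbb{H}(\mathbb{Z})$ (resp.\ $\mathbb{H}ur(\mathbb{Z})$), whose minimal nonzero norm is $1$, we have $|c|^2\ge 1$, so $|\q c + d|^2 \ge \Re(\q)^2$. If $c=0$, then $\gamma$ fixes $\infty$ and therefore lies in the affine subgroup $\mathcal{A}(\mathfrak{L})$ (resp.\ $\mathcal{A}(\mathfrak{H})$); such elements carry a Lipschitz (resp.\ Hurwitz) unit on the diagonal, so $|d|=1$ and $|\q c + d|^2 = |d|^2 = 1$. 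In either case $|\q c + d|^2 \ge \min(\Re(\q)^2,\,1)>0$, whence
\[ \sup_{\gamma}\Re(\gamma(\q)) \;\le\; \frac{\Re(\q)}{\min(\Re(\q)^2,\,1)} \;=\; \max\!\left(\Re(\q),\,\tfrac{1}{\Re(\q)}\right) \;<\; \infty. \]

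The step I expect to be the main obstacle is the verification of the key inequality: one must be careful to invoke the correct (BG) relation to kill the $x_0$-part of the cross term and to control the remaining cross term by Cauchy--Schwarz for quaternions. The $c=0$ case also deserves attention, since the inequality degenerates there and one must appeal to the structure of the stabilizer of $\infty$ to conclude $|d|=1$. I would also record, as an aside, that the same inequality justifies the finiteness remark preceding the statement: it forces $|c|$ (and then $|d|$) to be bounded whenever $|\q c + d|$ is bounded, so only finitely many integral pairs $(c,d)$ can make $|\q c + d|$ fall below any threshold; since none of these can vanish (as $\gamma$ preserves $\Hy$ and hence keeps $\gamma(\q)$ finite), the infimum of $|\q c + d|^2$ is attained and positive, giving the bound once more.
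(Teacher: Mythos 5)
Your proof is correct, but it takes a genuinely different route from the paper's. The paper disposes of the corollary in one sentence: since the entries of every $\gamma$ in $PSL(2,\mathfrak{L})$ (resp.\ $PSL(2,\mathfrak{H})$) lie in the lattice $\mathbb{H}(\mathbb{Z})$ (resp.\ $\mathbb{H}ur(\mathbb{Z})$), ``there are only a finite number of possibilities for $c$ and $d$ in such a way that $|\mathbf{q}c+d|$ is less than a given number,'' so the denominator in Lemma 6.1 is bounded away from zero and the supremum is finite. You instead prove an explicit uniform estimate: the (BG) relation $\Re(c\overline{d})=0$ gives
$$|\mathbf{q}c+d|^2\;\ge\;\Re(\mathbf{q})^2|c|^2+\bigl(|\Im(\mathbf{q})|\,|c|-|d|\bigr)^2,$$
and splitting into $c\neq 0$ (minimal nonzero norm $1$ in either lattice) and $c=0$ (where $|d|=1$) yields $|\mathbf{q}c+d|^2\ge\min\bigl(\Re(\mathbf{q})^2,1\bigr)$ and the effective bound $\max\bigl(\Re(\mathbf{q}),1/\Re(\mathbf{q})\bigr)$. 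Beyond giving an explicit constant, your inequality is in fact what makes the paper's counting remark rigorous: for \emph{arbitrary} lattice pairs $(c,d)$, unconstrained by (BG), the finiteness claim fails, since both lattices have covering radius at most $1$, so every $c$, however large, admits some lattice point $d$ with $|\mathbf{q}c+d|\le 1$. It is precisely the relation $\Re(c\overline{d})=0$, through your completed square, that forces $|c|$ (and then $|d|$) to be bounded whenever $|\mathbf{q}c+d|$ is, which is also the content of your closing aside. In short: the paper's approach buys brevity, yours buys an effective bound and a self-contained justification of the discreteness step.

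One small repair. In the case $c=0$ you appeal to the structure of the stabilizer of $\infty$ to conclude that $d$ is a unit; but at this point in the paper only the containment $\mathcal{A}(\mathfrak{L})\subset PSL(2,\mathfrak{L})\cap\mathcal{A}(\mathbb{H})$ has been recorded (the fact that every group element fixing $\infty$ lies in the affine subgroup is established only later, in results that themselves rely on this corollary), so that appeal risks circularity. Argue directly instead: (BG) with $c=0$ gives $a\overline{d}=1$, and $|a|^2,|d|^2$ are positive integers (the norm form is integral on both lattices) whose product is $1$, hence $|d|=1$. With that substitution your argument is complete.
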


\noi This corollary is the key reason why the orbifolds $\Hy/PSL(2,\mathfrak{L})$ and $\Hy/PSL(2,\mathfrak{H})$ are all compact. See section 9. The orbifolds $\Hy/PSL(2,\mathbb{H}(\mathbb{Z}))$ and $\Hy/PSL(2,\mathbb{H}ur(\mathbb{Z}))$ of the actions of $PSL(2,\mathbb{H}(\mathbb{Z}))$ and $PSL(2,\mathbb{H}ur(\mathbb{Z}))$ on $\mathbf{H}^5_{\R}$ are compact by a similar inequality. See section 13.  

\subsection{Orbifolds and fundamental domains for translations and for  the inversion.}

\noi From the above considerations and taking into account the actions of
the generators and the affine group one can describe the fundamental domains 
of $PSL(2, \mathfrak{L})$ and $PSL(2, \mathfrak{H})$.\\

\noi We recall that a fundamental domain of the parabolic group $\mathcal{T}_{\Im \mathbb{H(Z)}}$ (generated by the translations $\tau_{\mathbf{i}},$ $\tau_{\mathbf{j}}$ and
$\tau_{\mathbf{k}}$) is the infinite-volume convex hyperbolic chimney (defined in 3.1) with one
vertex at infinity which is the intersection of the half-spaces which
contain 2 and which are determined by the set of six hyperbolic
hyperplanes $\Pi_n$, where $n=\frac{\mathbf{i}}{2},
-\frac{\mathbf{i}}{2}, \frac{\mathbf{j}}{2},-\frac{\mathbf{j}}{2},\frac{\mathbf{k}}{2},-\frac{\mathbf{k}}{2}$.

\noi The hyperbolic 4-dimensional orbifold $\mathcal{M}_{\mathcal{T}_{\Im \mathbb{H}(\mathbb{Z})}}$ is a 2-cusped manifold which is an infinite volume cylinder on the 3-torus $\mathbf{T}^3$ with one cusp (an end of finite volume) and one tube (an end of infinite volume). We can write $\mathcal{M}_{\mathcal{T}_{\Im \mathbb{H}(\mathbb{Z})}}=\mathbf{T}^3 \times \R$.\\

\noi The fundamental domain of the inversion $T$ is closed half--space whose boundary is the hyperbolic hyperplane 
$$
\Pi=\{\mathbf{q}\in\mathbf{H}^{1}_{\mathbb{H}}\,\,\,:\,|\mathbf{q}|=1 \}.\\$$

\noi The hyperbolic 4-dimensional orbifold $\mathcal{M}_{T}$ has a unique singular point and it is homeomorphic to the cone over the real projective space ${\mathbf{P}}^3_{\R}$.
 \subsection{Fundamental domain of $PSL(2,\mathfrak{L})$}

\noi Since the quaternionic modular group $PSL(2, \mathfrak{L})$ is
generated by $\mathcal{T}_{\Im \mathbb{H}(\mathbb{Z})}$ and the inversion $T$, we can choose a fundamental domain which is totally contained in $\mathcal{P}$. 

\begin{figure}
\centering
\includegraphics[scale=0.33]{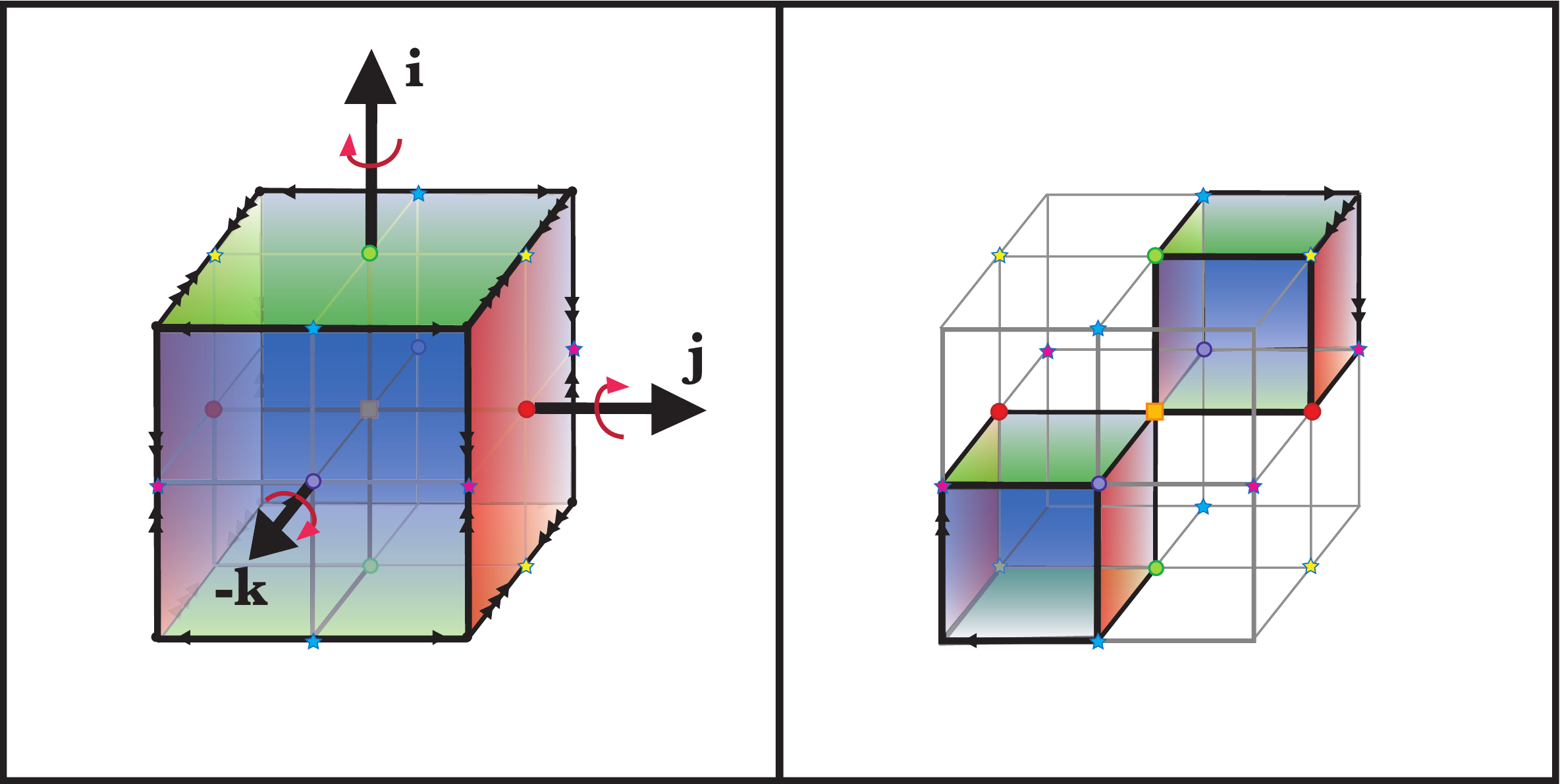}
\begin{center}{{\bf Figure 3.} Left: The action of $U(\mathfrak{L})$ on the cube $\mathcal{C}$. \\ÊRight: The two hyperbolic cubes $\mathcal{C}_1$ and $\mathcal{C}_2$ in $\mathcal{C}$ which are the bases of a fundamental domain $\mathcal{P}_{\mathfrak{L}}$ of $PSL(2,\mathfrak{L})$.}
\end{center}
\end{figure}

\noi The finite Lipschitz unitary group $\mathcal{U}(\mathfrak{L})$ acts by rotations of angle $\pi$ around the three hyperbolic 2-planes generated by 1 and $\mathbf{u}$ where $\mathbf{u}=\mathbf{i, j}$ or $\mathbf{k}$. We divide the cube $\mathcal{C}$ in eight congruent cubes by cutting it along the coordinate planes. Then $\mathcal{P}$ is divided in eight congruent cubic pyramids. We label the cubes with two colors as a chessboard. 

\noi An element of the finite unitary Lipschitz group identifies four cubes (two white cubes and
two black ones) with other four cubes (two white and two black) preserving the colors.
We can give a geometric description of 
 the action of the unitary group by means of its representation on 
the 3-torus $\mathbf{T}^3=\{(z_1,z_2,z_3) \in \mathbb{C}^{3} \ :
\ |z_1|=|z_2|=|z_3|=1\}.$ 
Indeed, we associate to  
$$
\begin{pmatrix}
\mathbf{i}&0\\
0&\mathbf{i} 
\end{pmatrix} \quad \rm{the\ self-map\ of\ }\mathbf{T}^3 \quad \rm{F}_\mathbf{i}(z_1,z_2,z_3)=(z_1,\overline{z_2},\overline{z_3}) $$ 
\noindent and  to

$$\begin{pmatrix} 
\mathbf{j}&0\\
0&\mathbf{j} 
\end{pmatrix}\quad \rm {the\ self-map\ of\ }\mathbf{T}^3  \quad \rm{F}_\mathbf{j}(z_1,z_2,z_3)=(\overline{z_1},z_2,\overline{z_3}).$$

\noi The composition of $\begin{pmatrix}
\mathbf{i}&0\\
0&\mathbf{i} 
\end{pmatrix}$ and $\begin{pmatrix}
\mathbf{j}&0\\
0&\mathbf{j} 
\end{pmatrix}$ is $\begin{pmatrix}
\mathbf{k}&0\\
0&\mathbf{k} 
\end{pmatrix}$ that is represented as 
$$\rm{F}_\mathbf{k}(z_1,z_2,z_3)=(\overline{z_1},\overline{z_2},z_3).$$

\noi This is a sort of ``complex
multiplication'' (like the one defined on certain elliptic curves) on the real torus $\mathbf{T}^3$. The functions $\rm{F}_\ii$, $\rm{F}_\jj$ and $\rm{F}_\kk$ are 
conformal automorphisms of the 3-torus acting as symmetries of the cube $\mathcal C$
and form a group of order 4 isomorphic to $\mathcal{U}(\mathfrak{L})= \Z/2\Z \times
\mathbb Z/2\Z$.

\noi A fundamental domain for $PSL(2,\mathfrak{L})$ can be taken to be the union of
two cubic pyramids with bases two of the cubes described in the previous paragraph, one white and one black and with a common vertex at the point at infinity. We
can choose adjacent cubes to obtain a convex fundamental domain but
this is not necessary to have a fundamental domain. 

\noi The inversion $T$ acts by identifying each white cube with a diametrally opposite black one in $\Pi$. Then a fundamental domain for $PSL(2,\mathfrak{L})$ is the union of two cubic pyramids in $\mathcal{P}$. See figure 3. Below we describe other fundamental domains which are more suitable to study the isotropy groups and the tessellation in $\Hy$ around singular points. 

\begin{definition} Let $\mathcal{C}_1$ and $\mathcal{C}_2$ be the two hyperbolic cubes in $\mathcal{C}$ which contain the vertices $\frac{1}{2}(1+\ii+\jj+\kk)$ and
$\frac{1}{2}(1-\ii-\jj-\kk)$, respectively. 

\noi Let $\mathcal{P}_{\mathfrak{L}}$ be the union of the two hyperbolic cubic pyramids with vertex at infinity and bases the two cubes $\mathcal{C}_1$ and $\mathcal{C}_2$. \end{definition}

\begin{remark} The elements in 
$\mathcal{P}_{\mathfrak{L}}$ are the points $\mathbf{q}=x_{_0}+x_{1}\mathbf{i}+x_{2}\mathbf{j}+x_{3}\mathbf{k}$ in 
$\mathcal{P}$ such that the real numbers $x_{n}$ have the same sign for all $n=1,2,3$.
\end{remark}

 \subsection{Fundamental domain of $PSL(2,\mathfrak{H})$} 
 
\begin{figure}
\centering
\includegraphics[scale=0.33]{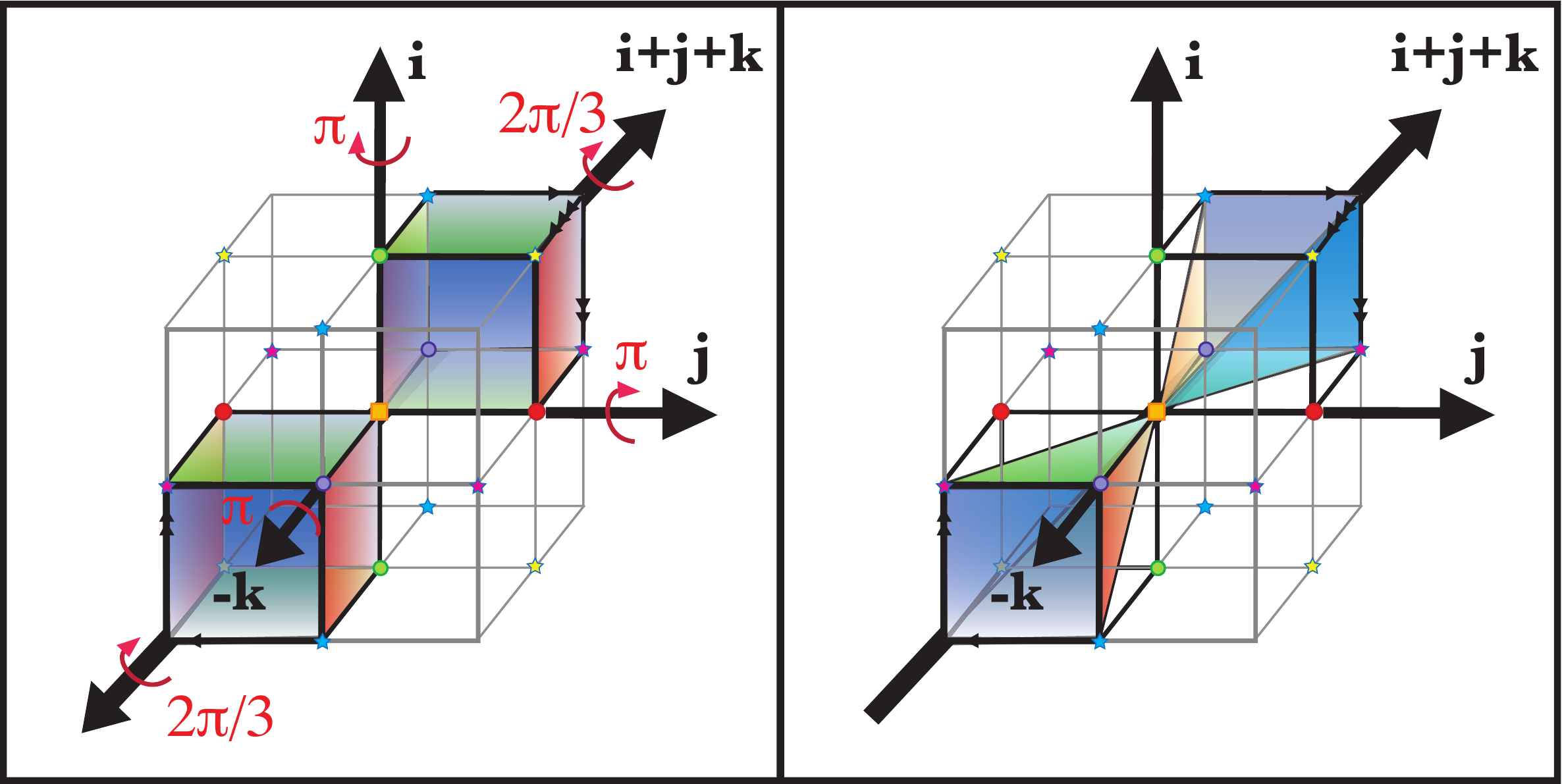}
\begin{center}
{{\bf Figure 4.} Left: The action of $U(\mathfrak{H})$ in the cube $\mathcal{C}$. \\ Right: the bases of the fundamental domain of $PSL(2,\mathfrak{H})$. The two hyperbolic pyramids $\mathcal{P}_1$ and $\mathcal{P}_2$ in $\mathcal{C}$ which are the bases of a fundamental domain $\mathcal{P}_{\mathfrak{H}}$ of $PSL(2,\mathfrak{H})$.}
\end{center}
\end{figure}
 
The analysis of the fundamental domain $\mathcal{P}_\mathfrak{H}$ of the Hurwitz modular group $PSL(2,\mathfrak{H})$
is analogous to the one of the fundamental domain of $PSL(2,\mathfrak{L})$. We recall that $PSL(2,\mathfrak{H})$ is generated by the parabolic group of translations $\mathcal{T}_{\Im \mathbb{H}(\mathbb{Z})}$, the inversion $T$ and the unitary Hurwitz group $\mathcal{U}(\mathfrak{H})$. From 
our previous descriptions of the fundamental domains of the group of translations and the group of order 2 generated by $T$ we know that the 
fundamental domain of $PSL(2,\mathfrak{H})$ is commensurable with  $\mathcal{P}$, the pyramid over the cube $\mathcal{C}$. More precisely,
$\mathcal{P}$ is invariant under $\mathcal{U}(\mathfrak{H})$ and therefore the fundamental domain of $PSL(2,\mathfrak{H})$ is the fundamental domain in 
$\mathcal{P}$ of the action of  $\mathcal{U}(\mathfrak{H})$  on $\mathcal{P}$. Moreover, as $\mathcal{U}(\mathfrak{L}) \subset \mathcal{U}(\mathfrak{H})$ we have that the fundamental domain of $PSL(2,\mathfrak{H})$ is a subset of the fundamental domain $\mathcal{P}_{\mathfrak{L}}$ of the action of $PSL(2,\mathfrak{L})$. Moreover, since $\mathcal{U}(\mathfrak{L})$ is a subgroup of $\mathcal{U}(\mathfrak{H})$ of index three then we have that  $\mathcal{P}_{\mathfrak{H}}$ is a third part of $\mathcal{P}_{\mathfrak{L}}$.

\noi Let $\mathbf{u}\in \mathfrak{H}_\mathbf{u}$, then $D_\mathbf{u}\in \mathcal{U}(\mathfrak{H})$ is induced by the diagonal matrix
$$D_\mathbf{u}=\left(\begin{array}{cc} \mathbf{u} & 0 \\ 0 & \mathbf{u}\end{array}\right)$$
and acts as follows: $\mathbf{q}\mapsto \mathbf{u}\mathbf{q}\mathbf{u}^{-1}=\mathbf{u}\mathbf{q}\overline{\mathbf{u}}$. Then if $\mathbf{u}$ is a Hurwitz unit which is not a Lipschitz unit (\textit{i,e.} $\mathbf{u}=1/2 (1\pm\mathbf{i} \pm\mathbf{j}\pm\mathbf{k}))$ then the matrix $D_\mathbf{u}$ is of order three and geometrically is a rotation of angle $2\pi/3$ around the diagonal
of $\mathcal{C}$ which contains $\mathbf{u}$ or $-\mathbf{u}$, but only one has a positive real part and then is in $\mathbf{H}^{1}_{\mathbb{H}}$. As $D_\mathbf{u}=D_\mathbf{-u}$ we can suppose that $\Re(\mathbf{u})=1/2$.
One has $D_\mathbf{u}^2=D_\mathbf{u^2}=D_{1-\mathbf{u}}$ (since $\Re(\mathbf{u})=1/2>0$, $1-\mathbf{u}$ is Hurwitz unit and $\Re(\mathbf{u})=\Re(1-\mathbf{u})>0$).  

\noi  The group of Hurwitz units $\mathcal{U}(\mathfrak{H})$ acts transitively on the edges of $\mathcal{C}$. Therefore
a fundamental domain can be determined by the choice of one edge of $\mathcal{C}$. Hence a convex fundamental domain is the pyramid with vertex at infinity with base the hyperbolic convex polyhedron with vertices the two end points of the edge, the two barycenters of the square faces that have the edge in common and 1.  However we choose as a fundamental domain a non-convex polyhedron.  
 
\begin{definition} Let $\mathcal{P}_1$ and $\mathcal{P}_2$ be the two hyperbolic 3-dimensional square pyramids in $\mathcal{C}_1\subset  \mathcal{C}$ and $\mathcal{C}_2 \subset  \mathcal{C}$, respectively, with apex  1 and which have as bases the squares in the boundary of $\mathcal{C}$ with sets of vertices 
$$
S:=\{ v_1= \frac{1}{2}(1+\ii+\jj+\kk), v_2=\frac{1}{2}(\sqrt{2}+\ii+\kk), v_3=\frac{1}{2}(\sqrt{2}+\jj+\kk), v_4=\frac{1}{2}(\sqrt{3}+\kk)\} 
$$
\noi  and $T(\{S\}):=(T(v_1), T(v_2),T(v_3).T(v_4))$, respectively.

\noi Let $\mathcal{P}_{\mathfrak{H}}$ be the union of the two hyperbolic 4-dimensional pyramids with vertex at infinity and bases the two hyperbolic 3-dimensional pyramids $\mathcal{P}_1$ and $\mathcal{P}_2$. 

\end{definition}

\subsection{Proof that $\mathcal{P}_{\mathfrak{L}}$ and $\mathcal{P}_{\mathfrak{H}}$ are fundamental domains.}

\noi With the geometric tools so far introduced we can establish the following:

\begin{theorem}\label{Domain} The fundamental domains  $\mathcal{P}_{\mathfrak{L}}$ and $\mathcal{P}_{\mathfrak{H}}$ for the actions of the groups  $PSL(2,\mathfrak{L})$ and $PSL(2,\mathfrak{H})$, respectively, have the following properties:
\begin{enumerate}
\item for every $\mathbf{q}\in\mathbf{H}^{1}_{\mathbb{H}}$ there
  exists $\gamma \in PSL(2,\mathfrak{L})$ (resp. $PSL(2,\mathfrak{H})$) such that $\gamma(\mathbf{q})
  \in \mathcal{P}_{\mathfrak{L}}$ (resp. $\mathcal{P}_{\mathfrak{H}}$). \\

\item If two distinct points $\mathbf{q, q}^{\prime}$ of $ \mathcal{P}_{\mathfrak{L}}$ (resp. $\mathcal{P}_{\mathfrak{H}}$) are congruent modulo $PSL(2,\mathfrak{L})$ (resp. $PSL(2,\mathfrak{H})$); i.e. if there exists $\gamma \in PSL(2,\mathfrak{L})$ (resp. $PSL(2,\mathfrak{H})$) such that $\gamma(\mathbf{q})=\mathbf{q}^{\prime}$, then $\mathbf{q, q}^{\prime} \in \partial  \mathcal{P}_{\mathfrak{L}}$ (resp. $\mathcal{P}_{\mathfrak{H}}$). If $|\mathbf{q}| > 1$ then $\gamma\in\mathcal{A}(\mathfrak{L})$ (resp. $ \mathcal{A}(\mathfrak{H})$). If $|\mathbf{q}| = 1$ then $\gamma \in{\mathcal{A}}(\mathfrak{L})$ (resp. $ \mathcal{A}(\mathfrak{H})$) or $\gamma=A {T}$ where $T$ is the usual inversion and $A\in{\mathcal{A}}(\mathfrak{L})$ (resp. $ \mathcal{A}(\mathfrak{H})$).\\

\item Let $\mathbf{q} \in  \mathcal{P}_{\mathfrak{L}}$ (resp. $\mathcal{P}_{\mathfrak{H}}$) and 
let $G_\mathbf{q}=\{g\in PSL(2,\mathfrak{L})\}$ (resp. $PSL(2,\mathfrak{H})$) be the stabilizer of $\mathbf{q}$ in $PSL(2,\mathfrak{L})\}$ (resp. $PSL(2,\mathfrak{H})$) then $G_\mathbf{q}=\{1\}$ if $\mathbf{q} \neq \partial  \mathcal{P}_{\mathfrak{L}}$ (resp. $\partial \mathcal{P}_{\mathfrak{H}}$).
\end{enumerate}
\end{theorem}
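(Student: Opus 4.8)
The plan is to adapt the classical fundamental-domain argument for $PSL(2,\mathbb{Z})$, with the real part $\Re(\mathbf{q})$ playing the role of the imaginary part in the upper half plane; the driving tool is Lemma~\ref{re} together with its Corollary, which guarantees that $\sup_{\gamma}\Re(\gamma(\mathbf{q}))$ is actually attained. I would carry out the argument in full for $PSL(2,\mathfrak{L})$ and then observe that the Hurwitz case is identical after replacing $\mathcal{U}(\mathfrak{L})$ by $\mathcal{U}(\mathfrak{H})$ and $\mathcal{P}_{\mathfrak{L}}$ by $\mathcal{P}_{\mathfrak{H}}$.

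For property (1) I would first pick $\gamma_0\in PSL(2,\mathfrak{L})$ maximizing $\Re(\gamma_0(\mathbf{q}))$ (possible by the Corollary). If $|\gamma_0(\mathbf{q})|<1$, then since $\Re(T(\mathbf{p}))=\Re(\mathbf{p})/|\mathbf{p}|^2$ for the inversion $T$, applying $T$ would strictly increase the real part, contradicting maximality; hence $|\gamma_0(\mathbf{q})|\ge 1$. I would then apply a translation in $\mathcal{T}_{\Im\mathbb{H}(\mathbb{Z})}$ to arrange $|x_n|\le 1/2$; since translations preserve $\Re$, the new point still realizes the maximum and so still satisfies $|\mathbf{q}|\ge1$, i.e.\ it lies in $\mathcal{P}$. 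Finally $\mathcal{U}(\mathfrak{L})\subset PSL(2,\mathfrak{L})$ (it is generated by the diagonal matrices $L_\omega^3$ appearing in the table of iterates of $L_\omega$) acts on the eight sub-cubes of $\mathcal{C}$ with exactly two orbits, and $\{\mathcal{C}_1,\mathcal{C}_2\}$ is a transversal; the appropriate element of $\mathcal{U}(\mathfrak{L})$ carries the point into $\mathcal{P}_{\mathfrak{L}}$.

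For property (2) the central computation is the identity, valid whenever $\gamma$ satisfies the (BG) condition $\Re(c\overline{d})=0$ of Proposition~\ref{GB},
\[|\mathbf{q}c+d|^2=\Re(\mathbf{q})^2\,|c|^2+|\Im(\mathbf{q})c+d|^2\ge \Re(\mathbf{q})^2\,|c|^2.\]
Suppose $\gamma(\mathbf{q})=\mathbf{q}'$ with $\mathbf{q},\mathbf{q}'\in\mathcal{P}_{\mathfrak{L}}$ and $\Re(\mathbf{q}')\ge\Re(\mathbf{q})$; Lemma~\ref{re} then gives $|\mathbf{q}c+d|\le1$. Since every point of $\mathcal{P}$ satisfies $\Re(\mathbf{q})\ge 1/2$ (because $|\Im\mathbf{q}|^2\le 3/4$ forces $\Re(\mathbf{q})^2=|\mathbf{q}|^2-|\Im\mathbf{q}|^2\ge 1/4$), the inequality above yields $|c|\le2$, hence $|c|^2\in\{0,1,2,3,4\}$ — only finitely many Lipschitz integers $c$. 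If $c=0$ the matrix is affine, so $\gamma\in\mathcal{A}(\mathfrak{L})$, and such $\gamma$ can identify distinct points of $\mathcal{P}_{\mathfrak{L}}$ only along its vertical boundary faces. The remaining finitely many $c\ne0$ describe isometric spheres $|\mathbf{q}c+d|=1$ of radius $1/|c|\le1$ centered at the purely imaginary point $-d\overline{c}/|c|^2$. \textbf{The hard part will be} the bookkeeping showing that $\mathcal{P}$ is exactly the corresponding Ford domain: one must verify over this finite list of pairs $(c,d)$ that, inside the chimney, each closed isometric ball meets $\{|\mathbf{q}|\ge1\}$ only along $\Pi$. This forces $|\mathbf{q}|=1$ in the case $c\ne0$, and then $\gamma$ differs from an affine element by the inversion, i.e.\ $\gamma=AT$ with $A\in\mathcal{A}(\mathfrak{L})$, giving the stated dichotomy.

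For property (3), if $\mathbf{q}$ is interior to $\mathcal{P}_{\mathfrak{L}}$ then $|\mathbf{q}|>1$, so by the analysis in (2) any $\gamma$ fixing $\mathbf{q}$ must have $c=0$, whence $\gamma\in\mathcal{A}(\mathfrak{L})$. Writing such a $\gamma$ as $\mathbf{q}\mapsto\mathbf{u}\mathbf{q}\mathbf{u}^{-1}+\mathbf{u}b\mathbf{u}^{-1}$, a nontrivial element is a rotation about the axis $S_{\mathbf{u}}$ (possibly combined with a translation), and its fixed-point set is contained in the union of the planes $S_{\mathbf{u}}$. By the Remark characterizing $\mathcal{P}_{\mathfrak{L}}$, its interior consists of points whose coordinates $x_1,x_2,x_3$ are nonzero and of the same sign, and therefore avoids every $S_{\mathbf{u}}$. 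Hence no nontrivial affine element fixes $\mathbf{q}$, so $\gamma$ is the identity and $G_{\mathbf{q}}=\{1\}$.
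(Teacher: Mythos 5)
Your part (1) is correct and is essentially the paper's own argument (maximize $\Re(\gamma(\mathbf{q}))$ via the corollary to Lemma~\ref{re}, normalize by a translation, exclude $|\gamma(\mathbf{q})|<1$ using the inversion, then use the two-orbit action of $\mathcal{U}(\mathfrak{L})$ on the eight sub-cubes), and your identity $|\mathbf{q}c+d|^2=\Re(\mathbf{q})^2|c|^2+|\Im(\mathbf{q})c+d|^2$, valid under the (BG) condition $\Re(c\overline{d})=0$ of Proposition~\ref{GB}, is a cleaner way to start part (2) than what the paper does. But part (2) is where the theorem lives, and you have not proved it: the whole content of the dichotomy (congruent points lie on $\partial\mathcal{P}_{\mathfrak{L}}$; $\gamma\in\mathcal{A}(\mathfrak{L})$ when $|\mathbf{q}|>1$; $\gamma\in\mathcal{A}(\mathfrak{L})$ or $\gamma=AT$ when $|\mathbf{q}|=1$) is exactly the finite verification that you label \emph{the hard part} and then defer. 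The paper's proof \emph{is} that bookkeeping: for $c=0$ it runs through $|b|^2\in\{1,2,3\}$ and $d=a$ a unit, placing the congruent points on explicit boundary faces; for $c\neq0$ it pins down the matrix form ($d=0$, $b=c$ a unit), which is what makes $\gamma$ of the form $AT$. Two concrete reasons this cannot be waved through: first, the deferred step is delicate — for $\omega=\mathbf{i}-\mathbf{j}$ the element $L^2_{\omega}\in PSL(2,\mathfrak{L})$ from the table of iterates has $c=\omega$ with $|c|^2=2$ and $d=1\neq0$, and its isometric sphere touches the chimney at the edge midpoint $\tfrac{1}{\sqrt{2}}+\tfrac{\mathbf{i}-\mathbf{j}}{2}\in\Pi$, so isometric balls with $|c|>1$ do reach $\{\Re(\mathbf{q})\geq 1/2\}$ and excluding them for points of $\mathcal{P}_{\mathfrak{L}}$ (as opposed to $\mathcal{P}$) needs the same-sign condition defining $\mathcal{C}_1\cup\mathcal{C}_2$, not just the crude bound $|c|\le 2$. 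Second, even granting your claimed verification, the inference ``$|\mathbf{q}|=1$ and $c\neq0$ imply $\gamma=AT$'' is unproved: $\gamma=AT$ with $A\in\mathcal{A}(\mathfrak{L})$ is equivalent to $d=0$ (and $b=c$ a unit), and nothing in your sketch produces that.

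Part (3) takes a genuinely different route from the paper — the paper perturbs $\mathbf{q}$ to $\mathbf{q}+\epsilon$ and applies (2) to two interior points, while you analyze fixed-point sets of affine maps directly — but it inherits the gap above, since you invoke ``the analysis in (2)'' to force $c=0$, which is precisely the part you skipped. There is also an error in the fixed-point claim: for $\gamma(\mathbf{q})=\mathbf{u}\mathbf{q}\mathbf{u}^{-1}+w$ with $\mathbf{u}\in\{\mathbf{i},\mathbf{j},\mathbf{k}\}$ and $w\in\Im\mathbb{H}(\mathbb{Z})$, the fixed-point set is not contained in $S_{\mathbf{u}}$: it is empty unless the component of $w$ along $S_{\mathbf{u}}$ vanishes, and in that case it equals the \emph{translate} $S_{\mathbf{u}}+\tfrac{w}{2}$, which is disjoint from $S_{\mathbf{u}}$ when $w\neq 0$. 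The conclusion can be repaired — interior points of $\mathcal{P}_{\mathfrak{L}}$ have all three imaginary coordinates in $(0,1/2)$ or in $(-1/2,0)$, hence avoid every such translate, whose two coordinates orthogonal to $S_{\mathbf{u}}$ are half-integers — but as written the step is false, and the repair again leans on the sign structure of $\mathcal{P}_{\mathfrak{L}}$ that your part (2) never exploited.
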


\begin{proof}
\noi Let $\mathbf{q}\in \mathbf{H}^1_{\mathbb{H}}$. By corollary 6.2 there exists
$\gamma \in PSL(2,\mathfrak{L})$ (resp. $PSL(2,\mathfrak{H})$) such that $\Re(\gamma (\mathbf{q}))$ is
maximum. There exists $(n_1,n_2,n_3)\in\Z^3$ such that the element
$\mathbf{q}^{\prime}=\tau_{\mathbf{i}}^{n_1}\tau_{\mathbf{j}}^{n_2}\tau_{\mathbf{k}}^{n_3}\gamma
(\mathbf{q})$ is of the form $\mathbf{q}^{\prime}=x_0+x_1\mathbf{i}+x_2\mathbf{j}+x_3\mathbf{k}$ where $|x_n| \leq\frac{1}{2}, n=1,2,3$. Then $\mathbf{q}^{\prime}$ is an element  of the fundamental domain of the parabolic group $\mathcal{T}_{\Im \mathbb{H}(\mathbb{Z})}$.

\noi  If
$|\mathbf{q}^{\prime}|<1$, then the element $T\mathbf{q}^{\prime}=
(\mathbf{q}^{\prime})^{-1}$ has real part strictly larger
than $\Re(\mathbf{q}^{\prime})=\Re(\gamma (\mathbf{q}))$, which is
impossible. Then we must have $|\mathbf{q}^{\prime}|\geq1$, and
$\mathbf{q}^{\prime} \in \mathcal{P}$.  This shows that given any $\mathbf{q}\in \mathbf{H}^1_{\mathbb{H}}$ there exists 
$\gamma\in{PSL(2,\mathfrak{L})}$ (resp. $PSL(2,\mathfrak{H})$) such that $\gamma(\mathbf{q})\in\mathcal{P}$. We remember that the elements in 
$\mathcal{P}_{\mathfrak{L}}$ are the points $\mathbf{q}=x_{_0}+x_{1}\mathbf{i}+x_{2}\mathbf{j}+x_{3}\mathbf{k}$ in 
$\mathcal{P}$ such that the real numbers $x_{n}$ have the same sign for all $n=1,2,3$. The action of an element $D_\mathbf{u}$, with $\mathbf{u}=\mathbf{i}, \mathbf{j},\mathbf{k}$, in the unitary Lipschitz group $\mathcal{U}({\mathfrak{L}})$ has the property of leaving invariant  $x_{_1}$ and $x_n$ and changing the signs of the other two coefficients. The action of an element $D_\mathbf{u}$, with $\mathbf{u}=\frac{1}{2}(\pm 1\pm \mathbf{i} \pm \mathbf{j} \pm \mathbf{k})$, in the unitary Hurwitz group $\mathcal{U}({\mathfrak{H}})$ has the property that it rotates multiples of $2\pi/3$ the cells of $\mathbf{Y}_{\mathfrak{H}}$ around the diagonal passing through $\mathbf{u}$ and $-\mathbf{u}$ of the cube $\mathcal{C}$. Then we can use one element in $\mathcal{U}({\mathfrak{L}})$ to have a point $\mathbf{q}^{\prime \prime}$ of the orbit of $\mathbf{q}\in \mathcal{P}_{\mathfrak{L}}$. In the Hurwitz case we can use one element in $\mathcal{U}({\mathfrak{H}})$ to have a point $\mathbf{q}^{\prime \prime}$ of the orbit of $\mathbf{q}\in \mathcal{P}_{\mathfrak{H}}$.    This proves (1). In others words, the orbit of any point $\mathbf{q}\in \mathbf{H}^1_{\mathbb{H}}$ under the action of the group
${PSL(2,\mathfrak{L})}$ (resp. $PSL(2,\mathfrak{H})$) has a representative in $\mathcal{P}_{\mathfrak{L}}$ (resp. $\mathcal{P}_{\mathfrak{H}}$). 
 
\noi Let $\mathbf{q} \in  \mathcal{P}_{\mathfrak{L}}$ (resp. $\mathcal{P}_{\mathfrak{H}}$) and let $\gamma=\left(\begin{array}{cc} a & b \\ c & d\end{array}\right) \in PSL(2,\mathfrak{L})$ (resp. $PSL(2,\mathfrak{H})$) such that $\gamma \neq \mathcal{I}$, where $\mathcal{I}$ is the identity matrix in $PSL(2,\mathbb{H})$ and $\gamma (\mathbf{q}) \in  \mathcal{P}_{\mathfrak{L}}$ (resp. $\mathcal{P}_{\mathfrak{H}}$). We can suppose that $\Re(\gamma (\mathbf{q})) \geq \Re (\mathbf{q})$, i.e. $|c\mathbf{q}+d|\leq 1$. This is clearly impossible if $|c|\geq 1$, leaving then the cases $c=0$ or $|c|=1$. 

\begin{enumerate}
\item[I)] If $c=0$, we have $|d|=1$ and (BG) conditions imply that $a\overline{d}$=1 and $b\overline{d}+d\overline{b}=0$. There are two cases: 

\begin{enumerate} 
\item[I.1)] If $d=1$, then $a=1$ and $\Re (b)=0$. Then
$$\gamma=\left(\begin{array}{cc} 1 & b \\ 0 & 1\end{array}\right)$$
where $b=b_{_\mathbf{i}}\mathbf{i}+b_{_\mathbf{j}}\mathbf{j}+b_{_\mathbf{k}}\mathbf{k}$. If $\mathbf{q}=x_{_1}+x_{_\mathbf{i}}\mathbf{i}+x_{_\mathbf{j}}\mathbf{j}+x_{_\mathbf{k}}\mathbf{k} \in  \mathcal{P}_{\mathfrak{L}}$ then
$$\gamma(\mathbf{q})=\mathbf{q}^{\prime}=x_{_1}+(x_{_\mathbf{i}}+b_{_\mathbf{i}})\mathbf{i}+(x_{_\mathbf{j}}+b_{_\mathbf{j}})\mathbf{j}+(x_{_\mathbf{k}}+b_{_\mathbf{k}})\mathbf{k}  \in  \mathcal{P}_{\mathfrak{L}}, \rm{and}$$ 

\begin{enumerate}
\item[I.1.1)] If $|b|=1$ then $b=\pm \mathbf{i},\pm \mathbf{j},\pm \mathbf{k};$ and $\mathbf{q}=r - \frac{b}{2}$, where $r\geq \frac{\sqrt{3}}{2}$. Then $\mathbf{q}$ is on the vertical geodesic that joins a barycenter of a square face of the cube $\mathcal{C}$ in the base of $\mathcal{P}$ with the point at infinity $\infty$ and so $\mathbf{q, q}^{\prime} \in \partial  \mathcal{P}_{\mathfrak{L}}$.

\item[I.1.2)] If $|b|=2$ then $b=\pm \mathbf{i} \pm \mathbf{j},\pm \mathbf{i} \pm \mathbf{k}, \pm \mathbf{j}\pm \mathbf{k};$ and $\mathbf{q}=r - \frac{b}{2}$, where $r\geq\frac{1}{\sqrt{2}}$. Then $\mathbf{q}$ is on the vertical geodesic that joins the middle point of an edge of the cube $\mathcal{C}$ with the point at infinity $\infty$ and so $\mathbf{q, q}^{\prime} \in \partial  \mathcal{P}_{\mathfrak{L}}$.

\item[I.1.2)] If $|b|=3$ then $b=\pm \mathbf{i} \pm \mathbf{j}\pm \mathbf{k};$ and $\mathbf{q}= r - \frac{b}{2}$, where $r\geq\frac{1}{2}$. Then $\mathbf{q}$ is on the vertical geodesic that joins a vertex of the cube $\mathcal{C}$ with the point at infinity and so $\mathbf{q, q}^{\prime} \in \partial  \mathcal{P}_{\mathfrak{L}}$.

\end{enumerate}

\begin{figure}
\centering
\includegraphics[scale=0.33]{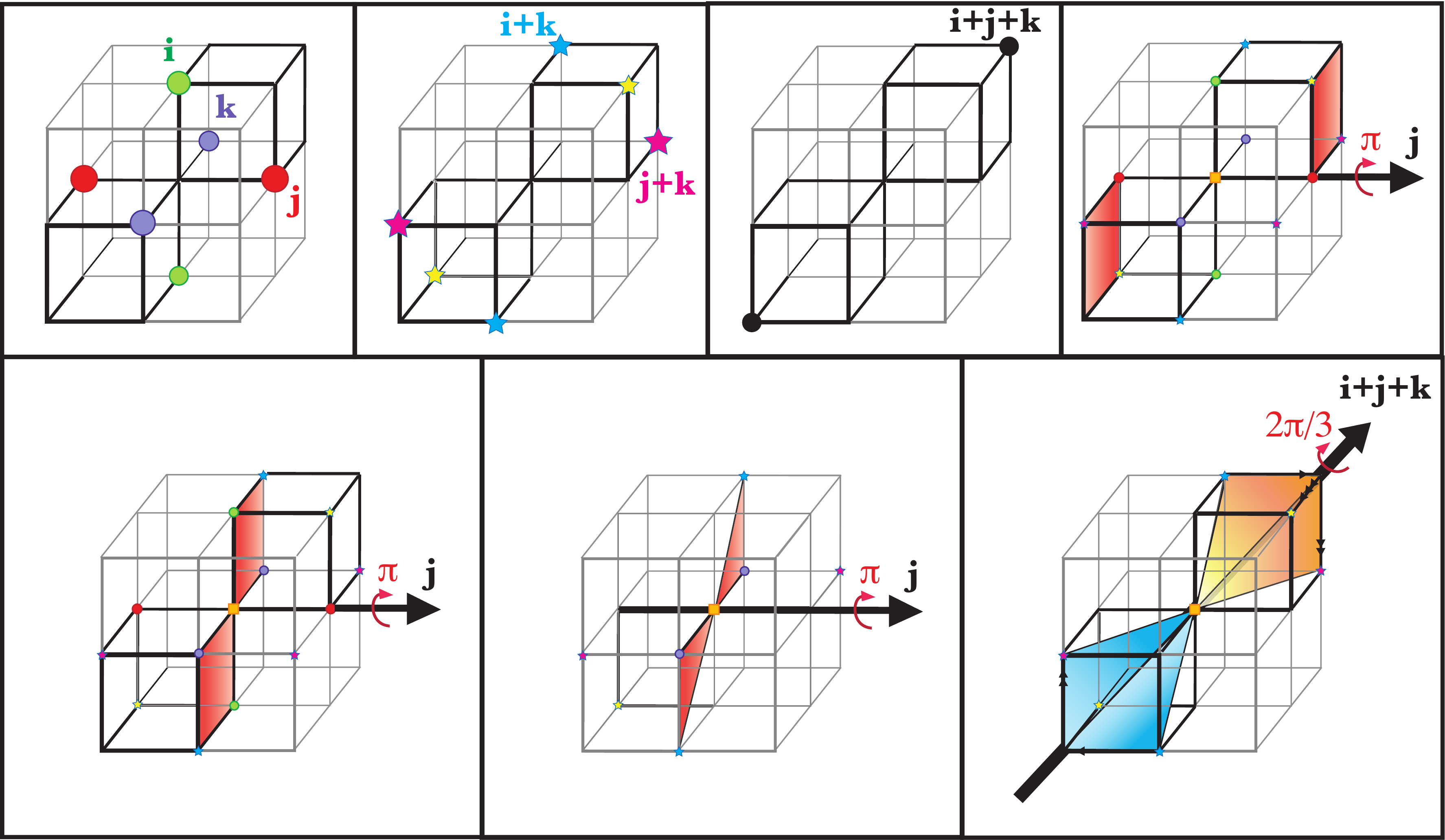}
\begin{center}
{{\bf Figure 5.} Points in $\mathcal{C}$ which are congruent modulo $PSL(2,\mathfrak{L})$ and $PSL(2,\mathfrak{H})$. Top line: Points in $\mathcal{C}$ which are congruent by translations and the top right is the case of the composition of a translation and the inversion $T$. Bottom line: The case of points  in $\mathcal{C}$ congruent by the action of the unitary groups $\mathcal{U}(\mathfrak{L})$ and $\mathcal{U}(\mathfrak{H})$.}
\end{center}
\end{figure}

\item[I.2)] If $d\neq 1$ then $d=a$, $|a|= 1$ and there are two subcases: 
\begin{enumerate} 
\item[I.2.1)] If $b=0$ then 
$$\gamma=\left(\begin{array}{cc} a & 0 \\ 0 & a\end{array}\right) .$$
Then $\mathbf{q}$ is on the hyperbolic plane generated by 1 and $a$  and so $\mathbf{q} \in \partial  \mathcal{P}_{\mathfrak{L}}$.

\item[I.2.2)] If $b\neq 0$, then $\Re (b)=0$, $|b|=1$ hence $b=\pm \mathbf{i}, \pm \mathbf{j},\pm \mathbf{k}$, but $b\neq a$. Then 
\end{enumerate}
$$\gamma=\left(\begin{array}{cc} a & b \\ 0 & a\end{array}\right).$$
Then $\mathbf{q}$ is on the hyperbolic plane generated by 1 and $a$  and so $\mathbf{q} \in \partial  \mathcal{P}_{\mathfrak{L}}$.

\end{enumerate} 

\item[II)] If $c\neq0$, as $|\mathbf{q}|\geq1$ then $d=0$. As $|c\mathbf{q}|\leq 1$ then $|c|=|\mathbf{q}|=1$. Then $\mathbf{q}\in \partial  \mathcal{P}_{\mathfrak{L}}$. (BG) conditions imply that $\overline{b}c=1$ and $a\overline{c}+\overline{a}c=0$.

\begin{enumerate} 
\item[II.1)] If $c=1$, then $b=1$ and $\Re (a)=0$. Then 
$$\gamma=\left(\begin{array}{cc} a & 1 \\ 1 & 0\end{array}\right).$$

\item[II.2)] If $c\neq 1$, then $c=\pm \mathbf{i}, \pm \mathbf{j},\pm \mathbf{k}$ and $b=c$. Moreover $a=0$ or $|a|=1$ and $\Re (a\overline{c})=0$. Then 

$$\gamma=\left(\begin{array}{cc} a & c \\ c & 0\end{array}\right).$$
\end{enumerate}
\end{enumerate}

\noi To prove (3) suppose that $\mathbf{q}\in\rm{Interior}(\mathcal{P}_{\mathfrak{L}})$. Let $\gamma$ be such that 
 $\gamma(\mathbf{q})=\mathbf{q}$. Then there exist $\epsilon>0$ such that $\mathbf{q}+\epsilon$ and 
 $\gamma(\mathbf{q}+\epsilon)\in \rm{Interior}(\mathcal{P}_{\mathfrak{L}})$. 
But then by (2) $\mathbf{q}+\epsilon$ and  $\gamma(\mathbf{q}+\epsilon)$ are in $ \partial  \mathcal{P}_{\mathfrak{L}}$ that is a contradiction. The same proof applies to show that points in $\rm{Interior}(\mathcal{P}_{\mathfrak{H}})$ have trivial isotropy group. 
\end{proof}

\noi If we use the group $PSL(2,\mathfrak{L})$ to propagate
$\mathcal{P}_{\mathfrak{L}}$ we obtain a tessellation of
$\mathbf{H}^{1}_{\mathbb{H}}$ that we denote by $\mathbf{Y}_{\mathfrak{L}}$. The
intersection of $\mathcal{P}_{\mathfrak{L}}$ and $\mathbf{Y}_{\mathfrak{L}}$ with
each of the totally geodesic planes $S_\mathbf{i}$, $S_\mathbf{j}$,
$S_\mathbf{k}$, where
$S_\mathbf{u}:=\{q=x_{_1}+x_{_\mathbf{i}}\mathbf{i}+x_{_\mathbf{j}}\mathbf{j}+x_{_\mathbf{k}}\mathbf{k}\in\mathbf{H}^{1}_{\mathbb{H}}\, : \, x_{_\mathbf{s}}=
0\ \mathrm{if} \ \mathbf{s}\neq \mathbf{u}, 0\}$, with $\mathbf{u}=\mathbf{i,j,k}$, gives a copy
of the closure of a (non-convex) fundamental domain of $PSL(2,\Z)$ and the associated
tessellation in the half--space
model of $\mathbf{H}^{2}_{\mathbb{R}}$. See figure 14 in the appendix. Indeed, it is worth noticing
here that $S_\mathbf{u}$ is an {\em invariant} set for $R_\mathbf{u}, L_\mathbf{u}$ and $\tau_\mathbf{u}$
($\mathbf{u}=\mathbf{i,j,k}$); therefore the intersection of $\mathcal{P}_{\mathfrak{L}}$
and $\mathbf{Y}_{\mathfrak{L}}$ with each of the 3-dimensional totally geodesic
hyperbolic 3-spaces $S_{\mathbf{ij}}$, $S_{\mathbf{jk}}$,
$S_{\mathbf{ik}}$, where
$S_{\mathbf{lm}}:=\{q=x_{_1}+x_{_\mathbf{i}}\mathbf{i}+x_{_\mathbf{j}}\mathbf{j}+x_{_\mathbf{k}}\mathbf{k}\in\mathbf{H}^{1}_{\mathbb{H}}\, : \,\ x_{_\mathbf{s}}=
0\ \mathrm{if} \ \mathbf{s}\neq \mathbf{l}, \mathbf{m},0\}$, with $\mathbf{l},\mathbf{m}=\mathbf{i,j,k}$, gives a
copy of the closure of the classical fundamental domain (and the
tessellation generated by it) of the Picard group $PSL(2,{\mathbb
  Z}[\mathbf{i}])$ for the Gaussian integers acting on the half-space
model of $\mathbf{H}^{3}_{\mathbb{R}}$. See figure 14 in the appendix.

\subsection{A fundamental domain of $PSL(2,\mathfrak{L})$ as an ideal cone over a rhombic hyperbolic dodecahedron} 
\noi We can describe another fundamental domain of the modular group 
which is convex (and in some sense more symmetrical) using ``cut and paste'' techniques as follows: the fundamental domain 
$\mathcal{P}_{\mathfrak{L}}$ is the union of two ideal  pyramids with vertex at infinity with bases
two antipodal cubes $\mathcal{C}_1\subset\Pi$ and $\mathcal{C}_2\subset\Pi$ of $\mathcal{C}$. We recall that  $\mathcal{C}_1$ contains the vertices 1 and $v=\frac{1}{2}(1+\ii+\jj+\kk)$. We can divide one of the cubes for example $\mathcal{C}_2$ into the six hyperbolic square
pyramids contained in $\Pi$ which have as common apex the barycenter of $\mathcal{C}_2$ and bases the square faces of $\mathcal{C}_2$. Let $P_\ii, P_\jj,P_\kk$ be the three pyramids in $\mathcal{C}_2$ that contain the vertices 1 and $\frac{\sqrt{2} -\jj-\kk}{2}, \frac{\sqrt{2} -\ii-\kk}{2},\frac{\sqrt{2} -\ii-\jj}{2}$, respectively. Let $Q_\ii, Q_\jj,Q_\kk$ be the three pyramids in $\mathcal{C}_2$ that do not contain the vertex 1 but contain $\frac{\sqrt{2} -\jj-\kk}{2}, \frac{\sqrt{2} -\ii-\kk}{2},\frac{\sqrt{2} -\ii-\jj}{2}$, respectively. Then $P_\ii, P_\jj,P_\kk$ are permuted by the element $D_v$ in the unitary Hurwitz group $\mathcal{U}({\mathfrak{H}})$ which is a rotation of angle $2\pi/3$ around the 2-dimensional hyperbolic plane $\{s+vt \in \Hy: s, t>0 \}$. Then they are isometric to each other. The three square pyramids $Q_\ii, Q_\jj,Q_\kk$ are also permuted by $D_v \in \mathcal{U}({\mathfrak{H}})$ therefore they are isometric to each other. However $P_\mathbf{u}$ is not isometric to $Q_\mathbf{u}$ for $\mathbf{u}=\ii,\jj,\kk$.

\noi Let $\mathcal{Q}=\mathcal{C}_1 \cup D_\ii(P_\ii)\cup D_\jj(P_\jj)\cup D_\kk(P_\kk)$ be the convex hyperbolic 3-dimensional polyhedron contained in $ \Pi$ with set of vertices the eleven points  which are the eight vertices of $\mathcal{C}_1$ and $\{D_\ii(v_2), D_\jj(v_2), D_\kk(v_2)\}$ where $v_2$ is the barycenter of $\mathcal{C}_2$. $\mathcal{Q}$ has 12 triangular faces and 3 square faces. 

\noi Let $\hat{\mathcal{R}} =\mathcal{Q} \cup \tau_\ii D_\ii(Q_\ii)\cup \tau_\jj  D_\jj(Q_\jj)\cup \tau_\kk  D_\kk(Q_\kk)$ be the non convex hyperbolic 3-dimensional polyhedron

\begin{figure}
\centering
\includegraphics[scale=0.33]{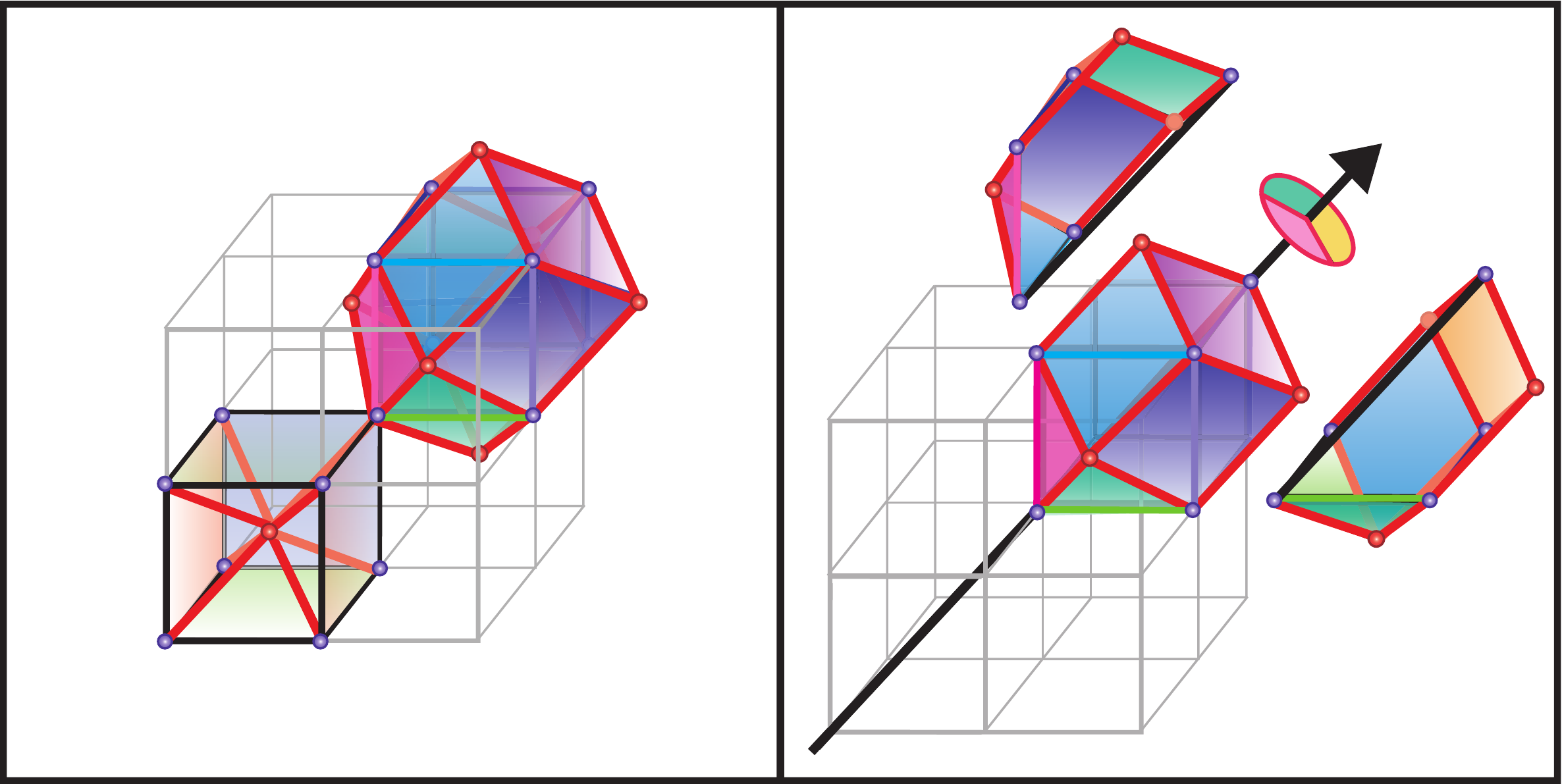}
\begin{center}
{{\bf Figure 6.} A fundamental domain $\mathcal{R}_{\mathfrak{L}}$ for $PSL(2,\mathfrak{L})$ can be taken to be the pyramid over the rhombic dodecahedron $\mathcal{R}$ with apex the point at infinity.  A fundamental domain $\mathcal{R}_{\mathfrak{H}}$ for $PSL(2,\mathfrak{H})$ can be taken to be the pyramid over a third part of the rhombic dodecahedron $\mathcal{R}$.}
\end{center}
\end{figure}

\noi The pyramids $D_\ii(P_\ii), D_\jj(P_\jj), D_\kk(P_\kk)$ are in $\Pi$ and the other three pyramids $\tau_\ii D_\ii(Q_\ii), \tau_\jj  D_\jj(Q_\jj), \tau_\kk  D_\kk(Q_\kk)$ are in $\{ \q\in \mathbf{H}_{\mathbb{H}}^1 : |\q|>1\}$. They form a dihedral  angle of $2\pi/3$ with the ridges of the three square faces in $\mathcal{Q}$.
 
\noi The hyperbolic polyhedron $\hat{\mathcal{R}}$ has 24 triangular faces, three square ridges faces and 14 vertices (the 8 vertices of the cube $\mathcal{C}_1$ and six vertices for each apex of the pyramids of $\mathcal{C}_2$). The convex hull of these six vertices is a non regular octahedron which is combinatorially equal to the dual polyhedron of the cube $\mathcal{C}_1$. The polyhedron $\hat{\mathcal{R}}$ is decomposed in twelve square pyramids and this decomposition is combinatorially equivalent to the decomposition of the rhombic dodecahedron. See figures 6 and 7. 

\noi The set of imaginary parts of the set of vertices of $\mathcal{C}_1$ is the set 
$$
V_{\mathcal{C}_1}:= \left\{ 0, \frac{\ii}{2},\frac{\jj}{2},\frac{\kk}{2}, \frac{\ii+\jj}{2},\frac{\jj+\kk}{2},\frac{\ii+\kk}{2},\frac{\ii+\jj+\kk}{2} \right\}.$$

\noi The set of imaginary parts of the set of vertices of $\hat{\mathcal{R}}$ is the set 
$$
V_{\hat{\mathcal{R}}}:=V_{\mathcal{C}_1}\cup V_{O}
$$
\noi where
$$
V_O:= \left\{  \frac{-\ii+\jj+\kk}{4}, \frac{\ii-\jj+\kk}{4}, \frac{\ii+\jj-\kk}{4}, \frac{3\ii+\jj+\kk}{4},\frac{\ii+3\jj+\kk}{4},\frac{\ii+\jj+3\kk}{4} \right\}.
$$

\noi We see that the convex hulls of $V_{\hat{\mathcal{R}}}, V_{\mathcal{C}_1}$ and $V_{O}$ are Euclidean polyhedra which are a rhombic dodecahedron, a regular cube and a regular octahedron, respectively.  

\begin{definition} Let $\mathcal{R}_{\mathfrak{L}}$ be the  4-dimensional non-compact polytope in $\Hy$ which is obtained by deleting the point at infinity from the convex hull of the point at infinity and the fourteen points which are the vertices of $\hat{\mathcal{R}}$.  
 \end{definition}

\noi The hyperbolic polytope $\mathcal{R}_{\mathfrak{L}}$ is an ideal cone over the non convex polyhedron $\hat{\mathcal{R}}$ with apex the point at infinity. From all the previous results and remarks it follows the following:

\begin{proposition}  $\mathcal{R}_{\mathfrak{L}}$ is a convex fundamental domain for $PSL(2,\mathfrak{L})$.
\end{proposition}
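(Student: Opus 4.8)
The plan is to deduce that $\mathcal{R}_{\mathfrak{L}}$ is a fundamental domain from the already established fact (Theorem \ref{Domain}) that $\mathcal{P}_{\mathfrak{L}}$ is one, by exhibiting $\mathcal{R}_{\mathfrak{L}}$ as a scissors-congruent rearrangement of $\mathcal{P}_{\mathfrak{L}}$ under the group $PSL(2,\mathfrak{L})$, and then to read off convexity directly from its description as a convex hull.

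First I would record the decomposition underlying the construction: $\mathcal{P}_{\mathfrak{L}}$ is the ideal cone with apex $\infty$ over $\mathcal{C}_1\cup\mathcal{C}_2$, and subdividing $\mathcal{C}_2$ into the six square pyramids $P_{\ii},P_{\jj},P_{\kk},Q_{\ii},Q_{\jj},Q_{\kk}$ splits $\mathcal{P}_{\mathfrak{L}}$ into seven ideal cones with pairwise disjoint interiors. The transformations used to move these pieces, namely $D_{\mathbf{u}}$ and $\tau_{\mathbf{u}}D_{\mathbf{u}}$ for $\mathbf{u}=\ii,\jj,\kk$, all lie in $PSL(2,\mathfrak{L})$: indeed $\ii,\jj,\kk$ are Lipschitz units so $D_{\mathbf{u}}\in\mathcal{U}(\mathfrak{L})$, and $\tau_{\mathbf{u}}\in\mathcal{T}_{\Im \mathbb{H}(\mathbb{Z})}$. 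Crucially each of them fixes $\infty$, so it carries an ideal cone over a base to the ideal cone over the image base. I would then check, from the coordinates already computed, that $D_{\ii}(P_{\ii}),D_{\jj}(P_{\jj}),D_{\kk}(P_{\kk})$ attach to the faces of $\mathcal{C}_1$ to build $\mathcal{Q}$ and that $\tau_{\mathbf{u}}D_{\mathbf{u}}(Q_{\mathbf{u}})$ attach along the three square faces of $\mathcal{Q}$ with disjoint interiors, so that the seven moved cones tile the ideal cone $C(\hat{\mathcal{R}})$ over $\hat{\mathcal{R}}$ without overlap.

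With this in hand the fundamental-domain property is formal: because $\mathcal{P}_{\mathfrak{L}}$ meets every $PSL(2,\mathfrak{L})$-orbit and we have only permuted its pieces by group elements while preserving disjointness of interiors and total volume, the region $C(\hat{\mathcal{R}})$ meets every orbit as well, and no two of its interior points are $PSL(2,\mathfrak{L})$-equivalent; the face identifications are exactly those of $\mathcal{P}_{\mathfrak{L}}$ composed with the pasting maps. It remains to identify $C(\hat{\mathcal{R}})$ with $\mathcal{R}_{\mathfrak{L}}$ and to see that the latter is convex. Convexity is immediate from the definition, since $\mathcal{R}_{\mathfrak{L}}$ is by construction the convex hull of $\{\infty\}\cup V_{\hat{\mathcal{R}}}$ with the ideal point deleted, and removing the single boundary point $\infty$ does not destroy geodesic convexity in $\Hy$.

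The one substantive point — and the step I expect to be the main obstacle — is the equality $C(\hat{\mathcal{R}})=\mathcal{R}_{\mathfrak{L}}$, i.e. that coning the \emph{non-convex} polyhedron $\hat{\mathcal{R}}$ to $\infty$ produces the \emph{convex} cone over the rhombic dodecahedron. Here I would argue that the only concavities of $\hat{\mathcal{R}}$ are the three wedges between adjacent tents $\tau_{\mathbf{u}}D_{\mathbf{u}}(Q_{\mathbf{u}})$, which lie in $\{|\mathbf{q}|>1\}$, and that these concavities are radial with respect to the cone: every vertical geodesic to $\infty$ through a point of the rhombic dodecahedron that is missing from $\hat{\mathcal{R}}$ meets $\hat{\mathcal{R}}$ higher up, inside one of the tents. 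Consequently the vertical rays filling $C(\hat{\mathcal{R}})$ are exactly those over the rhombic-dodecahedron base, and since the fourteen vertices of $\hat{\mathcal{R}}$ are precisely the vertices $V_{\hat{\mathcal{R}}}$ of that dodecahedron, $C(\hat{\mathcal{R}})$ coincides with the convex hull $\mathcal{R}_{\mathfrak{L}}$. Equivalently, one verifies that $\mathcal{R}_{\mathfrak{L}}$ is cut out by the finite family of hyperbolic half-spaces bounded by the unit hemisphere $\Pi$ and by the vertical hyperplanes over the twelve rhombic faces, and that these are exactly the bounding faces of $C(\hat{\mathcal{R}})$. Combining convexity with the fundamental-domain property then yields the proposition.
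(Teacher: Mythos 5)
Your overall strategy is the one the paper intends: the paper offers no argument beyond the preceding construction, and your cut-and-paste half is exactly that construction made precise. That half is sound: the seven ideal cones over $\mathcal{C}_1$, $P_{\mathbf{u}}$, $Q_{\mathbf{u}}$ decompose $\mathcal{P}_{\mathfrak{L}}$, the maps $D_{\mathbf{u}}=(\tau_{\mathbf{u}}T)^3$ and $\tau_{\mathbf{u}}D_{\mathbf{u}}$ lie in $PSL(2,\mathfrak{L})$ and fix $\infty$, hence send ideal cones to ideal cones, and since the interior of each piece lies in the interior of $\mathcal{P}_{\mathfrak{L}}$, Theorem \ref{Domain} gives the standard cut-and-paste conclusion that the rearranged union is again a fundamental domain.

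The gap is in the step you yourself flag as the crux, and it is a genuine error rather than a missing detail. The three tents $\tau_{\mathbf{u}}D_{\mathbf{u}}(Q_{\mathbf{u}})$ are facets of $\mathcal{R}_{\mathfrak{L}}$, and they lie neither in $\Pi$ nor in any vertical hyperplane: since $D_{\mathbf{u}}(\Pi)=\Pi$ and $\tau_{\mathbf{u}}(\Pi)=\{\q\,:\,|\q-\mathbf{u}|=1\}$, each tent lies in the hemisphere $\{|\q-\mathbf{u}|=1\}$ for $\mathbf{u}=\ii,\jj,\kk$. So your claimed cutting-out of $\mathcal{R}_{\mathfrak{L}}$ by the half-spaces bounded by $\Pi$ and by the twelve vertical hyperplanes over the rhombic faces is false: that intersection is the vertical prism over the rhombic dodecahedron truncated below only by $\Pi$, and over each of the three outer pyramids of the dodecahedron (the tent bases) it also contains the slab of points above $\Pi$ but below the tent. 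Each slab has positive volume (for $y$ in the interior of the tent base one has $|y-\mathbf{u}|<|y|$, so the tent lies strictly above $\Pi$ there), hence your region has volume strictly larger than $\pi^{2}/72$ and cannot be a fundamental domain; equivalently, since the $PSL(2,\mathfrak{L})$-translates of the cone tile $\Hy$, almost every point of a slab is equivalent to an interior point of the cone. The ``radial concavity'' argument does not repair this, because it addresses the wrong issue: $\hat{\mathcal{R}}$ is in fact a graph over the rhombic dodecahedron (every vertical geodesic over the open dodecahedron meets it exactly once), so there are no missing rays to fill in; what must be checked is that the cone lies on one side of the hyperplane of each of its sixteen facets. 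The correct description is that $\mathcal{R}_{\mathfrak{L}}$ is the intersection of your twelve vertical half-spaces with the exteriors of the \emph{four} unit hemispheres centered at $0,\ii,\jj,\kk$. The verification is elementary: if the $\mathbf{u}$-coordinate of $y$ is at most $1/2$ then $|y|\le|y-\mathbf{u}|$, so lying outside the sphere centered at $0$ forces lying outside the sphere centered at $\mathbf{u}$, and symmetrically over the tent bases; this yields simultaneously the convexity of the cone and its identification with the convex hull of the fourteen vertices and $\infty$. With that correction, the rest of your argument goes through.
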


\noi Let $\mathcal{R}^{1/3}\subset \hat{\mathcal{R}}$ be the hyperbolic polyhedron which is the intersection of $\hat{\mathcal{R}}$ with two hyperbolic half-spaces determined by two 3-dimensional hyperbolic hyperplanes which intersect at the plane generated by $v$ and the real positive axis with dihedral angle $2\pi/3$. The non convex polyhedron $\mathcal{R}^{1/3}$ has seven faces: 3 rhombus, 2 triangles and 2 trapezoids. 

\noi The vertices of $\mathcal{R}^{1/3}$ are the following 8 points 
$$\frac{1}{2}(1+ \mathbf{i} + \mathbf{j}+ \mathbf{k}),\sqrt{ \frac{7}{16}}+ \frac{3}{4}\mathbf{i}, \sqrt{ \frac{7}{16}}+ \frac{3}{4}\mathbf{j},  \sqrt{ \frac{1}{2}}+ \frac{1}{2}(\mathbf{i}+\mathbf{j}), \sqrt{ \frac{3}{4}}+ \frac{1}{2}\mathbf{i}, \sqrt{ \frac{3}{4}}+ \frac{1}{2}\mathbf{j}, \sqrt{ \frac{7}{8}}+ \frac{1}{4}(\mathbf{i}+\mathbf{j}), 1.$$

\noi We have that $\hat{\mathcal{R}}= \mathcal{R}^{1/3}\cup  D_{v} (\mathcal{R}^{1/3})  \cup D_{v}^2 (\mathcal{R}^{1/3})$. Hence the hyperbolic volume of $\mathcal{R}^{1/3}$ is a third part of the volume of $\hat{\mathcal{R}}$.

\begin{definition} Let $\mathcal{R}_{\mathfrak{H}}$ the 4-dimensional non-compact polytope in $\Hy$ which is obtained by deleting the point at infinity from the convex hull of the point at infinity and the eight points which are the vertices of $\mathcal{R}^{1/3}$.  
 \end{definition}

\noi $\mathcal{R}_{\mathfrak{H}}$ is an ideal cone over the non convex polyhedron $\mathcal{R}^{1/3}$ with apex the point at infinity. From all the previous results and remarks it follows the following:

\begin{proposition}  $\mathcal{R}_{\mathfrak{H}}$ is a convex fundamental domain for $PSL(2,\mathfrak{H})$.
\end{proposition}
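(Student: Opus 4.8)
The plan is to deduce the statement from the companion result that $\mathcal{R}_{\mathfrak{L}}$ is a convex fundamental domain for $PSL(2,\mathfrak{L})$ (the preceding Proposition), together with the fact that $PSL(2,\mathfrak{L})$ has index three in $PSL(2,\mathfrak{H})$ and that the order-three rotation $D_v$ about the hyperbolic plane spanned by $v=\frac12(1+\ii+\jj+\kk)$ and the positive real axis realises the three cosets. The algebraic observation I would record first is that $PSL(2,\mathfrak{L})$ is \emph{normal} in $PSL(2,\mathfrak{H})$: since $D_v$ commutes with $T$ and conjugates each translation $\tau_\omega$ with $\omega\in\Im\mathbb{H}(\mathbb{Z})$ to $\tau_{v\omega v^{-1}}$, where $v\omega v^{-1}\in\Im\mathbb{H}(\mathbb{Z})$ because conjugation by $v$ cyclically permutes $\ii,\jj,\kk$, the element $D_v$ normalises the generators of $PSL(2,\mathfrak{L})$. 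As $PSL(2,\mathfrak{H})=\langle PSL(2,\mathfrak{L}),D_v\rangle$ and $\mathcal{U}(\mathfrak{L})\subset PSL(2,\mathfrak{L})$, this yields $PSL(2,\mathfrak{L})\triangleleft PSL(2,\mathfrak{H})$ together with the coset decomposition $PSL(2,\mathfrak{H})=PSL(2,\mathfrak{L})\sqcup PSL(2,\mathfrak{L})D_v\sqcup PSL(2,\mathfrak{L})D_v^2$.

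The geometric input is the decomposition already established, $\hat{\mathcal{R}}=\mathcal{R}^{1/3}\cup D_v(\mathcal{R}^{1/3})\cup D_v^2(\mathcal{R}^{1/3})$. Because $D_v$ fixes $\infty$, coning over $\infty$ commutes with $D_v$ and gives $\mathcal{R}_{\mathfrak{L}}=\mathcal{R}_{\mathfrak{H}}\cup D_v\mathcal{R}_{\mathfrak{H}}\cup D_v^2\mathcal{R}_{\mathfrak{H}}$. Moreover, since the two cutting hyperplanes meet along the $v$-axis with dihedral angle $2\pi/3$ and $D_v$ rotates by $2\pi/3$ about that axis, the three pieces have pairwise disjoint interiors and meet only along these hyperplanes. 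I would extract two consequences: $\operatorname{int}(D_v^i\mathcal{R}_{\mathfrak{H}})\subset\operatorname{int}(\mathcal{R}_{\mathfrak{L}})$ for each $i$, and $\mathcal{R}_{\mathfrak{H}}\cap\operatorname{int}(D_v^i\mathcal{R}_{\mathfrak{H}})=\varnothing$ for $i\neq 0$.

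With these in hand I would verify the defining properties. For surjectivity on orbits, given $\mathbf{q}\in\Hy$ the fundamental-domain property of $\mathcal{R}_{\mathfrak{L}}$ gives $\gamma'\in PSL(2,\mathfrak{L})$ with $\gamma'\mathbf{q}\in\mathcal{R}_{\mathfrak{L}}=\bigcup_i D_v^i\mathcal{R}_{\mathfrak{H}}$, so $D_v^{-i}\gamma'\mathbf{q}\in\mathcal{R}_{\mathfrak{H}}$ for some $i$, with $D_v^{-i}\gamma'\in PSL(2,\mathfrak{H})$. For trivial isotropy of interior points, take $\mathbf{q}\in\operatorname{int}(\mathcal{R}_{\mathfrak{H}})$ and $\gamma\in PSL(2,\mathfrak{H})$ with $\gamma\mathbf{q}\in\mathcal{R}_{\mathfrak{H}}$; writing $\gamma=\gamma'D_v^i$ with $\gamma'\in PSL(2,\mathfrak{L})$, the point $D_v^i\mathbf{q}$ lies in $\operatorname{int}(\mathcal{R}_{\mathfrak{L}})$ and is $PSL(2,\mathfrak{L})$-equivalent, via $\gamma'$, to $\gamma\mathbf{q}\in\mathcal{R}_{\mathfrak{L}}$, so the fundamental-domain property of $\mathcal{R}_{\mathfrak{L}}$ forces $\gamma'=1$ and $\gamma\mathbf{q}=D_v^i\mathbf{q}$. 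Then $D_v^i\mathbf{q}\in\mathcal{R}_{\mathfrak{H}}\cap\operatorname{int}(D_v^i\mathcal{R}_{\mathfrak{H}})$ forces $i=0$, whence $\gamma=1$. The same bookkeeping, applied to a point of $\mathcal{R}_{\mathfrak{H}}$ that is identified with a distinct congruent point, shows that both must lie on $\partial\mathcal{R}_{\mathfrak{H}}$. Convexity is immediate, since $\mathcal{R}_{\mathfrak{H}}$ is defined as the convex hull of $\infty$ with the vertices of $\mathcal{R}^{1/3}$.

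The main obstacle I anticipate is purely the boundary bookkeeping in the last step: for points lying on the cutting hyperplanes one must keep straight the distinctions among $\operatorname{int}(\mathcal{R}_{\mathfrak{H}})$, $\operatorname{int}(\mathcal{R}_{\mathfrak{L}})$, and the faces shared by the three rotated copies, since this is precisely where the index-three folding could a priori create spurious identifications. Once the disjoint-interior property of the three cone pieces is firmly established, everything else reduces to the normal-subgroup coset combinatorics and the already-proven properties of $\mathcal{R}_{\mathfrak{L}}$.
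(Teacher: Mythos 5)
Your proof is correct and follows essentially the same route the paper intends: the paper's own justification is left implicit (``from all the previous results and remarks''), which amounts precisely to folding $\mathcal{R}_{\mathfrak{L}}$ into the three congruent pieces $\mathcal{R}_{\mathfrak{H}}$, $D_v\mathcal{R}_{\mathfrak{H}}$, $D_v^2\mathcal{R}_{\mathfrak{H}}$ via the order-three rotation $D_v$ and invoking the index-three inclusion $PSL(2,\mathfrak{L})\subset PSL(2,\mathfrak{H})$. Your write-up simply supplies the details the paper omits (normality of $PSL(2,\mathfrak{L})$, the coset decomposition, and the disjoint-interior bookkeeping), and these details are sound.
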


\begin{figure}
\centering
\includegraphics[scale=0.33]{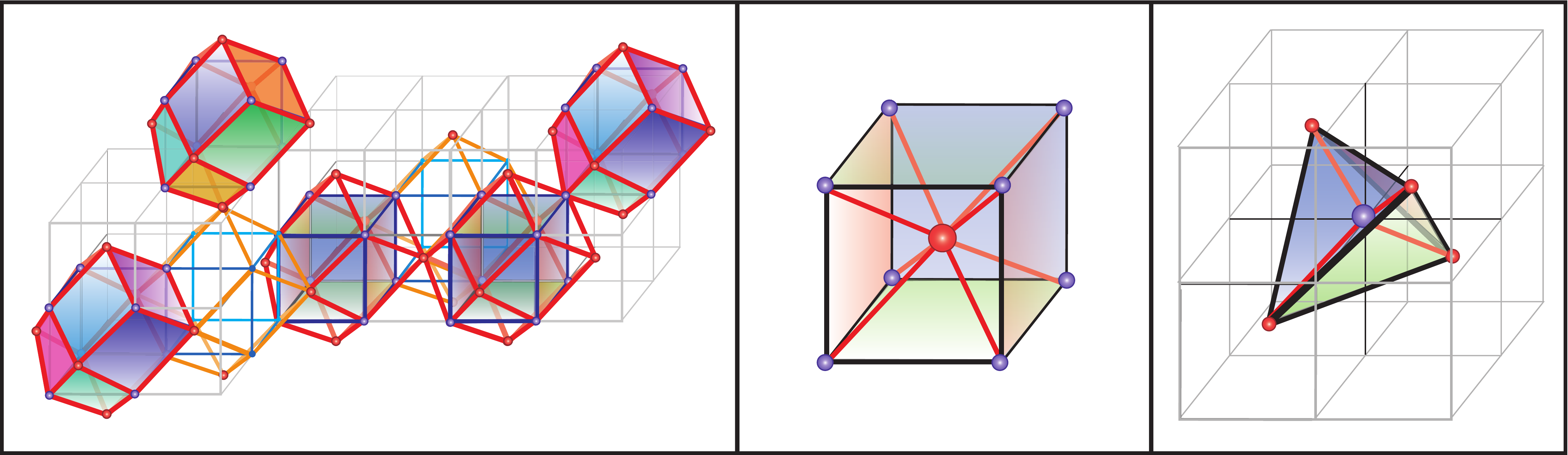}
\begin{center}
{{\bf Figure 7.} The uniform tessellation of the Euclidean 3-space with rhombic dodecahedra. The link of the vertices of the tessellation are of two types: a cube and a tetrahedron.}\end{center}
\end{figure}

\subsection{Geometric characterization of the quaternionic modular groups}

\noi The following fundamental theorem gives the description of the Lipschitz quaternionic modular group $PSL(2,\mathfrak{L})$ as the group of quaternionic M\"obius transformations whose entries are Lipschitz integers and which also satisfy (BG) conditions.

\begin{theorem}
Any element in $PSL(2,\mathbb{H}(\mathbb{Z}))$ which satisfies (BG)
conditions belongs to the quaternionic modular group $PSL(2,\mathfrak{L})$.
\end{theorem}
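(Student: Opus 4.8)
The plan is to prove the two-sided containment. Write $\Gamma'$ for the set of matrices in $PSL(2,\mathbb{H}(\mathbb{Z}))$ satisfying the (BG) conditions of Proposition~\ref{GB}. The inclusion $PSL(2,\mathfrak{L})\subseteq\Gamma'$ is immediate, since the generators $T$ and the translations $\tau_\omega$ with $\omega\in\Im\mathbb{H}(\mathbb{Z})$ have Lipschitz integer entries and satisfy (BG); the content of the theorem is the reverse inclusion $\Gamma'\subseteq PSL(2,\mathfrak{L})$. For this I would exploit that the case analysis already performed in the proof of Theorem~\ref{Domain}(2) uses \emph{only} that the entries are Lipschitz integers and satisfy (BG), and that every matrix it produces is visibly an element of $PSL(2,\mathfrak{L})$.

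First I would record that $\Gamma'$ is a group. From the matrix form $\overline{A}^tKA=K$ with $K=\left(\begin{smallmatrix}0&1\\1&0\end{smallmatrix}\right)$ one gets $A^{-1}=K\overline{A}^tK=\left(\begin{smallmatrix}\overline d&\overline b\\\overline c&\overline a\end{smallmatrix}\right)$; since conjugates of Lipschitz integers are Lipschitz integers and a product of (BG) matrices satisfies (BG), both closure under inverses and under products hold. Next, fix $A\in\Gamma'$ and a point $\mathbf{q}_0$ in the interior of the fundamental domain $\mathcal{P}_{\mathfrak{L}}$. Because $A$ satisfies (BG) it preserves $\Hy$, so $A(\mathbf{q}_0)\in\Hy$, and by Theorem~\ref{Domain}(1) there is $\gamma\in PSL(2,\mathfrak{L})$ with $B:=\gamma A$ satisfying $B(\mathbf{q}_0)\in\mathcal{P}_{\mathfrak{L}}$. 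Note $B\in\Gamma'$. Replacing $B$ by $B^{-1}\in\Gamma'$ if necessary (this interchanges $\mathbf{q}_0$ and $B(\mathbf{q}_0)$, both of which lie in $\mathcal{P}_{\mathfrak{L}}$), I may assume $\Re(B(\mathbf{q}_0))\geq\Re(\mathbf{q}_0)$, which by Lemma~\ref{re} is exactly $|\,\mathbf{q}_0c+d\,|\leq 1$ for $B=\left(\begin{smallmatrix}a&b\\c&d\end{smallmatrix}\right)$.

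The heart of the proof is to run the reduction argument of Theorem~\ref{Domain}(2) verbatim for this $B\in\Gamma'$. Since $|\mathbf{q}_0|\geq 1$ and the entries are Lipschitz integers, $|\,\mathbf{q}_0c+d\,|\leq 1$ forces $c=0$ or $|c|=1$, and each resulting subcase yields $B$ explicitly: a translation $\left(\begin{smallmatrix}1&b\\0&1\end{smallmatrix}\right)$ with $\Re(b)=0$, a diagonal unit $D_a=\left(\begin{smallmatrix}a&0\\0&a\end{smallmatrix}\right)$ with $|a|=1$, a product $\tau_{b\overline a}D_a$, or a matrix of the form $\tau_{a\overline c}\,T\,D_c$ with $|c|=1$. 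In every case the off-diagonal data that appears, namely $b\overline a$ and $a\overline c$, is a \emph{purely imaginary} Lipschitz integer precisely because the (BG) relations $\Re(b\overline d)=0$ and $\Re(a\overline c)=0$ hold; hence each such matrix is a word in $T$, $\mathcal{U}(\mathfrak{L})$ and $\mathcal{T}_{\Im\mathbb{H}(\mathbb{Z})}$, that is, an element of $PSL(2,\mathfrak{L})$. Therefore $B\in PSL(2,\mathfrak{L})$, and consequently $A=\gamma^{-1}B\in PSL(2,\mathfrak{L})$.

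The main obstacle — and the point to argue with care — is that the enumeration in Theorem~\ref{Domain}(2) must be read as a statement about $\Gamma'$, not about $PSL(2,\mathfrak{L})$, to avoid circularity: what closes the argument is the observation that the finite list of ``reducing'' matrices compatible with $|\,\mathbf{q}_0c+d\,|\leq1$ happens to consist entirely of elements already known to lie in $PSL(2,\mathfrak{L})$. Verifying this reduces to the routine checks that $\overline a d=1$ with $c=0$ forces $a,d$ to be Lipschitz units with $d=a$, that $|c|=1$ forces $d=0$, and that the stated (BG) real-part relations make the relevant off-diagonal entries imaginary Lipschitz integers; none of these uses more than membership in $\Gamma'$.
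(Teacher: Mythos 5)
Your proof is correct and follows essentially the same route as the paper's: reduce the image of a base point into the fundamental domain by some $S\in PSL(2,\mathfrak{L})$ (Theorem \ref{Domain}(1)), then run the case analysis from the proof of Theorem \ref{Domain}(2) --- which uses only Lipschitz integrality and the (BG) conditions --- to conclude that the reduced matrix lies in $PSL(2,\mathfrak{L})$, hence so does $A$. The differences are only ones of care: the paper takes the base point $1$ and compresses everything into three lines, silently relying on exactly the non-circularity point you make explicit, namely that the enumeration in Theorem \ref{Domain}(2) is valid for any (BG) matrix with Lipschitz integer entries and not merely for elements already known to be in $PSL(2,\mathfrak{L})$.
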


\begin{proof} Let $A\in{PSL(2,\mathbb{H}(\mathbb{Z}))}$ satisfy (BG) conditions. Let $q=A(1)$ and $S\in{PSL(2,\mathfrak{L})}$ be such that
$p:=S(q)\in\mathcal{P}$. Then $(S {A})(1)=p$ and by \ref{Domain} it follows that $S {A}\in{\mathcal{A}(\mathfrak{L})}$.
Hence $A\in{\mathcal{A}(\mathfrak{L})}\subset{PSL(2,\mathfrak{L})}$.

\end{proof}
 \noi This theorem completely characterizes the group of M\"obius transformations with entries in the Lipschitz integers which preserve the hyperbolic half-space $\mathbf{H}^1_{\mathbb{H}}$.
 
 \noi This proof can be adapted \emph{verbatim} to prove the following 
 Theorem which characterizes $PSL(2,\mathfrak{H})$:

\begin{theorem}
Any element in $PSL(2,\mathbb{H})$ which satisfies (BG)
conditions with entries in the Hurwitz integers $\mathfrak{H}$ belongs to the quaternionic modular group $PSL(2,\mathfrak{H})$.
\end{theorem}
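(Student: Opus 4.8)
The plan is to mimic \emph{verbatim} the proof of the Lipschitz characterization, replacing $\mathfrak{L}$ by $\mathfrak{H}$ throughout and supplying the handful of extra cases created by the half-integer Hurwitz units. Two ingredients are already in place for the Hurwitz group: the corollary to Lemma~\ref{re}, which guarantees that $\sup_{\gamma\in PSL(2,\mathfrak{H})}\Re(\gamma(\mathbf{q}))<\infty$, and Theorem~\ref{Domain}, whose statement and proof are written simultaneously for $PSL(2,\mathfrak{L})$ and $PSL(2,\mathfrak{H})$ and which provides the fundamental domain $\mathcal{P}_{\mathfrak{H}}\subset\mathcal{P}$ together with the classification of the matrices that can identify two points of $\mathcal{P}$. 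The only genuinely new point to verify is that the case analysis inside that proof survives when the entries are allowed to be Hurwitz rather than Lipschitz integers.

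First I would set $q=A(1)$ and, using part (1) of Theorem~\ref{Domain} (equivalently the finiteness corollary to Lemma~\ref{re}), choose $S\in PSL(2,\mathfrak{H})$ with $p:=S(q)\in\mathcal{P}$; if convenient one takes $S$ so that $\Re(p)$ is maximal in the orbit, which is legitimate precisely because the supremum is finite. Put $B:=SA$. Then $B(1)=S(A(1))=p$, and I would observe that $B$ again has entries in $\mathbb{H}ur(\mathbb{Z})$ and still satisfies (BG): the ring $\mathbb{H}ur(\mathbb{Z})$ is closed under multiplication (as recorded after its definition), and the (BG) relation $\overline{B}^{t}KB=K$ is manifestly stable under products. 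Thus $B$ is a (BG) matrix over the Hurwitz integers carrying the point $1\in\mathcal{P}$ into $p\in\mathcal{P}$. Re-running the argument of Theorem~\ref{Domain} for $B$ at the point $\mathbf{q}=1$ --- which uses nothing about $B$ beyond integrality of its entries, (BG), and $|1\cdot c+d|\le 1$ --- forces $B$ to be a product of the generators $T$ and elements of $\mathcal{A}(\mathfrak{H})$; in particular $B\in PSL(2,\mathfrak{H})$, whence $A=S^{-1}B\in PSL(2,\mathfrak{H})$, as desired.

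The step I expect to be the main obstacle is the enumeration hidden in ``re-running the case analysis''. The decisive arithmetic fact, that a nonzero Hurwitz integer has norm $\ge 1$, remains true (the half-integer units $\tfrac12(\pm 1\pm\mathbf{i}\pm\mathbf{j}\pm\mathbf{k})$ have norm exactly $1$), so the reduction $|1\cdot c+d|\le 1\Rightarrow c=0$ or $|c|=1$ goes through unchanged. What grows is the list of admissible $c$ and $d$: alongside $\pm 1,\pm\mathbf{i},\pm\mathbf{j},\pm\mathbf{k}$ one must now allow the twelve extra units, and correspondingly one must check that each resulting diagonal or anti-diagonal block is realized inside $\mathcal{U}(\mathfrak{H})$ --- in particular by the order-three rotations $D_{\mathbf{u}}$ with $\mathbf{u}=\tfrac12(\pm 1\pm\mathbf{i}\pm\mathbf{j}\pm\mathbf{k})$ --- rather than escaping the group. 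Since $\mathcal{A}(\mathfrak{H})$ is by definition generated by $\mathcal{T}_{\Im\mathbb{H}(\mathbb{Z})}$ and $\mathcal{U}(\mathfrak{H})$, and the splitting of $\mathcal{A}(\mathfrak{H})$ as the semidirect product of $\mathcal{T}_{\Im\mathbb{H}(\mathbb{Z})}$ with $\mathcal{U}(\mathfrak{H})$ is already available, each of these finitely many possibilities lands in $\mathcal{A}(\mathfrak{H})$ (or, in the $|\mathbf{q}|=1$ case, in a product $A\,T$ with $A\in\mathcal{A}(\mathfrak{H})$), which closes the argument.
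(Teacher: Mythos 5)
Your proposal is correct and takes essentially the same approach as the paper: the paper proves the Lipschitz case by setting $q=A(1)$, choosing $S$ with $S(q)\in\mathcal{P}$, and invoking the case analysis of Theorem~\ref{Domain} to conclude $SA\in\mathcal{A}(\mathfrak{L})$ (up to a factor of $T$), and then simply declares that this argument adapts \emph{verbatim} to the Hurwitz case, which is exactly what you carry out. Your additional checks --- closure of $\mathbb{H}ur(\mathbb{Z})$ under multiplication, stability of (BG) under products, norm $\ge 1$ for nonzero Hurwitz integers, and that the extra units land in $\mathcal{U}(\mathfrak{H})\subset\mathcal{A}(\mathfrak{H})$ --- just make explicit the details the paper leaves implicit.
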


\section{Coxeter decomposition of the fundamental domains.}
\label{COXDEC}

\noi The polytope $\mathcal{P}$ is a \textit{Coxeter
  polytope} i.e. the angles between its faces called \emph{dihedral
  angles} are submultiples of $\pi$. The geometry of the hyperbolic
tessellation of $\Hy$ that is generated by reflections
on the sides of $\mathcal{P}$ is codified by these angles. We denote
this tessellation of $\mathbf{H}_{\mathbb{H}}^1$ by $\mathbf{Y}$. In order to understand it we
will consider another tessellation of $\mathbf{H}_{\mathbb{H}}^1$, which is a refinement based on a
barycentric decomposition of $\mathbf{Y}$ and whose cells all are isometric to a fixed hyperbolic 4-simplex which we denote by $\Delta_{\mathfrak{L}}$. This model simplex $\Delta_{\mathfrak{L}}$ is a Coxeter simplex with one ideal vertex. We denote by $\mathbf{Y}_{\Delta_{\mathfrak{L}}}$ this refined tessellation of $\mathbf{Y}$. We denote by $\mathbf{Y}_{ \mathfrak{L}}$ and $\mathbf{Y}_{ \mathfrak{H}}$, 
the tessellations of $\mathbf{H}_{\mathbb{H}}^1$ whose cells are congruent copies of the fundamental domains $\mathcal{P}_{\mathfrak{L}}$ and  $\mathcal{P}_{\mathfrak{H}}$ of $PSL(2, \mathfrak{L})$ and $PSL(2, \mathfrak{L})$, respectively.  The tessellation $\mathbf{Y}_{ \Delta_{\mathfrak{L}}}$ is a  refinement tessellation of $\mathbf{Y}_{ \mathfrak{H}}$ which in turn is a refinement tessellation of $\mathbf{Y}_{ \mathfrak{L}}$ so that both are refinements of $\mathbf{Y}$. 

\noi It is important for us to describe the groups $PSL(2,
\mathfrak{L})$ and  $PSL(2,
\mathfrak{H})$ as Coxeter subgroups of rotations of the symmetries of the
tessellation generated by hyperbolic reflections of $\Delta_{\mathfrak{L}}$. 

\noi Let us recall that a {\it complete flag} of the 4-dimensional
polytope $\mathcal{P}$ consists of a sequence $(v,e,r, f)$ of $k-$faces of $\mathcal{P}$, where $v$ is a vertex of
the edge $e$, in turn $e$ is an edge of the 2-dimensional face $r$ (a ridge) and finally $r$ 
is a 2-dimensional face of the 3-dimensional face $f$ of the polytope $\mathcal{P}$.

\noi Let us consider a complete flag of the pyramid $\mathcal{P}$ in the half-space
model of $\mathbf{H}^{1}_{\mathbb{H}}$ with the ideal vertex at
$\infty$ and with base the cube $\mathcal{C}$. Let $A,B,C,D$ be the barycenters of the $k-$faces of a flag in $\mathcal{C}$; $\it{i. e.}$ the barycenter of the cube $\mathcal{C}$, one of its six
square faces, one of its four incident edges and one of its vertices,
respectively. We chose, for example, $A=1,B=\frac{1}{2}(\sqrt{3}+\mathbf{i}),
C=\frac{1}{\sqrt{2}}+\frac 12(\mathbf{i+j})$ y $D=\frac{1}{2}(1+\mathbf{i+j+k})$, as in the figure 8.  

\noi Let
$\Delta_{\mathfrak{L}}$ be the non-compact hyperbolic 4-simplex whose five vertices are
$A,B,C,D$ and $\infty$. Then $\Delta_{\mathfrak{L}}$ has five hyperbolic tetrahedral faces:
one compact tetrahedron with vertices A,B,C,D, and four tetrahedra with one ideal vertex at
$\infty$. Moreover $\Delta_{\mathfrak{L}}$ has 10 triangles (4 compact and
6 with one ideal vertex) and 10 edges (6 compact and four with one
ideal vertex). The dihedral angles of the five 3-dimensional faces of $\Delta_{\mathfrak{L}}$ meet in pairs in the 
10  two dimensional faces. 

\noi The orthogonal projection of $\Delta_{\mathfrak{L}}$ on the ideal boundary of $\mathbf{H}^{1}_{\mathbb{H}}$ 
is a 3-dimensional Euclidean tetrahedron that we denote by
$\Delta_{\mathfrak{L}}^{e}$. The ideal triangles of $\Delta_{\mathfrak{L}}$
which are asymptotic at $\infty$ have the same angles that the corresponding
dihedral angles of the edges of $\Delta_{\mathfrak{L}}^{e}$. The 3-dimensional faces of $\Delta_{\mathfrak{L}}$ which meet in compact regular 2-faces
which have the origin as a vertex have dihedral angle $\pi/2$ because the hyperplanes in
$\Delta_{\mathfrak{L}}$ which are asymptotic at $\infty$ and contain 0 are orthogonal to the
ideal boundary $\partial \mathbf{H}^{1}_{\mathbb{H}}$ and to the
unitary sphere $\Pi$.

\begin{figure}
\centering
\includegraphics[scale=0.33]{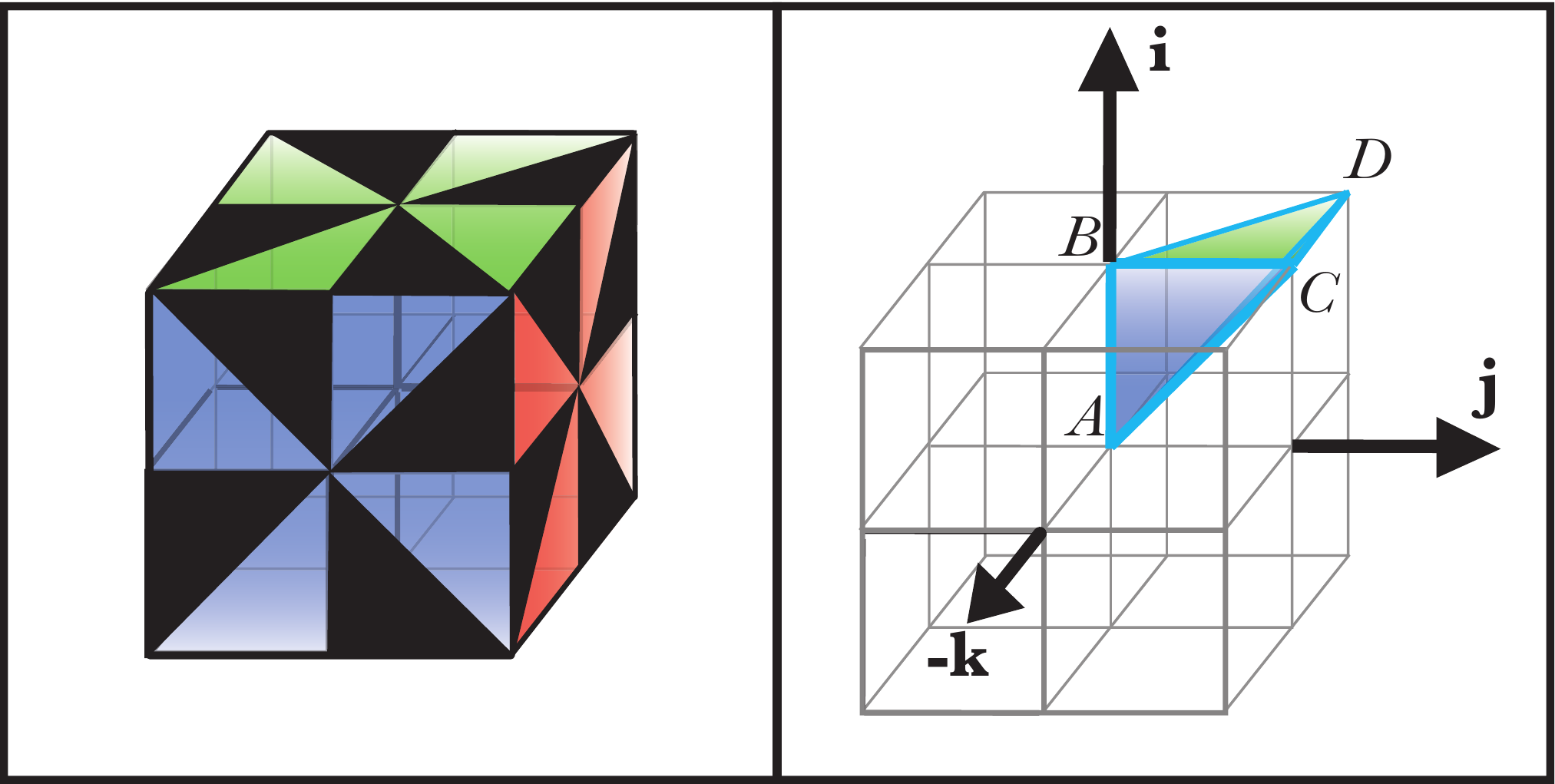}
\begin{center}
{{\bf Figure 8.} The Coxeter decomposition into 48 tetrahedra of a cube in the Euclidean 3-space.}\end{center}
\end{figure}

\noi The Euclidean tetrahedron $\Delta_{\mathfrak{L}}^{e}$ is the standard Coxeter's
3-simplex $\triangle(4,3,4)$, see \cite{C}. The Euclidean tessellation
whose cells are isometric copies of the tetrahedron with Coxeter symbol $\triangle(4,3,4)$ is the refinament obtained by
means of the barycentric subdivision of the classic tessellation by cubes
of the Euclidean 3-space whose Schl\"afli symbol is $\{4,3,4\}$. Each cube is divided into 48 tetrahedra of type $\triangle(4,3,4)$. The Schl\"afli symbol of a cube is $\{4,3\}$.This symbol means that the faces of a cube are squares with Schl\"afli symbol $\{4\}$ and that the link of each vertex is an equilateral triangle with Schl\"afli symbol $\{3\}$. The symbol of the tessellation  $\{4,3,4\}$ of the Euclidean 3-space means that the 3-dimensional cells are cubes with Schl\"afli symbol $\{4,3\}$ and that the link or verticial figure of each vertex in the tessellation  $\{4,3,4\}$ is an octahedron  with Schl\"afli symbol $\{3,4\}$.

\noi We can compute the dihedral angles of $\triangle(4,3,4)$ by
means of $\{4,3,4\}$. The dihedral angles at the edges $AC, CB$ and $DB$
are $\pi/2$ because the incident tetrahedra at these triangles are
orthogonal. The dihedral angles at $AB$ and $CD$ is $\pi/4$ because
there are eight tetrahedra around these edges in $\{4,3,4\}$. Finally,
the dihedral angle at $AD$ is $\pi/3$ because there are six tetrahedra
with a common vertex $D$ of the cube, two for each square at the
corner of the cube $\mathcal{C}$.

\noi The dihedral angle of $\Delta_{\mathfrak{L}}$ at the triangle $BCD$ is the angle subtended
by the hyperplane $\Pi$ and the hyperplane  $\Pi_{\frac{\ii}{2}}$. The intersection of the cells of the tessellation $\mathbf{Y}_{\mathfrak{L}}$ with the 2-dimensional hyperbolic plane $\{\q= x_0+x_1\ii \in \Hy \}$ is the classic tessellation of the fundamental domains of the action of the 
2-dimensional modular group $PSL(2,\mathbb{Z})$ on a hyperbolic plane. The invariant tesellation of the fundamental domains of $PSL(2,\mathbb{Z})$ is a regular tessellation whose tiles are congruent copies of a triangle with one vertex at the point at infinity and two vertices with angles of $\pi/3$. Then we can see
that the dihedral angle at $BCD$ is $\pi/3$. See the figure 14.

\noi In short, we list the 10 dihedral angles of $\Delta_{\mathfrak{L}}$:

$$\begin{tabular}{ccccc} $\angle BCD=\pi/3$, & $\angle
  AC\infty=\pi/2$, & $\angle BC\infty=\pi/2$, \\ $\angle
  AB\infty=\pi/4$, & $\angle BD\infty=\pi/2$, & $\angle
  ABC=\pi/2$, \\$\angle AD\infty=\pi/3$, & $\angle ABD=\pi/2$, &
  $\angle ACD=\pi/2$, \\$\angle CD\infty=\pi/4.$ & & \end{tabular}$$

\noi In order to understand what is the Coxeter group corresponding to
the symmetries of $\mathbf{Y}_{\Delta_{\mathfrak{L}}}$ we need to determine the diagram of the
Coxeter group. This is a graph with labeled edges which decode the information of $\mathbf{Y}_{\Delta_{\mathfrak{L}}}$ by means of the dihedral angles of $\Delta_{\mathfrak{L}}$. For this purpose, we consider the complete graph with 5 vertices and 10 edges with vertices corresponding to the five 3-faces 
$f_1,\dots,f_5,$ of the 4-dimensional simplex $\Delta_{\mathfrak{L}}$ and edges to the intersection of two faces in $\Delta_{\mathfrak{L}}$. Given an edge of this graph with end points $f_n$ and $f_m$, $n\neq m$, we
label this edge with the natural number $p$ if the dihedral angle between the faces $f_n$ and $f_m$ in $\Delta_{\mathfrak{L}}$ is $2\pi/p$.

\noi If we eliminate the edges labeled with
the number 2, we have a path graph with 5 vertices with the numbered
labels 3, 4, 3, 4; see \cite{C}. Therefore we can identify $\Delta_{\mathfrak{L}}$ with the Coxeter 4-simplex
$\triangle(3,4,3,4)$.

\begin{definition} Let $\Gamma_{\{3,4,3,4\}}$ be the hyperbolic Coxeter group generated by reflections on the sides of $\Delta_{\mathfrak{L}}$. This group is a hyperbolic Kleinian group.

\end{definition}

\noi The union of the $6 \times 8=48$ simplexes asymptotic at $\infty$ and isometric to $\Delta_{\mathfrak{L}}$ with bases in the cube $\mathcal{C}$ is
$\mathcal{P}$. The Lipschitz fundamental domain $\mathcal{P}_{\mathfrak{L}}$ is obtained as the union of $6 \times 2=12$ simplexes asymptotic at $\infty$ and isometric to $\Delta_{\mathfrak{L}}$ with bases in the two cubes $\mathcal{C}_1$ and $\mathcal{C}_2$. 

\noi The Hurwitz modular domain $\mathcal{P}_{\mathfrak{H}}$ is obtained as the union of $4$ simplexes asymptotic at $\infty$ and isometric to $\Delta_{\mathfrak{L}}$ since $PSL(2,\mathfrak{L})$ is a subgroup of index 3 of $PSL(2,\mathfrak{H})$. 

\noi Finally, applying $24\times 48=$ 1152 elements of the group $\Gamma_{\{3,4,3,4\}}$ to $\Delta_{\mathfrak{L}}$ we obtain an union of isometric copies of $\mathcal{P}$ that forms a
24-cell which is a cell of the regular hyperbolic honeycomb $\{3,4,3,4\}$. See figure 13. There are 24 octahedra in a 24-cell and there are 48 simplexes congruent to $\Delta_{\mathfrak{L}}$ in each one octahedron. The non-compact, right-angled hyperbolic 24-cell has finite volume equal to
$4\pi^{2}/3$, (see \cite{RT}).  

\noi We can summarize these previous results in the following theorem

\begin{theorem} The Coxeter group $\Gamma_{\{3,4,3,4\}}$ of finite covolume contains as subgroups the quaternionic modular groups $PSL(2,\mathfrak{L})$ and $PSL(2,\mathfrak{H})$. We have $PSL(2,\mathfrak{L}) \subset PSL(2,\mathfrak{H}) \subset \Gamma_{\{3,4,3,4\}}$. 

\noi We have the following indices:
\begin{itemize}
\item $[\Gamma_{\{3,4,3,4\}} : PSL(2,\mathfrak{H})]=4$, 
\item $[\Gamma_{\{3,4,3,4\}}:PSL(2,\mathfrak{L})]=12$ and
\item  $[PSL(2,\mathfrak{H}):PSL(2,\mathfrak{L})]=3$.
\end{itemize}

\end{theorem}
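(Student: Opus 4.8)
The plan is to establish the chain of inclusions first and then read off the three indices from covolumes, exploiting that each fundamental domain is an explicit union of copies of the model simplex $\Delta_{\mathfrak{L}}$. \textbf{Inclusions.} First I would show that each generator of $PSL(2,\mathfrak{L})$—the translations $\tau_{\mathbf{i}},\tau_{\mathbf{j}},\tau_{\mathbf{k}}$ and the inversion $T$—and each additional generator of $PSL(2,\mathfrak{H})$—the Hurwitz rotations $D_{\mathbf{u}}$—is a symmetry of the refined tessellation $\mathbf{Y}_{\Delta_{\mathfrak{L}}}$, i.e. carries chambers to chambers. For the translations this is immediate: on the ideal boundary they act by vectors of the cubic lattice, hence preserve the Euclidean cubic tessellation $\{4,3,4\}$ and its barycentric subdivision into copies of $\triangle(4,3,4)$, and therefore preserve the cone tessellation $\mathbf{Y}_{\Delta_{\mathfrak{L}}}$. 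The rotations $D_{\mathbf{u}}$ are symmetries of the cube $\mathcal{C}$ and of its barycentric decomposition, so they again permute the simplices of $\mathbf{Y}_{\Delta_{\mathfrak{L}}}$. The inversion $T$ leaves $\Pi$ invariant and interchanges $\{|\mathbf{q}|>1\}$ with $\{|\mathbf{q}|<1\}$; since $\Delta_{\mathfrak{L}}$ has its tetrahedral face $ABCD$ in $\Pi$, the map $T$ interchanges the two chambers adjacent along each such face and so permutes chambers as well.

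\textbf{Simple transitivity.} Next I would invoke the standard fact that a Coxeter group generated by the reflections in the facets of a simplex which is a fundamental domain acts simply transitively on the chambers of the resulting tessellation, and that the full isometry group preserving the tessellation is the Coxeter group extended by the automorphisms of its diagram. The diagram of $\Delta_{\mathfrak{L}}$ is the path with edge labels $3,4,3,4$; since this labelled path differs from its reverse $4,3,4,3$, it admits no nontrivial diagram automorphism, and hence $\Gamma_{\{3,4,3,4\}}$ is exactly the full symmetry group of $\mathbf{Y}_{\Delta_{\mathfrak{L}}}$. Given any generator $g$ above, the chamber $g(\Delta_{\mathfrak{L}})$ equals $\gamma(\Delta_{\mathfrak{L}})$ for a unique $\gamma\in\Gamma_{\{3,4,3,4\}}$; then $\gamma^{-1}g$ fixes $\Delta_{\mathfrak{L}}$, hence is the identity, so $g=\gamma\in\Gamma_{\{3,4,3,4\}}$. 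Together with the inclusion $PSL(2,\mathfrak{L})\subset PSL(2,\mathfrak{H})$ already known from the definitions, this yields $PSL(2,\mathfrak{L})\subset PSL(2,\mathfrak{H})\subset\Gamma_{\{3,4,3,4\}}$.

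\textbf{Indices.} Finally, the three indices follow by comparing covolumes, using that for a finite-index inclusion of groups acting properly discontinuously with finite-volume fundamental domains the index equals the ratio of covolumes. The simplex $\Delta_{\mathfrak{L}}$ is a fundamental domain for $\Gamma_{\{3,4,3,4\}}$; by the decompositions established in this section, $\mathcal{P}_{\mathfrak{H}}$ is the union of $4$ copies of $\Delta_{\mathfrak{L}}$ and $\mathcal{P}_{\mathfrak{L}}$ the union of $12$ copies. Since $\mathcal{P}_{\mathfrak{L}}$ and $\mathcal{P}_{\mathfrak{H}}$ are fundamental domains for $PSL(2,\mathfrak{L})$ and $PSL(2,\mathfrak{H})$ by Theorem \ref{Domain}, and all covolumes are finite (the $24$-cell has finite volume $4\pi^{2}/3$ by \cite{RT}, whence $\mathrm{vol}(\Delta_{\mathfrak{L}})=4\pi^{2}/(3\cdot 1152)$), we obtain $[\Gamma_{\{3,4,3,4\}}:PSL(2,\mathfrak{H})]=4$ and $[\Gamma_{\{3,4,3,4\}}:PSL(2,\mathfrak{L})]=12$, and therefore $[PSL(2,\mathfrak{H}):PSL(2,\mathfrak{L})]=12/4=3$, consistent with the earlier proposition. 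The step I expect to demand the most care is the verification that each generator—most delicately the inversion $T$—genuinely permutes the chambers of $\mathbf{Y}_{\Delta_{\mathfrak{L}}}$, since the entire argument rests on placing $PSL(2,\mathfrak{L})$ and $PSL(2,\mathfrak{H})$ inside the symmetry group of the Coxeter tessellation before the covolume count can be applied.
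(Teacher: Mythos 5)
Your overall strategy is the same one the paper uses: the paper obtains this theorem as a summary of the Coxeter decomposition counts ($\mathcal{P}_{\mathfrak{L}}$ is $12$ copies of $\Delta_{\mathfrak{L}}$, $\mathcal{P}_{\mathfrak{H}}$ is $4$ copies, and $\Delta_{\mathfrak{L}}$ itself is a fundamental domain for $\Gamma_{\{3,4,3,4\}}$), and your covolume-ratio computation of the indices, including $[PSL(2,\mathfrak{H}):PSL(2,\mathfrak{L})]=12/4=3$ by multiplicativity, is correct once the inclusions are in place. The gap is exactly where you predicted it would be: your verification that the generators permute the chambers does not hold up, and since the whole argument rests on it, it must be repaired. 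The claim that $T$ ``interchanges the two chambers adjacent along the face $ABCD$'' is false. The isometry doing that is the reflection $\rho_{\Pi}(\mathbf{q})=\mathbf{q}/|\mathbf{q}|^{2}$, which is orientation-reversing, whereas $T\in PSL(2,\mathbb{H})$ is orientation-preserving; concretely, $T$ acts on $\Pi$ by $\mathbf{q}\mapsto\overline{\mathbf{q}}$ (fixing only the point $1$), so it sends $\Delta_{\mathfrak{L}}$, the cone from $\infty$ over $ABCD$, to the cone from $0$ over the tetrahedron with vertices $A,\overline{B},\overline{C},\overline{D}$ --- which is \emph{not} the chamber adjacent to $\Delta_{\mathfrak{L}}$ along $ABCD$. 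A related (milder) weakness affects your treatment of the translations and of $D_{\mathbf{u}}$: preserving the trace $\{4,3,4\}$ at infinity, or the barycentric subdivision of the single cube $\mathcal{C}$, does not by itself imply preservation of the four-dimensional tessellation $\mathbf{Y}_{\Delta_{\mathfrak{L}}}$, because that tessellation is not a cone over its ideal trace: it also has hemispherical walls such as $\Pi$ and its images.

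All of this can be repaired by a single device which in fact simplifies your proof: express each generator as a product of reflections in \emph{walls} of the tessellation, using the standard fact that the reflection in any wall (any hyperplane containing a facet of some chamber $\gamma(\Delta_{\mathfrak{L}})$, namely $\gamma r \gamma^{-1}$ with $r$ a generating reflection) lies in $\Gamma_{\{3,4,3,4\}}$. The five walls of $\Delta_{\mathfrak{L}}$ are $\Pi$, $\{x_1=\tfrac12\}$, $\{x_1=x_2\}$, $\{x_2=x_3\}$ and $\{x_3=0\}$; applying the generating reflections to these one sees that all coordinate hyperplanes $\{x_n=0\}$, $\{x_n=\tfrac12\}$ and all diagonal hyperplanes $\{x_m=x_n\}$ are walls. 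Then $\tau_{\mathbf{u}}$ is the product of the reflections in the two parallel walls $\{x_u=0\}$ and $\{x_u=\tfrac12\}$; the Lipschitz rotation $D_{\mathbf{i}}$ (conjugation by $\mathbf{i}$, which negates $x_2,x_3$) is the product of the reflections in $\{x_2=0\}$ and $\{x_3=0\}$; the Hurwitz rotation $D_{\omega}$ with $\omega=\tfrac12(1+\mathbf{i}+\mathbf{j}+\mathbf{k})$ cyclically permutes the coordinates and is the product of the reflections in the two diagonal walls $\{x_1=x_2\}$ and $\{x_2=x_3\}$; and $T=\rho_{\Pi}\circ\kappa$, where $\kappa(\mathbf{q})=\overline{\mathbf{q}}$ is the product of the three coordinate reflections $\{x_n=0\}$, $n=1,2,3$. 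This puts every generator, hence both modular groups, inside $\Gamma_{\{3,4,3,4\}}$ directly, and makes your simple-transitivity/diagram-automorphism step unnecessary; the index computation then goes through exactly as you wrote it.
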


\subsection{Volumes of the fundamental domains} The Coxeter decomposition of the 24-cell implies that the group of symmetries (both orientation-preserving and orientation-reversing) of the 24-cell is of order $24\times48=1152$. With the action of these 1152 symmetries the 
24-cell can be divided into 1152 congruent simplexes where each of them is congruent to a $\Delta_{\mathfrak{L}}$. One knows from \cite{RT} that the volume of the hyperbolic right-angled 24-cell is $4\pi^{2}/3$, therefore the volume of $\Delta_{\mathfrak{L}}$ is $(4\pi^{2}/3)$ divided by 1152, this is $(\pi^{2}/864)$. 
\noi Then, we have the following proposition

\begin{proposition} The volume of $\mathcal{P}_{\mathfrak{L}}$ is $12(\pi^{2}/864)=\pi^{2}/72$ and  the volume of $\mathcal{P}_{\mathfrak{H}}$ is $4(\pi^{2}/864)=\pi^{2}/216$.
\end{proposition}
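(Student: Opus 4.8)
The plan is to deduce both volumes from the single normalization supplied by Ratcliffe and Tschantz in \cite{RT} --- namely that the non-compact right-angled hyperbolic 24-cell has volume $4\pi^2/3$ --- combined with the Coxeter decomposition established in the preceding subsection. The entire argument is additivity of hyperbolic volume over a decomposition into isometric cells, so the real work is organizing the cell counts correctly rather than computing any integral.

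First I would fix the volume of the model simplex $\Delta_{\mathfrak{L}}$. The full symmetry group of the 24-cell (including the orientation-reversing symmetries) has order $24\times 48 = 1152$, and acting by this group on $\Delta_{\mathfrak{L}}$ tiles the 24-cell by $1152$ isometric copies of $\Delta_{\mathfrak{L}}$ with pairwise disjoint interiors, as recorded above. Since the shared faces are lower-dimensional and hence of measure zero, additivity of volume yields $\mathrm{vol}(\Delta_{\mathfrak{L}}) = (4\pi^2/3)/1152 = \pi^2/864$. Next I would invoke the counts from the Coxeter decomposition: $\mathcal{P}_{\mathfrak{L}}$ is the union of $6\times 2 = 12$ simplices isometric to $\Delta_{\mathfrak{L}}$ (the twelve congruent barycentric cells with bases in the two cubes $\mathcal{C}_1$ and $\mathcal{C}_2$), while $\mathcal{P}_{\mathfrak{H}}$ is the union of $4$ such simplices, consistent with the index relation $[PSL(2,\mathfrak{H}):PSL(2,\mathfrak{L})]=3$ proved earlier. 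Additivity then gives $\mathrm{vol}(\mathcal{P}_{\mathfrak{L}}) = 12\cdot \pi^2/864 = \pi^2/72$ and $\mathrm{vol}(\mathcal{P}_{\mathfrak{H}}) = 4\cdot \pi^2/864 = \pi^2/216$, as claimed.

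The one genuinely delicate point is the additivity-and-finiteness bookkeeping for the non-compact pieces, since every cell here has an ideal vertex at $\infty$. I would take care to confirm that no volume is lost or double-counted near the cusp: the finite volume of the 24-cell is obtained by a horoball truncation followed by a limit, and one must check that this truncation restricts compatibly to each $\Delta_{\mathfrak{L}}$, so that the limit defining hyperbolic volume commutes with the finite sums counting cells. Concretely, the tiling of the 24-cell by the $1152$ simplices and the decompositions of $\mathcal{P}_{\mathfrak{L}}$ and $\mathcal{P}_{\mathfrak{H}}$ all share the same cusp cross-section (a piece of the horizontal horosphere), so each truncated simplex has finite volume and the truncated identities hold exactly before passing to the limit. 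Once this compatibility is granted --- and it is, because all cells arise from the common barycentric refinement $\mathbf{Y}_{\Delta_{\mathfrak{L}}}$ --- the proposition follows by the elementary counting above.
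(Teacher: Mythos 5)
Your proposal is correct and follows essentially the same route as the paper: it fixes $\mathrm{vol}(\Delta_{\mathfrak{L}})=\pi^{2}/864$ by dividing the Ratcliffe--Tschantz volume $4\pi^{2}/3$ of the right-angled 24-cell by the $1152$ copies of $\Delta_{\mathfrak{L}}$ in its Coxeter decomposition, and then multiplies by the cell counts $12$ and $4$ for $\mathcal{P}_{\mathfrak{L}}$ and $\mathcal{P}_{\mathfrak{H}}$. Your additional verification that horoball truncation near the ideal vertices is compatible with the finite sums is a welcome extra bit of rigor, but it does not change the argument, which is the paper's own.
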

         
\section{Algebraic presentations of the quaternionic modular groups.}

\begin{figure}
\centering
\includegraphics[scale=0.25]{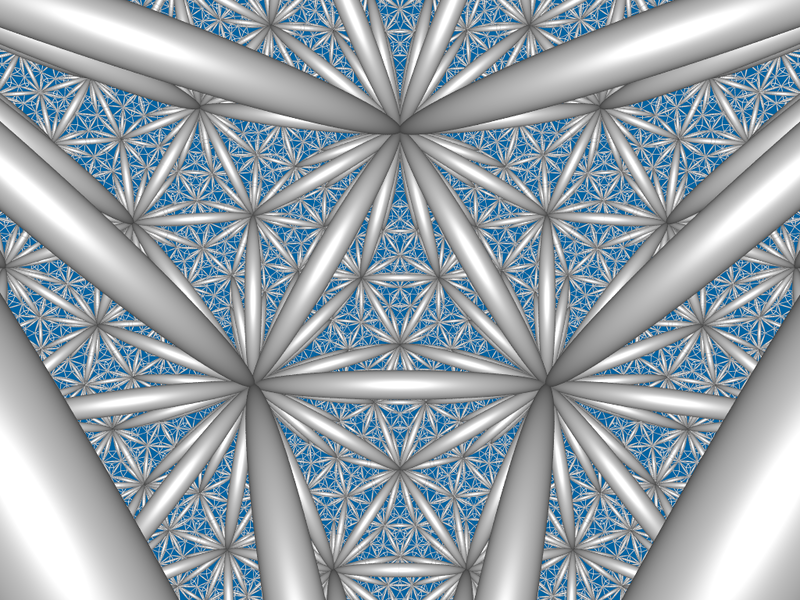}
\begin{center}
{{\bf Figure 9.} Schematic 3-dimensional version: the hyperbolic tessellation $\{3,4,4\}$ . 

This figure is courtesy of Roice Nelson \cite{RN} } 
\end{center}
\end{figure}

\begin{figure}
\centering
\includegraphics[scale=0.2]{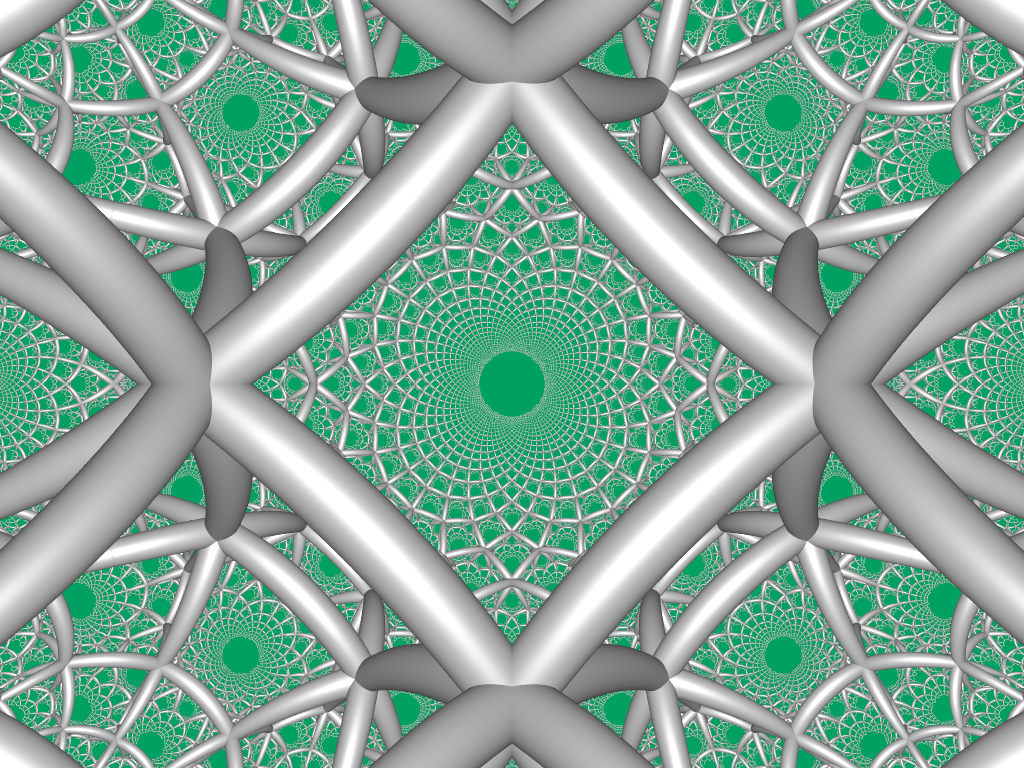}
\begin{center}
{{\bf Figure 10.} Schematic 3-dimensional version: the hyperbolic tessellation $\{4,4,3\}$\\ that its dual to $\{3,4,4\}$. 
This figure is courtesy of Roice Nelson \cite{RN}} .
\end{center}
\end{figure}

\noi We now give algebraic descriptions (following geometric approaches) of the quaternionic modular groups $PSL(2,\mathfrak{L})$ and $PSL(2,\mathfrak{H})$ in terms of  finite presentations by means of generators and relations.
For this purpose, we use the Cayley graphs of the groups that we construct from the action of the generators of $PSL(2,\mathfrak{L})$ and $PSL(2,\mathfrak{H})$ on the orbit of a regular point in the fundamental domain of $PSL(2,\mathfrak{L})$ and $PSL(2,\mathfrak{H})$. Each Cayley graph is a spiderweb of the 4-dimensional hyperbolic space and we can read the relations of the groups by considering reduced (or minimal) cycles of Cayley graph. We obtain the following presentations of the quaternionic modular groups.
      
\subsection{Algebraic presentation of the Lipschitz modular group $PSL(2,\mathfrak{L})$.}
   
We begin with the abstract presentation of the Lipschitz modular group $PSL(2,\mathfrak{L})$ generated by the three translations $\tau_{\mathbf{i}}, \tau_{\mathbf{j}}, \tau_{\mathbf{k}}$ and the inversion $T$.     
         
 \begin{theorem}\label{apL} The group $PSL(2,\mathfrak{L})$ 
 has the following finite presentation:
$$
PSL(2,\mathfrak{L})=
\left< T, \tau_{\mathbf{i}}, \tau_{\mathbf{j}}, \tau_{\mathbf{k}}\,  | \, \mathfrak{R}_{\mathfrak{L}} \right>,
$$ 
\noi where $\mathfrak{R}_{\mathfrak{L}}$ is the set of relations:

\begin{equation*}\label{R}
\mathrm{\mathfrak{R}_{\mathfrak{L}}:=}\quad\left\{\begin{array}{l}
 
  T^{2},\ [\tau_{\mathbf{i}}:\tau_{\mathbf{j}}],\ [\tau_{\mathbf{i}}:\tau_{\mathbf{k}}],
\ [\tau_{\mathbf{k}}:
\tau_{\mathbf{j}}],\\ \\ 
  \ (\tau_{\mathbf{i}}T)^{6},
\ (\tau_{\mathbf{j}}T)^{6},
\ (\tau_{\mathbf{k}}T)^{6}, \\ \\
  \ (\tau_{\mathbf{i}}\tau_{\mathbf{j}}T)^{4},
\ (\tau_{\mathbf{i}}\tau_{\mathbf{k}}T)^{4},
\ (\tau_{\mathbf{j}}\tau_{\mathbf{k}}T)^{4},\\ \\
\ (\tau_{\mathbf{i}}\tau_{\mathbf{j}}\tau_{\mathbf{k}}T)^{6},
\\ \\
  \ (\tau_{\mathbf{i}}T)^{3}\ (\tau_{\mathbf{j}}T)^{3}\ (\tau_{\mathbf{k}}T)^{3},
\ (\tau_{\mathbf{i}}T)^{3}\ (\tau_{\mathbf{k}}T)^{3}\ (\tau_{\mathbf{j}}T)^{3},
\\ \\
\ [(\tau_{\mathbf{i}}T)^{3}:T],[(\tau_{\mathbf{j}}T)^{3}:T],[(\tau_{\mathbf{k}}T)^{3}:T],
\\ \\
\ [(\tau_{\mathbf{i}}T)^{3}:\tau_{\mathbf{i}}],[(\tau_{\mathbf{j}}T)^{3}:\tau_{\mathbf{j}}],[(\tau_{\mathbf{k}}T)^{3}:\tau_{\mathbf{k}}],
\\ \\
\ (\tau_{\mathbf{u}}T)^{3}\tau_{\mathbf{w}}(\tau_{\mathbf{u}}T)^{3} \tau_{\mathbf{w}}, where \, \, \mathbf{u} \neq \mathbf{w} \, \, are\, \, units\, \, in\,\, the\,\, set \, \, \{\mathbf{i,j,k}\}.
 \end{array}
\right.
\end{equation*}

 \end{theorem}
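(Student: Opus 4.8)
The plan is to establish the presentation by the two standard inclusions for a group acting discontinuously with a fundamental domain: first that every relator in $\mathfrak{R}_{\mathfrak{L}}$ is trivial in $PSL(2,\mathfrak{L})$, and then that these relators normally generate all relations among $T,\tau_{\ii},\tau_{\jj},\tau_{\kk}$.

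\emph{Relations hold.} This direction is a direct computation from the iterate table for $L_\omega=\tau_\omega T$ of Section 3. The commutators $[\tau_{\ii}:\tau_{\jj}]$, $[\tau_{\ii}:\tau_{\kk}]$, $[\tau_{\kk}:\tau_{\jj}]$ vanish because $\Im\mathbb{H}(\mathbb{Z})$ is abelian. The relators $(\tau_{\ii}T)^6$, $(\tau_{\ii}\tau_{\jj}T)^4$, $(\tau_{\ii}\tau_{\jj}\tau_{\kk}T)^6$ are read off the table, where the cases $\omega^2=-1,-2,-3$ give $L_\omega^6=L_\omega^4=L_\omega^6=-\mathcal{I}$, hence the identity in $PSL$. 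The remaining relators are organized around the table identity $(\tau_{\mathbf u}T)^3=L_{\mathbf u}^3=D_{\mathbf u}$, a diagonal matrix. Then $[(\tau_{\mathbf u}T)^3:T]=[D_{\mathbf u},T]$ is trivial since diagonal matrices commute with $\begin{pmatrix}0&1\\1&0\end{pmatrix}$; $[(\tau_{\mathbf u}T)^3:\tau_{\mathbf u}]=[D_{\mathbf u},\tau_{\mathbf u}]$ is trivial because conjugation by $D_{\mathbf u}$ sends $\tau_\omega$ to $\tau_{\mathbf u\omega\mathbf u^{-1}}$ and fixes $\omega=\mathbf u$; the cube products follow from $D_{\ii}D_{\jj}D_{\kk}=D_{\ii\jj\kk}=D_{-1}\equiv\mathcal{I}$; and the last family follows from $\mathbf u\mathbf w\mathbf u^{-1}=-\mathbf w$ for distinct units, so that $D_{\mathbf u}\tau_{\mathbf w}D_{\mathbf u}=\tau_{-\mathbf w}=\tau_{\mathbf w}^{-1}$.

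\emph{Completeness via simple connectivity.} Fix a base point $p_0$ in the interior of $\mathcal{P}_{\mathfrak{L}}$ and form the Cayley graph with vertex set the orbit $PSL(2,\mathfrak{L})\cdot p_0$ and an edge from $g p_0$ to $g s p_0$ for each generator $s\in\{T,\tau_{\ii},\tau_{\jj},\tau_{\kk}\}$; by part (3) of Theorem \ref{Domain} the generic stabilizer is trivial, so this is genuinely the Cayley graph and it embeds in $\Hy$. A word $w$ is a relation exactly when its edge path is a closed loop. Since $\Hy$ is hyperbolic $4$-space and hence simply connected, every such loop bounds a disk, and the usual van Kampen / Poincar\'e--polyhedron argument then expresses $w$ as a product of conjugates of the boundary words of the $2$-cells of any $PSL(2,\mathfrak{L})$-invariant cell structure whose $1$-skeleton is this graph. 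It therefore suffices to identify those boundary words, i.e. the minimal cycles wrapping once around the codimension-two strata of the tessellation $\mathbf{Y}_{\mathfrak{L}}$, and to check each lies in the normal closure of $\mathfrak{R}_{\mathfrak{L}}$. These strata are of two kinds: the fixed $2$-planes of the elliptic generators and the parabolic ridges at the cusp. Around the axis of $L_{\mathbf u}$ the local rotation has order $6$, giving $(\tau_{\mathbf u}T)^6$; around the axes of $L_{\mathbf u+\mathbf v}$ and of $L_{\ii+\jj+\kk}$ one gets orders $4$ and $6$, i.e. $(\tau_{\mathbf u}\tau_{\mathbf v}T)^4$ and $(\tau_{\ii}\tau_{\jj}\tau_{\kk}T)^6$; the axes $S_{\mathbf u}$ of the order-two rotations $D_{\mathbf u}=(\tau_{\mathbf u}T)^3$ produce the relations internal to the finite group $\hat{\mathcal U}(\mathfrak{L})=\langle T,\mathcal U(\mathfrak{L})\rangle$ (the two cube products and the commutators $[(\tau_{\mathbf u}T)^3:T]$), while the way $D_{\mathbf u}$ moves neighbouring translation cells yields $[(\tau_{\mathbf u}T)^3:\tau_{\mathbf u}]$ and the final conjugation family; and the cusp ridges, being the $2$-faces of the flat $\mathbf T^3$ cross-section, have link cycles exactly the commutators $[\tau_{\mathbf u}:\tau_{\mathbf w}]$. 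Because $\mathcal{P}_{\mathfrak{L}}$ is the cone on the rhombic dodecahedron $\hat{\mathcal R}$, only finitely many ridge types occur and they are exhausted by this list.

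The delicate point, which I expect to be the core of the argument, is twofold: confirming that the enumeration of codimension-two strata is genuinely complete, and that the \emph{mixed} relators — the cube product $(\tau_{\ii}T)^3(\tau_{\jj}T)^3(\tau_{\kk}T)^3$ and the conjugation family $(\tau_{\mathbf u}T)^3\tau_{\mathbf w}(\tau_{\mathbf u}T)^3\tau_{\mathbf w}$ — together with the elliptic orders really collapse every minimal cycle, so that the interaction of $T$, the finite unitary group $\mathcal U(\mathfrak{L})$ and the lattice $\Im\mathbb{H}(\mathbb{Z})$ hides no further independent relation. As an external check on completeness one can run Reidemeister--Schreier on the inclusion $PSL(2,\mathfrak{L})\subset\Gamma_{\{3,4,3,4\}}$ of index $12$, whose Coxeter presentation is read off its diagram, and verify the result is Tietze-equivalent to $\langle T,\tau_{\ii},\tau_{\jj},\tau_{\kk}\mid\mathfrak{R}_{\mathfrak{L}}\rangle$; the matching covolume $\mathrm{vol}(\mathcal{P}_{\mathfrak{L}})=\pi^2/72$ furnishes a further consistency test.
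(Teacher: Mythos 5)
Your proposal follows essentially the same route as the paper's own proof: both directions match, namely verifying the relators by the matrix computations of Section 3 (the iterate table and the identity $(\tau_{\mathbf u}T)^3=D_{\mathbf u}$), and establishing completeness by realizing the Cayley graph on the orbit of a regular point of $\mathcal{P}_{\mathfrak{L}}$ as the 1-skeleton of the tessellation dual to $\mathbf{Y}_{\mathfrak{L}}$ and reading the relations off the minimal (reduced) cycles around codimension-two strata — the dodecagons and 24-gons at the elliptic axes, the squares at the cusp ridges, and the tetrahedron carrying the $\hat{\mathcal{U}}(\mathfrak{L})$-relations. The ``delicate point'' you flag (exhaustiveness of the cycle enumeration) is handled at the same level of detail in the paper, so your argument matches it in substance and rigor.
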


\noi In this theorem $[\cdot:\cdot]$ denotes the commutator relation.

 \begin{proof}
 \noi We denote by $G_{\mathfrak{L}}$ the Cayley directed  graph of the action of $PSL(2,\mathfrak{L})$ on $\mathbf{H}_{\mathbb{H}}^1$ generated by the three translations $\tau_{\mathbf{i}}, \tau_{\mathbf{j}}, \tau_{\mathbf{k}}$ and the inversion $T$. For each element in $PSL(2,\mathfrak{L})$ we have exactly one vertex in the Cayley graph $G_{\mathfrak{L}}$. Then there is a bijective correspondence between the set of vertices of $G_{\mathfrak{L}}$ and the orbit of a point $\q$ in the interior of a cell congruent 
to $\mathcal{P}_{\mathfrak{L}}$ in the tessellation $Y_{\mathfrak{L}}$  (see the section \ref{COXDEC}). For example, the point $\q=\frac{1}{3}(\sqrt{3}+\ii+\jj+\kk)\in \mathcal{P}_{\mathfrak{H}}\subset \mathcal{P}_{\mathfrak{L}}$ is a regular point for the orbits of $PSL(2,\mathfrak{L})$ and $PSL(2,\mathfrak{H})$.
 
 \noi The edges of the Cayley graph $G_{\mathfrak{L}}$ are the oriented segments that join a vertex corresponding with
  an element $\gamma \in PSL(2,\mathfrak{L})$  with the vertex corresponding with the
  element $g  \gamma$, for each generator $g$ of $PSL(2,\mathfrak{L})$. 
  
\noi For each
  vertex in $G_{\mathfrak{L}}$ we have two oriented edges corresponding to each generator and its inverse. Then at each vertex of $G_{\mathfrak{L}}$ there are the same number of oriented edges that start and end at the vertex. Therefore     $G_{\mathfrak{L}}$ is a regular digraph. Moreover, the number of edges with a common vertex is twice the number of generators which, in our case, is 8.
 
  \noi As we have shown, the fundamental domain $P_{\mathfrak{L}}$ of the action of $PSL(2,\mathfrak{L})$ on $\mathbf{H}_{\mathbb{H}}^1$ is a
  pyramid with the apex at the infinity and base a pair of cubes $\mathcal{C}_1$ and $\mathcal{C}_2$ that are symmetric with respect to the center of the  cube $\mathcal{C}$.  We obtain that for each 3-dimensional face of $P_{\mathfrak{L}}$ there
  is an oriented edge of $G_{\mathfrak{L}}$. In fact, $G_{\mathfrak{L}}$ is the
  1-skeleton of the dual tessellation (in the Voronoi sense) in $\Hy$ of the tessellation $\mathbf{Y}_{\mathfrak{L}}$ whose cells are fundamental domains of $PSL(2,\mathfrak{L})$. 
  
 \noi  The dual tessellation of
  $\{3,4,3,4\}$ is the regular tessellation whose Schl\"afli symbol is
  $\{4,3,4,3\}$. Geometrically the cells of $\mathbf{Y}$ are horoballs whose boundary
  are 3-dimensional horospheres tessellated by the classic tessellation by cubes of $\mathbb{R}^3$ 
  and whose Schl\"afli symbol is $\{4,3,4\}$. The angle subtended at each pair of adjacent horoballs in a squared face of each cube is $\pi /6$.

\begin{figure}
\centering
\includegraphics[scale=0.33]{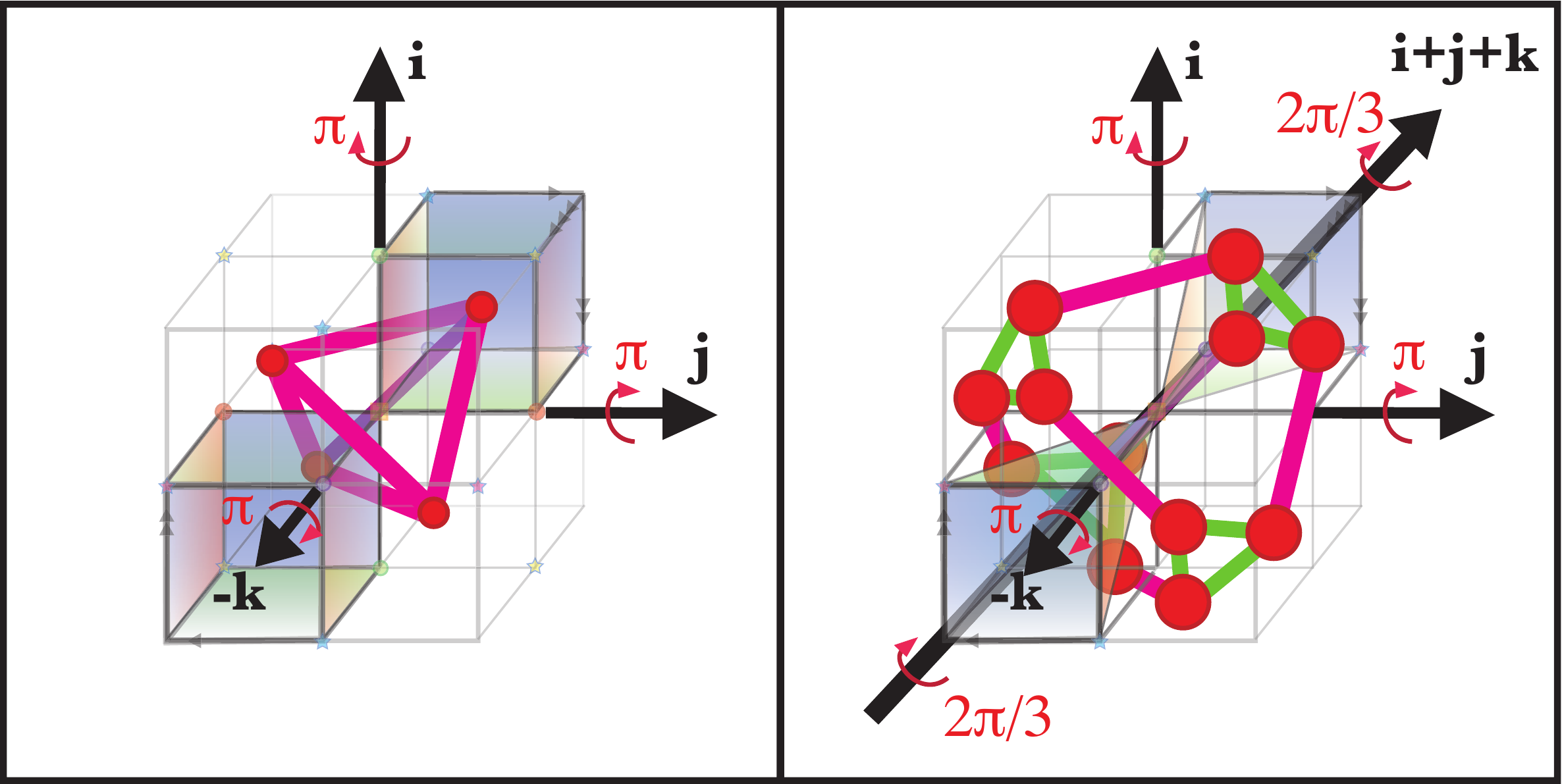}
\begin{center}
{{\bf Figure 11.} The unitary groups in the Cayley graphs of the quaternionic modular groups $PSL(2,\mathfrak{L})$ and $PSL(2,\mathfrak{H})$. The edges in red correspond to elements of order two and the edges in green correspond to elements of order three.}
\end{center}
\end{figure}

  \noi The generator $T$ is an involution; then it satisfies the relation
  $T^{2}=1$.
  
  \noi There are 3 free generators of the parabolic subgroup $\mathcal{T}_{\Im \mathbb{Z}(\mathbb{H})}$ of $PSL(2,\mathfrak{L})$:
  $\tau_{\mathbf{i}}, \tau_{\mathbf{j}}$ and $\tau_{\mathbf{k}}$ and they satisfy 3 relations:

$$[\tau_{\mathbf{i}}:\tau_{\mathbf{j}}]=\tau_{\mathbf{i}}\tau_{\mathbf{j}}\tau_{\mathbf{i}}^{-1}\tau_\mathbf{j}^{-1}=1,$$
$$ [\tau_{\mathbf{i}}:\tau_{\mathbf{k}}]=\tau_\mathbf{i}\tau_{\mathbf{k}}\tau_{\mathbf{i}}^{-1}\tau_{\mathbf{k}}^{-1}=1,$$
$$ [\tau_{\mathbf{k}}: \tau_{\mathbf{j}}]=\tau_{\mathbf{k}}\tau_{\mathbf{j}}\tau_{\mathbf{k}}^{-1}\tau_{\mathbf{j}}^{-1}=1.$$ 

\noi This group is isomorphic to $\mathbb{Z}\oplus\mathbb{Z}\oplus\mathbb{Z}$. The Cayley graph of $\mathcal{T}_{\Im \mathbb{Z}(\mathbb{H})}$ is the 1-skeleton of the classical regular cubic tessellation $\{4,3,4\}$ of $\mathbb{R}^3$ that we identify with the horosphere centered at the point at infinity $\Re(\q)=c$.
\noi Each of these relations is a cycle in $G_{\mathfrak{L}}$ of order
4. Geometrically these cycles are squares around the triangles that
are the 2-faces of the cells that all are congruent to the 24-cell $\{3,4,3\}$ of the
tessellation $\{3,4,3,4\}$. Unions of these one dimensional squares form
the 1-skeleton of cubes $\{4,3\}$ in $G_{\mathfrak{L}}$. Each cube is the link in $\{3,4,3\}$
of an ideal vertex of $\{3,4,3,4\}$ and is joined to another cube by 
the involution $T$. Then, in $G_{\mathfrak{L}}$ we have copies as non-labeled subgraphs
of the 1-skeleton of $\{4,3,4\}$ for each ideal vertex of
$\{3,4,3,4\}$.

\noi The Cayley graph $G_{\mathfrak{L}}$ is the 1-skeleton of the
tessellation which is dual to the tessellation  $\mathbf{Y}_{\mathfrak{L}}$ of $\mathbf{H}_{\mathbb{H}}^{1}$ whose cells are obtained as subsets of tetrahedrons  by the symmetric subdivision of the cells of $\{3, 4, 3, 4\}$.

\noi The 4-cells associated to $G_{\mathfrak{L}}$ in $\mathbf{H}_{\mathbb{H}}^{1}$ are of many types: the horoballs
whose Euclidean boundary is tessellated by $\{4,3,4\}$ and the compact
hyperbolic polytopes that are obtained from the truncation of the vertices
of $\mathcal{P}_\mathfrak{L}$. The relations of the presentation of $PSL(2,\mathfrak{L})$ are the reduced
cycles on the boundary of these polytopes. This give us seven new relations that involve  the involution $T$
and each one of the free generators $\tau_{\mathbf{i}},\tau_{\mathbf{j}},\tau_{\mathbf{k}}$:
$$ (\tau_{\mathbf{i}}T)^{6}=(\tau_{\mathbf{j}}T)^{6}= (\tau_{\mathbf{k}}T)^{6}=1,$$
$$  (\tau_{\mathbf{i}}\tau_{\mathbf{j}}T)^{4}= (\tau_{\mathbf{i}}\tau_{\mathbf{k}}T)^{4}= (\tau_{\mathbf{j}}\tau_{\mathbf{k}}T)^{4}= (\tau_{\mathbf{i}}\tau_{\mathbf{j}}\tau_{\mathbf{k}}T)^{6}=1.
$$

\noi The first six relations correspond to the cycles which are dodecagons in $G_{\mathfrak{L}}$ and which are obtained by
bitruncation of the triangular 2-faces of the regular tessellation $\{3,4,3,4\}$. They correspond to the isotropy groups of the square faces and the edges of $\mathcal{C}$. The last relation can be read from a polygon with 24 edges which corresponds to the isotropy group of the vertices of $\mathcal{C}$.

\noi Finally we consider the action of the unitary group $\mathcal{U}(\mathfrak{L})$. This group divides the pyramid $\mathcal{P}$ and the 3-dimensional cube $\mathcal{C}$ that is the base of $\mathcal{P}$ in eight pyramids and eight cubes. Then the fundamental domain $\mathcal{P}_{\mathfrak{L}}$ of $PSL(2,\mathfrak{L})$ is the union of two pyramids. We defined the fundamental domain $\mathcal{P}_\mathfrak{L}$ of $PSL(2,\mathfrak{L})$ as two pyramids whose bases are two cubes diametrically opposed by 1. 

\noi The unitary group $\mathcal{U}(\mathfrak{L})$ has order 4. Then we have four points in $\mathcal{P}$ as the orbit of a point in $\mathcal{P}_\mathfrak{L}$ by the action of $\mathcal{U}(\mathfrak{L})$. We have a representation of this group in the Cayley graph $G_{\mathfrak{L}}$ as a tetrahedron whose vertices are the involution elements $(\tau_{\mathbf{i}}  T)^3, (\tau_{\mathbf{j}}  T)^3$ and $(\tau_{\mathbf{k}}  T)^3$ in $PSL(2,\mathfrak{L})$ which correspond to $D_{\mathbf{i}},D_{\mathbf{j}},D_{\mathbf{k}}$ in $\mathcal{U}(\mathfrak{L})$, respectively . We have two more relations from the triangular sides of the tetrahedrons:
$$ \ (\tau_{\mathbf{i}}  T)^{3}\ (\tau_{\mathbf{j}}  T)^{3}\ (\tau_{\mathbf{k}}  T)^{3},
\ (\tau_{\mathbf{i}}  T)^{3}\ (\tau_{\mathbf{k}}  T)^{3}\ (\tau_{\mathbf{j}}  T)^{3}.$$

\noi Straight forward computations show the following six commuting relations:
 $$[D_{\mathbf{i}}:T],[D_{\mathbf{j}}:T],[D_{\mathbf{k}}:T],$$
 $$[D_{\mathbf{i}}:\tau_{\mathbf{i}}],[D_{\mathbf{j}}:\tau_{\mathbf{j}}],[D_{\mathbf{k}}:\tau_{\mathbf{k}}],$$
 and the following six anti-commuting relations:
 $$D_{\mathbf{u}}\tau_{\mathbf{w}}D_{\mathbf{u}} \tau_{\mathbf{w}}=1$$
 where $\mathbf{u}$ and $\mathbf{w}$ are two different units in the set $\{\mathbf{i,j,k}\}$.

\noi In this way we obtain the desired abstract presentation of $PSL(2,\mathfrak{L})$.
\end{proof} 
        
\subsection{Algebraic presentation of the Hurwitz modular group $PSL(2,\mathfrak{H})$.}

We give now an abstract presentation of the Hurwitz modular group $PSL(2,\mathfrak{H})$ generated by three free translations $\tau_i$, one inversion $T$ and the unitary Hurwitz group $\mathcal{U}(\mathfrak{H})$. We remember that $\mathcal{U}(\mathfrak{H})$ is a group of order 12 isomorphic to the tetrahedral group. Let 
$$
\omega_{\mathbf{1}}=\frac{1}{2}(1+\mathbf{i}+\mathbf{j}+\mathbf{k}), \, \, \omega_{\mathbf{i}}=\frac{1}{2}(1+\mathbf{i}-\mathbf{j}-\mathbf{k}), \, \,\omega_{\mathbf{j}}=\frac{1}{2}(1-\mathbf{i}+\mathbf{j}-\mathbf{k}), \, \,\omega_{\mathbf{k}}=\frac{1}{2}(1-\mathbf{i}-\mathbf{j}+\mathbf{k}).$$
\noi We remember that $D_{\mathbf{u}}=\left(\begin{array}{cc} \mathbf{u} & 0 \\ 0 & \mathbf{u}\end{array}\right)$, where $\mathbf{u}\in \mathfrak{H}_{u}$. We obtain the following presentation of $PSL(2,\mathfrak{H})$.

     \begin{theorem}

$$
PSL(2,\mathfrak{H})=
\left< T, \tau_{\mathbf{i}}, \tau_{\mathbf{j}}, \tau_{\mathbf{k}}, D_{\omega_{1}},D_{\omega_{\mathbf{i}}},D_{\omega_{\mathbf{j}}}, D_{\omega_{\mathbf{k}}} \,  | \, \mathfrak{R}_{\mathfrak{H}} \right>,
$$ 
\noi where $\mathfrak{R}_{\mathfrak{H}}$ is the set of relations:
  
  \begin{equation*}\label{R}
\mathrm{\mathfrak{R}_{\mathfrak{H}}:=}\quad\left\{\begin{array}{l}
 
  T^{2},\ [\tau_{\mathbf{i}}:\tau_{\mathbf{j}}],\ [\tau_{\mathbf{i}}:\tau_{\mathbf{k}}],
\ [\tau_{\mathbf{k}}:
\tau_{\mathbf{j}}],\\ \\ 
  \ (\tau_{\mathbf{i}}T)^{6},
\ (\tau_{\mathbf{j}}T)^{6},
\ (\tau_{\mathbf{k}}T)^{6}, \\ \\
  \ (\tau_{\mathbf{i}}\tau_{\mathbf{j}}T)^{4},
\ (\tau_{\mathbf{i}}\tau_{\mathbf{k}}T)^{4},
\ (\tau_{\mathbf{j}}\tau_{\mathbf{k}}T)^{4},\\ \\
\ (\tau_{\mathbf{i}}\tau_{\mathbf{j}}\tau_{\mathbf{k}}T)^{6},
\\ \\
  \ (\tau_{\mathbf{i}}T)^{3}\ (\tau_{\mathbf{j}}T)^{3}\ (\tau_{\mathbf{k}}T)^{3},
\ (\tau_{\mathbf{i}}T)^{3}\ (\tau_{\mathbf{k}}T)^{3}\ (\tau_{\mathbf{j}}T)^{3},
\\ \\
\ [(\tau_{\mathbf{i}}T)^{3}:T],[(\tau_{\mathbf{j}}T)^{3}:T],[(\tau_{\mathbf{k}}T)^{3}:T],
\\ \\
\ [(\tau_{\mathbf{i}}T)^{3}:\tau_{\mathbf{i}}],[(\tau_{\mathbf{j}}T)^{3}:\tau_{\mathbf{j}}],[(\tau_{\mathbf{k}}T)^{3}:\tau_{\mathbf{k}}],
\\ \\
\ D_{\mathbf{u}}\tau_{\mathbf{w}}D_{\mathbf{u}} \tau_{\mathbf{w}}, where \, \, \mathbf{u} \neq \mathbf{w} \, \, are\, \, units\, \, in\,\, the\,\, set \, \, \{\mathbf{i,j,k}\}
\\ \\
\ [D_{\omega_1}:T],[D_{\omega_{\mathbf{i}}}:T],[D_{\omega_{\mathbf{j}}}:T],[D_{\omega_{\mathbf{k}}}:T],
\\ \\
\ (D_{\omega_1})^{3}, (D_{\omega_{\mathbf{i}}})^{3},(D_{\omega_{\mathbf{j}}})^{3},(D_{\omega_{\mathbf{k}}})^{3},
\\ \\
\ D_{\omega_1} D_{\mathbf{i}} D_{\omega_{\mathbf{i}}} D_{\mathbf{j}} D_{\omega_{\mathbf{k}}} D_{\mathbf{k}},
\\ \\
\  D_{\mathbf{k}} D_{\omega_{\mathbf{k}}} D_{\mathbf{i}} D_{\omega_{\mathbf{j}}}^{-1} D_{\mathbf{j}} D_{\omega_1},
\\ \\
\  D_{\mathbf{i}} D_{\omega_{\mathbf{k}}} D_{\mathbf{j}} D_{\omega_{\mathbf{i}}} D_{\mathbf{k}} D_{\omega_{\mathbf{j}}},
\\ \\
\  D_{\mathbf{j}} D_{\omega_{\mathbf{j}}}^{-1} D_{\mathbf{k}} D_{\omega_{\mathbf{i}}} D_{\mathbf{i}} D_{\omega_1}.
 \end{array}
\right.
\end{equation*}

\end{theorem}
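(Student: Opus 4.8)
The plan is to mirror the proof of Theorem \ref{apL}, building the Cayley digraph $G_{\mathfrak{H}}$ of $PSL(2,\mathfrak{H})$ for the generating set $\{T,\tau_{\mathbf{i}},\tau_{\mathbf{j}},\tau_{\mathbf{k}}, D_{\omega_1}, D_{\omega_{\mathbf{i}}}, D_{\omega_{\mathbf{j}}}, D_{\omega_{\mathbf{k}}}\}$ and reading the defining relations off its reduced cycles. Throughout I abbreviate $D_{\mathbf{u}} := (\tau_{\mathbf{u}}T)^3$ for $\mathbf{u}=\mathbf{i},\mathbf{j},\mathbf{k}$, exactly as in the Lipschitz case; with this convention the first block of $\mathfrak{R}_{\mathfrak{H}}$ is literally $\mathfrak{R}_{\mathfrak{L}}$. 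As always the argument splits into two halves: first that every listed word is a relation in $PSL(2,\mathfrak{H})$, so that the natural map $\pi$ from the abstractly presented group $\Gamma$ onto $PSL(2,\mathfrak{H})$ is well defined and surjective; and then that the relations are complete, so that $\pi$ is injective. The organizing principle for the second half is that $PSL(2,\mathfrak{L})$ sits inside $PSL(2,\mathfrak{H})$ as a normal subgroup of index three with quotient $\Z/3\Z$ generated by the class of $D_{\omega_1}$.

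For the easy direction, the relations of the first block hold by Theorem \ref{apL} applied to the subgroup $PSL(2,\mathfrak{L})$. For the new relations: $(D_{\omega})^3=1$ because each $\omega$ with $\Re(\omega)=\tfrac12$ satisfies $\omega^3=-1$ (it equals $\cos(\pi/3)+\sin(\pi/3)\hat n$ for a unit imaginary $\hat n$), whence $D_\omega^3=D_{\omega^3}=D_{-1}=\mathrm{Id}$; the relations $[D_\omega:T]=1$ hold because conjugation $\mathbf{q}\mapsto\omega\mathbf{q}\omega^{-1}$ commutes with inversion $\mathbf{q}\mapsto\mathbf{q}^{-1}$; and the four product words are checked by direct quaternion multiplication, verifying that the corresponding products of Hurwitz units equal $\pm1$ so the diagonal transformations compose to the identity in $PSL(2,\mathbb{H})$. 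These same computations show that the order-three relations together with the four products present the Hurwitz unitary group $\mathcal{U}(\mathfrak{H})\cong A_{4}$, with the $D_\omega$ the four vertex-rotations and the $D_{\mathbf{u}}$ the three edge-involutions of the tetrahedron.

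For completeness I would identify $G_{\mathfrak{H}}$ with the $1$-skeleton of the tessellation dual to $\mathbf{Y}_{\mathfrak{H}}$, whose cells are copies of $\mathcal{P}_{\mathfrak{H}}$, so that every reduced cycle records a relation, and sort the cycles into two families. The cycles living on the horospherical cubic pattern $\{4,3,4\}$ at the cusp and on the faces of the ambient honeycomb $\{3,4,3,4\}$ are exactly those already present in $G_{\mathfrak{L}}$, hence consequences of $\mathfrak{R}_{\mathfrak{L}}\subset\mathfrak{R}_{\mathfrak{H}}$ by the proof of Theorem \ref{apL}. The genuinely new cycles encircle the fixed point $1$, where the isotropy jumps from $\mathcal{U}(\mathfrak{L})$ to $\hat{\mathcal{U}}(\mathfrak{H})=\Z/2\Z\oplus\mathcal{U}(\mathfrak{H})$ of order $24$; the minimal cycles of the corresponding finite subgraph (the Cayley graph of $\hat{\mathcal{U}}(\mathfrak{H})$, with order-two and order-three edges coloured as in Figure 11) are precisely the four product words together with the $(D_\omega)^3$ and $[D_\omega:T]$ relations. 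A clean way to package this is Reidemeister--Schreier with transversal $\{1,D_{\omega_1},D_{\omega_1}^2\}$: the product relations rewrite $D_{\omega_{\mathbf{i}}},D_{\omega_{\mathbf{j}}},D_{\omega_{\mathbf{k}}}$ in terms of $D_{\omega_1}$ and elements of $\Gamma_{\mathfrak{L}}:=\langle T,\tau_{\mathbf{i}},\tau_{\mathbf{j}},\tau_{\mathbf{k}}\rangle$, giving $[\Gamma:\Gamma_{\mathfrak{L}}]\le 3$, and rewriting the relators of $\Gamma$ on this transversal should recover exactly $\mathfrak{R}_{\mathfrak{L}}$; then Theorem \ref{apL} gives $\Gamma_{\mathfrak{L}}\cong PSL(2,\mathfrak{L})$, and the coincidence of indices $[\Gamma:\Gamma_{\mathfrak{L}}]=[PSL(2,\mathfrak{H}):PSL(2,\mathfrak{L})]=3$ forces $\pi$ to be injective.

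The main obstacle is this completeness step, and within it the delicate point is the interaction of the order-three rotations with the translation lattice. One must verify that conjugation by $D_{\omega_1}$ permutes the translation generators $\tau_{\mathbf{i}},\tau_{\mathbf{j}},\tau_{\mathbf{k}}$ cyclically (reflecting $\omega_1\mathbf{i}\omega_1^{-1}=\mathbf{j}$, and so on) and that this conjugation rule is a \emph{consequence} of the listed relators rather than an independent relation; equivalently, that the Reidemeister--Schreier rewriting of $\mathfrak{R}_{\mathfrak{H}}$ introduces no relations in $\Gamma_{\mathfrak{L}}$ beyond those of $\mathfrak{R}_{\mathfrak{L}}$. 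Establishing that the four chosen product words, and not a larger family, already generate every cycle of the order-$24$ isotropy subgraph and splice correctly with the cusp and honeycomb cycles is the heart of the proof.
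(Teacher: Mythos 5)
Your plan is indeed the paper's plan: the paper also builds the Cayley graph $G_{\mathfrak{H}}$, carries the thirteen Lipschitz relations over from the Lipschitz theorem, and reads the new relators off the truncated tetrahedron realizing the unitary part (Figure 11); the Reidemeister--Schreier completeness mechanism is your own addition, since the paper simply asserts that the reduced cycles of the dual tessellation exhaust all relations. But both halves of your argument have genuine gaps, and your own $\Z/3\Z$ bookkeeping is precisely the tool that exposes them. The soundness step fails as asserted: you claim the four hexagonal words are ``checked by direct quaternion multiplication'' to give $\pm 1$, but since $D_{\omega_1}^{-1}D_{\omega_{\mathbf{i}}}=D_{\overline{\omega}_1\omega_{\mathbf{i}}}=D_{\mathbf{j}}$ (and similarly $D_{\omega_1}^{-1}D_{\omega_{\mathbf{j}}}=D_{\mathbf{k}}$, $D_{\omega_1}^{-1}D_{\omega_{\mathbf{k}}}=D_{\mathbf{i}}$), all four $D_{\omega}$ lie in one and the same nontrivial coset of $PSL(2,\mathfrak{L})\trianglelefteq PSL(2,\mathfrak{H})$, so every relator must have total $D_{\omega}$-exponent divisible by $3$. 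The second and fourth hexagonal words have exponent sum $1-1+1=1$, hence cannot be relations, under any composition convention, because the obstruction is read in the abelian quotient $PSL(2,\mathfrak{H})/PSL(2,\mathfrak{L})\cong\Z/3\Z$. Concretely, $\mathbf{k}\,\omega_{\mathbf{k}}\,\mathbf{i}\,\overline{\omega}_{\mathbf{j}}\,\mathbf{j}\,\omega_1=-\omega_1$, so the word $D_{\mathbf{k}} D_{\omega_{\mathbf{k}}} D_{\mathbf{i}} D_{\omega_{\mathbf{j}}}^{-1} D_{\mathbf{j}} D_{\omega_1}$ equals $D_{\omega_1}$, an element of order three, not the identity; it does become a relation when $D_{\omega_{\mathbf{j}}}^{-1}$ is replaced by $D_{\omega_{\mathbf{j}}}$. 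Carrying out the computation you promised would have uncovered this; asserting it instead leaves a false step in the proof.

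The second gap is fatal to the completeness mechanism you propose, even after the sign correction above: the conjugation rules that you yourself flag as ``the heart of the proof'' are provably \emph{not} consequences of the listed relators, so the coset table over $\{1,D_{\omega_1},D_{\omega_1}^2\}$ cannot close. Pass to the quotient of the abstractly presented group $\Gamma$ by the normal closure of $\{T,\tau_{\mathbf{i}},\tau_{\mathbf{j}},\tau_{\mathbf{k}}\}$: every relator except the cubes $(D_{\omega})^3$ and the hexagons dies there (each $D_{\mathbf{u}}$ is by definition the word $(\tau_{\mathbf{u}}T)^3$, and each $[D_{\omega}:T]$ dies with $T$), and the corrected hexagons reduce to $abd=dca=dbc=cba=1$ in the letters $a=D_{\omega_1}$, $b=D_{\omega_{\mathbf{i}}}$, $c=D_{\omega_{\mathbf{j}}}$, $d=D_{\omega_{\mathbf{k}}}$. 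Eliminating $c=a^{-1}b^{-1}$ and $d=b^{-1}a^{-1}$ leaves $\langle a,b \mid a^3,\ b^3,\ (b^{-1}a)^2\rangle\cong A_4$, of order $12$, whereas the corresponding quotient of the target group is $PSL(2,\mathfrak{H})/PSL(2,\mathfrak{L})\cong\Z/3\Z$. Hence $[\Gamma:\Gamma_{\mathfrak{L}}]\geq 12>3$, your index comparison collapses, and the natural surjection $\Gamma\to PSL(2,\mathfrak{H})$ has nontrivial kernel: relators such as $D_{\omega_1}\tau_{\mathbf{i}}D_{\omega_1}^{-1}\tau_{\mathbf{j}}^{-1}$ (equivalently, relators identifying $D_{\omega_1}^{-1}D_{\omega_{\mathbf{i}}}$ with $D_{\mathbf{j}}$) are independent of $\mathfrak{R}_{\mathfrak{H}}$ and must be added before any completeness argument can go through. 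The paper's proof, which stops at reading cycles off the truncated tetrahedron, shares this hole; your proposal localizes the difficulty correctly but does not, and with the given relator set cannot, resolve it.
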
 

 \begin{proof}

\noi We denote by $G_{\mathfrak{H}}$ the Cayley graph of the action of $PSL(2,\mathfrak{H})$ on $\mathbf{H}_{\mathbb{H}}^1$ generated by three translations $\tau_i$, one inversion $T$ and four matrices in the Hurwitz unitary group $U(\mathfrak{L})$ associated to rotations of order 3.

\noi As $PSL(2,\mathfrak{L})$ is a subgroup of $PSL(2,\mathfrak{H})$ and moreover all the generators of $PSL(2,\mathfrak{L})$ are generators of $PSL(2,\mathfrak{H})$ we have the same thirteen relations of $PSL(2,\mathfrak{L})$ that form the set $\mathfrak{R}_{\mathfrak{L}}$ in the theorem \ref{apL}. 

\noi For the last relations of $\mathfrak{R}_{\mathfrak{H}}$ we need to consider the action of the Hurwitz unitary group $\mathcal{U}(\mathfrak{H})$. This group divides the pyramid $\mathcal{P}$ and the cube $\mathcal{C}$ that is the base of $\mathcal{P}$ in 24 4-dimensional non-compact pyramids and 24 3-dimensional compact square pyramids in $\mathcal{C}$. The fundamental domain of $PSL(2,\mathfrak{H})$ is the union of two pyramids. We defined the fundamental domain $\mathcal{P}_\mathfrak{H}$ of $PSL(2,\mathfrak{H})$ as two 4-dimensional pyramids whose bases are two 3-dimensional square pyramids diametrically opposite in $\mathcal{C}$. 

\noi The unitary group $\mathcal{U}(\mathfrak{H})$ has order 12. Then we have twelve points in $\mathcal{P}$ as the orbit of a point in $\mathcal{P}_\mathfrak{H}$ by the action of $\mathcal{U}(\mathfrak{H})$. We have a representation of this group in the Cayley graph $G_{\mathfrak{H}}$ as a truncated tetrahedron whose edges are of two types: (1) the edges of the tetrahedron, these are associated to the involution elements in $\mathcal{U}(\mathfrak{L})\subset PSL(2,\mathfrak{L}) \subset PSL(2,\mathfrak{H})$  and (2) the edges of the triangular links of the vertices of the tetrahedron, these are associated to the elements of order 3 in $\mathcal{U}(\mathfrak{H})\subset PSL(2,\mathfrak{H})$. See figure 11. 

\noi We choose a set of seven generators of $\mathcal{U}(\mathfrak{H})$ as generators of $PSL(2,\mathfrak{H})$. First we choose $D_{\mathbf{i}},D_{\mathbf{j}},D_{\mathbf{k}} \in  
\mathcal{U}(\mathfrak{H})$. They generate $\mathcal{U}(\mathfrak{L})$ and they satisfy the relations: $D_{\mathbf{i}}=(\tau_{\mathbf{i}}  T)^3, D_{\mathbf{j}}=(\tau_{\mathbf{j}}  T)^3$ and $D_{\mathbf{k}}=(\tau_{\mathbf{k}}  T)^3$.

\noi Straight forward computations show the following seven commuting relations:
 $$[D_{\mathbf{i}}:\tau_{\mathbf{i}}],[D_{\mathbf{j}}:\tau_{\mathbf{j}}],[D_{\mathbf{k}}:\tau_{\mathbf{k}}],$$
 and the following six anti-commuting relations:
 $$D_{\mathbf{u}}\tau_{\mathbf{w}}D_{\mathbf{u}} \tau_{\mathbf{w}}=1$$
 where $\mathbf{u}$ and $\mathbf{w}$ are two different units in the set $\{\mathbf{i,j,k}\}$.

\noi Next, we choose $D_{\omega_{1}},D_{\omega_{\mathbf{i}}},D_{\omega_{\mathbf{j}}}$ and $D_{\omega_{\mathbf{k}}} \in \mathcal{U}(\mathfrak{H})$ where 
$$\omega_{\mathbf{1}}=\frac{1}{2}(1+\mathbf{i}+\mathbf{j}+\mathbf{k}), \, \, \omega_{\mathbf{i}}=\frac{1}{2}(1+\mathbf{i}-\mathbf{j}-\mathbf{k}), \, \, \omega_{\mathbf{j}}=\frac{1}{2}(1-\mathbf{i}+\mathbf{j}-\mathbf{k}), \, \, \omega_{\mathbf{k}}=\frac{1}{2}(1-\mathbf{i}-\mathbf{j}+\mathbf{k}).$$ 

\noi The quaternion $\omega_{1}$ is the vertex of $\mathcal{C}$ which has all of its coefficients positive. There are three square faces of $\mathcal{C}$ which have $\omega_{1}$ as a vertex. The three quaternions $\omega_{\mathbf{i}}, \omega_{\mathbf{j}},\omega_{\mathbf{k}}$ are the three opposite vertices of $\omega_{1}$ in these three square faces of $\mathcal{C}$. Moreover, the four quaternions $\omega_{1}, \omega_{\mathbf{i}}, \omega_{\mathbf{j}},\omega_{\mathbf{k}}$ are the vertices of a regular tetrahedron in $\Hy$. See figure 4 or 11.

\noi We have the following new relations: 

$$ (D_{\omega_1})^{3}=(D_{\omega_{\mathbf{i}}})^{3}=(D_{\omega_{\mathbf{j}}})^{3}=(D_{\omega_{\mathbf{k}}})^{3}=1.$$ 

\noi We have the four commuting relations:
$$
[D_{\omega_1}:T],[D_{\omega_{\mathbf{i}}}:T],[D_{\omega_{\mathbf{j}}}:T],[D_{\omega_{\mathbf{k}}}:T].$$

\noi Finally we have the last four relations, one from each of the four hexagonal sides of the truncated tetrahedron:

$$  D_{\omega_1} D_{\mathbf{i}} D_{\omega_{\mathbf{i}}} D_{\mathbf{j}} D_{\omega_{\mathbf{k}}} D_{\mathbf{k}} = D_{\mathbf{k}} D_{\omega_{\mathbf{k}}} D_{\mathbf{i}} D_{\omega_{\mathbf{j}}}^{-1} D_{\mathbf{j}} D_{\omega_1}=1
$$
$$
  D_{\mathbf{i}} D_{\omega_{\mathbf{k}}} D_{\mathbf{j}} D_{\omega_{\mathbf{i}}} D_{\mathbf{k}} D_{\omega_{\mathbf{j}}}= D_{\mathbf{j}} D_{\omega_{\mathbf{j}}}^{-1} D_{\mathbf{k}} D_{\omega_{\mathbf{i}}} D_{\mathbf{i}} D_{\omega_1}=1.
$$

 \noi In this way we obtain the desired abstract presentation of $PSL(2,\mathfrak{H})$.
\end{proof} 

\noi The presentations given above are not very efficient and it is possible to reduce the number of generators and relations. However with less generators the relations become more complicated. 

\noi For instance, for the Hurwitz group we can obtain a simpler presentation with three generators by considering the isometries of $\Hy$ corresponding to the three matrices: $T=\left(\begin{array}{cc} 0 & 1 \\ 1 & 0\end{array}\right), \tau_{\mathbf{i}}=\left(\begin{array}{cc} 1 & \mathbf{i} \\ 0 & 1 \end{array}\right)$ and $D_{\omega_{\mathbf{1}}}=\left(\begin{array}{cc} \omega_{\mathbf{1}} & 0 \\ 0 & \omega_{\mathbf{1}} \end{array}\right)$.
\noi With these generators, using only $D_{\ii}:=(\tau_{\ii}T)^3$ and $D_{\omega_{1}}$, we have the presentation of the unitary Hurwitz group: 
$\mathcal{U}(\mathfrak{H})=\left< D_{\mathbf{i}}, D_{\omega_{1}}\,  | \, (D_{\mathbf{i}})^2= ( D_{\omega_{1}})^3= ( D_{\mathbf{i}} D_{\omega_{1}})^3\right>.$ This is the usual presentation of the tetrahedral group. Then we can write the other elements in $\mathcal{U}(\mathfrak{H})$ as combinations of $D_{\mathbf{i}}$ and $D_{\omega_{1}}$:
 $$
D_{\mathbf{k}}=D_{\omega_{1}}^2 D_{\mathbf{i}} D_{\omega_{1}},\, \,
D_{\mathbf{j}}=D_{\mathbf{i}}D_{\omega_{1}}^2 D_{\mathbf{i}} D_{\omega_{1}}, \,\, \quad \quad
D_{\omega_{\ii}}= D_{\mathbf{k}} D_{\omega_{1}},\, \,
D_{\omega_{\jj}}= D_{\mathbf{i}} D_{\omega_{1}}, \, \,
D_{\omega_{\kk}}= D_{\mathbf{j}} D_{\omega_{1}}.
 $$
 \noi The translations $\tau_{\jj}$ and $\tau_{\kk}$ are obtained using conjugation by $D_{\omega_{1}}$:
 $$
\tau_{\jj}= D_{\omega_{1}}\tau_{\ii}D_{\omega_{1}}^2,\,\,
\tau_{\kk}= D_{\omega_{1}}^2\tau_{\ii}D_{\omega_{1}}.
 $$

\section{The Lipschitz and Hurwitz quaternionic modular orbifolds.} 
 
\noi In this section we study the geometry of the quaternionic modular orbifolds associated to the action of the quaternionic modular groups $PSL(2,\mathfrak{L})$ and $PSL(2,\mathfrak{H})$. We describe the ends, the underlying spaces and the singular loci of the quaternionic modular orbifolds. Moreover we
give the local models of these singularities and the local isotropy groups. Finally we compute their orbifold Euler characteristic.

\begin{definition} Let $\mathcal{O}^4_{\mathfrak{L}}:=\Hy/PSL(2,\mathfrak{L})$ and $\mathcal{O}^4_{\mathfrak{H}}:=\Hy/PSL(2,\mathfrak{H})$ be the \emph{Lipschitz quaternionic modular orbifold} and the \emph{Hurwitz quaternionic modular orbifold}, respectively. 
\end{definition}

\noi These quaternionic modular orbifolds
are hyperbolic non-compact real 4-dimensional orbifolds of finite hyperbolic volume. Both have only one end and their singular loci has one connected component that accumulates to the cusp at infinity.  

\noi Moreover, these orbifolds are diffeomorphic to the quotient spaces of their fundamental domains $\mathcal{P}_{\mathfrak{L}}$ and $\mathcal{P}_{\mathfrak{H}}$ by the action of the modular groups $PSL(2,\mathfrak{L})$ and $PSL(2,\mathfrak{H})$ on their boundaries $\partial \mathcal{P}_{\mathfrak{L}}$ and $\partial \mathcal{P}_{\mathfrak{L}}$, respectively. Then they have the same volume as $\mathcal{P}_{\mathfrak{L}}$ and $\mathcal{P}_{\mathfrak{H}}$, respectively. By proposition 7.3 these volumes are $\pi^{2}/72$ and $\pi^{2}/216$, respectively. 

\noi Each of the quaternionic modular orbifolds $\mathcal{O}^4_{\mathfrak{L}}$ and $\mathcal{O}^4_{\mathfrak{H}}$ has only one end because $ \mathcal{P}_{\mathfrak{L}}$ and $ \mathcal{P}_{\mathfrak{H}}$ have each one end. We study the structure of the ends and we start by describing the sections of their ends and the thin and thick regions in the sense of Margulis thin-thick decomposition, see \cite{Rat} pages 654--665).

\subsection{The sections of the ends and the thin regions of $\mathcal{O}^4_{\mathfrak{L}}$ and $\mathcal{O}^4_{\mathfrak{H}}$.}

\noi For $r>1$ we denote by $\mathcal{E}^3_r$ the \emph{horosphere centered at the point at infinity} in $\Hy$ which consists of the set of points in $\Hy$ which have real part equal to $r$. Then $\mathcal{E}^3_r$ with the induced metric of $\Hy$ is isometric to the Euclidean 3-space. The affine modular groups $\mathcal{A}(\mathfrak{L})$ and $\mathcal{A}(\mathfrak{H})$ are the maximal subgroups of $PSL(2,\mathfrak{L})$ and $PSL(2,\mathfrak{H})$ that leave invariant each horospheres  $\mathcal{E}^3_r$ for any 
$r>0$. Moreover $\mathcal{A}(\mathfrak{L})$ and $\mathcal{A}(\mathfrak{H})$ are isomorphic to discrete subgroups of Euclidean orientation-preserving isometries of $\mathcal{E}^3_r$. 
 
\noi The fact that $PSL(2,\mathfrak{L})$ is a subgroup of index 3 of 
$PSL(2,\mathfrak{H})$ implies the existence of an epimorphism $\pi: \mathcal{A}(\mathfrak{H}) \to\mathcal{A}(\mathfrak{L})$ with kernel
$\Z/3\Z$. 
 
 \noi We call $\mathcal{E}_{r,\mathfrak{L}}^3$ and $\mathcal{E}_{r,\mathfrak{H}}^3$ the intersections of the fundamental domains of the quaternionic modular groups  $PSL(2,\mathfrak{L})$ and $PSL(2,\mathfrak{H})$ with $\mathcal{E}^3_r$, respectively. Then $\mathcal{E}_{r,\mathfrak{L}}^3$ and $\mathcal{E}_{r,\mathfrak{H}}^3$ are hyperbolic subsets of $\Hy$ with finite volume which are isometric to Euclidean 3-dimensional polyhedra. In the Lipschitz case it consists of a pair of cubes which are symmetric with respect to the point $r$, where $r\in\mathbb{R} 
\cap \partial{\mathcal {E}^3_{r,\mathcal{L}}}$. In the Hurwitz case it consists of a pair of square pyramids in $\mathcal{E}_{r,\mathfrak{H}}^3$ symmetric with respect to the point $r\in \R \subset \mathbb{H}$.  The orthogonal projections  into the ideal boundary of $\Hy$ of $\mathcal{E}_{r,\mathfrak{L}}^3$ and $\mathcal{E}_{r,\mathfrak{H}}^3$ are the same as the orthogonal projections of $\mathcal{P}_{\mathfrak{L}}$ and $\mathcal{P}_{\mathfrak{H}}$. There is covering map $\pi_ \mathcal{E}: \mathcal{E}_{r,\mathfrak{L}}^3 \to \mathcal{E}_{r,\mathfrak{H}}^3$ which is three to one.

\noi  Let $\mathcal{S}_{r,\mathfrak{L}}^{3}:=\mathcal{E}_r^3 / \mathcal{A}(\mathfrak{L})$ and $\mathcal{S}_{r,\mathfrak{H}}^{3}:=\mathcal{E}_r^3 / \mathcal{A}(\mathfrak{H})$. These are Euclidean 3-dimensional orbifolds of finite hyperbolic volume. A pair of fundamental domains for the actions of the corresponding affine groups on $\mathcal{E}_r^3$ are the polyhedra $\mathcal{E}_{r,\mathfrak{L}}^3$ and $\mathcal{E}_{r,\mathfrak{H}}^3$, respectively.

\noi The actions of the restrictions of  $PSL(2,\mathfrak{L})$ and $PSL(2,\mathfrak{H})$ on the boundaries $\partial \mathcal{E}_{r,\mathfrak{L}}^3$ and $\partial \mathcal{E}_{r,\mathfrak{H}}^3$, respectively give side-pairings of $\mathcal{E}_{r,\mathfrak{L}}^3$ and $\mathcal{E}_{r,\mathfrak{H}}^3$. The quotients of the side-pairing in $\mathcal{E}_{r,\mathfrak{L}}^3$ and $\mathcal{E}_{r,\mathfrak{H}}^3$ are  diffeomorphic to $\mathcal{S}_{r,\mathfrak{L}}^{3}$ and $\mathcal{S}_{r,\mathfrak{H}}^{3}$, respectively.

\noi There is an orbifold covering map $\pi_ \mathcal{S}: \mathcal{S}_{r,\mathfrak{L}}^{3} \to \mathcal{S}_{r,\mathfrak{H}}^{3}$ which is three to one.

\noi A convenient description of these Euclidean orbifolds is as follows: let $\mathbf{T}^3=\{(z_1,z_2,z_3) \in \mathbb{C}^{3} \ : \ |z_1|=|z_2|=|z_3|=1\}=\mathbb{S}^1\times\mathbb{S}^1\times\mathbb{S}^1$ be the 3-torus with its standard flat metric. The group of orientation--preserving isometries of $\mathbf{T}^3$ generated by the transformations $ F_T, F_{\omega}, F_{\ii}, F_{\jj},F_{\kk}$ given by the formulas: $F_T(z_1,z_2,z_3)=(\overline{z_1},\overline{z_2},\overline{z_3})$, $F_{\omega}(z_1,z_2,z_3)=(z_2,z_3,z_1)$, $F_\ii(z_1,z_2,z_3)=(z_1,\overline{z_2},\overline{z_3})$, $F_{\jj}(z_1,z_2,z_3)=(\overline{z_1},z_2,\overline{z_3})$ and $F_{\kk}:=F_{\jj} F_{\ii}$, is isomorphic to the group $\hat{\mathcal{U}}(\mathfrak{H})$ generated by $T$ and $\mathcal{U}(\mathfrak{H})$ (see definition 4.6 and proposition 4.7).  

\noi The group $\hat{\mathcal{U}}(\mathfrak{H})$ has as subgroups $\hat{\mathcal{U}}(\mathfrak{L})$, $\mathcal{U}(\mathfrak{H})$ and $\mathcal{U}(\mathfrak{L})$. These subgroups are generated by the sets of transformations $\{ F_T, F_{\ii}, F_{\jj},F_{\kk}\},$ $\{ F_{\omega}, F_{\ii}, F_{\jj},F_{\kk}\},$ $\{ F_{\omega}, F_{\ii}, F_{\jj},F_{\kk}\}$ and $\{ F_{\ii}, F_{\jj},F_{\kk}\}$, respectively. 

\noi For $r>0$, $\mathbf{T}^3\times \{r\}/\langle F_{\ii}, F_{\jj},F_{\kk}\rangle$ is homeomorphic to the Euclidean 3-orbifold $\mathcal{S}^3_{r,\mathfrak{L}}$. As a topological space it is homeomorphic to the 3-sphere $\mathbb{S}^3$. On the other hand $\mathbf{T}^3\times \{0\}/\langle F_T,F_{\ii}, F_{\jj},F_{\kk}\rangle$ is homeomorphic to the closed 3-ball $\mathbf{B}^3$.

\noi Let $[(z_1,z_2,z_3)]$ denote
the equivalence class of orbits under the transformations $F_T, F_{\omega}, F_{\ii}, F_{\jj},F_{\kk}$.

\noi There exists a strong deformation retract of $\mathcal{O}^4_{\mathfrak{L}}$ and $\mathcal{O}^4_{\mathfrak{H}}$ to the Euclidean 3-orbifolds $\mathcal{S}^3_{2,\mathfrak{L}}$ and $\mathcal{S}^3_{2,\mathfrak{H}}$, repectively. In fact, as a topological
space $\mathcal{O}^4_{\mathfrak{L}}:=\mathbf{H}^{1}_{\mathbb{H}}/ PSL(2,\mathfrak{L})$ is homeomorphic to $\mathbf{T}^3\times[0,\infty)/\sim$, where  $\mathbf{T}^3=\{(z_1,z_2,z_3) \in \mathbb{C}^{3} \ : \
|z_1|=|z_2|=|z_3|=1\}$ and $\sim$ is the equivalence relation given by the orbits of the action of some groups of diffeomorphisms of  $\mathbf{T}^3$ generated by the set and subsets of elements $ F_T, F_{\ii}, F_{\jj},F_{\kk}$. Moreover $\mathbf{T}^3\times \{r\}/\Gamma$ is homeomorphic to $\mathbb{S}^3$ for $r>0$ and $\mathbf{T}^3\times \{0\}/\Gamma$ is homeomorphic to $\mathbf{B}^3$.

\noi The underlying spaces of the 3-dimensional Euclidean orbifolds $\mathcal{S}_{r,\mathfrak{L}}^{3}$ and $\mathcal{S}_{r,\mathfrak{H}}^{3}$ are homeomorphic to the 3-sphere $\mathbb{S}^3$ because they are obtained by pasting two 3-dimensional balls along their boundaries which are 2-dimensional spheres. 

\noi The singular loci  of the 3-dimensional Euclidean orbifolds $\mathcal{S}_{r,\mathfrak{L}}^{3}$ and $\mathcal{S}_{r,\mathfrak{H}}^{3}$ are the 1-skeletons of their fundamental domains divided by the actions of the corresponding groups. Thus, their singular loci  are the two graphs which are the 1-skeleton of a cube and the graph in the figure 12, respectively. All edges of the singular locus of $\mathcal{S}_{r,\mathfrak{L}}^{3}$ are labeled by 2. The labels of the edges of  the singular locus of $\mathcal{S}_{r,\mathfrak{H}}^{3}$ are showed in the figure 12.

\subsubsection{The singular locus of the Lipschitz cusp section.}

\noi All the isotropy groups of the vertices in the fundamental domain of $\mathcal{S}_{r,\mathfrak{L}}^{3}$ are isomorphic to $\mathcal{U}(\mathfrak{L})$. All the isotropy groups of points in the edges of the fundamental domain of $\mathcal{S}_{r,\mathfrak{L}}^{3}$ are isomorphic to $\Z/2\Z$.  The isotropy groups of the 6 open 2-dimensional faces and two open 3-dimensional faces are trivial. The orbifold  $\mathcal{S}_{r,\mathfrak{L}}^{3}$ has 8 vertices, 12 edges, 6 square faces and two cubic 3-dimensional faces.

\noi The orbifold Euler characteristic\footnote{For the definition of orbifold Euler characteristic we refer to
\cite{Hirh} and the appendix at the end of this paper. }
 of $\mathcal{S}_{r,\mathfrak{L}}^{3}$ is $8(\frac{1}{4})-12(\frac{1}{2})+6-2=0.$
 
\subsubsection{The singular locus of the Hurwitz cusp section.}

\begin{figure}
\centering
\includegraphics[scale=0.33]{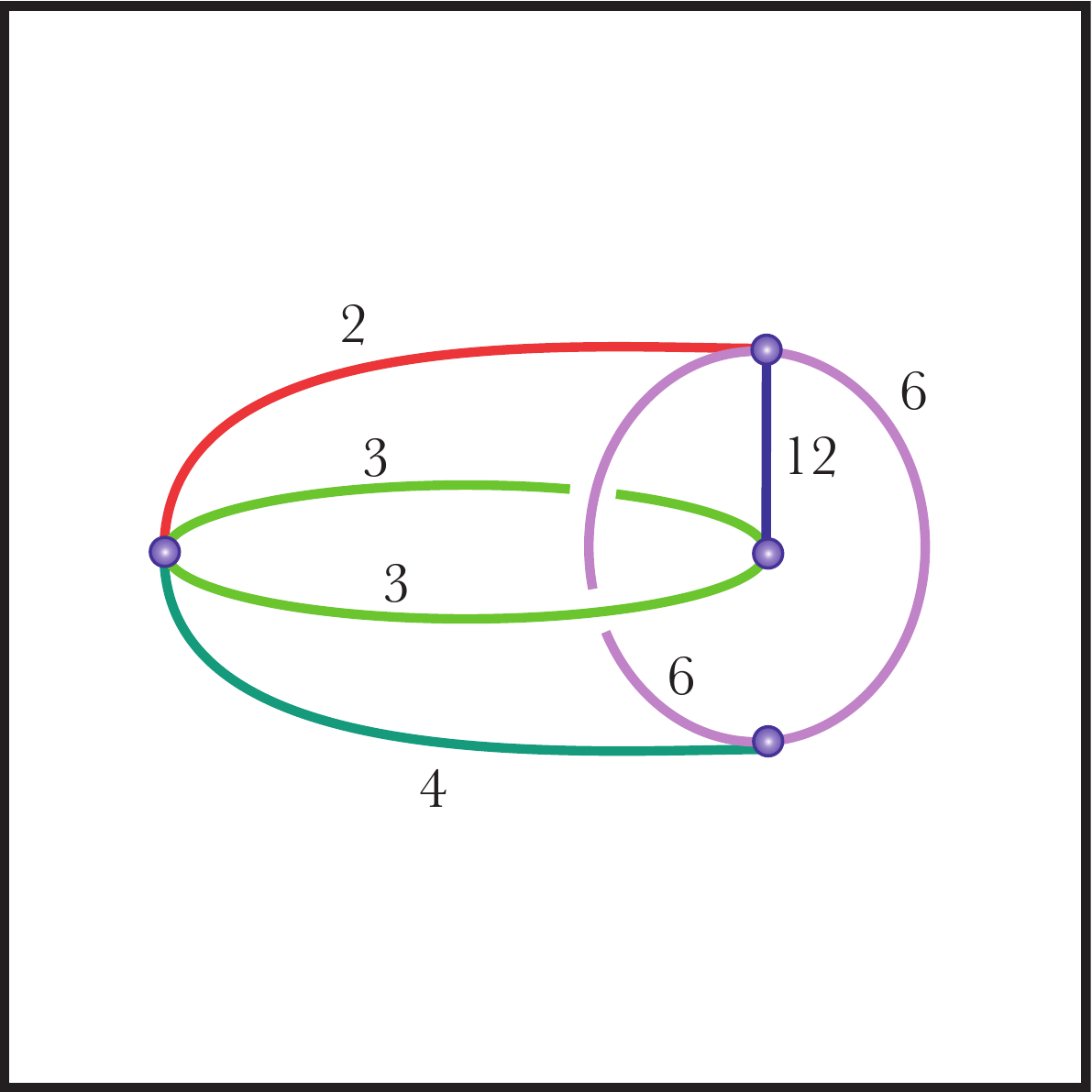}
\begin{center}
{{\bf Figure 12.} The singular locus of the Hurwitz cusp section $\mathcal{S}_{r,\mathfrak{H}}^{3}$.}
\end{center}
\end{figure}

\noi For  $\mathcal{S}_{r,\mathfrak{H}}^{3}$ there are vertices in the fundamental domain $\mathcal{E}_{r,\mathfrak{H}}^{3}$ of $\mathcal{S}_{r,\mathfrak{H}}^{3}$ with different isotropy groups : $\mathcal{U}(\mathfrak{L})$ of order four and $\mathcal{U}(\mathfrak{H})$ of order 12. Also the edges have three types of isotropy groups: the trivial group, the group $\Z/2\Z$ and $\Z/3\Z$. The center $r$ and the vertices $r+\frac{\ii+\jj+\kk}{2}$ and $r+\frac{-\ii-\jj-\kk}{2}$ of the cubes $\mathcal{E}_{r,\mathfrak{H}}^{3}$ have isotropy groups isomorphic to $\mathcal{U}(\mathfrak{H})$. The points $r+\frac{\kk}{2}$, $r-\frac{\kk}{2}$ and $r+\frac{\ii+\kk}{2}$, $r+\frac{-\ii-\kk}{2}$ and $r+\frac{\jj+\kk}{2}$ and $r+\frac{-\jj-\kk}{2}$ have isotropy groups isomorphic to $\mathcal{U}(\mathfrak{L})$. See the figure 12.  

\noi The points in the edges of $\mathcal{E}_{r,\mathfrak{H}}^{3}$ with have $r$ as a vertex and the points $r+\frac{\ii+\jj+\kk}{2}$ and $r+\frac{-\ii-\jj-\kk}{2}$ as second vertex have isotropy groups isomorphic to $\Z/3\Z$. The points in the edges of $\mathcal{E}_{r,\mathfrak{H}}^{3}$ with have $r$ as a vertex and the points $r+\frac{\jj-\kk}{2}$, $r+\frac{\ii-\kk}{2}$, $r+\frac{-\jj+\kk}{2}$, $r+\frac{-\ii+\kk}{2}$  as second vertex have trivial isotropy groups. All the points in the other edges of $\mathcal{E}_{r,\mathfrak{H}}^{3}$ have isotropy groups isomorphic to $\Z/2\Z$. The isotropy groups of the 5 open 2-dimensional faces and of the two open 3-dimensional faces are trivial. The orbifold  $\mathcal{S}_{r,\mathfrak{L}}^{3}$ has 4 vertices, 7 edges, 4 triangular faces and one square face and two 3-dimensional faces.

\noi The orbifold Euler characteristic of $\mathcal{S}_{r,\mathfrak{H}}^{3}$ is $2(\frac{1}{12}) +2(\frac{1}{4})-1-2(\frac{1}{3})-4(\frac{1}{2})+5-2=0$. 

\begin{remark} As it is expected, the orbifold Euler characteristics of both  $\mathcal{S}_{r,\mathfrak{L}}^{3}$
and  $\mathcal{S}_{r,\mathfrak{H}}^{3}$ vanishes since both orbifolds are compact and Euclidean.
\end{remark}
\subsubsection{The structure of the ends.}

\noi The family of Euclidean orbifolds $\mathcal{S}_{r,\mathfrak{H}}^{3}$ consists of orbifolds which are homothetic for all $r>1$. The Euclidean volume $V_{\mathfrak{H}}(r)$ decreases exponentially to 0 as $r\to\infty$.  The same is true for the family $\mathcal{S}_{r,\mathfrak{L}}^{3}$, $r>1$ and the corresponding volume $V_{\mathfrak{L}}(r)$, since
$V_{\mathfrak{L}}(r)=3V_{\mathfrak{H}}(r)$.

\noi The thin parts are open cylinders on the sections $\mathcal{S}_{r,\mathfrak{L}}^{3}$ and $\mathcal{S}_{r,\mathfrak{H}}^{3}$, respectively. More precisely, $\mathcal{S}_{r,\mathfrak{L}}^{3}$ and $\mathcal{S}_{r,\mathfrak{H}}^{3}$  separate $\mathcal{O}_{\mathfrak{L}}^{4}$ and $\mathcal{O}_{\mathfrak{H}}^{4}$, respectively, into two connected components with boundaries $\mathcal{S}_{r,\mathfrak{L}}^{3}$ and $\mathcal{S}_{r,\mathfrak{H}}^{3}$, respectively. One of the components is compact and the other is non-compact but with finite hyperbolic volume. Using Margulis notation the compact part is the \emph{thick} region and the non-compact part is the \emph{thin} region of the corresponding orbifolds. The thin regions are the non-compact orbifolds diffeomorphic to half-open cylinders $\mathcal{Z}_{r,\mathfrak{L}}^{4}:=\mathcal{S}_{r,\mathfrak{L}}^{3} \times [0,1)$ and $\mathcal{Z}_{r,\mathfrak{H}}^{4}:=\mathcal{S}_{r,\mathfrak{H}}^{3} \times [0,1)$, respectively. There is an orbifold cover $\pi_ \mathcal{Z}: \mathcal{Z}_{r,\mathfrak{L}}^{4} \to \mathcal{Z}_{r,\mathfrak{H}}^{4}$ which is three to one.

\subsection{The thick regions and underlying spaces of $\mathcal{O}^4_{\mathfrak{L}}$ and $\mathcal{O}^4_{\mathfrak{H}}$.}

\noi Each of the underlying spaces of the 4-dimensional orbifolds $\mathcal{O}^4_{\mathfrak{L}}$ and $\mathcal{O}^4_{\mathfrak{H}}$ has only one end. The sections of the ends are 3-dimensional Euclidean orbifolds $\mathcal{S}_{r,\mathfrak{L}}^{3}$ and $\mathcal{S}_{r,\mathfrak{H}}^{3}$ which each of their underlying spaces is homeomorphic to the 3-sphere $\mathbb{S}^3$. Then the thin regions of $\mathcal{O}^4_{\mathfrak{L}}$  and $\mathcal{O}^4_{\mathfrak{H}}$ are homeomorphic to the 4-ball  $\mathbf{D}^4$ minus one point  for example in its center. Moreover each of the thick regions of $\mathcal{O}^4_{\mathfrak{L}}$  and $\mathcal{O}^4_{\mathfrak{H}}$ is homeomorphic to the 4-ball  $\mathbf{D}^4$. 

\noi Then each of the underlying spaces of $\mathcal{O}^4_{\mathfrak{L}}$  and $\mathcal{O}^4_{\mathfrak{H}}$ is homeomorphic to the 4-sphere $\mathbb{S}^4$ minus one point thus each of the underlying spaces is homeomorphic to $\mathbb{R}^4$.

\subsection{The singular locus of $\mathcal{O}^4_{\mathfrak{L}}$ and $\mathcal{O}^4_{\mathfrak{H}}$.}

\noi  The 3-dimensional faces of $\mathcal{P}_{\mathfrak{L}}$ and $\mathcal{P}_{\mathfrak{H}}$  are identified in pairs by the action of the generators of the Lipschitz and Hurwitz modular groups, respectively. We denote by $\Sigma_{\mathfrak{L}}$ and $\Sigma_{\mathfrak{H}}$ the singular loci of $\mathcal{O}^{4}_{\mathfrak{L}}$ and $\mathcal{O}^{4}_{\mathfrak{H}}$, respectively.  They are the 2-dimensional skeletons of their fundamental domains $\mathcal{P}_{\mathfrak{L}}$ and $\mathcal{P}_{\mathfrak{H}}$. Then each singular locus is non-compact with one connected component. 

\noi The Lipschitz singular locus $\Sigma^2_{\mathfrak{L}}$ is the union of a 2-dimensional cube $\mathcal{C}_{\Sigma}$ which is obtained by identifying the boundaries of the cubes $\mathcal{C}_1$ and $\mathcal{C}_2$ in $\mathcal{C}\subset \Pi$  by the action of the group $\hat{\mathcal{U}}(\mathfrak{L})$ and the non-compact cone over its 1-skeleton of the 2-dimensional sides of $\mathcal{P}_{\mathfrak{L}}$ which are asymptotic to the point at infinity. 

\noi The Hurwitz singular locus $\Sigma^2_{\mathfrak{H}}$ is  the union of the 2-dimensional pyramid  $\mathcal{P}_{\Sigma}$ which is obtained by identifying  the boundaries of the union of the two pyramids $\mathcal{P}_1$ and $\mathcal{P}_2$ in $\mathcal{C}_1, \mathcal{C}_2 \subset \mathcal{C}\subset \Pi$ by the action of the group $\hat{\mathcal{U}}(\mathfrak{H})$ and the non-compact cone over its 1-skeleton of the 2-dimensional sides of $\mathcal{P}_{\mathfrak{H}}$ that are asymptotic to the point at infinity.

\subsection{Local models of the modular orbifolds singularities.}\label{local}

\noi In this section we study the local models of the isolated singularities of $\mathcal{O}^{4}_{\mathfrak{L}}$ and $\mathcal{O}^{4}_{\mathfrak{H}}$.

\noi The local models of the isolated singularities of $\mathcal{O}^{4}_{\mathfrak{L}}$ and $\mathcal{O}^{4}_{\mathfrak{H}}$ are obtained as quotients of a hyperbolic 4-ball  $\mathbf{B}^4$ by the
 action of a discrete subgroup of hyperbolic isometries which fix its center and its boundary which is a 3-sphere. 
\noi Let $\mathbb F_0\subset\mathbb F_{\mathbf B}$ be the subgroup of
 hyperbolic isometries of the hyperbolic ball which fix the
 center of $\mathbf B$. Then $\mathbb F_0$ is the group of orientation-preserving isometries of the 3-sphere $\mathbb{S}^3$. The group $\mathbb F_0$ is isomorphic to SO(4).

\noi Let $\mathbf B^4\subset {\mathbb H}$ denote, as before, the disk model of $\mathbf{H}^{1}_{\mathbb{H}}$.

\noi The ideal boundary of $\mathbf B^4$ is the unitary 3-sphere $\mathbb{S}^3 = \left \{ \mathbf{q}= \alpha +\beta \jj \in \Hy: \ \ \alpha,\beta\in\mathbf{C}, |\alpha|^2 + |\beta|^2 = 1\right \}$.

\noi Let $\mathrm{SU}(2) = \left \{ \begin{pmatrix} \alpha&-\overline{\beta}\\ \beta & \overline{\alpha} \end{pmatrix}: \ \ \alpha,\beta\in\mathbf{C}, |\alpha|^2 + |\beta|^2 = 1\right \}$ be  the \emph{unitary special group}. The 3-sphere is a Lie group isomorphic to SU(2). An element $\mathbf{q}= \alpha +\beta \jj \in \mathbb{S}^3$ corresponds to the element $\begin{pmatrix} \alpha&-\overline{\beta}\\ \beta & \overline{\alpha} \end{pmatrix}$ in SU(2).

\noi We define $$f_{(u,v)}:\mathbf{B}^4 \to \mathbf{B}^4;$$
 $$f_{(u,v)}(\mathbf{q}) \mapsto u\mathbf{q}v$$
 
\noi where $|u|=|v|=1$. We observe that $f_{(u,v)} \in $SO(4) and $(u,v)\in \mathbb{S}^{3}\times \mathbb{S}^{3}=\mathrm{SU}(2) \times \mathrm{SU}(2)$. We introduce $$\phi:\mathrm{SU}(2)\times \mathrm{SU}(2) \to \mathrm{SO}(4);$$
 $$(u,v) \mapsto f_{(u,v)}.$$
 
\noi Since SU(2) $\times$ SU(2) is simply connected but the fundamental group of SO(4) is $\mathbb{Z}/2\mathbb{Z}$ then the kernel of $\phi$ is the group with two elements consisting of $(1,1)$ and $(-1,-1)$.\\

 \noi There is an
 orthogonal action of $\mathbb{S}^3 \times \mathbb{S}^3$ on
 $\mathbb{S}^3$ given by $\mathbf{q} \mapsto q^{-1}_1\mathbf{q}q_2$,
 for a fixed pair $(q_1, q_2)\in \mathbb{S}^3 \times
 \mathbb{S}^3$. This defines a homomorphism of Lie groups
 $\mathbb{S}^3 \times \mathbb{S}^3 \to$ SO(4) with kernel
 $\mathbb{Z}/2\Z$ generated by $(-1,-1)$. Then SO(4) is isomorphic
 to the central product $\mathbb{S}^3 \times _{\mathbb{Z}/2\Z}
 \mathbb{S}^3$.  
  The finite subgroups of $SO(4)$ are, up to
 conjugation, exactly the finite subgroups of the central products of
 two binary polyhedral groups $G_1$ and $G_2$
$$G_1 \times _{\mathbb{Z}/2\Z} G_2 \subset \mathbb{S}^3 \times _{\mathbb{Z}/2\Z} \mathbb{S}^3.$$   

\noi The finite subgroups of $SU(2)$ have been classified by Felix
Klein in \cite{K} and they are the cyclic groups of order $n$ ($n>1$)),
the binary dihedral groups $ \langle 2,2,n\rangle$ of order 4$n$, the binary tetrahedral group $ \langle 2,3,3\rangle$ of order 24, the binary
octahedral group $ \langle 2,3,4\rangle$ of order 48 and the binary icosahedral group $ \langle 2,3,5\rangle$ of order 120. These are the
\emph{binary polyhedral groups}.  Let $\Gamma$ be a finite subgroup of
$\mathbb F_0$. Let $r>0$ and ${\mathbf B}^4_r$ be the hyperbolic ball
centered at the origin of radius $r$. The ball ${\mathbf B}^4_r$ is
invariant under the action of $\Gamma$. Let ${\mathcal
  O}^4(\Gamma,r)={\mathbf B^4}_r{\slash\Gamma}$. For every $r>0$ the orbifold ${\mathcal
  O}^4(\Gamma,r)={\mathbf B^4}_r{\slash\Gamma}$ is equivalent, up to rescaling the orbifold metric, to a fixed ${\mathbf B^4}_{\epsilon}{\slash\Gamma}$ for $\epsilon$ sufficiently small. Let ${\mathcal
  O}^4(\Gamma)={\mathbf B^4}_{\epsilon}{\slash\Gamma}$.
  
\begin{definition} Let $p$ and $q$ be two integers. Let  $\Gamma(p,q)\subset{SU(2)}$ be the abelian subgroup generated
by the map $T_{p,q}(\mathbf{z}_1,\mathbf{z}_2)=(e^{2\pi{i}/p}\mathbf{z}_1,
e^{2\pi{i}/q}\mathbf{z}_2)$. The group $\Gamma(p,q)$ is isomorphic to $\Z/p\Z\oplus\Z/q\Z$. Let us denote by ${\mathcal O}(p,q)$ the
orbifold ${\mathcal O}^4(\Gamma(p,q),\epsilon)={\mathbf
  B^4}_{\epsilon}{\slash\Gamma(p,q)}$. If
$\Gamma(G_1,G_2)\subset{SO(4)}$ is a finite subgroup isomorphic to
$G_1 \times _{\mathbb{Z}/2\Z} G_2$, where $G_1$ and $G_2$ are the binary polyhedral groups then we denote by $\mathcal {O}(
G_1,G_2)$ the orbifold $\mathbf{B}^4_{\epsilon}/\Gamma(G_1,G_2)$. If $G_k=\Z/p\Z$, where $k=1,2$, then we write $p$ in the place of $G_k$ in the notation 	$\mathcal {O}(G_1,G_2)$. If  $G_k=\Z/p\Z\oplus\Z/q\Z$, where $k=1,2$, then we write $(p,q)$ in the place of $G_k$ in the notation 	$\mathcal {O}(G_1,G_2)$.
\end{definition}

\subsection{Singular locus of the Lipschitz and Hurwitz modular orbifolds.} 
\noi We give the local groups and the local models of the singular points in the locus of the Lipschitz and Hurwitz modular orbifolds $\mathcal{O}^{4}_{\mathfrak{L}}$ and $\mathcal{O}^{4}_{\mathfrak{H}}$, respectively. We also give a presentation, a fundamental domain, a Cayley graph and a thorough study of its spherical 3-orbifold link for each group associated to singular points in the singular loci of $\mathcal{O}^{4}_{\mathfrak{L}}$ and $\mathcal{O}^{4}_{\mathfrak{H}}$, respectively.  

\noi We can describe orbifold stratifications of the set of singular points of $\mathcal{O}^{4}_{\mathfrak{L}}$ and $\mathcal{O}^{4}_{\mathfrak{H}}$ according to their isotropy groups. The lists of types of singular points and their corresponding isotropy groups can be divided into two types of strata: the compact and the non compact. Also in these two groups we can divide the strata according to the dimension of the corresponding stratum in the stratification. 
\noi We give a list of points in 11 strata in the Lipschitz singular locus $\Sigma^2_{\mathfrak{L}}$ of $\mathcal{O}^{4}_{\mathfrak{L}}$ and 15 strata in the Hurwitz singular locus $\Sigma^2_{\mathfrak{H}}$ of $\mathcal{O}^{4}_{\mathfrak{H}}$ which have isomorphic isotropy groups and denote these groups by $\Gamma_{k}$ and $\Lambda_{m}$, where $k=1,...,11;m=1,...,15$.    

\noi The isotropy group of a point in a non-compact stratum in the singular loci $\Sigma^{2}_{\mathfrak{L}}$ or $\Sigma^{2}_{\mathfrak{H}}$ is the isotropy group of the action of $\mathcal{A}(\mathfrak{L})$ and $\mathcal{A}(\mathfrak{H})$, respectively.
\noi The singular points of the orbifold $\mathcal{O}^{4}_{\mathfrak{L}}$ which are in compact strata are in the boundary of the cubes $\mathcal{C}_1$ and $\mathcal{C}_2$. The singular points of the orbifold $\mathcal{O}^{4}_{\mathfrak{H}}$ which are in compact strata are in the boundary of the squared pyramids $\mathcal{P}_1$ and $\mathcal{P}_2$. The strata can be characterized by the dimension of the corresponding stratum in the stratification and whether the stratum contains the point 1 or not.

\noi The point $\q=\frac{1}{3}(\sqrt{3}+\ii+\jj+\kk)\in \mathcal{P}_{\mathfrak{H}}\subset \mathcal{P}_{\mathfrak{L}}$ is a regular point for the orbits of $PSL(2,\mathfrak{L})$ and $PSL(2,\mathfrak{H})$. We had considered previously $\mathcal{P}_\mathfrak{L}$ and $\mathcal{P}_\mathfrak{H}$ as unions of the non-compact cones over the cubes $\mathcal{C}_1$ and $\mathcal{C}_2$ and the pyramids $\mathcal{P}_1$ and $\mathcal{P}_2$ with apices the point at infinity, respectively. However, to obtain the isotropy groups it is better to define new fundamental regions $\tilde{\mathcal{P}}_\mathfrak{L}$ and $\tilde{\mathcal{P}}_\mathfrak{H}$ as follows:

\begin{enumerate} 
\item Let $\tilde{\mathcal{P}}_\mathfrak{L}$ be the non-compact bicone over $\mathcal{C}_1$ with apices the ideal vertices at 0 and the point at infinity.

\item Let $\tilde{\mathcal{P}}_\mathfrak{H}$ be the non-compact bicone over  $\mathcal{P}_1$ with apices the ideal vertices at 0 and the point at infinity.
\end{enumerate}

\noi These are convex bicones over the cube $\mathcal{C}_1$ and the pyramid $\mathcal{P}_1$, each with two ends and they are fundamental domains for $PSL(2,\mathfrak{L})$ and $PSL(2,\mathfrak{H})$, respectively. 

\begin{remark}
The action of the groups $PSL(2,\mathfrak{L})$ and $PSL(2,\mathfrak{H})$ on their new  fundamental regions $\tilde{\mathcal{P}}_\mathfrak{L}$ and $\tilde{\mathcal{P}}_\mathfrak{H}$ is equivalent to the action of $G(3)$ on $\mathcal{P}$ described before in section 5.1. The groups $PSL(2,\mathfrak{L})$ and $PSL(2,\mathfrak{H})$ act on $\tilde{\mathcal{P}}_\mathfrak{L}$ and $\tilde{\mathcal{P}}_\mathfrak{H}$ by rotations around the 2-faces of the cube $\mathcal{C}_1$ and the pyramid $\mathcal{P}_1$, respectively.
\end{remark}

\noi For the strata in $\Sigma^{2}_{\mathfrak{L}}$ or $\Sigma^{2}_{\mathfrak{H}}$ that contain 1 it is easy to calculate the isotropy group as a subgroup of $\hat{\mathcal{U}}(\mathfrak{L})$ or $\hat{\mathcal{U}}(\mathfrak{H})$.  For the strata $\Sigma^{2}_{\mathfrak{L}}$ or $\Sigma^{2}_{\mathfrak{H}}$ which do not contain 1 we consider the orbit of 1 by the action of points in their isotropy groups. For each point $\pp$ in one of these strata 1 is a regular point for the action of its isotropy group on a 3-sphere $\mathbb{S}_{r_{0}}^3(\pp)$, where $r_{0}$ is the distance from $\pp$ to 1. Then the orbit of 1 is in correspondence 1-1 with fundamental regions of the correspondent isotropy group acting in a 3-sphere $\mathbb{S}^3$. 

\noi The fundamental regions on $\mathbb{S}^3$ of the isotropy groups of each stratum in the singular loci $\Sigma_{\mathfrak{L}}$ and $\Sigma_{\mathfrak{H}}$ are formed by two 4-simplices. Each 4-simplex has 5 3-dimensional faces. Therefore the isotropy groups have presentations with at most 4 generators.   

\subsection{The stratification of the Lipschitz singular locus.} 

\noi We give the list of 11 strata in the Lipschitz singular locus $\Sigma^{2}_{\mathfrak{L}}$. For each stratum we enlist its isotropy group $\Gamma_k$, $k=1,...11$, determine a fundamental domain of the action of its isotropy group on $\mathbb{S}^3$, and give a geometrical description of its spherical 3-orbifold (or spherical link). 

\noi In the following list we consider the canonical projection 
$\mathfrak{p}:\mathcal{C}\to\mathcal{C}_\Sigma$. It is important to see figure 8 in each case.

\noi {\bf Non compact strata.}
\begin{enumerate}
\item[$\Gamma_1$] \textbf{Eight 1-cells.} The 1-skeleton of the non-compact cone over the cubes $\mathcal{C}_1$ and $\mathcal{C}_2 \subset \mathcal{P}_{\mathfrak{L}}$ is a set of eight open lines in $\Sigma_{\mathfrak{L}}$ which are represented in $\mathcal{P}_{\mathfrak{L}}$ as 
\begin{enumerate}
\item[a)] the half-line $\{\q\,:\, \q=r, r\in \R, r>1\}$, 
\item[b)] the three lines $\mathbf{q}=r_1 \pm \ii/2, r_1 \pm \jj/2, r_1\pm \kk/2,$ where $r_1>\frac{\sqrt{3}}{2}$, 
\item[c)] the three lines $\mathbf{q}=r_2 \pm (\ii/2+\jj/2),r_2 \pm (\ii/2+\kk/2),r_2 \pm (\jj/2+\kk/2),$ where $r_2>\frac{\sqrt{2}}{2}$, and finally 
\item[d)] the line $\mathbf{q}=r_3 \pm (\ii/2+\jj/2+\kk/2),$ where $r_3>\frac{1}{2}$. 
\end{enumerate}
\noi These eight half-lines orthogonally project, under the natural projection $\mathcal{P}_\mathfrak{L}\to \mathcal{C}$ by geodesics asymptotic to the point at infinity, to the barycenter of the cube $\mathcal{C}\subset \mathcal{P}_{\mathfrak{L}}$, the barycenters of the square faces of $\mathcal{C}$, the half of its edges, and two of its vertices, respectively. These 8 open half-lines in $\mathcal{P}_{\mathfrak{L}}$ project to 8 open half-lines in $\mathcal{O}^{4}_{\mathfrak{L}}$. Their local isotropy groups are isomorphic to the group of order 4 isomorphic to $\mathcal{U}(\mathfrak{L})$. We define $\Gamma_1=\mathcal{U}(\mathfrak{L})=\Z/2\Z\times\Z/2\Z$ as the local isotropy group of the quaternions in these 8 open half-lines. The local model for the singular points in this stratum is isometric to the orbifold $\mathcal {O}(2,2)$.

\item[$\Gamma_2$] \textbf{Twelve 2-cells.} The 2-skeleton of the non-compact cone over the cubes $\mathcal{C}_1$ and $\mathcal{C}_2$ is a set of twelve triangles with one vertex at the point at infinity which are the quaternions in $\mathcal{P}_{\mathfrak{L}}$ that orthogonally projects over the quaternions which are the edges of $\mathcal{C}_1$ and $\mathcal{C}_2$. Their isotropy groups are isomorphic to the cyclic group of order 2 isomorphic to $\Z/2\Z$ and we define $\Gamma_2=\Z/2\Z$. The local model for the singular points in this stratum is isometric to the orbifold $\mathcal {O}(2)$.

\end{enumerate}

\noi {\bf Compact strata.}

\begin{itemize}
\item  {\bf 0-dimensional.}

\begin{enumerate}
\item[$\Gamma_3$] \textbf{One 0-cell.} The common vertex $v_1=1$ of $\mathcal{C}_1$ and $\mathcal{C}_2$ which is the barycenter of the cube $\mathcal{C}$. Its isotropy group is the abelian group $\Gamma_3=\Z/2\Z\times \mathcal{U}(\mathfrak{L})\cong\hat{\mathcal{U}}(\mathfrak{L})\cong (\Z/2\Z)^3$ of order 8 generated by the involution $T$ and the elements in the Lipschitz unitary group. If we take a round hyperbolic ball $\mathbf{B}_r(1)$ with center at $\q=1$ and small radius $r$ we obtain that its boundary $\mathbb{S}_r^3(1)$ intersects the tessellation $\mathbf{Y}_\mathfrak{L}$ of $PSL(2,\mathfrak{L})$ in a spherical tessellation by sixteen Coxeter spherical right-angled regular tetrahedra. These tetrahedra are the faces of a 4-dimensional regular convex polytope which is known as the 16-cell, it is the dual polytope of the hypercube known as the 8-cell. These polytopes are two of the six Platonic polytopes of dimension 4. The Cayley graph of $\Gamma_3$ is the 1-skeleton of a truncated octahedron in $\mathbb{S}_r^3(1)$.  The local model for the singular points in this stratum is isometric to the orbifold $\mathcal {O}(2,(2,2))$.

\item[$\Gamma_4$] \textbf{Three 0-cells.} The vertices $v_2,v_3, v_4$ of $\mathcal{C}_{\Sigma}$ which are the images under $\mathfrak{p}$ of the 6 barycenters of the square faces of the cube $\mathcal{C}$. The isotropy group of these vertices is isomorphic to the group $\Gamma_4= (\Z/2\Z \times\Z/3\Z) \times \Z/2\Z= \Z/6\Z\times \Z/2\Z$ of order 12. It is the group generated by $\tau_{\mathbf{u}}T$ and $TD_{\mathbf{v}}$ where $\mathbf{u},{\mathbf{v}}=\pm{\mathbf{i}}, \pm{\mathbf{j}}, \pm{\mathbf{k}}$ and ${\mathbf{u}}\neq{\mathbf{v}}$. The group $\Gamma_4$ leaves invariant two orthogonal hyperbolic planes meeting at the barycenter of the square face of the cube; one is the plane containing the face and $\Gamma_{4}$ acts on it as the group  of order 4 isomorphic to $\Z/2\Z \times\Z/2\Z$ (the group generated by  $(\tau_{\mathbf{u}}T)^3$ and $TD_{\mathbf{v}}$ restricted to this plane), and the other is its orthogonal complement, and $\Gamma_{4}$ acts on it as a rotation of order 3 (the group generated by  $(\tau_{\mathbf{u}}T)^2$ restricted to this plane). The 12 middle points of the edges are identified in groups of four points by translations with three singular vertices of $\mathcal{C}_{\Sigma}\subset \mathcal{O}^{4}_{\mathfrak{L}}$.  The local model for the singular points in this stratum is isometric to the orbifold $\mathcal {O}(2,6)$.

\item[$\Gamma_5$] \textbf{Three 0-cells.} The three vertices $v_5,v_6, v_7$ of $\mathcal{C}_{\Sigma}$ which are the images under $\mathfrak{p}$ of the 12 middle points of the edges of $\mathcal{C}$. The isotropy group of these vertices is isomorphic to the group $\Gamma_5=\Z/4\Z\times  (\Z/3\Z \times\Z/2\Z) =\Z/4\Z \times \Z/6\Z $  of order 24. It is the group generated by $TD_{\mathbf{v}}$, $\tau_{\mathbf{u}+\mathbf{v}}T$, $(\tau_{\mathbf{v}}T)^2$ and $(\tau_{\mathbf{u}}T)^2$ where $\mathbf{u},\mathbf{v}=\pm{\mathbf{i}}, \pm{\mathbf{j}}, \pm{\mathbf{k}}$, and $\mathbf{u}\neq\mathbf{v}$. The group $\Gamma_5$ leaves invariant two orthogonal hyperbolic planes meeting at the middle point of the edge of the cube; one is the hyperbolic plane whose ideal boundary is the line generated by $\mathbf{u}+\mathbf{v}$ and $\Gamma_{5}$ acts on it as  a rotation of order 4 (the group generated by  $\tau_{\mathbf{u}+\mathbf{v}}T$ restricted to this plane), and the other is its orthogonal complement, and $\Gamma_{5}$ acts as on it as a rotation of order 6 (the group generated by $D_{\mathbf{u}\mathbf{v}}T$  and $(\tau_{\mathbf{u}}T)^2$ restricted to this plane). The 12 middle points of the edges of the cube are identified in groups of four points by translations with three singular vertices of $\mathcal{C}_{\Sigma}\subset \mathcal{O}^{4}_{\mathfrak{L}}$.  The local model for the singular points in this stratum is isometric to the orbifold $\mathcal {O}(4,6)$.

\item[$\Gamma_6$] \textbf{One 0-cell.} The vertex $v_8$ of $\mathcal{C}_{\Sigma}$ which is the image under $\mathfrak{p}$ of the 8 vertices of $\mathcal{C}$. The isotropy group of this vertex is isomorphic to the group
  $\Gamma_{6}=(\Z/2\Z \times \Z/2\Z) \times  \langle 2,3,3\rangle$ of order 96 where $ \langle 2,3,3\rangle$ is the binary tetrahedral group of order 24. It is the group generated by $\tau_{\mathbf{i}+\mathbf{j}+\mathbf{k}}T$, $(\tau_{\mathbf{i}+\mathbf{j}}T)^2$, $(\tau_{\mathbf{i}+\mathbf{k}}T)^2$, $(\tau_{\mathbf{j}+\mathbf{k}}T)^2$, $(\tau_{\mathbf{u}} T)^2$ where
  $\mathbf{u}=\pm{\mathbf{i}},\pm{\mathbf{j}},\pm{\mathbf{k}}$.  The local model for the singular points in this stratum is isometric to the orbifold $\mathcal {O}((2,2),  \langle 2,3,3\rangle)$.

\end{enumerate}

\item  {\bf 1-dimensional.}
\begin{enumerate}

\item[$\Gamma_7$] \textbf{Three 1-cells.} The points of the three edges of $\mathcal{C}_{\Sigma}$ that are incident with the barycenter of $\mathcal{C}$ have isotropy group $\Gamma_{7}=\Z/2\Z \times \Z/2\Z$. If the point is contained in the edge of $\mathcal{C}_1$ which contains 1 and $\sqrt{3}+\mathbf{u}/2$, then $\Gamma_{7}$ is the group generated by
  $(\tau_{\mathbf{u}}  T)^{3}$ where $\mathbf{u}=\pm{\mathbf{i}},\pm{\mathbf{j}},\pm{\mathbf{k}}$. The group $\Gamma_{7}$ leaves invariant the hyperbolic 2-plane generated by 1 and $\mathbf{u}$ and the hyperbolic hyperplane $\Pi$. Moreover $\Gamma_{7}$ acts on it as a rotation of order 2 in $\Pi$. The
  points of these 6 edges in $\mathcal{C}_1$ and $\mathcal{C}_2$ are identified in groups of two and form 3
  singular edges in $\mathcal{O}^{4}_{\mathfrak{L}}$ incident with 1.  The local model for the singular points in this stratum is isometric to the orbifold $\mathcal {O}(2,2)$.
  
\item[$\Gamma_8$] \textbf{Three 1-cells.} The points of the six edges of $\mathcal{C}_{\Sigma}$ which have one vertex at the barycenter of the square faces of $\mathcal{C}$ and the other vertex is the middle point of an edge of the cube $\mathcal{C}$. Its
  isotropy group $\Gamma_{8}=\Z/2\Z \times \Z/3\Z$ is the group generated by
  $(\tau_{\mathbf{u}}  T)^{2}$ and $D_{\mathbf{v}}T$ where $\mathbf{u},\mathbf{v}=\pm{\mathbf{i}}\pm{\mathbf{j}}\pm{\mathbf{k}},\mathbf{u} \neq \mathbf{v}$. The group $\Gamma_{8}$ leaves invariant two orthogonal
  hyperbolic planes meeting at the vertex of the
  cube. $\Gamma_{8}$ acts on one plane as a rotation of order 6 and as a rotation of order 4 on the other plane. The
  points of the 6 edges of $\mathcal{C}$ are identified and form 3
  singular edges in $\mathcal{C}_{\Sigma} \subset\mathcal{O}^{4}_{\mathfrak{L}}$ which are incident with $1\in \mathcal{C}_{\Sigma} $.  The local model for the singular points in this stratum is isometric to the orbifold $\mathcal {O}(2,3)$.

\item[$\Gamma_9$] \textbf{Six 1-cells.} The points of the three edges of $\mathcal{C}_{\Sigma}$ which are incident with a vertex of $\mathcal{C}$. Its
  isotropy group $\Gamma_{9}=\Z/2\Z \times \Z/6\Z$ is the group generated by
  $(\tau_{\mathbf{u}}  T)^{3}$ where $\mathbf{u}=\pm{\mathbf{i}}\pm{\mathbf{j}}\pm{\mathbf{k}}$. The group $\Gamma_{9}$ leaves invariant two orthogonal
  hyperbolic planes meeting at the vertex of the
  cube. $\Gamma_{9}$ acts on one plane as a rotation of order 6 and as a rotation of order 2 in the other. The
  points of the 12 edges of $\mathcal{C}$ are identified and form six singular edges in $\mathcal{O}^{4}_{\mathfrak{L}}$.  The local model for the singular points in this stratum is isometric to the orbifold $\mathcal {O}(2,6)$.

\end{enumerate}

\item  {\bf 2-dimensional.}
\begin{enumerate}

\item[$\Gamma_{10}$] \textbf{Three 2-cells.} The points of the interior of a square face of $\mathcal{C}_{\Sigma}$ that is incident with the barycenter of $\mathcal{C}$. Its isotropy group $\Gamma_{10}=\Z/2\Z$ is the group generated
  by $ D_{\mathbf{u}}T$, where $\mathbf{u}=\ii$, $\jj$, $\kk$. The group
 $\Gamma_{10}$ acts on one plane  as  a rotation of order 2 around a hyperbolic 2-plane in $\Pi$. The points in the 6 squares
  are identified in groups of two with 3 singular 2-cells in
  $\mathcal{O}^{4}_{\mathfrak{L}}$.  The local model for the singular points in this stratum is isometric to the orbifold $\mathcal {O}(2)$.
  
\item[$\Gamma_{11}$] \textbf{Three 2-cells.} The points of the interior of a square face of $\mathcal{C}_{\Sigma}$ that is not incident with the barycenter of $\mathcal{C}$. Its isotropy group $\Gamma_{11}=\Z/3\Z$ is the group generated
  by $(\tau_{u}  T)^{2}$, where $u=\pm{\mathbf{i}}, \pm{\mathbf{j}}, \pm{\mathbf{k}}$. The group
 $\Gamma_{11}$ leaves invariant two orthogonal hyperbolic planes meeting at
  the point in the square face of the cube; one is the plane containing the face and $\Gamma_{11}$ acts on it as
  a rotation of order 3, and the other is its orthogonal complement
  and $\Gamma_{11}$ acts on it as the identity. The points in the 6 squares
  are identified in groups of two with 3 singular 2-cells in
  $\mathcal{O}^{4}_{\mathfrak{L}}$.   The local model for the singular points in this stratum is isometric to the orbifold $\mathcal {O}(3)$.
\end{enumerate}
\end{itemize}

\subsection{The stratification of the Hurwitz singular locus.} 

\noi Now we describe the orbifold stratification of the set of singular points of the Hurwitz modular orbifold $\mathcal{O}^{4}_{\mathfrak{H}}$ according to their isotropy groups. 

\noi We give the list of 15 strata in the Hurwitz singular locus $\Sigma^{2}_{\mathfrak{H}}$. For each stratum we enlist the isotropy group $\Lambda_k$, $k=1,...15$, determine a fundamental domain of its action on $\mathbb{S}^3$, and study in detail the corresponding spherical 3-orbifold (or spherical link). 

\noi In the following list we consider the canonical projection 
$\mathfrak{p}:\mathcal{P}_d\to\mathcal{P}_\Sigma$ where $d=1,2$.

\noi In the same way as in the case of the Lipschitz singular locus the list of types of singular points and their corresponding isotropy groups can be divided by the dimension and the compactness of the corresponding stratum in the stratification. The non-compact strata have the same isotropy groups that the respective Euclidean 3-orbifold which is the intersection of  $\mathcal{O}^{4}_{\mathfrak{H}}$ with a horosphere. The list is the following:

\noi {\bf Non compact strata.}
\begin{enumerate}

\item[$\Lambda_{1}, \, \Lambda_{2}$] \textbf{Five 1-cells.} The 1-skeleton of the non-compact cone over the pyramids  $\mathcal{P}_1$ and    $\mathcal{P}_2$ is a set of five open half-lines which are represented in $\mathcal{P}_{\mathfrak{H}}$ as 
\begin{enumerate}
\item[a)] the half-line $\{\q \in \Hy \,:\, \q=r, r\in \R, r>1\}$,  
\item[b)] the line $\mathbf{q}= r_1\pm \kk/2,$ where $r_1>\frac{\sqrt{3}}{2}$, 
\item[c)] the two lines $\mathbf{q}=r_2 \pm (\ii/2+\jj/2),r_2 \pm (\jj/2+\kk/2),$ where $r_2>\frac{\sqrt{2}}{2}$ and finally 
\item[d)] the line $\mathbf{q}=r_3 \pm (\ii/2+\jj/2+\kk/2),$ where $r_3>\frac{1}{2}$.
\end{enumerate} 
\noi These five half-lines orthogonally project, under the natural projection $\mathcal{P}_\mathfrak{H}\to \mathcal{C}$ by geodesics asymptotic to the point at infinity,  to the vertices of $\mathcal{P}_1$: the barycenter of the cube $\mathcal{C}$, the barycenters of any square face of $\mathcal{C}$, the half of two of its edges, and two of its vertices, respectively. These 5 open half-lines in $\mathcal{P}_{\mathfrak{H}}$ project to 4 open lines in $\mathcal{O}^{4}_{\mathfrak{H}}$. Their isotropy groups are isomorphic for a) and d) to the abelian group of order 8 isomorphic to $\mathcal{U}(\mathfrak{H})$ and for b) and c) to the dihedral group of order 4 isomorphic to $\mathcal{U}(\mathfrak{L})$. We obtain $\Lambda_1:=\mathcal{U}(\mathfrak{L})=\Z/2\Z\times\Z/2\Z$ and $\Lambda_2:=\mathcal{U}(\mathfrak{H})=\Z/2\Z\times\Z/2\Z\times\Z/2\Z$ as the isotropy groups of the quaternions in these 5 open half-lines. The local models for the singular points in these strata are isometric to the orbifolds $\mathcal {O}(2,2)$ and $\mathcal {O}(2,(2,2))$, respectively.

\item[$\Lambda_{3}, \, \Lambda_{4}$] \textbf{Eight 2-cells.}  The 2-skeleton of the non-compact cone over the pyramid $\mathcal{P}_1 \subset \mathcal{P}_{\mathfrak{H}}$ is a set of eight triangles with one vertex at infinity which are orthogonally projected over the quaternions which are the edges of $\mathcal{P}_1$ and $\mathcal{P}_2$. The isotropy groups of points in the five triangles with base the edges of the square base of $\mathcal{P}_{1}$ and the edge that joins 1 with the barycenter of a squared face of $\mathcal{C}$ are isomorphic to the cyclic group $\Z/2\Z$. For points in the diagonal edge of $\mathcal{P}_1$ which joins 1 with a vertex of $\mathcal{C}$ their isotropy groups are isomorphic to $\Z/3\Z$. For points in the two edges which joins 1 with middle points of the edges of $\mathcal{C}$ their isotropy groups are isomorphic to the trivial group, then these points are not singular. We define for these six strata $\Lambda_3=\Z/2\Z$ and $\Lambda_4=\Z/3\Z$.  The local models for the singular points in these strata are isometric to the orbifolds $\mathcal {O}(2)$ and $\mathcal {O}(3)$, respectively.
\end{enumerate}

\noi {\bf Compact strata.}

\begin{itemize}
\item  {\bf 0-dimensional.}

\begin{enumerate}
\item[$\Lambda_{5}$] \textbf{One 0-cell.}  The common vertex $v_1=1$ of $\mathcal{P}_1$ and $\mathcal{P}_2$ which is the barycenter of the cube $\mathcal{C}$. Its isotropy group is the abelian group $\Lambda_5= \hat{\mathcal{U}}(\mathfrak{H})$ of order 24 generated by the involution $T$ and the elements in the Hurwitz unitary group.  The local model for the singular points in this stratum is isometric to the orbifold $\mathcal {O}(2,  \langle 2,3,3\rangle)$.

\item[$\Lambda_{6}$] \textbf{One 0-cell.}    The vertex $v_2$ of $\mathcal{P}_{\Sigma}$ which is the image under $\mathfrak{p}$ of 2 opposite barycenters of the square faces of the cube $\mathcal{C}$. The isotropy group of this vertex is isomorphic to the group $\Lambda_6= (\Z/2\Z \times\Z/3\Z) \times \Z/2\Z= \Z/6\Z\times \Z/2\Z$ of order 12. It is the group generated by $\tau_{\mathbf{u}}T$ and $TD_{\mathbf{v}}$ where $\mathbf{u},{\mathbf{v}}=\pm{\mathbf{i}}, \pm{\mathbf{j}}, \pm{\mathbf{k}}$ and ${\mathbf{u}}\neq{\mathbf{v}}$.  The local model for the singular points in this stratum is isometric to the orbifold $\mathcal {O}(2, 6)$.

\item[$\Lambda_{7}$] \textbf{One 0-cell.}   The vertex $v_3$ of $\mathcal{P}_{\Sigma}$ which is the images under $\mathfrak{p}$ of the 12 middle points of the edges of $\mathcal{C}$. The isotropy group of this vertex is isomorphic to the group $\Lambda_7=\Z/4\Z\times  (\Z/3\Z \times\Z/2\Z) =\Z/4\Z \times \Z/6\Z $  of order 24. It is the group generated by $TD_{\mathbf{v}}$, $\tau_{\mathbf{u}+\mathbf{v}}T$, $(\tau_{\mathbf{v}}T)^2$ and $(\tau_{\mathbf{u}}T)^2$ where $\mathbf{u},\mathbf{v}=\pm{\mathbf{i}}, \pm{\mathbf{j}}, \pm{\mathbf{k}}$, and $\mathbf{u}\neq\mathbf{v}$.  The local model for the singular points in this stratum is isometric to the orbifold $\mathcal {O}(4, 6)$.

\item[$\Lambda_{8}$] \textbf{One 0-cell.}   The vertex $v_4$ of $\mathcal{P}_{\Sigma}$ which is the image under $\mathfrak{p}$ of the 8 vertices of $\mathcal{C}$. The isotropy group of this vertex is isomorphic to the group
  $\Lambda_{8}=(\Z/2\Z \times \Z/6\Z) \times  \langle 2,3,3\rangle$ of order 288 where $ \langle 2,3,3\rangle$ is the binary tetrahedral group of order 24. It is the group generated by $D_{\omega}$, $\tau_{\mathbf{i}+\mathbf{j}+\mathbf{k}}T$, $(\tau_{\mathbf{i}+\mathbf{j}}T)^2$, $(\tau_{\mathbf{i}+\mathbf{k}}T)^2$, $(\tau_{\mathbf{j}+\mathbf{k}}T)^2$, $(\tau_{\mathbf{u}} T)^2$ where
  $\mathbf{u}=\pm{\mathbf{i}},\pm{\mathbf{j}},\pm{\mathbf{k}}$.  The local model for the singular points in this stratum is isometric to the orbifold $\mathcal {O}((2, 6), \langle 2,3,3\rangle)$.
\end{enumerate}

\item  {\bf 1-dimensional.}
\begin{enumerate}

\item[$\Lambda_{9}$] \textbf{One 1-cell.}   The points of the edge of $\mathcal{P}_{\Sigma}$ which is incident with the barycenter of $\mathcal{C}$ and the barycenter of a square face of $\mathcal{C}$. Their isotropy groups are isomotphic to  $\Lambda_{9}=\Z/2 \Z \times \Z/2 \Z$ which is the group generated by
  $D_{i}T$ and $D_{j}T$.  The local model for the singular points in this stratum is isometric to the orbifold $\mathcal {O}(2,2)$.

\item[$\Lambda_{10}$] \textbf{Two 1-cells.}   The points of the two edges of $\mathcal{P}_{\Sigma}$ which are incident with the barycenter of $\mathcal{C}$ and the middle points of edges of $\mathcal{C}$. Their
  isotropy groups are isomorphic to $\Lambda_{10}=\Z/2 \Z$ which is the group generated by
  $D_{i}  T$.  The local model for the singular points in this stratum is isometric to the orbifold $\mathcal {O}(2)$.

 \item[$\Lambda_{11}$] \textbf{One 1-cell.}   The points of the edge of $\mathcal{P}_{\Sigma}$ which is incident with the barycenter of $\mathcal{C}$ and the vertex of $\mathcal{C}$. Their
  isotropy groups are isomorphic to $\Lambda_{11}=\Z/3 \Z $.  The local model for the singular points in this stratum is isometric to the orbifold $\mathcal {O}(3)$.

 \item[$\Lambda_{12}$] \textbf{Two 1-cells.}   The points of the two edges of $\mathcal{P}_{\Sigma}$ which are incident with the barycenter of a square face of $\mathcal{C}$ and a middle point of its edges. Their
  isotropy groups are isomorphic to $\Lambda_{12}=\Z/2 \Z \times \Z/3 \Z$.  The local model for the singular points in this stratum is isometric to the orbifold $\mathcal {O}(2, 3)$.

 \item[$\Lambda_{13}$] \textbf{One 1-cell.}   The points of the edge of $\mathcal{P}_{\Sigma}$ which are incident with a vertex of $\mathcal{C}$ and a middle point of its edges. Their
  isotropy groups are isomorphic to $\Lambda_{13}=\Z/2 \Z \times \Z/6 \Z$.  The local model for the singular points in this stratum is isometric to the orbifold $\mathcal {O}(2, 6)$.

\end{enumerate}

\item  {\bf 2-dimensional.}
\begin{enumerate}

\item[$\Lambda_{14}$] \textbf{One 2-cell.}   The isotropy groups of the points of the interior of the square face of $\mathcal{P}_{\Sigma}$ are isomorphic to $\Lambda_{14}=\Z/3\Z$.  The local model for the singular points in this stratum is isometric to the orbifold $\mathcal {O}(3)$.

\item[$\Lambda_{15}$] \textbf{Two 2-cell.}   The isotropy groups of the points in the interiors of the two triangle faces of $\mathcal{P}_{\Sigma}$ which contain 1 and the barycenter of a square face of $\mathcal{C}$ are isomorphic to $\Lambda_{15}=\Z/2\Z$.  The local model for the singular points in this stratum is isometric to the orbifold $\mathcal {O}(2)$.

  \end{enumerate}
\end{itemize}

\subsection{The Euler orbifold--characteristic of the Lipschitz and Hurwitz modular orbifolds.} 
\noi We use our previous computations on the order of the local groups of the strata in the singular loci of the Lipschitz and Hurwitz modular orbifolds to obtain the following:

\begin{theorem} The Euler orbifold--characteristic of the Lipschitz and Hurwitz modular orbifolds are

$$\chi ^{orb}(\mathcal{O}^{4}_{\mathfrak{L}})=\frac{1}{96} \qquad \, \rm{and}\,\qquad \chi ^{orb}(\mathcal{O}^{4}_{\mathfrak{L}})=\frac{1}{288},$$ respectively.
\end{theorem}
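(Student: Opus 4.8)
The plan is to compute $\chi^{orb}$ directly from the Satake--Hirzebruch definition recalled in \cite{Hirh} and the appendix, namely as the alternating sum
\[
\chi^{orb}(\mathcal{O})=\sum_{\sigma}\frac{(-1)^{\dim\sigma}}{|\Gamma_\sigma|}
\]
taken over the open cells $\sigma$ of any cellular decomposition of the quotient orbifold that is compatible with the orbit--type stratification, $\Gamma_\sigma$ being the (constant) isotropy group along $\sigma$. First I would fix such a decomposition of $\mathcal{O}^4_{\mathfrak{L}}$ induced by the fundamental domain $\mathcal{P}_{\mathfrak{L}}$ (equivalently $\tilde{\mathcal{P}}_{\mathfrak{L}}$) together with its side--pairings, arranging that the singular locus $\Sigma^2_{\mathfrak{L}}$ is a subcomplex. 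The isotropy orders needed are exactly those tabulated in the stratification: the non--compact strata $\Gamma_1,\Gamma_2$ and the compact strata $\Gamma_3,\dots,\Gamma_{11}$, of orders $4,2,8,12,24,96,4,6,12,2,3$ and with the dimensions listed there; every remaining cell is regular (trivial isotropy). The identical scheme applies to $\mathcal{O}^4_{\mathfrak{H}}$ with the strata $\Lambda_1,\dots,\Lambda_{15}$.

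Before summing I would dispose of the non--compactness. Since $\mathcal{O}^4_{\mathfrak{L}}$ has a single cusp whose cross--section is the compact Euclidean $3$--orbifold $\mathcal{S}^3_{r,\mathfrak{L}}$, and since we already computed $\chi^{orb}(\mathcal{S}^3_{r,\mathfrak{L}})=0$, the product formula gives $\chi^{orb}(\mathcal{S}^3_{r,\mathfrak{L}}\times[r_0,\infty))=\chi^{orb}(\mathcal{S}^3_{r,\mathfrak{L}})\cdot 1=0$; by additivity the entire cuspidal end contributes nothing, so $\chi^{orb}(\mathcal{O}^4_{\mathfrak{L}})$ equals the orbifold Euler characteristic of the compact thick part. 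Concretely I would truncate each ideal stratum (half--line, ideal triangle, and so on) at the horosphere $\mathcal{E}^3_r$, so that every surviving cell is compact, and then read off from the stratification how many orbits of cells of each dimension occur with each isotropy order.

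The main obstacle is organizational rather than conceptual: one must produce a single globally consistent cell decomposition of the truncated quotient in which (i) every regular $0$--, $1$--, $2$--, $3$-- and $4$--cell is accounted for, not merely the singular ones appearing in the stratification, and (ii) the number of orbits of cells in each stratum and their dimensions are counted without double counting across the identifications of $\partial\mathcal{P}_{\mathfrak{L}}$. The delicate points are the correct incidence of the ideal directions after truncation and the bookkeeping of the top--dimensional and codimension--one regular cells, which carry the signs $+1$ and $-1$ with weight $1$; the isotropy weights themselves are already in hand. Assembling these contributions gives $\chi^{orb}(\mathcal{O}^4_{\mathfrak{L}})=\tfrac{1}{96}$ and $\chi^{orb}(\mathcal{O}^4_{\mathfrak{H}})=\tfrac{1}{288}$.

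Finally I would corroborate the result by two checks that bypass the bookkeeping. By multiplicativity of $\chi^{orb}$ under finite covers and the index relation $[PSL(2,\mathfrak{H}):PSL(2,\mathfrak{L})]=3$, one must have $\chi^{orb}(\mathcal{O}^4_{\mathfrak{L}})=3\,\chi^{orb}(\mathcal{O}^4_{\mathfrak{H}})$, consistent with $\tfrac{1}{96}=3\cdot\tfrac{1}{288}$. More decisively, the Gauss--Bonnet--Chern theorem for hyperbolic $4$--orbifolds gives $\chi^{orb}(\mathcal{O})=\frac{3}{4\pi^{2}}\,\mathrm{Vol}(\mathcal{O})$; inserting the volumes $\mathrm{Vol}(\mathcal{O}^4_{\mathfrak{L}})=\pi^{2}/72$ and $\mathrm{Vol}(\mathcal{O}^4_{\mathfrak{H}})=\pi^{2}/216$ from Proposition 7.3 reproduces $\tfrac{1}{96}$ and $\tfrac{1}{288}$ at once. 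The normalizing constant $\frac{3}{4\pi^{2}}$ is itself confirmed by the right--angled ideal $24$--cell (volume $4\pi^{2}/3$, Euler number $1$), which is precisely the polytope governing the tessellation $\mathbf{Y}$ of the preceding sections.
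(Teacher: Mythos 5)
Your proposal is correct, but the argument that actually carries it is not the route the paper takes. The paper's proof is nothing more than the execution of your first (Satake--Hirzebruch) route: it fixes a CW structure on $\mathcal{O}^4_{\mathfrak{L}}$ and $\mathcal{O}^4_{\mathfrak{H}}$ in which the ideal, non-compact strata are counted as ordinary cells, lists the isotropy orders, and sums. Concretely, for the Lipschitz orbifold the vertices give $\tfrac18+\tfrac{3}{12}+\tfrac{3}{24}+\tfrac{1}{96}=\tfrac{49}{96}$, the edges give $\tfrac34+\tfrac66+\tfrac{3}{12}+\tfrac84=4$, the $2$-cells give $\tfrac32+\tfrac33+\tfrac{12}{2}=\tfrac{17}{2}$, and the six $3$-cells and one $4$-cell give $6$ and $1$, whence $\tfrac{49}{96}-4+\tfrac{17}{2}-6+1=\tfrac{1}{96}$ (similarly $\tfrac{1}{288}$ in the Hurwitz case). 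You defer exactly this enumeration as ``organizational,'' so your first route, as written, is incomplete --- the enumeration \emph{is} the paper's proof. What makes your proposal a valid proof is the Gauss--Bonnet--Chern argument you present only as a check: for a complete finite-volume hyperbolic $4$-orbifold, $\chi^{orb}=\tfrac{3}{4\pi^2}\,\mathrm{Vol}$ (cusp contributions vanish since the cross-sections are flat), and the volumes $\pi^2/72$ and $\pi^2/216$ of Proposition 7.3 immediately give $\tfrac{1}{96}$ and $\tfrac{1}{288}$. This is a genuinely different route from the paper's, and it buys robustness: it is immune to miscounting, which is not an idle concern here, since the paper's stratification tables and its proof disagree (the stratification lists three edges of isotropy order $6$ and six of order $12$, the proof uses six of order $6$ and three of order $12$, and only the latter counts return $\tfrac{1}{96}$). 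Your cusp-truncation observation is also a worthwhile addition, since it justifies treating the ideal strata as cells, a point the paper passes over silently. To make your text self-contained, promote the Gauss--Bonnet argument to the main argument (citing the orbifold version of the theorem for finite-volume hyperbolic $4$-orbifolds), and keep multiplicativity under the index-$3$ cover $\mathcal{O}^4_{\mathfrak{L}}\to\mathcal{O}^4_{\mathfrak{H}}$ as the consistency check.
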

  
\begin{proof}  
   
\noi The Euler orbifold--characteristic of the Lipschitz and Hurwitz modular orbifolds can be computed by the alternate sums of the number of strata for each dimension in $\Sigma^{2}_{\mathfrak{L}}$ and in $\Sigma^{2}_{\mathfrak{H}}$ divided for the order of the isotropy group of a point in the stratum. 

\noi The Lipschitz modular orbifold has a stratification as CW complex with one vertex with isotropy group of order 8, another vertex with isotropy group of order 96, three vertices of order 12, and three vertices of order 24.
 It has three edges with isotropy group of order 4, six edges with isotropy group of order 6, three edges with isotropy group of order 12, and eight edges with isotropy group of order 4.  It has three 2-cells with isotropy group of order 2, three 2-cells with isotropy group of order 3  and twelve 2-cells with isotropy group of order 2. Finally it has six 3-cells and one 4-cell with isotropy groups of orders 1.

\noi The Hurwitz modular orbifold has a stratification as CW complex with one vertex with isotropy group of order 12, one vertex with isotropy group of order 288, and two vertices of order 24. It has one edge with isotropy group of order 2, one edge with isotropy group of order 3, three edges with isotropy group of order 4, two edges with isotropy group of order 6, and three edges with isotropy group of order 12. It has two 2-cells with trivial isotropy group, six 2-cells with isotropy group of order 2 and three 2-cells with isotropy group of order 3. Finally it has five 3-cells and one 4-cell with isotropy groups of orders 1.
\end{proof}
\begin{remark} Since there is an orbifold cover 
$\mathfrak{p}_{_{\mathfrak{L},\mathfrak{H}}}:\mathcal{O}^{4}_{\mathfrak{L}}\to\mathcal{O}^{4}_{\mathfrak{H}}$ of order 3 we obtain, as expected: $\chi^{\rm{orb}}(\mathcal{O}^{4}_{\mathfrak{L}})=3\chi^{\rm{orb}}(\mathcal{O}^{4}_{\mathfrak{H}})$.

\end{remark}

\noi The volume of an orbifold is the same as its fundamental domain. Then we had computed  
Vol($\mathcal{O}^{4}_{\mathfrak{L}})=3$Vol$(\mathcal{O}^{4}_{\mathfrak{H}})$ in the section 7.1. This is related to the Gauss-Bonnet-Euler Theorem for Orbifolds.
   
\section{Selberg's covers and examples of hyperbolic 4-manifolds.}\label{Selberg}

\noi In this section by a \emph{Selberg cover} we mean a covering space which is a manifold which corresponds to a torsion-free and finite-index subgroup. We have already remarked that the group $PSL(2, \mathfrak{L})$ is a
subgroup of the symmetries of the honeycomb $\{3,4,3,4\}$. This is a corollary of the results in section \ref{COXDEC}. Then the
fundamental domain $\mathcal{P}_{\mathfrak{L}}$ of the group $PSL(2,
\mathfrak{L})$ is commensurable with a hyperbolic regular right--angled
convex cell $\{3,4,3\}$ of the honeycomb $\{3,4,3,4\}$ (see also \cite{Rat}). In other words, there
is a finite subdivision of $\mathcal{P}_{\mathfrak{L}}$ and $\{3,4,3\}$
by congruent polyhedrons. The 24 vertices of $\{3,4,3\}$ are:
\begin{equation}\label{v24cell}
0, \infty, \pm \mathbf{i}, \pm \mathbf{j}, \pm \mathbf{k},  \pm \mathbf{i}\pm \mathbf{j}\pm \mathbf{k},  \frac{\pm \mathbf{i}\pm \mathbf{j}\pm \mathbf{k}}{2}.
\end{equation}

\noi Besides the points 0 and $\infty$ the other vertices are the vertices of regular
polyhedrons contained in concentrical spheres with center at the origin in the ideal boundary of
$\mathbf{H}_{\mathbb{H}}^1$: the units form six vertices whose convex
hull is an octahedron inscribed in the unit sphere. The rest are the
16 vertices of two cubes that are symmetric by the isometry $T$.

\noi We consider the parabolic group that we denote by $A(2,\mathfrak{L})$ generated by the twelve translations $\q\mapsto \q+\alpha,$ where $\alpha=\pm \mathbf{i}\pm \mathbf{j}, \pm \mathbf{i}\pm \mathbf{k}, \pm \mathbf{j}\pm \mathbf{k}$ (all possible signs are allowed). 

\noi The fundamental domain $\mathcal{P}_{{A}(2,\mathfrak{L})}$
of $A(2,\mathfrak{L})$ is a chimney (an ideal cone) with apex
the ideal point $\infty$ and whose base is a polyhedron with 12 faces,
one for each point (translation vector) in the generators. The
translation vectors form the convex hull of a cuboctahedron with 12
vertices in the middle points of the edges of the cube with vertices
$\pm \mathbf{i}\pm \mathbf{j}\pm \mathbf{k}$. To obtain the
fundamental domain $\mathcal{P}_{A(2,\mathfrak{L})}$ we
consider the Dirichlet domain of $A(2,\mathfrak{L})$, that is
the polyhedron bordered by the mediatrices (perpendicular bisectors)
of the 12 translation vectors and the origin 0. This is a rhombic
dodecahedron $\mathcal{R}_d$ whose faces consist of 12 isometric
rhombi. This is an equilateral convex polyhedron with two types of
vertices. The rhombic dodecahedron $\mathcal{R}_d$ has 6 vertices $\pm \mathbf{i}, \pm
\mathbf{j}, \pm \mathbf{k}$ adjacent to four rhombi and 8 vertices
$\frac{\pm \mathbf{i}\pm \mathbf{j}\pm \mathbf{k}}{2}$ adjacent to
three rhombi.

\noi The action of the parabolic group $A(2,\mathfrak{L})$
gives a uniform tessellation of the ideal boundary of
$\mathbf{H}_{\mathbb{H}}^1$ (with symmetry group transitive in cells, faces and
edges) that we identify with
$\mathbb{R}^3$. The cells are congruent rhomboidal dodecahedra. The
links of the vertices are of two types. In one case they are
tetrahedrons and in the other they are cubes.

\noi Following an approach which reminds the one used by Selberg (see \cite{S}) and 
taking into account 
the calculations just made for the tessellation of 
$\mathbf{H}^{1}_{\mathbb{H}}$, one is led  
to consider a singular (finite) covering for the orbifold $\mathcal{O}_{\mathfrak{L}}^4=\mathbf{H}^{1}_{\mathbb{H}}/ PSL(2,\mathfrak{L})$.

\noi The Selberg's theorem says that there exist smooth cover hyperbolic 4-manifolds of
the orbifolds $\mathcal{O}_{\mathfrak{L}}^4$ and $\mathcal{O}_{\mathfrak{H}}^4$. 
\begin{proposition} By proposition \rm{9.4} the minimal orders of Selberg covers of $O^4_{\mathfrak{L}}$ and $O^4_{\mathfrak{H}}$ are of orders 96 and 288, respectively. 
\end{proposition}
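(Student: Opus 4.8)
The plan is to combine a lower bound coming from the multiplicativity of the orbifold Euler characteristic with an explicit realization via the hyperbolic $24$-cell. Recall that if $M\to\mathcal{O}$ is a $d$-fold cover of an orbifold $\mathcal{O}$ by a manifold $M$ (a Selberg cover), then the orbifold Euler characteristic is multiplicative, so that $\chi(M)=d\,\chi^{\mathrm{orb}}(\mathcal{O})$. First I would observe that $M$ is a complete finite-volume hyperbolic $4$-manifold, hence, by the thick--thin decomposition and the fact that its cusps are products over flat compact $3$-manifolds, it is homotopy equivalent to a finite CW complex; thus $\chi(M)\in\mathbb{Z}$. (By the Gauss--Bonnet--Chern theorem one even has $\chi(M)=\tfrac{3}{4\pi^{2}}\,\mathrm{Vol}(M)>0$, the flat cusp cross-sections contributing no boundary term.) Feeding in the values $\chi^{\mathrm{orb}}(\mathcal{O}^{4}_{\mathfrak{L}})=1/96$ and $\chi^{\mathrm{orb}}(\mathcal{O}^{4}_{\mathfrak{H}})=1/288$ computed above, integrality of $\chi(M)=d/96$ (resp. $d/288$) forces $96\mid d$ (resp. $288\mid d$), so any Selberg cover has degree $d\ge 96$, respectively $d\ge 288$.

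The same lower bounds can be read off purely group-theoretically, and I would include this as a cross-check. A torsion-free subgroup $\Gamma'\subset\Gamma$ meets every conjugate of every finite subgroup $H\subset\Gamma$ trivially; indeed $H$ acts freely on the coset space $\Gamma/\Gamma'$, since the stabilizer of $g\Gamma'$ is $H\cap g\Gamma'g^{-1}$, which is torsion-free and finite, hence trivial. Therefore $|H|$ divides $[\Gamma:\Gamma']$, so the index of any Selberg cover is divisible by the least common multiple of the orders of the isotropy groups. From the stratifications of $\Sigma^{2}_{\mathfrak{L}}$ and $\Sigma^{2}_{\mathfrak{H}}$ this least common multiple equals $96$ (the largest isotropy group being $\Gamma_{6}$, of order $96$) and $288$ (the largest being $\Lambda_{8}$, of order $288$), respectively, giving the same bounds.

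To show these bounds are attained, I would match volumes against the hyperbolic $24$-cell. Since $\mathrm{Vol}(\mathcal{P}_{\mathfrak{L}})=\pi^{2}/72$ and $\mathrm{Vol}(\mathcal{P}_{\mathfrak{H}})=\pi^{2}/216$ by Proposition 7.3, a degree-$96$ cover of $\mathcal{O}^{4}_{\mathfrak{L}}$ and a degree-$288$ cover of $\mathcal{O}^{4}_{\mathfrak{H}}$ both have volume $96\cdot\pi^{2}/72=288\cdot\pi^{2}/216=4\pi^{2}/3$, which is exactly the volume of the non-compact right-angled hyperbolic $24$-cell from \cite{RT}; the corresponding manifold has $\chi=1$. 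Hence a minimal Selberg cover must be a hyperbolic $4$-manifold tiled by a single $24$-cell. Using the inclusions $PSL(2,\mathfrak{L})\subset PSL(2,\mathfrak{H})\subset\Gamma_{\{3,4,3,4\}}$ of Section 7 together with the side-pairings of the $24$-cell produced by Ratcliffe and Tschantz, one obtains torsion-free subgroups of $PSL(2,\mathfrak{L})$ and $PSL(2,\mathfrak{H})$ of index $96$ and $288$ whose quotients are such single-$24$-cell manifolds.

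The hard part will be this realization step rather than the lower bound: one must guarantee that a torsion-free gluing subgroup can be chosen \emph{inside} the specific arithmetic groups $PSL(2,\mathfrak{L})$ and $PSL(2,\mathfrak{H})$, and not merely inside the ambient Coxeter group $\Gamma_{\{3,4,3,4\}}$. I would handle this by exhibiting an explicit side-pairing of one $24$-cell whose face identifications are realized by elements of $PSL(2,\mathfrak{L})$ (and of its index-$3$ refinement in the Hurwitz case), and then checking that the resulting subgroup is torsion-free by verifying that it acts freely on the vertices, edges and $2$-faces of the honeycomb $\{3,4,3,4\}$, equivalently that it meets each conjugate of the isotropy groups $\Gamma_{k}$ and $\Lambda_{m}$ trivially. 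Once such a subgroup of the correct index is produced, the degree computed from the volume ratio matches the lower bound, and minimality of the orders $96$ and $288$ follows.
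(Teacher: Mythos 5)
Your first paragraph is exactly the paper's proof: the proposition is justified there solely by citing the orbifold Euler characteristic computation, i.e.\ the multiplicativity $\chi(M)=d\,\chi^{\mathrm{orb}}(\mathcal{O})$ stated in the appendix plus integrality of $\chi(M)$ for a finite-volume hyperbolic $4$-manifold, which forces $96\mid d$ and $288\mid d$. The rest of your proposal goes beyond the paper, correctly and usefully. The coset-action argument giving divisibility of the index by the order of every finite (isotropy) subgroup, applied to $\Gamma_{6}$ of order $96$ and $\Lambda_{8}$ of order $288$, is a valid, independent derivation of the same bound that bypasses the Euler characteristic entirely. You are also right that \emph{minimal order} strictly requires attainment, which the paper never proves for these two groups: its only realization statement (existence of torsion-free subgroups with $24$-cell fundamental domain, via Ratcliffe--Tschantz) is made for the congruence group $\mathfrak{G}(2,\mathfrak{L})$ in the Lorentz model, not for $PSL(2,\mathfrak{L})$ or $PSL(2,\mathfrak{H})$ themselves. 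Your volume bookkeeping ($96\cdot\pi^{2}/72=288\cdot\pi^{2}/216=4\pi^{2}/3$, the volume of the right-angled $24$-cell) correctly pins down what a minimal cover must look like, and you honestly flag the genuinely unresolved step: verifying that a side-pairing subgroup can be chosen inside $PSL(2,\mathfrak{L})$ (respectively $PSL(2,\mathfrak{H})$) rather than merely inside the Coxeter group $\Gamma_{\{3,4,3,4\}}$. Since the paper itself only establishes the divisibility lower bound, your argument fully matches it, and your realization plan is a sound but unfinished strengthening rather than a gap relative to what the paper actually proves.
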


\noi In 1999,
J. Ratcliffe and T. Tschantz found 1171 noncompact hyperbolic
4-manifolds which have Euler characteristic 1 by side-pairings in a fundamental region $\{3,4,3\}$ of
the honeycomb $\{3,4,3,4\}$, see \cite{RT}. In 2004, D. Ivansi\'c
showed that the nonorientable 4-manifold numbered 1011 in the
Ratcliffe and Tschantz's list, this is, the 4-manifold $M_{1011}$ with
the biggest order of their symmetry groups, is the complement of five
Euclidean 2-torus in a closed 4-manifold with fundamental group
isomorphic to $\mathbb{Z}/{2\Z}$. Moreover, the orientable double cover
$\tilde{M}_{1011}$ is a complement of five 2-torus in the 4-sphere
\cite{I04}. In 2008 Ivansi\'c showed that 
this 4-sphere  has the same topology of  the  standard differentiable
4-sphere and not of an exotic 4-sphere, see \cite{I08}. In his doctoral thesis J.P. D\'iaz provides diagrams of this link to give an explicit model of the isotopy class of the link.

\subsection{One example by J. Ratcliffe and T. Tschantz of a hyperbolic 4-manifold} We now recall the beautiful construction of a complete, nonorientable hyperbolic 4-manifold of finite volume with six cusps whose cross sections  are 
$\mathbb{S}^1\times\mathbf{K}^2$, where $\mathbf{K}^2$ is the  Klein bottle. Let us consider the open unit ball in 
$\mathbb{H}$ with the Poincar\'e metric (see \ref{ISOD}). Let $\mathbf{C}_{24}$  denote the 24-cell whose vertices are the Hurwitz unit as seen before. There are 24 faces which are regular ideal hyperbolic octahedrons. See the figure 13. Given a face $F$ there is an opposite face $-F$ which is the face diametrically opposite to $F$ (the image under multiplication by -1). One identifies $F$ with $-F$ by a composition which consists of a reflection with respect to the hyperplane which contains $F$ followed by multiplication by -1. This composition is an orientation-reversing hyperbolic isometry which sends $\mathbf{C}_{24}$ onto a contiguous cell of the honeycomb determined by the 24-cell.  This pairing of each face with its opposite has the effect of creating a nonsingular, nonorientable, hyperbolic manifold with 6 cusps. We can take the orientable double covering.

\noi In 2005, Ratcliffe, Tschantz and Ivansi\'c showed that there are dozens of examples of non-orientables hyperbolic 4-manifolds from this list whose orientable double covers are complements of five or six Euclidean surfaces (tori and Klein bottles) in the 4-sphere, see \cite{IRT}. 

\noi Ratcliffe showed that three of these dozen complements can be
used to construct aespheric 4-manifolds that are homology spheres by
means of Dehn's fillings (see \cite{R}).

\begin{figure}
\centering
\includegraphics[scale=0.30]{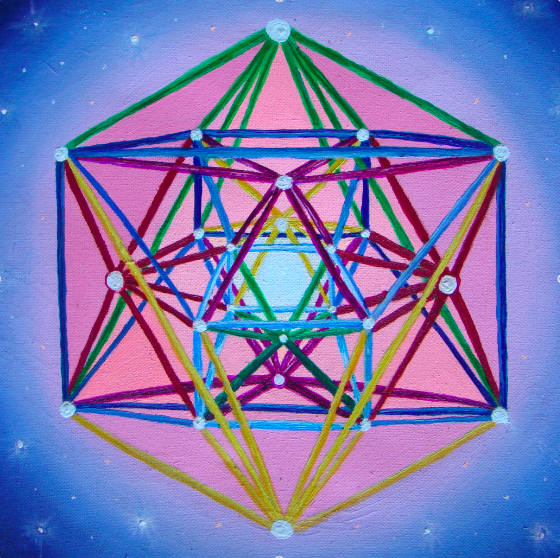}
\begin{center}
{{\bf Figure 13.} The 24-cell $\{3,4,3\}$.}
\end{center}
\end{figure}

\section{Lorentz transformations}

\noi We are mostly interested in the half-space model and in the
hyperboloid model of Lorentz and Minkowski hyperbolic models. Thus we will study the
Cayley transformations that give us isometries of the hyperbolic
models. In particular we study the representation of
$PSL(2,\mathbb{H})$ as Lorentz transformations. A
\textit{Lorentz-Minkowski matrix} $M$ 
is any $5\times 5$ matrix such that
$M^{t}JM=J$ where $M^{t}$ is the transpose matrix of
$M$ and $J$ is the matrix
$$J=\left(\begin{array}{ccccc}-1 & 0 & 0 & 0 & 0 \\0 & 1 & 0 & 0 & 0 \\0 & 0 & 1 & 0 & 0 \\ 0 & 0 & 0 & 1 & 0 \\0 & 0 & 0 & 0 & 1\end{array}\right).$$ 
We observe that the  determinant of any 
Lorentz-Minkowski matrix $M$ 
is $\pm 1$.

\noi We now describe  two 4-hyperbolic models as subsets in
$\mathbb{R}^5$: 
the hyperboloid model 
\begin{equation}\label{Lor}
\mathbf{Lor}:=\left\{(x_{0},x_{1},x_{2},x_{3},x_{4})\in\mathbb{R}^5\ :\ x_{0}>0,\ x_{1}^2+x_{2}^2+x_{3}^2+x_{4}^2=-1+x_0^2 \right\} 
\end{equation}
and the half-space model
$$\mathbf{H}^+:=\left\{(1,x_{1},x_{2},x_{3},x_{4})\in\mathbb{R}^5\ :\
x_{4}>0 \right\}.$$ 
\noi Each of these models has its corresponding
complete metric of constant curvature -1 and one can pass from one to
the other by explicit projections called \textit{Cayley
  transformations} (see \cite{Lev}).   

\noi Indeed, if $(x_{0},x_{1},x_{2},x_{3},x_{4})\in \mathbf{Lor}$ then 
\begin{equation}\label{Cayley1}
\left(1,\frac{x_{1}}{x_{0}+x_{4}},\frac{x_{2}}{x_{0}+x_{4}},\frac{x_{3}}{x_{0}+x_{4}}, \frac{1}{x_{0}+x_{4}}\right)\in \mathbf{H}^+.
\end{equation}
In fact, $x_0+x_4$ is positive since, $x_4^2-x_0^2=(x_4-x_0)(x_4+x_0)=
-(1+x_1^2+x_2^2+x_3^2)<0$ and hence either $x_0+x_4>0$
or $x_4-x_0>0$, but in this second case this is equivalent to
$x_4>x_0$ and since $x_0$ is positive, then $x_0+x_4$ is positive.
In order to prove that the function $\Phi_{ \mathbf{Lor},  \mathbf{H}^+}:  \mathbf{Lor}\to \mathbf{H}^+$
\[ \Phi_{ \mathbf{Lor},  \mathbf{H}^+}\left(x_0, x_{1},x_{2},x_{3}, x_{4}\right)=
\left(1,\frac{x_{1}}{x_{0}+x_{4}},\frac{x_{2}}{x_{0}+x_{4}},\frac{x_{3}}{x_{0}+x_{4}}, \frac{1}{x_{0}+x_{4}}\right)
\]
is a  one-to-one function, we show that it is invertible.

\noi Therefore, given $(1,\mathbf{y})=(1,y_{1},y_{2},y_{3},y_{4})\in
\mathbf{H}^+$, if
$|\mathbf{y}|^2=y_1^2+y_2^2+y_3^2+y_4^2$,  then it is readily seen that the inverse of $\Phi_{ \mathbf{Lor},  \mathbf{H}^+}$ is given by the formula:
\begin{equation}\label{Cayley2}
\Phi_{ \mathbf{Lor},  \mathbf{H}^+}^{-1}((1,y_{1},y_{2},y_{3},y_{4}))=\left(\frac{1+|y|^2}{2y_4},\frac{y_1}{y_4},\frac{y_2}{y_4},\frac{y_3}{y_4},\frac{1-|y|^2}{2y_4}\right)
\in \mathbf{Lor}
\end{equation}
because
\[\Phi_{ \mathbf{Lor},  \mathbf{H}^+}\left(\frac{1+|y|^2}{2y_4},\frac{y_1}{y_4},\frac{y_2}{y_4},\frac{y_3}{y_4},\frac{1-|y|^2}{2y_4}\right)=(1,y_{1},y_{2},y_{3},y_{4}).\]

\noi For a matrix $M$ the condition $M^{t}JM=J$ is equivalent to (BG)
 conditions, therefore

\begin{proposition}
Any Lorentz-Minkovski matrix is in one to one correspondence with a
matrix of $PSL(2,\mathbb{H})$ which satisfies (BG) conditions, i.e. the orientation-preserving isometry group of $\Hy$ which we denote by $\mathcal{M}_{\Hy}$.  
\end{proposition}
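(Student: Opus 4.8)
The plan is to recognize both groups as the full orientation-preserving isometry group of real hyperbolic $4$-space and to make the identification concrete through the Cayley transformations constructed above. First I would invoke the classical fact (see Ratcliffe) that the hyperboloid $\mathbf{Lor}$, with its metric of constant curvature $-1$, is a model of $\mathbf{H}^4_{\mathbb{R}}$ whose group of orientation-preserving isometries is exactly $SO_+(4,1)$: the Lorentz--Minkowski matrices $M$ with $M^tJM=J$, $\det M=1$, that preserve the upper sheet $x_0>0$. Moreover the assignment sending such a matrix to the isometry it induces on $\mathbf{Lor}$ is faithful, so these matrices are in bijection with $Isom_+(\mathbf{Lor})$.

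Next I would use that the Cayley transformation $\Phi_{\mathbf{Lor},\mathbf{H}^+}$ of \eqref{Cayley1}, with inverse \eqref{Cayley2}, is by construction an isometry from $\mathbf{Lor}$ onto the half-space model $\mathbf{H}^+$. Composing it with the identification $\mathbf{H}^+\cong\mathbf{H}^4_{\mathbb{R}}\cong\Hy$ of Remark 2.7 produces an isometry $\Theta\colon\mathbf{Lor}\to\Hy$. Conjugation $g\mapsto\Theta\,g\,\Theta^{-1}$ is then a group isomorphism $Isom_+(\mathbf{Lor})\to Isom_+(\Hy)=\mathcal{M}_{\Hy}$. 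Since $\mathcal{M}_{\Hy}$ is, by Proposition \ref{GB}, precisely the subgroup of $PSL(2,\mathbb{H})$ whose representatives satisfy the (BG) conditions, chaining the two bijections gives the desired one-to-one correspondence between Lorentz--Minkowski matrices and elements of $PSL(2,\mathbb{H})$ satisfying (BG). The sign ambiguity $\pm A$ inherent in $PSL(2,\mathbb{H})$ matches the passage to $PSL$, consistent with the kernel $\{\pm\mathcal{I}\}$ of the restricted antihomomorphism $\Phi$ of Theorem \ref{Thphi}.

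To match this with the algebraic statement $M^tJM=J\Leftrightarrow(BG)$ used just before the proposition, I would transport the quadratic form: writing $\Theta^{-1}F_A\Theta$ explicitly as a $5\times5$ real matrix $M$, one checks that $M$ preserves the form with Gram matrix $J$ if and only if the entries of $A$ obey the relations $\Re(a\overline c)=\Re(b\overline d)=0$ and $\overline b c+\overline d a=1$ of Proposition \ref{GB}. This is a direct computation in the same spirit as the proof of Lemma \ref{re}, using that the Minkowski form $x_1^2+x_2^2+x_3^2+x_4^2-x_0^2$ restricts on the light cone to the conformal structure on which $\mathcal{M}_{\Hy}\cong Conf_+(\mathbb{S}^3)$ acts.

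The main obstacle is this explicit verification together with the bookkeeping of orientation and sheet: one must ensure that the matrix produced lies in the identity component $SO_+(4,1)$ rather than in another component of $O(4,1)$ (so that $\det M=+1$ and the condition $x_0>0$ is preserved), and that the correspondence respects the group law. Because $\Phi$ is an antihomomorphism, some care is required to confirm that conjugation by $\Theta$ nonetheless yields a genuine group isomorphism onto $\mathcal{M}_{\Hy}$; this becomes automatic once one argues at the level of transformations rather than matrices.
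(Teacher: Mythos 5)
Your proof is correct and follows essentially the same route as the paper, which gives no detailed argument but simply asserts that the condition $M^{t}JM=J$ is equivalent to the (BG) conditions and recalls that both groups are the isometry groups of isometric models of $\mathbf{H}^{4}_{\mathbb{R}}$ linked by the Cayley transformations \eqref{Cayley1}--\eqref{Cayley2}. Your version is in fact more careful than the paper's one-line justification: you rightly flag that, taken literally, ``any'' matrix with $M^{t}JM=J$ lies in $O(4,1)$, so one must restrict to the identity component $SO_{+}(4,1)$ (and account for the sign ambiguity $\pm A$ and the antihomomorphism property of $\Phi$) for the correspondence with $\mathcal{M}_{\Hy}$ to be a genuine bijection.
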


\noi  We also recall that the isometry group
of $\mathbf{Lor}$ is the group of Lorentz-Minkowski transformations
and that the subgroup of $PSL(2,\mathbb{H})$ of matrices which satisfy (BG)
conditions is isomorphic to the group of isometries of $\mathbf{H}^+$.

\noi For the  matrix associated to the general translation
$\tau_{x,y,z}=\left(\begin{array}{cc}1 &
  x\mathbf{i}+y\mathbf{j}+z\mathbf{k} \\0 & 1 \end{array}\right)$
we have the following Lorentz representation
$$\mathfrak{T}(x,y,z):=\left(\begin{array}{ccccc}1+\frac{(x^2+y^2+z^2)}{2} & x & y & z &  \frac{(x^2+y^2+z^2)}{2} 
\\x & 1 & 0 & 0 & x 
\\y & 0 & 1 & 0 & y 
\\z & 0 & 0 & 1 & z 
\\-\frac{(x^2+y^2+z^2)}{2} & -x & -y & -z & 1-\frac{(x^2+y^2+z^2)}{2}\end{array}\right)$$\\

\noi We notice that $\mathfrak{T}(x,y,z)\mathfrak{T}(x',y',z')=\mathfrak{T}(x+x',y+y',z+z')$.\\

\noi The Lorentz transformation corresponding to $T=\left(\begin{array}{cc}0 & 1  \\1 & 0 \end{array}\right)$ is the matrix:
$$\left(\begin{array}{ccccc}1 & 0 & 0 & 0 & 0 \\0 & -1 & 0 & 0 & 0 \\0 & 0 & -1 & 0 & 0 \\ 0 & 0 & 0 & -1 & 0 \\0 & 0 & 0 & 0 & -1\end{array}\right)=-J$$

\noi For $R=\left(\begin{array}{cc}\sqrt{r} & 0  \\0 & \frac{1}{\sqrt{r}} \end{array}\right)$ we have the Lorentz representation
$$D(r):=\left(\begin{array}{ccccc}\frac{1+r^2}{2r} & 0 & 0 & 0 & \frac{1-r^2}{2r} \\0 & 1 & 0 & 0 & 0 \\0 & 0 & 1 & 0 & 0 \\ 0 & 0 & 0 & 1 & 0 \\\frac{1-r^2}{2r} & 0 & 0 & 0 & \frac{1+r^2}{2r}\end{array}\right)$$

\begin{definition}\label{Lorso}  Finally if $B \in SO(4)$ we define 
\begin{equation}\label{LORSO}
\widehat{B}:=\left(
\begin{array}{c|c}
  1 & 0 \cdots 0 \\ \hline
  0 & \raisebox{-15pt}{{\huge\mbox{{$B$}}}} \\[-4ex]
  \vdots & \\[-0.5ex]
  0 &
\end{array}
\right).
\end{equation}
\end{definition}

\begin{remark} The set of matrices of the form $\widehat{B}$ can be viewed
  as the
  subgroup of $SO_+(4,1)$ of matrices 
that fix the point $(1,0,0,0,0)$; this subgroup is isomorphic to $SO(4)$.
\end{remark}

\subsection{Iwasawa decomposition for the Lorentz group $SO_+(4,1)$.} 
\noi Let  $SO_+(4,1)$ denote the Lorentz-Minkowski group. This is the group of orientation preserving isometries of the branch of the hyperboloid $\mathbf{Lor}$ in \ref{Lor}. Using the decomposition given in \ref{Iwasawa} we have the following:

\begin{proposition}[\bf Iwasawa decomposition] For a given $M \in SO_+(4,1)$ there exist unique real numbers $r,x,y,z \in \mathbb{R}, r>0$, and a unique matrix $B\in SO(4)$ such that 
\begin{equation} M=D(r) \mathfrak{T}(x,y,z) \widehat{B}.
\end{equation}
\end{proposition}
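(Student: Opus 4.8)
The plan is to transport the quaternionic Iwasawa decomposition of Proposition \ref{Iwasawa} across the group isomorphism between $SO_+(4,1)$ and the group $\mathcal{M}_{\Hy}$ of matrices in $PSL(2,\mathbb{H})$ satisfying the (BG) conditions. Write $\Theta:\mathcal{M}_{\Hy}\to SO_+(4,1)$ for this group isomorphism (its existence and bijectivity are exactly the content of the Proposition identifying Lorentz--Minkowski matrices with (BG) matrices; since Lorentz matrices act on $\mathbf{Lor}$ by ordinary left multiplication, $\Theta$ is order-preserving). Given $M\in SO_+(4,1)$, I would first take the (BG) matrix $\tilde M=\Theta^{-1}(M)=\begin{pmatrix} a&b\\ c&d\end{pmatrix}$ and apply Proposition \ref{Iwasawa} to factor it uniquely as
\begin{equation*}
\tilde M=\begin{pmatrix}\lambda&0\\0&\lambda^{-1}\end{pmatrix}\begin{pmatrix}1&\omega\\0&1\end{pmatrix}\begin{pmatrix}\alpha&\beta\\\beta&\alpha\end{pmatrix},
\end{equation*}
with $\lambda>0$, $\Re(\omega)=0$, $|\alpha|^2+|\beta|^2=1$ and $\Re(\alpha\overline{\beta})=0$. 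Applying $\Theta$ to this matrix product presents $M$ as the product of the $\Theta$-images of the three factors, so the whole problem reduces to identifying those images.

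Next I would match each factor against the explicit Lorentz representations already computed in this section. The diagonal factor $\mathrm{diag}(\lambda,\lambda^{-1})$ is precisely the matrix $R$ with $\sqrt{r}=\lambda$, whose Lorentz representation was shown to be $D(r)$; thus $\Theta$ sends it to $D(\lambda^2)$ and one sets $r=\lambda^2$. The unipotent factor $\begin{pmatrix}1&\omega\\0&1\end{pmatrix}$ with $\omega=x\mathbf{i}+y\mathbf{j}+z\mathbf{k}$ is the translation $\tau_{x,y,z}$, whose Lorentz representation is $\mathfrak{T}(x,y,z)$, and this reads off $x,y,z$. The remaining factor $\begin{pmatrix}\alpha&\beta\\\beta&\alpha\end{pmatrix}$ lies in the subgroup $\mathcal{K}$ of symmetric (BG) matrices, which by the earlier Proposition is isomorphic to $SO(4)$ and coincides with the isotropy subgroup at $1\in\Hy$.

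The key step, and the one I expect to be the main obstacle, is to show $\Theta(\mathcal{K})=\{\widehat{B}:B\in SO(4)\}$, i.e. that the symmetric factor is carried to a matrix of the form $\widehat{B}$. For this I would use that, under the Cayley transformations (\ref{Cayley1})--(\ref{Cayley2}) and the identification of $\mathbf{H}^+$ with $\Hy$, the fixed point $1$ of $\mathcal{K}$ corresponds to the hyperboloid vertex $(1,0,0,0,0)$. Consequently every $\Theta(K)$ with $K\in\mathcal{K}$ fixes $(1,0,0,0,0)$, and by the Remark following Definition \ref{Lorso} the full stabilizer of $(1,0,0,0,0)$ in $SO_+(4,1)$ is exactly $\{\widehat{B}:B\in SO(4)\}$. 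Since $\mathcal{K}\cong SO(4)\cong\widehat{SO(4)}$ and $\Theta$ is injective, $\Theta$ restricts to an isomorphism $\mathcal{K}\to\widehat{SO(4)}$; hence $\Theta(K)=\widehat{B}$ for a unique $B\in SO(4)$, where $B$ is recovered as the lower-right $4\times4$ block.

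Assembling the three identifications yields $M=D(r)\,\mathfrak{T}(x,y,z)\,\widehat{B}$, which gives existence. For uniqueness I would run the argument in reverse: any expression $M=D(r)\mathfrak{T}(x,y,z)\widehat{B}$ pulls back under $\Theta^{-1}$ to a factorization of $\tilde M$ of the form appearing in Proposition \ref{Iwasawa}, and the uniqueness of $(\lambda,\omega,K)$ there, together with the bijections $r\leftrightarrow\lambda^2$, $(x,y,z)\leftrightarrow\omega$ and $B\leftrightarrow K$ established above, forces $r,x,y,z$ and $B$ to be uniquely determined. The only genuinely geometric input beyond bookkeeping is the basepoint matching $1\leftrightarrow(1,0,0,0,0)$ underlying $\Theta(\mathcal{K})=\widehat{SO(4)}$; everything else is a transcription of Proposition \ref{Iwasawa} through the explicit representations $D(r)$, $\mathfrak{T}(x,y,z)$, $\widehat{B}$.
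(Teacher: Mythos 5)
Your proposal is correct and is essentially the paper's own argument: the paper proves this proposition only by the phrase ``Using the decomposition given in Proposition \ref{Iwasawa}'', i.e., precisely the transport of the quaternionic Iwasawa decomposition through the correspondence between (BG) matrices and Lorentz matrices that you carry out explicitly. Your factor-by-factor identification with $D(r)$, $\mathfrak{T}(x,y,z)$ and $\widehat{B}$ (via the basepoint matching $1\leftrightarrow(1,0,0,0,0)$ and the stabilizer description of $\widehat{B}$-type matrices), together with the pull-back argument for uniqueness, simply fills in the details the paper leaves implicit.
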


\subsection{The congruence group $\Gamma(2,\mathfrak{L})$ in the Lorentz group.}
\noi The congruence group $\Gamma(2,\mathfrak{L})$ corresponds, via the
Cayley transformations (\ref{Cayley1}) and (\ref{Cayley2}), to the
subgroup $\mathfrak{G}(2,\mathfrak{L})\subset{SO_+(4,1)}$ of Lorentz
transformations generated by translations $\mathfrak{T}(x,y,z)$ with $x,y,
z\in\Z$ such that $x+y+z\equiv0$ (mod 2), and the matrix $-J$.

\noi Let $SO_+(4,1,\Z)\subset{SO_+(4,1)}$ be the subgroup of Lorentz
matrices with integer coefficients.  Since in the case of the
congruence group $\Gamma(2,\mathfrak{L})$ the translation matrices
$\mathfrak{T}(x,y,z)$ have integer coefficients, we have:

\begin{proposition} The congruence group $\mathfrak{G}(2,\mathfrak{L})$ is
  generated by matrices with integer coefficients, i.e, 
$\mathfrak{G}(2,\mathfrak{L})\subset{SO_+(4,1,\Z)}$.
\end{proposition}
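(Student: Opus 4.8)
The plan is to work directly from the generators of $\mathfrak{G}(2,\mathfrak{L})$ and to reduce the assertion to a single elementary parity observation, using only the explicit Lorentz representations displayed above. By definition $\Gamma(2,\mathfrak{L})$ is generated by $\mathcal{A}(2,\mathfrak{L})$ together with the inversion $T$; transporting these generators through the Cayley transformations (\ref{Cayley1}) and (\ref{Cayley2}), the group $\mathfrak{G}(2,\mathfrak{L})$ is generated by the Lorentz translations $\mathfrak{T}(x,y,z)$ with $x,y,z\in\Z$ and $x+y+z\equiv 0 \pmod 2$, together with the Lorentz matrix corresponding to $T$, namely $-J$. So it suffices to show each of these two families of generators has integer entries, and then to appeal to closure.

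First I would treat the two kinds of generators separately. The matrix $-J$ is diagonal with entries in $\{\pm 1\}$, so it evidently lies in $SO_+(4,1,\Z)$. For a translation generator, inspecting the explicit form of $\mathfrak{T}(x,y,z)$ shows that every entry is automatically an integer except those containing the quantity $\tfrac{x^2+y^2+z^2}{2}$; hence $\mathfrak{T}(x,y,z)$ is integral precisely when $x^2+y^2+z^2$ is even.

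The key step, and really the only computation, is the parity identity $x^2\equiv x \pmod 2$, valid for every integer $x$ because $x^2-x=x(x-1)$ is a product of consecutive integers and hence even. Summing over the three coordinates gives $x^2+y^2+z^2\equiv x+y+z\pmod 2$, so the defining congruence $x+y+z\equiv 0\pmod 2$ of $\mathcal{A}(2,\mathfrak{L})$ is exactly equivalent to $x^2+y^2+z^2$ being even. Therefore every translation generator of $\mathfrak{G}(2,\mathfrak{L})$ has integer coefficients. The hard part is only locating this equivalence; everything else is bookkeeping on the matrices already written down.

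Finally I would invoke that $SO_+(4,1,\Z)$ is a subgroup of $SO_+(4,1)$. Closure under products is immediate, and closure under inverses follows because any $M$ with $M^{t}JM=J$ satisfies $M^{-1}=JM^{t}J$ (using $J^2=\mathcal{I}$), which is a product of integer matrices whenever $M$ is integral. Since both the translation generators and $-J$ lie in this subgroup, the group they generate is contained in it, yielding $\mathfrak{G}(2,\mathfrak{L})\subset SO_+(4,1,\Z)$, as claimed.
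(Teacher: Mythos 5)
Your proof is correct and takes essentially the same route as the paper, which simply observes that the generators $\mathfrak{T}(x,y,z)$ (with $x+y+z\equiv 0 \pmod 2$) and $-J$ have integer entries and concludes the containment in $SO_+(4,1,\Z)$. You merely make explicit the two details the paper leaves implicit: the parity identity $x^2+y^2+z^2\equiv x+y+z\pmod 2$, which shows $\tfrac{x^2+y^2+z^2}{2}\in\Z$, and the closure of $SO_+(4,1,\Z)$ under inversion via $M^{-1}=JM^{t}J$.
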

\begin{remark} Let $\mathbf{Lor}(\Z):=\mathbf{Lor}\cap{\Z^5}$ be the set of points with integer coefficients which are in the Lorentz-Minkowski hyperboloid, then
the orbit of $(1,0,0,0,0)$ under the action of $\mathfrak{G}(2,\mathfrak{L})$ is contained in $\mathbf{Lor}(\Z)$ thus this set is infinite and countable.
\end{remark}

\begin{theorem} The fundamental domain of $\Gamma(2,\mathfrak{L})$ in
  $\mathbf{H}^1_{\mathbb{H}}$ is a convex noncompact polytope of
  dimension four and of finite volume, with ideal vertices $v_1, v_2, v_3, v_4, v_5, v_6$ and $v_{\infty}$ where 
$\{v_1, v_2, v_3, v_4, v_5, v_6\}$ are the six vertices $\pm
  \mathbf{i},\pm \mathbf{j},\pm \mathbf{k},$ of a regular ideal
  octahedron and $v_{\infty}$ is the point at infinity and eight real
  vertices which are the vertices $(\frac{1}{2})(1\pm \mathbf{i} \pm
  \mathbf{j} \pm \mathbf{k})$ of a regular cube.

\end{theorem}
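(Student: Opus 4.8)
The plan is to realize the domain explicitly as the portion of the chimney over the Dirichlet cell of the parabolic part of $\Gamma(2,\mathfrak{L})$ that lies above the isometric sphere of the inversion, and then to verify it is fundamental by the same maximal--real--part method already used for Theorem~\ref{Domain}. Recall that $\Gamma(2,\mathfrak{L})=\langle \mathcal{A}(2,\mathfrak{L}),T\rangle$, and that its translation subgroup $\mathcal{A}(2,\mathfrak{L})$ consists of the $\tau_{x\mathbf{i}+y\mathbf{j}+z\mathbf{k}}$ with $x+y+z\equiv 0\ (\mathrm{mod}\ 2)$, i.e. the face--centred cubic (root lattice $D_3$) lattice generated by $\mathbf{i}+\mathbf{j},\mathbf{i}+\mathbf{k},\mathbf{j}+\mathbf{k}$. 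As established in Section~\ref{Selberg}, its Dirichlet cell in $\partial\Hy\cong\R^3$ is the rhombic dodecahedron $\mathcal{R}_d$, whose $6$ degree--$4$ vertices are $\pm\mathbf{i},\pm\mathbf{j},\pm\mathbf{k}$ (at Euclidean distance $1$ from $0$) and whose $8$ degree--$3$ vertices are $\tfrac12(\pm\mathbf{i}\pm\mathbf{j}\pm\mathbf{k})$ (at distance $\sqrt3/2$). The first step is to define the candidate $\mathcal{F}:=\mathcal{P}_{\mathcal{A}(2,\mathfrak{L})}\cap\{\mathbf{q}:|\mathbf{q}|\ge 1\}$, the intersection of the chimney over $\mathcal{R}_d$ with the exterior of the unit hemisphere $\Pi$. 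Since the $12$ vertical walls over the rhombic faces and the hemisphere $\Pi$ are all totally geodesic hyperplanes of $\Hy$, the region $\mathcal{F}$ is a convex four--dimensional polytope.

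For surjectivity of the quotient map, given $\mathbf{q}\in\Hy$ I would invoke the finiteness of $\sup_{\gamma}\Re(\gamma(\mathbf{q}))$ (the corollary to Lemma~\ref{re}, which applies verbatim because $\Gamma(2,\mathfrak{L})\subset PSL(2,\mathfrak{L})$) to pick $\gamma\in\Gamma(2,\mathfrak{L})$ attaining the maximum. Translations in $\mathcal{A}(2,\mathfrak{L})$ preserve $\Re$ and can be used to move $\Im$ into $\mathcal{R}_d$; denote the result $\mathbf{q}'$, which still has maximal real part. By Lemma~\ref{re}, $\Re(T(\mathbf{q}'))=\Re(\mathbf{q}')/|\mathbf{q}'|^2$, so $|\mathbf{q}'|<1$ would make $\Re(T(\mathbf{q}'))$ strictly larger, contradicting maximality; hence $|\mathbf{q}'|\ge 1$ and $\mathbf{q}'\in\mathcal{F}$. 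For the complementary uniqueness statement I would mirror the case analysis of Theorem~\ref{Domain}: if $\gamma(\mathbf{q})=\mathbf{q}'$ with both in $\mathcal{F}$ and $\gamma\ne 1$, then (after possibly swapping the two points) $|c\mathbf{q}+d|\le 1$, and the cases $c=0$ and $|c|\ge 1$ force respectively that $\gamma$ is a wall--pairing translation of $\mathcal{A}(2,\mathfrak{L})$ (so $\mathbf{q},\mathbf{q}'$ lie on opposite rhombic walls) or that $|\mathbf{q}|=1$ with $\gamma=AT$, $A\in\mathcal{A}(2,\mathfrak{L})$ (so $\mathbf{q}\in\Pi$); in all cases both points lie in $\partial\mathcal{F}$. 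Crucially, since $\Gamma(2,\mathfrak{L})$ contains none of the $\pi$--rotations $D_{\mathbf{u}}$, the hemisphere $\Pi$ is a single facet, self--paired by the involution $T|_\Pi:\mathbf{q}\mapsto\overline{\mathbf{q}}$ (which preserves $\mathcal{F}\cap\Pi$ because $\mathcal{R}_d=-\mathcal{R}_d$). This shows interior points have trivial stabiliser and $\mathcal{F}$ is a fundamental domain.

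It then remains to read off the vertices and the metric conclusions. The ideal vertices are the apex $v_\infty=\infty$ together with those boundary vertices of $\mathcal{R}_d$ that survive the cut by $\Pi$: the six octahedral points $\pm\mathbf{i},\pm\mathbf{j},\pm\mathbf{k}$ lie at distance exactly $1$, hence on $\partial\Pi$, and remain ideal vertices, while over each degree--$3$ vertex $\tfrac12(\pm\mathbf{i}\pm\mathbf{j}\pm\mathbf{k})$ (at distance $\sqrt3/2<1$) the vertical wall is truncated by $\Pi$, meeting it at the finite point $\tfrac12(1\pm\mathbf{i}\pm\mathbf{j}\pm\mathbf{k})$ (solving $x_0^2+\tfrac34=1$). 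This yields precisely the claimed six ideal octahedral vertices, the ideal vertex $\infty$, and the eight real cube vertices. Finite volume is immediate from the index--two inclusion $\Gamma(2,\mathfrak{L})\subset PSL(2,\mathfrak{L})$ together with $\mathrm{vol}(\mathcal{P}_{\mathfrak{L}})=\pi^2/72$, giving $\mathrm{vol}(\mathcal{F})=\pi^2/36$; noncompactness is forced by the cusp at $\infty$.

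The part I expect to require the most care is exactness, namely confirming that $\Pi$ is the only ``floor'' of $\mathcal{F}$ --- that no other element of $\Gamma(2,\mathfrak{L})$ has an isometric sphere $\{|\mathbf{q}c+d|=1\}$ rising above $\Pi$ over $\mathcal{R}_d$. This is exactly what the maximal--real--part argument of the second paragraph guarantees, so the real work is concentrated there and in the boundary case analysis; the combinatorics of the vertices and the volume are then routine once $\mathcal{F}$ is known to be fundamental.
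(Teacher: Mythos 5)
Your reconstruction is evaluated against a theorem the paper actually states \emph{without any proof}, so there is no ``paper proof'' to compare with; your skeleton --- taking $\mathcal{F}$ to be the chimney over the Dirichlet cell $\mathcal{R}_d$ of $\mathcal{A}(2,\mathfrak{L})$ truncated by the hemisphere $\Pi$, proving every orbit meets $\mathcal{F}$ by the maximal-real-part argument of Lemma \ref{re}, and reading off the six ideal octahedral vertices, the cusp $v_\infty$ and the eight finite cube vertices $\tfrac12(1\pm\mathbf{i}\pm\mathbf{j}\pm\mathbf{k})$ --- is correct and is surely what the authors intended. The genuine gap sits exactly at the step you yourself call ``crucial'' and then assert without proof: that $\Gamma(2,\mathfrak{L})$ contains none of the rotations $D_{\mathbf{u}}$, and more generally that its stabilizer of $\infty$ is precisely the even translation lattice $\mathcal{A}(2,\mathfrak{L})$ (no elements $D_{\mathbf{u}}\tau_{\alpha}$, no odd translations). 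This cannot be obtained by ``mirroring the case analysis of Theorem \ref{Domain}'': that analysis pins down elements of $PSL(2,\mathfrak{L})$ through conditions on their \emph{entries} (Lipschitz integrality plus (BG)), and every $D_{\mathbf{u}}$ satisfies those conditions; what you need is to decide membership in the proper subgroup $\Gamma(2,\mathfrak{L})\subsetneq PSL(2,\mathfrak{L})$, which no entry-wise condition in the paper detects. The issue is not cosmetic: since $\mathcal{R}_d$ and $\Pi$ are both $D_{\mathbf{i}}$-invariant, $D_{\mathbf{i}}$ maps $\mathcal{F}$ onto itself, so if $D_{\mathbf{i}}$ did lie in $\Gamma(2,\mathfrak{L})$, interior points of $\mathcal{F}$ would be identified and $\mathcal{F}$ would not be a fundamental domain. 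Two ways to close the gap: (i) run Poincar\'e's polyhedron theorem on $\mathcal{F}$ with the pairings $\{\tau_{\beta}\}$ and $T$ --- the ridge conditions do check out (three vertical walls at angle $2\pi/3$ with trivial cycle transformation around each ridge over an edge of $\mathcal{R}_d$; two ridges of angle $\pi/4$ on $\Pi$ with cycle transformation $\tau_{\beta}T$, of order $4$ by the paper's table for $\omega^2=-2$) --- so that discreteness, exactness, and the absence of rotations become conclusions rather than hypotheses; or (ii) prove the claim algebraically, e.g.\ by reducing entries modulo $2$, where $\mathbb{H}(\mathbb{Z})/2\mathbb{H}(\mathbb{Z})$ is a commutative local ring and one checks that the image of $D_{\mathbf{u}}$ (and of $D_{\mathbf{u}}\tau_{\alpha}$, $\tau_{\mathbf{u}}$) does not lie in the subgroup generated by the images of $T$ and the even translations.

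Separately, your volume sentence is wrong and internally inconsistent: your own $\mathcal{F}$ contains the full pyramid $\mathcal{P}$, which is four copies of $\mathcal{P}_{\mathfrak{L}}$ and already has volume $\pi^2/18>\pi^2/36$. In fact all walls of $\mathcal{F}$ lie on mirrors of $\Gamma_{\{3,4,3,4\}}$, so $\mathcal{F}$ decomposes into copies of $\Delta_{\mathfrak{L}}$: $48$ of them in $\mathcal{P}$ and $16$ in each of the six pieces over the pyramids of $\mathcal{R}_d$, giving $\mathrm{vol}(\mathcal{F})=144\,(\pi^2/864)=\pi^2/6$ and hence $[PSL(2,\mathfrak{L}):\Gamma(2,\mathfrak{L})]=12$. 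In other words, the paper's claim that $\Gamma(2,\mathfrak{L})$ has index two in $PSL(2,\mathfrak{L})$, which you imported, is itself erroneous and must not be used (it is incompatible with the very statement being proved). Since the theorem only asserts finite volume, deduce that directly from the geometry: $\mathcal{F}$ is a convex polytope with finitely many faces whose ideal points are the isolated vertices $v_{\infty},\pm\mathbf{i},\pm\mathbf{j},\pm\mathbf{k}$, and each ideal vertex has a horoball neighborhood with compact polyhedral cross-section, hence finite volume.
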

\noi The group $PSL(2,\mathfrak{L})$ leaves invariant the regular
hyperbolic honeycomb corresponding to the right-angle 24-cell with
Schl\"afli symbol $\{3,4,3,4\}$.  We recall that the 24-cell (or
{\em icositetrachoron}\footnote {It is also called an {\em octaplex} (short for
``octahedral complex"), {\em octacube}, or {\em polyoctahedron}, being constructed
of octahedral cells.})
is a convex regular 4-polytope, 
whose boundary is composed of 24 octahedral cells with six
meeting at each vertex, and three at each edge. Together they have 96
triangular faces, 96 edges, and 24 vertices. See the figure 13.
It is possible to give an (ideal) model of the 24-cell 
by considering the convex hull (of the images) of the 24 unitary Hurwitz numbers 
via the Cayley transform $\Psi(\mathbf{q})=(1+\mathbf{q})(1-\mathbf{q})^{-1}$.
The coordinates in $\mathbb{Z}^4$ of all unitary Hurwitz numbers 
can be obtained by considering the 8 quadruples
$(\pm 1,0,0,0)$, $(0,\pm 1,0,0)$, $(0,0,\pm 1,0)$ $(0,0,0,\pm 1)$
together with the 16 quadruples $(\pm 1/2,\pm 1/2,\pm 1/2,\pm 1/2)$.

\noi The congruence group $\Gamma(2,\mathfrak{L})$  leaves invariant the regular
tessellation generated by the 24-cell. Therefore (see  \cite{RT}) we have the following
\begin{theorem} The group $\mathfrak{G}(2,\mathfrak{L})$ contains a finite-index
  subgroup whose fundamental domain is the 24-cell. 
Therefore  as shown in \cite{RT}, the group ${\mathfrak{G}}(2,\mathfrak{L})$ contains 22
  subgroups which act freely on $\mathbf{Lor}$ and whose quotients are
  hyperbolic orientable and of finite volume.
\end{theorem}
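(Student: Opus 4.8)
The plan is to prove the two assertions in turn: the substantive claim is the first sentence (that the $24$-cell is a fundamental domain for a finite-index subgroup of $\mathfrak{G}(2,\mathfrak{L})$), after which the second sentence follows from the enumeration of Ratcliffe and Tschantz \cite{RT}. Throughout I identify $\mathfrak{G}(2,\mathfrak{L})$ with the congruence group $\Gamma(2,\mathfrak{L})$ via the Cayley transformations (\ref{Cayley1})--(\ref{Cayley2}), so that I may argue in the half-space model of $\Hy$, where the honeycomb $\{3,4,3,4\}$ and its cell $\{3,4,3\}$ are the objects already described in Section \ref{Selberg}.

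First I would fix the expected index by a volume count. By Proposition 7.3 the domain $\mathcal{P}_{\mathfrak{L}}$ has volume $\pi^2/72$; since $\Gamma(2,\mathfrak{L})$ has index two in $PSL(2,\mathfrak{L})$, a fundamental domain of $\Gamma(2,\mathfrak{L})$ has volume $\pi^2/36$, that is $24$ copies of the model simplex $\Delta_{\mathfrak{L}}$ (whose volume is $\pi^2/864$). The right-angled $24$-cell has volume $4\pi^2/3=1152\,(\pi^2/864)$, so it is tiled by exactly $48$ translates of a fundamental domain of $\Gamma(2,\mathfrak{L})$; this is the index of the subgroup I am after. As a consistency check, a torsion-free subgroup of this index has Euler characteristic $48\cdot\frac{1}{48}=1$ (since $\chi^{\mathrm{orb}}(\mathbf{Lor}/\mathfrak{G}(2,\mathfrak{L}))=\frac{3}{4\pi^2}\cdot\frac{\pi^2}{36}=\frac{1}{48}$ by Gauss--Bonnet), in agreement with the Euler characteristic $1$ of the manifolds of \cite{RT}.

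To construct such a subgroup $G_0\le\mathfrak{G}(2,\mathfrak{L})$ I would apply Poincar\'e's polyhedron theorem to a side-pairing of the $24$ octahedral faces of the $24$-cell. The cell is centrally symmetric and is a cell of the honeycomb preserved by $\Gamma(2,\mathfrak{L})$, so each antipodal pair of faces $\{F,-F\}$ can be identified by an orientation-preserving isometry of $\mathfrak{G}(2,\mathfrak{L})$ carrying the $24$-cell onto the neighboring cell sharing $F$. Because the honeycomb $\{3,4,3,4\}$ is right-angled, every ridge of the $24$-cell has dihedral angle $\pi/2$ and the ridge-cycle conditions of Poincar\'e's theorem close up automatically; consequently the group generated by these pairings is discrete, has the $24$-cell as fundamental domain, and is of finite index (namely $48$) in $\mathfrak{G}(2,\mathfrak{L})$. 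This establishes the first sentence.

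The hard part will be the integrality and orientation bookkeeping: one must verify that the chosen face-pairings, written in the Lorentz representation through $\mathfrak{T}(x,y,z)$ and $-J$, actually lie in the integral congruence group $\mathfrak{G}(2,\mathfrak{L})\subset SO_+(4,1,\Z)$ — i.e. that their translational parts satisfy $x+y+z\equiv 0\pmod 2$ — and that an orientation-preserving choice of pairings can be made consistently (the naive reflection-type pairings of Section \ref{Selberg} are orientation-reversing and only yield the nonorientable $6$-cusped example). Once this is granted, every side-pairing subgroup of the $24$-cell of the required congruence type sits inside $\mathfrak{G}(2,\mathfrak{L})$, and the second assertion is then exactly the output of \cite{RT}: among the complete face-pairings of the $24$-cell classified there, precisely $22$ produce torsion-free orientation-preserving groups, which therefore act freely on $\mathbf{Lor}$ with orientable hyperbolic $4$-manifold quotients of finite volume $4\pi^2/3$.
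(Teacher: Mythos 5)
Your numerology is correct and your overall route (pin the index by volume, build a side-pairing of the $24$ octahedral sides, invoke Poincar\'e's polyhedron theorem, then quote the Ratcliffe--Tschantz census) is the natural one --- indeed the paper itself offers essentially no proof beyond observing that $\Gamma(2,\mathfrak{L})$ preserves the honeycomb $\{3,4,3,4\}$ and citing \cite{RT}, whose side-pairing maps are explicit integral Lorentzian matrices. However, two steps of your argument are genuinely gapped. The first is your claim that ``because the honeycomb $\{3,4,3,4\}$ is right-angled, the ridge-cycle conditions of Poincar\'e's theorem close up automatically.'' This is false: with all dihedral angles equal to $\pi/2$, a ridge cycle of combinatorial length $k$ has angle sum $k\pi/2$, and Poincar\'e's theorem requires this sum to equal $2\pi/m$ for an integer $m$; hence only $k=1,2,4$ are admissible, and every cycle must have length exactly $4$ if the resulting group is to act freely near the ridges. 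Which cycle lengths occur depends delicately on the chosen pairing --- the overwhelming majority of side-pairings of the $24$-cell fail, which is precisely why the enumeration in \cite{RT} is a substantive computation and not a formality. So for your orientation-preserving antipodal-type pairing you cannot conclude discreteness and the fundamental-domain property without checking the cycles, and this check is nowhere performed.

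The second gap is the step you yourself call ``the hard part'': that the pairing maps can be chosen inside the congruence group $\mathfrak{G}(2,\mathfrak{L})$, and that the $22$ orientable groups of \cite{RT} lie in $\mathfrak{G}(2,\mathfrak{L})$. This is the crux of both sentences of the theorem, and saying ``once this is granted'' concedes the theorem rather than proves it. Nor is it routine bookkeeping: by the same volume count you use, $\mathfrak{G}(2,\mathfrak{L})$ (covolume $\pi^2/36$) sits with index $12$ inside the rotation subgroup of the Coxeter group $\Gamma_{\{3,4,3,4\}}$ (covolume $\pi^2/432$), so an orientation-preserving symmetry of the honeycomb carrying the central cell to an adjacent cell is, roughly, only one time in twelve an element of $\mathfrak{G}(2,\mathfrak{L})$. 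One must actually verify --- for instance through the normal form $D(r)\,\mathfrak{T}(x,y,z)\,\widehat{B}$ of Proposition 11.3 --- that each generator has translational part satisfying $x+y+z\equiv 0 \pmod 2$, and similarly that the explicit integral matrices of \cite{RT} pass this congruence test. Until both the cycle conditions and this membership verification are supplied, neither the existence of the index-$48$ subgroup with the $24$-cell as fundamental domain nor the containment of the $22$ freely acting subgroups has been established.
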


\section{The Hurwitz modular group $PSL(2,\mathfrak{H})$ in the Lorentz model.}

\noi The algebra of the quaternions $\mathbb{H}$ is isomorphic to the real algebra of $4\times4$ matrices generated by $I_4, S_\mathbf{i},S_\mathbf{j}, S_\mathbf{k}$, where $I_4$ is the identity $4\times 4$ matrix and 

$$S_\mathbf{i} :=\left(\begin{array}{cccc} 0 & -1 & 0 & 0 \\ 1 & 0 & 0 & 0 \\  0 & 0 & 0 & -1 \\ 0 & 0 & 1 & 0\end{array}\right), \quad S_\mathbf{j} :=\left(\begin{array}{cccc}  0 & 0 & -1 & 0 \\ 0 & 0 & 0 & 1 \\  1 & 0 & 0 & 0 \\ 0 & -1 & 0 & 0\end{array}\right)$$

\noi and

$$S_\mathbf{k} :=\left(\begin{array}{cccc}  0 & 0 & 0 & -1 \\ 0 & 0 & -1 & 0 \\  0 & 1 & 0 & 0 \\ 1 & 0 & 0 & 0\end{array}\right).$$

\noi  This follows from the fact that $S_\mathbf{i}^2=S_\mathbf{j}^2=S_\mathbf{k}^2=-I_4$ and $S_\mathbf{i}S_\mathbf{j}=S_\mathbf{k}, S_\mathbf{j}S_\mathbf{k}=S_\mathbf{i}$ and  $S_\mathbf{k}S_\mathbf{i}=S_\mathbf{j}$. In particular the group of Hurwitz units $\mathcal{U}(\mathfrak{H})$ consists of the 24 special orthogonal matrices: 
$\pm I_4, \pm S_\mathbf{i},\pm S_\mathbf{j},\pm S_\mathbf{k},  \frac12({\pm I_4  \pm S_\mathbf{i} \pm S_\mathbf{j} \pm S_\mathbf{k}})$ (all possible 16 combinations of signs are allowed). We remark that this group is isomorphic to the binary tetrahedral group.
\begin{definition} Let $\mathcal{U}(\mathfrak{H},\mathbf{Lor})\subset{SO_+(4,1)}$ be the finite group of order 24 given,
using \ref{Lorso}, by the Lorentz matrices:
$$
\pm\widehat{I_4},\ \pm\widehat{S_\mathbf{i}},\ \pm\widehat{S_\mathbf{j}},\ \pm\widehat{S_\mathbf{k}},\
\frac12(\pm\widehat{I_4} \pm \widehat{S_\mathbf{i}}\pm\widehat{S_\mathbf{j}}\pm\widehat{S_\mathbf{k}}).
$$ 
 \end{definition}
 \noi  We remark that the inversion $T$ corresponds to $-\widehat{I_4} \in\mathcal{U}(\mathfrak{H},\mathbf{Lor})$ i.e. the matrix $-J$.
\begin{proposition} In the Lorentz model the group $PSL(2,\mathcal{H})$ corresponds to the subgroup of $SO_+(4,1)$,
denoted by $\Gamma_{\mathfrak{H}}$ generated by $\mathcal{U}(\mathfrak{H},\mathbf{Lor})$ and the translations $\mathfrak{T}(n,m,p)$ where $n,m,p \in \mathbb{Z}$ .
\end{proposition}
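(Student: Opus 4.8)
The plan is to prove the statement by transporting the three generating families of $PSL(2,\mathfrak{H})$ through the isometric correspondence $\mathcal{M}_{\Hy}\cong SO_+(4,1)$ furnished by the earlier proposition that identifies the matrices satisfying the $(BG)$ conditions with the Lorentz--Minkowski matrices $M$ with $M^{t}JM=J$, and then checking that the resulting images are exactly a generating set of $\Gamma_{\mathfrak{H}}$. Recall that $PSL(2,\mathfrak{H})$ is generated by the inversion $T$, by the translations $\mathcal{T}_{\Im\mathbb{H}(\mathbb{Z})}$ (equivalently by $\tau_{\mathbf{i}},\tau_{\mathbf{j}},\tau_{\mathbf{k}}$), and by the Hurwitz unitary group $\mathcal{U}(\mathfrak{H})$. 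Since the correspondence is a group isomorphism, it sends the subgroup generated by a set to the subgroup generated by the images; hence it suffices to show that each of these generators has image in $\Gamma_{\mathfrak{H}}$ and, conversely, that the listed generators of $\Gamma_{\mathfrak{H}}$ all arise in this way.

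First I would record the two correspondences already computed in this section. The general Lipschitz translation $\tau_{x,y,z}$ corresponds to the Lorentz translation $\mathfrak{T}(x,y,z)$, and the relation $\mathfrak{T}(x,y,z)\mathfrak{T}(x',y',z')=\mathfrak{T}(x+x',y+y',z+z')$ shows the map is a homomorphism on this abelian family; hence the integer translations $\tau_\omega$, $\omega\in\Im\mathbb{H}(\mathbb{Z})$, map precisely onto the matrices $\mathfrak{T}(n,m,p)$ with $n,m,p\in\mathbb{Z}$. Likewise the inversion $T$ corresponds to $-J$, which is the element $-\widehat{I_4}$ of $\mathcal{U}(\mathfrak{H},\mathbf{Lor})$. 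These two facts already place the images of $T$ and of $\mathcal{T}_{\Im\mathbb{H}(\mathbb{Z})}$ inside $\Gamma_{\mathfrak{H}}$.

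The heart of the argument is the unitary part: one must identify the image of $\mathcal{U}(\mathfrak{H})$ in $SO_+(4,1)$ with the finite group $\mathcal{U}(\mathfrak{H},\mathbf{Lor})$. A unitary element fixes the point $1\in\Hy$, so under the Cayley transformations (\ref{Cayley1}) and (\ref{Cayley2}) its Lorentz image fixes $(1,0,0,0,0)\in\mathbf{Lor}$; by the Remark characterizing the matrices $\widehat{B}$ as the stabilizer of $(1,0,0,0,0)$, that image is of the form $\widehat{B}$ with $B\in SO(4)$ (Definition \ref{Lorso}). To pin down $B$ I would use the $4\times4$ model of $\mathbb{H}$ generated by $I_4,S_{\mathbf{i}},S_{\mathbf{j}},S_{\mathbf{k}}$ introduced at the start of this section, under which the $24$ Hurwitz units become the orthogonal matrices $\pm I_4,\pm S_{\mathbf{i}},\pm S_{\mathbf{j}},\pm S_{\mathbf{k}},\tfrac12(\pm I_4\pm S_{\mathbf{i}}\pm S_{\mathbf{j}}\pm S_{\mathbf{k}})$. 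The key computation is to verify, passing through the boundary action on $\mathbb{S}^3\cong \mathrm{SU}(2)$, that the rotation around $1$ attached to a Hurwitz unit $\mathbf{u}$ is carried to the orthogonal transformation $\widehat{S_{\mathbf{u}}}$; letting $\mathbf{u}$ range over all $24$ units then recovers the whole of $\mathcal{U}(\mathfrak{H},\mathbf{Lor})$.

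Assembling these three computations, the images of $T$, of $\tau_{\mathbf{i}},\tau_{\mathbf{j}},\tau_{\mathbf{k}}$ and of $\mathcal{U}(\mathfrak{H})$ generate exactly $\langle\,\mathcal{U}(\mathfrak{H},\mathbf{Lor}),\ \mathfrak{T}(n,m,p):n,m,p\in\mathbb{Z}\,\rangle=\Gamma_{\mathfrak{H}}$, while conversely each listed generator of $\Gamma_{\mathfrak{H}}$ is the image of a generator of $PSL(2,\mathfrak{H})$; since the correspondence is an isomorphism onto its image, $PSL(2,\mathfrak{H})$ corresponds to $\Gamma_{\mathfrak{H}}$, as claimed. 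I expect the main obstacle to be precisely the unitary computation: one must track carefully how the rotation action of the Hurwitz units transforms under the composite of the Cayley maps relating $\Hy$, the ball model $\mathbf{B}^4$ and the hyperboloid $\mathbf{Lor}$, so that the $SO(4)$-block obtained is literally the matrix $S_{\mathbf{u}}$ of $\mathcal{U}(\mathfrak{H},\mathbf{Lor})$ and the finite stabilizer of $(1,0,0,0,0)$ is identified on the nose rather than only up to conjugacy.
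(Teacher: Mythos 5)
Your overall strategy --- transport each generating family of $PSL(2,\mathfrak{H})$ through the equivariant correspondence of Proposition 11.1 and match generators on both sides --- is the natural one (the paper in fact states this proposition with no proof at all), and your first two verifications, $\tau_{x,y,z}\leftrightarrow\mathfrak{T}(x,y,z)$ and $T\leftrightarrow -J$, are correct and agree with the paper's explicit computations. But the step you yourself single out as the heart of the argument, namely that the image of $\mathcal{U}(\mathfrak{H})$ is $\mathcal{U}(\mathfrak{H},\mathbf{Lor})$ via $D_{\mathbf{u}}\mapsto\widehat{S_{\mathbf{u}}}$, is false, and no amount of care with the Cayley maps can make it come out. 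First, it is not even well defined: $D_{\mathbf{u}}=D_{-\mathbf{u}}$ in $PSL(2,\mathbb{H})$, so $\mathcal{U}(\mathfrak{H})$ has order $12$ (as the paper states in Section 4), whereas $\widehat{S_{\mathbf{u}}}\neq\widehat{S_{-\mathbf{u}}}$ and $\mathcal{U}(\mathfrak{H},\mathbf{Lor})$ has order $24$; an injective homomorphism cannot carry the former onto the latter. Second, geometrically: $D_{\mathbf{u}}$ acts by conjugation $\mathbf{q}\mapsto\mathbf{u}\mathbf{q}\bar{\mathbf{u}}$, which fixes the positive real axis of $\Hy$ pointwise, so its Lorentz image is $\widehat{B}$ with $B\in SO(4)$ fixing the nonzero tangent vector at $(1,0,0,0,0)$ of that geodesic; concretely $B=R_{\mathbf{u}}\oplus 1$ with $R_{\mathbf{u}}\in SO(3)$ the rotation of the imaginary quaternions. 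By contrast $S_{\mathbf{u}}$ is \emph{left multiplication} by $\mathbf{u}$, which for $\mathbf{u}\neq 1$ fixes no nonzero vector of $\mathbb{R}^4$ (because $\mathbb{H}$ is a division algebra). Hence the image of $\mathcal{U}(\mathfrak{H})$ meets $\mathcal{U}(\mathfrak{H},\mathbf{Lor})$ only in the identity --- the two groups you must identify are in fact almost disjoint.

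This is not merely a defect of your write-up; it exposes that the proposition is false as literally stated with Definition 12.1. By equivariance, the stabilizer of $(1,0,0,0,0)$ in the image of $PSL(2,\mathfrak{H})$ is exactly the image of $\hat{\mathcal{U}}(\mathfrak{H})=\langle T\rangle\times\mathcal{U}(\mathfrak{H})$, i.e.\ the order-$24$ group $\{\widehat{\pm(R_{\mathbf{u}}\oplus 1)}\}\cong\Z/2\Z\times A_4$ (seven involutions), while $\Gamma_{\mathfrak{H}}$ contains $\{\widehat{S_{\mathbf{u}}}\}$, a copy of the binary tetrahedral group (a unique involution); these are different subgroups of $\widehat{SO(4)}$, so for instance $\widehat{S_{\ii}}\in\Gamma_{\mathfrak{H}}$ cannot lie in the image of $PSL(2,\mathfrak{H})$. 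The isometries whose Lorentz images really are the $\widehat{S_{\mathbf{u}}}$ are the rotations $P_{\mathbf{u}}=\Psi L_{\mathbf{u}}\Psi^{-1}$ of Section 4 (left multiplications on the ball), and these do \emph{not} belong to $PSL(2,\mathfrak{H})$: every element of $PSL(2,\mathfrak{H})$ admits a representative with Hurwitz entries and Dieudonn\'e determinant $1$, while any real multiple of $P_{\mathbf{u}}$ having Hurwitz entries has Dieudonn\'e determinant at least $4$. The confusion originates in the paper itself, which in Section 12 calls ``$\mathcal{U}(\mathfrak{H})$'' the order-$24$ group $\mathfrak{H}_u$ realized by left-multiplication matrices, conflating it with the order-$12$ conjugation group used to generate $PSL(2,\mathfrak{H})$. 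The statement your method does prove is the one in which $\mathcal{U}(\mathfrak{H},\mathbf{Lor})$ is replaced by $\{\widehat{\pm(R_{\mathbf{u}}\oplus 1)}:\mathbf{u}\in\mathfrak{H}_u\}$; as it stands, the claimed equality of groups fails, so no correct proof of it exists.
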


\noi Since $PSL(2,\mathfrak{L})\subset PSL(2,\mathfrak{H})$ we have a corresponding subgroup 
$\Gamma_{\mathfrak{L}}\subset{\Gamma_{\mathfrak{H}}}$ of the Lorentz group.

\noi The fundamental domain of $\Gamma_{\mathfrak{H}}$ is contained in the fundamental domain of $\Gamma_{\mathfrak{L}}$ and therefore as we seen before the group $PSL(2, \mathfrak{L})$ leaves invariant the hyperbolic honeycomb whose cell is the 24-cell.

\section{Poincar\'e extension to the hyperbolic fifth dimension.} 

\noi As we have seen before the quaternionic projective line
$\mathbf{P}^1_{\mathbb{H}}$ can be identified with the unit sphere
$\mathbb{S}^4$ in $\mathbb{R}^5$ and therefore $\mathbb S^4$ is the
boundary of the closed unit ball $\mathbf D^5\subset\mathbb R^5$. As
usual, we identify the interior of $\mathbf D^5$ with the real
hyperbolic 5-space $\mathbf{H}_\mathbb{R}^5$.  Since
$PSL(2,\mathbb{H})$ acts conformally on $\mathbb{S}^4
\cong \mathbf{P}^1_{\mathbb{H}}$, by Poincar\'e Extension Theorem each element
$\gamma\in{PSL(2,\mathbb H)}$ extends canonically to a conformal
diffeomorphism of $\mathbf{D}^{5}$ which restricted to
$\mathbf{H}_\mathbb{R}^5$ is an orientation preserving isometry $\tilde{\gamma}$ of the open
5-disk $\mathbf{B}^{5}$ with the Poincar\'e's metric. Reciprocally, any orientation preserving
isometry of $\mathbf{H}_\mathbb{R}^5$ extends canonically to the ideal
boundary $\mathbb{R}^{4}\cup \{\infty \}$ as an element of
$PSL(2,\mathbb H)$.  Thus the map $\gamma\mapsto\tilde{\gamma}$ is an
isomorphism and $PSL(2,\mathbb{H})=Isom_{+}\mathbf{H}_\mathbb{R}^5$.\\

\noi We can also refer to the upper half-space model
$\mathbf{H}_\mathbb{R}^5$ which can be regarded as
$\{(\mathbf{q},t)\,\,:\,\mathbf{q}\in\mathbb{H},
t>0\}$.  The action of $PSL(2,\mathbb{H})$ extends to the closed half-space
$\overline{\mathbf{H}}_\mathbb{R}^5:=\{(\mathbf{q},t)\,:\,\mathbf{q}\in\mathbb{H},
t\geq0\}$.

\noi We give the quaternionic Poincar\'e's extension theorem uses the notion of Dieudonn\'e determinant, let us briefly recall its definition and properties.

\begin{definition}
Let $\gamma=\left(\begin{array}{cc}a & b \\ c & d\end{array}\right) \in PSL(2,\mathbb{H})$.
The Dieudonn\'e determinant of $\gamma$ is defined by the formula:
$$
\rm{det}_{\mathbb{H}}(\gamma)=\sqrt{|a|^2|d|^2+|c|^2|b|^2-2\Re{c\overline{a}b\overline{d}}}$$
\end{definition}
\noi The Dieudonn\'e determinant has the following properties (\cite{BisGen}):
\begin{enumerate}
\item $\gamma=\left(\begin{array}{cc}a & b \\ c & d\end{array}\right)$ is in $PSL(2,\mathbb{H})$ if
and only if  $\rm{det}_{\mathbb{H}}(\gamma)\neq0$.
\item If $\gamma_1, \gamma_2\in{PSL(2,\mathbb{H})}$ then   $\rm{det}_{\mathbb{H}}(\gamma_1\gamma_2)=
 \rm{det}_{\mathbb{H}}(\gamma_1) \rm{det}_{\mathbb{H}}(\gamma_2)$.
\item If $c\neq0$  
the Dieudonn\'e determinant of $\gamma$ is equal to $|(cb-cac^{-1}d)|$ (see Lemma 2.3. in \cite{BisGen} page 7). 
If $c=0$,  $\det_{\mathbb{H}}(\gamma)=|a||d|$. If $a,b,c,d$ commute then $\det_{\mathbb{H}}(\gamma)=|ad-bc|$.
\end{enumerate}
\begin{theorem} 

\noi Let $\gamma=\left(\begin{array}{cc}a & b \\ c & d\end{array}\right) \in PSL(2,\mathbb{H})$, then 
its Poincar\'e extension is given explicitly in the upper half-space model $\mathbf{H}_\mathbb{R}^5$ as follows:

$$
\overline\gamma(\mathbf{q},t)=\left(\left(\frac{1}{|c\mathbf{q}+d|^2+|c|^2t^2}\right)((a\mathbf{q}+b)(\overline{\mathbf{q}}\overline{c}+\overline{d})+a\overline{c}t^2),
\frac{\rm{det}_{\mathbb{H}}(\gamma)\text{t}}{|c\mathbf{q}+d|^2+|c|^2t^2}\right).
$$
\end{theorem}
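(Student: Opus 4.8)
The plan is to lean on the Poincar\'e extension theorem recalled just above, which identifies $PSL(2,\mathbb{H})$ with $Isom_+(\mathbf{H}_{\mathbb{R}}^5)$ via $\gamma\mapsto\overline{\gamma}$ and, crucially, is compatible with composition (the extension of a composite boundary map is the composite of the extensions). Because of this, it suffices to establish the formula on a generating set and then let it propagate through products. I would take as generators the translations $\begin{pmatrix}1 & b\\0 & 1\end{pmatrix}$ with $b\in\mathbb{H}$, the similarities $\begin{pmatrix}a & 0\\0 & d\end{pmatrix}$, and the inversion $T=\begin{pmatrix}0 & 1\\1 & 0\end{pmatrix}$, together with the Bruhat-type factorization
\begin{equation*}
\begin{pmatrix}a & b\\c & d\end{pmatrix}=\begin{pmatrix}1 & ac^{-1}\\0 & 1\end{pmatrix}\begin{pmatrix}0 & 1\\1 & 0\end{pmatrix}\begin{pmatrix}c & 0\\0 & b-ac^{-1}d\end{pmatrix}\begin{pmatrix}1 & c^{-1}d\\0 & 1\end{pmatrix},
\end{equation*}
valid when $c\neq 0$; the case $c=0$ reduces to a similarity followed by a translation, $\begin{pmatrix}a & b\\0 & d\end{pmatrix}=\begin{pmatrix}a & 0\\0 & d\end{pmatrix}\begin{pmatrix}1 & a^{-1}b\\0 & 1\end{pmatrix}$.

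Next I would compute the extension of each generator from the geometric recipe: a boundary similarity of $\mathbb{R}^4=\mathbb{H}$ extends to the similarity of $\mathbb{R}^5=\mathbb{H}\oplus\mathbb{R}t$ fixing the $t$-axis, and a boundary inversion extends to the corresponding sphere inversion. This gives manifestly isometric extensions: a translation acts by $(\mathbf{q},t)\mapsto(\mathbf{q}+b,t)$; the similarity $\mathrm{diag}(a,d)$, whose boundary action is $\mathbf{q}\mapsto a\mathbf{q}d^{-1}$, acts by $(\mathbf{q},t)\mapsto\bigl(a\mathbf{q}d^{-1},(|a|/|d|)\,t\bigr)$; and $T$ acts by $(\mathbf{q},t)\mapsto\bigl(\overline{\mathbf{q}}/(|\mathbf{q}|^2+t^2),\,t/(|\mathbf{q}|^2+t^2)\bigr)$, i.e.\ inversion in the unit sphere of $\mathbb{R}^5$ followed by quaternionic conjugation. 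A one-line substitution checks that each agrees with the claimed formula: for $T$ one uses $\mathrm{det}_{\mathbb{H}}(T)=1$, and for the similarity one uses $\overline{d}/|d|^2=d^{-1}$ together with $\mathrm{det}_{\mathbb{H}}=|a||d|$.

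The general formula then comes from composing the four generator extensions along the factorization and simplifying. Two target simplifications organize the computation: the denominators must accumulate to exactly $|c\mathbf{q}+d|^2+|c|^2t^2$, where the $|c|^2t^2$ term traces back to the $t$-dependence that the inversion step injects into the first component; and the product of the scalar stretch factors must collapse to $\mathrm{det}_{\mathbb{H}}(\gamma)$ by property (3), $\mathrm{det}_{\mathbb{H}}(\gamma)=|cb-cac^{-1}d|=|c|\,|b-ac^{-1}d|$, which is precisely the modulus governing the middle similarity $\mathrm{diag}(c,\,b-ac^{-1}d)$. A cleaner alternative, which I would record as a cross-check, is to define $\Phi_\gamma(\mathbf{q},t)$ to be the right-hand side and verify directly that it preserves $\mathbf{H}_{\mathbb{R}}^5$ (its $t$-component is positive since $\mathrm{det}_{\mathbb{H}}(\gamma)>0$ by property (1)) and restricts on $t=0$ to $F_A$, using $\overline{\mathbf{q}}\,\overline{c}+\overline{d}=\overline{c\mathbf{q}+d}$ to get $(a\mathbf{q}+b)(\overline{\mathbf{q}}\,\overline{c}+\overline{d})/|c\mathbf{q}+d|^2=(a\mathbf{q}+b)(c\mathbf{q}+d)^{-1}$; uniqueness of the Poincar\'e extension then forces $\Phi_\gamma=\overline{\gamma}$ once $\Phi_\gamma$ is known to be an isometry, which the factorization into isometric generators supplies.

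I expect the main obstacle to be the bookkeeping in the composition step, precisely because quaternionic multiplication is noncommutative: the inner factor $b-ac^{-1}d$ and the left and right multiplications do not commute, so one must track the order of quaternionic factors carefully when expanding the first component into $(a\mathbf{q}+b)(\overline{\mathbf{q}}\,\overline{c}+\overline{d})+a\overline{c}\,t^2$ and confirm that the cross terms combine exactly as stated rather than producing spurious residues. The delicate point is showing that the emergent scalar is \emph{exactly} $\mathrm{det}_{\mathbb{H}}(\gamma)$ and not merely proportional to it; here property (2), the multiplicativity of the Dieudonn\'e determinant, is what guarantees consistency across the factorization and lets the degenerate case $c=0$ be handled uniformly.
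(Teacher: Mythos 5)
Your proposal is correct and is essentially the paper's own proof: the paper likewise computes the extensions of affine maps and of $\mathbf{q}\mapsto(c\mathbf{q}+d)^{-1}$, then uses the identity $\gamma(\mathbf{q})=ac^{-1}+(b-ac^{-1}d)(c\mathbf{q}+d)^{-1}$ --- which is precisely your Bruhat factorization read at the level of transformations --- and composes the pieces, with the Dieudonn\'e determinant entering through $\det_{\mathbb{H}}(\gamma)=|cb-cac^{-1}d|=|c|\,|b-ac^{-1}d|$ exactly as you predict. The only point to watch is the order of composition (the paper's Theorem~2.4 calls $A\mapsto F_A$ an antihomomorphism, so be explicit that your factorization is to be read with the leftmost matrix acting last, which is what makes it reproduce the paper's decomposition).
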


\begin{proof} 
\noi If  $\gamma(\q)$ is the affine transformation $\gamma(\q)=a\q+b$ ($a\neq0$) then its Poincar\'e's extension 
$\overline\gamma$ is given by the formula:
$$
\overline\gamma(\mathbf{q},t)=(a\q+b, |a|t)=(a\q+b,\rm{det}_{\mathbb{H}}(\gamma)\text{\emph{t}}).
$$

\noi If $c\neq0$ and $\gamma(\q)=(c\q+d)^{-1}$ its Poincar\'e extension is given by the formula
$$
\overline\gamma(\mathbf{q},t)=\left(\frac{\overline{(c\q+d)}}{|c\q+d|^2+|c|^2t^2}, \frac{|c|t}{|c\q+d|^2+|c|^2t^2}\right)=
$$
$$
=\left(\frac{\overline{(c\q+d)}}{|c\q+d|^2+|c|^2t^2}, \frac{\rm{det}_{\mathbb{H}}(\gamma)\emph{t}}{|c\mathbf{q}+d|^2+|c|^2t^2}\right)
$$

\noi  If $c\neq0$ we have:
$$
\gamma(\q)=(a\q + b)(c\q + d)^{-1} = ac^{-1} + (b-ac^{-1}d)(c\q + d)^{-1}.
$$
Hence $\gamma$ is a composition of affine transformations with the map $\q\mapsto\q^{-1}$. Applying the previous formulas we obtain:
$$
\overline\gamma(\mathbf{q},t)=\left(\frac{(b-ac^{-1}d)(\overline{\mathbf{q}}\overline{c}+\overline{d})}{|c\mathbf{q}+d|^2+|c|^2t^2}+a{c}^{-1},\dfrac{|(cb-cac^{-1}d)|t}{|c\mathbf{q}+d|^2+|c|^2t^2}\right)=
$$
$$
=\left(\frac{(b-ac^{-1}d)(\overline{\mathbf{q}}\overline{c}+\overline{d})}{|c\mathbf{q}+d|^2+|c|^2t^2}+
\frac{a{c}^{-1}[(c\mathbf{q}+d)(\overline{\mathbf{q}}\overline{c}+\overline{d}) +|c|^2t^2]}{|c\mathbf{q}+d|^2+|c|^2t^2},
\frac{\rm{det}_{\mathbb{H}}(\gamma)\emph{t}}{|c\mathbf{q}+d|^2+|c|^2t^2}\right)=
$$
$$
=\left(\frac{1}{|c\mathbf{q}+d|^2+|c|^2t^2}((a\mathbf{q}+b)(\overline{\mathbf{q}}\overline{c}+\overline{d})+a\overline{c}t^2),
\frac{\rm{det}_{\mathbb{H}}(\gamma)\emph{t}}{|c\mathbf{q}+d|^2+|c|^2t^2}\right)
$$

\noi A simply direct calculation
implies that the transformations corresponding to an affine transformation and $\q\mapsto\q^{-1}$ acts conformally on
$\overline{\mathbf{H}}_\mathbb{R}^5$ and therefore it acts by
isometries on the open half-space $\mathbf{H}_\mathbb{R}^5$. Since every extension is a finite composition of transformations of this type one has that the extensions act conformally and hence by isometries. \end{proof}
     
\begin{remark} The Poincar\'{e} extension
of a transformation when $t=0$ corresponds to the action of $\gamma$
 on $\mathbb{H}$.
 \end{remark}

\noi Therefore we may assume that any discrete subgroup of
$PSL(2,\mathbb{H})$ acts either conformally on $\mathbb{S}^4
\cong \mathbf{P}^1_{\mathbb{H}}$ or isometrically on $\mathbf{H}_\mathbb{R}^5$ and
rephrase Proposition \ref{pp}.

\begin{proposition} Let $\Gamma\subset{PSL(2,\mathbb{H})}$ be a
  discrete subgroup acting 
isometrically on 
$\mathbf{H}_\mathbb{R}^5$. Then this action is proper and
  discontinuous. Therefore $M:=\mathbf{H}_\mathbb{R}^5/\Gamma$ is a
  complete 5-dimensional hyperbolic orbifold. If in addition $\Gamma$
  acts freely then $M$ is a hyperbolic 5-manifold.
\end{proposition}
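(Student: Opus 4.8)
The plan is to deduce the whole statement from two standard facts about the ambient isometry group, using the identification $PSL(2,\mathbb{H})=\mathrm{Isom}_{+}(\mathbf{H}_\mathbb{R}^5)$ supplied by the Poincar\'e extension theorem just proved. The substantive input I would establish first is that $PSL(2,\mathbb{H})$ acts \emph{properly} on $\mathbf{H}_\mathbb{R}^5$, i.e. that for every compact $K\subset\mathbf{H}_\mathbb{R}^5$ the set $\{g\in PSL(2,\mathbb{H}):gK\cap K\neq\emptyset\}$ is compact. Rather than quote Myers--Steenrod I would argue by hand: fix a basepoint $o$ and a radius $r$ with $K\subset\overline{B(o,r)}$; if $gK\cap K\neq\emptyset$ there is $x\in K$ with $gx\in K$, and the triangle inequality gives $d(go,o)\le d(o,x)+d(gx,x)+d(x,o)\le 4r$. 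Since an orientation-preserving isometry of $\mathbf{H}_\mathbb{R}^5$ is determined by the image $go$ of $o$ together with its derivative, an element of $SO(T_o\mathbf{H}_\mathbb{R}^5)$, the set $\{g:d(go,o)\le 4r\}$ is homeomorphic to $\overline{B(o,4r)}\times SO(5)$ and hence compact; the displayed set is a closed subset of it and is therefore compact.

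Next I would pass from properness of the ambient action to proper discontinuity of $\Gamma$. The elementary lemma I would invoke is that a discrete subgroup $\Gamma$ of a Hausdorff topological group $G$ is closed. Granting this, for every compact $K$ the set $\{\gamma\in\Gamma:\gamma K\cap K\neq\emptyset\}=\Gamma\cap\{g\in G:gK\cap K\neq\emptyset\}$ is the intersection of a closed discrete subset of $G$ with a compact subset, hence finite. This is precisely the definition of a properly discontinuous action, which establishes the first assertion of the proposition.

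Finally I would assemble the geometric conclusions from the standard theory of properly discontinuous isometric actions. Proper discontinuity forces each stabilizer $\Gamma_x=\{\gamma\in\Gamma:\gamma x=x\}$ to be finite (apply the finiteness above to a compact neighborhood of $x$), and since $\Gamma$ acts by isometries each point $x$ admits a $\Gamma_x$-invariant ball $B$ meeting only its $\Gamma_x$-translates; the charts $B/\Gamma_x$ give $M=\mathbf{H}_\mathbb{R}^5/\Gamma$ the structure of a hyperbolic orbifold, and completeness is inherited from that of $\mathbf{H}_\mathbb{R}^5$ since the quotient projection is a local isometry along which geodesics lift. If in addition $\Gamma$ acts freely then every $\Gamma_x$ is trivial, the projection $\mathbf{H}_\mathbb{R}^5\to M$ is a covering map, and $M$ is a complete hyperbolic $5$-manifold. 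The only genuinely nontrivial step is the properness of the $PSL(2,\mathbb{H})$-action; everything afterwards is the routine implication \emph{proper} $\Rightarrow$ \emph{properly discontinuous} for discrete subgroups, together with the standard construction of orbifold charts.
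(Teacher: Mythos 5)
Your argument is correct, but you should know that the paper itself offers essentially no proof at this point: the proposition is presented as a rephrasing of Proposition \ref{pp} from the appendix, which is in turn stated without proof as a consequence of the standard theory of good orbifolds in the sense of Thurston (a discrete group of isometries acting properly discontinuously on a homogeneous Riemannian manifold yields an orbifold quotient, and the paper cites \cite{Rat} for this circle of ideas). What you have done is supply exactly the step the paper delegates to the literature, namely the implication \emph{discrete} $\Rightarrow$ \emph{properly discontinuous} for subgroups of $\mathrm{Isom}_+(\mathbf{H}_\mathbb{R}^5)$. Your route --- properness of the ambient action, obtained from the simply transitive action of $\mathrm{Isom}_+(\mathbf{H}_\mathbb{R}^5)$ on the oriented orthonormal frame bundle, so that $\{g : d(go,o)\le 4r\}\cong \overline{B(o,4r)}\times SO(5)$ is compact, followed by the lemma that a discrete subgroup of a Hausdorff group is closed and hence meets this compact set in a finite set --- is the standard textbook argument, and it correctly isolates where the real content lies: properness of the action on hyperbolic space itself. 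This emphasis is valuable, because the same $\Gamma$ acts on the conformal sphere $\mathbb{S}^4=\partial\mathbf{H}_\mathbb{R}^5$ discretely but in general \emph{not} properly discontinuously, so no purely formal manipulation of discreteness could prove the statement. Two small points you should make explicit to close the argument fully: (i) you use that the Poincar\'e-extension isomorphism $PSL(2,\mathbb{H})\cong\mathrm{Isom}_+(\mathbf{H}_\mathbb{R}^5)$ is a homeomorphism, so discreteness in the matrix-group topology coincides with discreteness in the compact-open topology (true, since a continuous isomorphism of Lie groups is a homeomorphism, but it is being invoked); (ii) closedness of $\{g: gK\cap K\ne\emptyset\}$ inside the compact set requires the observation that $g_n\to g$ and $x_n\to x$ imply $g_nx_n\to gx$, which holds because isometries form an equicontinuous family.
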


\subsection{Hyperbolic Kleinian 5-orbifolds and 5-manifolds.}
\noi Important examples of discrete subgroups of $PSL(2,\mathbb{H})$ are obtained when the coefficients are in the
ring $\mathbb{H}(\mathbb Z)$ of Lipschitz integers or the ring   $\mathbb{H}ur$ of Hurwitz integers i.e. $PSL(2,\mathbb{H}(\mathbb{Z}))$ and $PSL(2,\mathbb{H}ur(\mathbb{Z}))$ the subgroup of $PSL(2,\mathbb{H})$ whose entries are Hurwitz integers (\emph{not necessarily satisfying (BG) conditions}).
\noi We summarize our previous considerations in the following 
\begin{proposition} The groups $PSL(2, \mathbb{H}(\mathbb{Z}))$, $PSL(2,\mathbb{H}ur(\mathbb{Z}))$, $PSL(2,\mathfrak{L})$ and
  $PSL(2, \mathfrak{H})$ 
are discrete subgroups of
  $PSL(2,{\mathbb{H}})$.  In particular their Poincar\'e extensions
  act properly and discontinuously in
  $\mathbf{H}_\mathbb{R}^5$. 
 
\end{proposition}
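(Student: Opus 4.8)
The statement packages two claims, and the plan is to prove discreteness first and then read off proper discontinuity for free. For the \emph{discreteness} part I would begin by reducing the four groups to the two ``integral'' ones: by the Remark following the definition of $PSL(2,\mathfrak{L})$ we have $PSL(2,\mathfrak{L})\subset PSL(2,\mathbb{H}(\mathbb{Z}))$, and the analogous inclusion $PSL(2,\mathfrak{H})\subset PSL(2,\mathbb{H}ur(\mathbb{Z}))$ holds because every generator of the Hurwitz modular group (the inversion $T$, the translations $\mathcal{T}_{\Im\mathbb{H}(\mathbb{Z})}$, and the diagonal matrices $D_{\mathbf u}$ with $\mathbf u\in\mathfrak{H}_u$) has entries in $\mathbb{H}ur(\mathbb{Z})$. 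Since a subgroup of a discrete group is discrete, it suffices to establish discreteness of $PSL(2,\mathbb{H}(\mathbb{Z}))$ and $PSL(2,\mathbb{H}ur(\mathbb{Z}))$. The ``in particular'' clause is then immediate: once discreteness is known, the preceding Proposition---discrete subgroups of $PSL(2,\mathbb{H})$ act properly and discontinuously on $\mathbf{H}_\mathbb{R}^5$ via their Poincar\'e extensions---applies without change.

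For discreteness itself I would exploit the lattice structure recorded in Section 2: $\mathbb{H}ur(\mathbb{Z})$ is the $\mathcal{F}_4$ lattice in $\mathbb{H}\cong\mathbb{R}^4$ and $\mathbb{H}(\mathbb{Z})$ is its index-two sublattice, so both are \emph{discrete} subrings. Passing to the $8\times 8$ real form $\rho\colon M_2(\mathbb{H})\hookrightarrow M_8(\mathbb{R})$ (left multiplication on $\mathbb{H}^2\cong\mathbb{R}^8$), a matrix with entries in $\mathbb{H}(\mathbb{Z})$ (resp.\ $\mathbb{H}ur(\mathbb{Z})$) is carried to a matrix whose entries lie in the fixed lattice $\rho(\mathbb{H}(\mathbb{Z}))$ (resp.\ $\rho(\mathbb{H}ur(\mathbb{Z}))$) of $M_8(\mathbb{R})$. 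For the modular groups the argument is especially clean: every element of $PSL(2,\mathfrak{L})$ or $PSL(2,\mathfrak{H})$ is a product of generators each of Dieudonn\'e determinant $1$, hence has $\det_{\mathbb{H}}=1$ and lands, under $\rho$, in the intersection of $SL(8,\mathbb{R})$ with a fixed lattice of $M_8(\mathbb{R})$---a discrete set. Modding out by the finite kernel $\{\pm\mathcal{I}\}$ preserves discreteness.

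The genuine obstacle is the passage to the projective quotient $PSL(2,\mathbb{H})=GL(2,\mathbb{H})/\{t\mathcal{I}:t\neq0\}$ for the \emph{full} integral groups, where $A$ and $tA$ represent the same transformation and $t$ need not be integral. I would resolve this by choosing, in each class meeting the integral matrices, the \emph{primitive} representative (content $1$), which is unique up to the sign $\pm\mathcal{I}$; these primitive representatives form a discrete subset of $M_8(\mathbb{R})$. To confirm discreteness I would take a sequence $[\gamma_n]\to[\mathcal{I}]$, lift it to primitive integral representatives, and use that the Dieudonn\'e determinant is continuous and multiplicative (properties (1)--(3) recalled above) together with $\det_{\mathbb{H}}(tA)=t^2\det_{\mathbb{H}}(A)$ for $t\in\mathbb{R}$: the normalization pins the residual scalar to $t=\pm1$, so no accumulation at the identity is possible and the primitive representatives of $\gamma_n$ eventually coincide with $\pm\mathcal{I}$. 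An equivalent and perhaps cleaner route is to transport everything through the injection $\Psi\colon\mathbb{F}\hookrightarrow PSL(4,\mathbb{C})$ of Section 2: integral quaternionic entries become entries in a fixed lattice of $\mathbb{C}$ (via $q=z_1+z_2\mathbf{j}$ and the associated $2\times2$ complex block), whereupon discreteness reduces to the standard discreteness of a projective group of matrices with entries in a discrete ring. Either way, discreteness combined with the preceding Proposition yields the asserted proper discontinuity of the Poincar\'e extensions on $\mathbf{H}_\mathbb{R}^5$.
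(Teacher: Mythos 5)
Your architecture is partly sound, and two of its pieces are exactly what the paper intends (the paper states this Proposition as a summary and gives no further argument): the inclusions $PSL(2,\mathfrak{L})\subset PSL(2,\mathbb{H}(\mathbb{Z}))$ and $PSL(2,\mathfrak{H})\subset PSL(2,\mathbb{H}ur(\mathbb{Z}))$, and the final appeal to the preceding Proposition that discrete subgroups of $PSL(2,\mathbb{H})$ act properly and discontinuously on $\mathbf{H}^5_{\mathbb{R}}$. Your argument is also correct for the modular groups themselves, because their generators, hence all their elements, admit integral representatives with $\det_{\mathbb{H}}=1$: if $t_n\gamma_n\to\mathcal{I}$ with $\gamma_n$ integral and $\det_{\mathbb{H}}(\gamma_n)=1$, then $t_n^2=\det_{\mathbb{H}}(t_n\gamma_n)\to 1$, so $\gamma_n\to\pm\mathcal{I}$ inside a lattice and the sequence is eventually constant.

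The gap is in your treatment of the full integral groups. Choosing the primitive (content-one) representative does \emph{not} ``pin the residual scalar to $t=\pm1$'': primitivity places no bound on $\det_{\mathbb{H}}(\gamma_n)$, and the scalar is controlled only when the determinant is. Concretely, the matrices
\[
\gamma_N=\begin{pmatrix} N & 1\\ 0 & N\end{pmatrix},\qquad N\in\mathbb{N},
\]
are primitive and Lipschitz-integral, have $\det_{\mathbb{H}}(\gamma_N)=N^2$, induce the transformations $\mathbf{q}\mapsto \mathbf{q}+\tfrac{1}{N}$, and satisfy $\tfrac{1}{N}\gamma_N\to\mathcal{I}$; thus $[\gamma_N]\to[\mathcal{I}]$ while $[\gamma_N]\neq[\mathcal{I}]$, and your determinant bookkeeping is perfectly consistent with this, since $t_N=1/N\to 0$ and $t_N^2\det_{\mathbb{H}}(\gamma_N)=1$. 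Consequently the set of all classes in $GL(2,\mathbb{H})/\{t\mathcal{I}\}$ that admit \emph{some} integral representative is not discrete (clearing denominators, it contains the classes of all matrices with rational quaternionic entries), so under your reading the statement you are proving is false and cannot be patched; your alternative route through $PSL(4,\mathbb{C})$ fails for the same reason, as a projective group of matrices over a discrete ring need not be discrete. The missing ingredient is the normalization built into the paper's definition of $PSL(2,\mathbb{H})$ in Section 2 as $(SL(8,\mathbb{R})\cap GL(2,\mathbb{H}))/\{\pm\mathcal{I}\}$: the integral groups must be read as classes of integral matrices lying in $SL(8,\mathbb{R})$, equivalently with $\det_{\mathbb{H}}=1$ (note that $[\gamma_N]$ has no such representative, since $\tfrac{1}{N}\gamma_N$ is not integral). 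With that reading your primitive-representative device is unnecessary: integral matrices with $\det_{\mathbb{H}}=1$ form a subset of the lattice of matrices with entries in $\mathbb{H}ur(\mathbb{Z})$ inside $M_8(\mathbb{R})$, hence a discrete subgroup of $SL(2,\mathbb{H})$, and discreteness survives the quotient by the finite center $\{\pm\mathcal{I}\}$.
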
 
  
\begin{definition}  We can therefore define: 
$$\mathcal{O}^5_{\mathbb{H}(\mathbb{Z})}:=\mathbf{H}_\mathbb{R}^5/PSL(2,
  \mathbb{H}(\mathbb{Z})), \,\,\,\, \mathcal{O}^5_{\mathbb{H}ur(\mathbb{Z})}:=\mathbf{H}_\mathbb{R}^5/PSL(2,
  \mathbb{H}ur(\mathbb{Z})),$$
  $$ \mathcal{O}^5_{\mathfrak{L}}:=\mathbf{H}_\mathbb{R}^5/PSL(2,
  \mathfrak{L})\ \,\,\, \mathrm{and}\  \,\,\, \mathcal{O}^5_{\mathfrak{H}}:=\mathbf{H}_\mathbb{R}^5/PSL(2,\mathfrak{H});$$ these
  are real hyperbolic 5-dimensional orbifolds. 
 \end{definition} 
  
 \noi However the orbifolds $\mathcal{O}^5_{\mathbb{H(Z)}}$ and $\mathcal{O}^5_{\mathbb{H}ur(\mathbb{Z)}}$ are real 5-dimensional orbifolds of finite hyperbolic volume and the orbifolds $\mathcal{O}^5_{\mathfrak{L}}$  and $\mathcal{O}^5_{\mathfrak{H}}$ are real 5-dimensional orbifold of infinite hyperbolic volume.

\noi We identify $\mathbf{H}^5_{\mathbb{R}}$ with pairs
 $(\mathbf{q},t)$ where $\mathbf{q}\in\mathbb{H}$ and $t>0$. Let $\mathcal{P}^5$ be the 5-dimensional chimney which is the set: 
 $$\mathcal{P}^5:=\{(\mathbf{q},t)\in \mathbf{H}^5_{\mathbb{R}} : \mathbf{q}=x_0+x_1{\mathbf{i}}+x_2{\mathbf{j}}+x_3{\mathbf{k}}, \rm{where} -1/2\leq{x_n}\leq1/2,\, (n=0,1,2,3), \rm{and} \, |\mathbf{q}|^2+|t|^2\geq1   \}$$

\noi We can chose fundamental domains $\mathcal{P}^5_{\mathbb{H}(\mathbb{Z})}$ and $\mathcal{P}^5_{\mathbb{H}ur(\mathbb{Z})}$ for the actions of the Poincar\'e extensions of  $PSL(2, \mathbb{H}(\mathbb{Z}))$ and $PSL(2, \mathbb{H}ur(\mathbb{Z}))$ on $\mathbf{H}^5_{\mathbb{R}}$ which are subsets of $\mathcal{P}^5$ so that $\mathcal{P}^5_{\mathbb{H}(\mathbb{Z})}$ and $\mathcal{P}^5_{\mathbb{H}ur(\mathbb{Z})}$ have both finite hyperbolic volume.

\noi Some fundamental domains $\mathcal{P}^5_{\mathfrak{L}}$ and $\mathcal{P}^5_{\mathfrak{H}}$ for the actions of the Poincar\'e's extensions of  $PSL(2, \mathfrak{L})$ and $PSL(2, \mathfrak{H})$ on $\mathbf{H}^5_{\mathbb{R}}$ are the product of a line $\R$ by their respective fundamental domains in $\Hy$, these are $\mathcal{P}^4_{\mathfrak{L}} \times \R$ and $\mathcal{P}^4_{\mathfrak{H}} \times \R$. Then $\mathcal{P}_{\mathfrak{L}}$ and $\mathcal{P}_{\mathfrak{H}}$ have both infinite hyperbolic volume and one only end. Moreover, $\mathcal{O}^5_{\mathfrak{L}}$ and $\mathcal{O}^5_{\mathfrak{H}}$ are homeomorphic to $\mathcal{O}^4_{\mathfrak{L}} \times \R$ and $\mathcal{O}^4_{\mathfrak{H}} \times \R$, respectively. Then their underlying spaces are homeomorphic to $\R^5$ and their singular loci are the product of a line $\R$ by their respective singular loci $\Sigma_{\mathfrak{L}}$ and $\Sigma_{\mathfrak{H}}$ in $\mathcal{O}^4_{\mathfrak{L}}$ and $\mathcal{O}^4_{\mathfrak{H}}$, respectively.

\noi The orbifold Euler characteristic  of these orbifolds are:

$$\chi ^{orb}(\mathcal{O}^{5}_{\mathfrak{L}})=-\chi ^{orb}(\mathcal{O}^{4}_{\mathfrak{L}})=-\frac{1}{96} \qquad \, \rm{and}\,\qquad \chi ^{orb}(\mathcal{O}^{5}_{\mathfrak{H}})=-\chi ^{orb}(\mathcal{O}^{4}_{\mathfrak{H}})=-\frac{1}{288},$$
 respectively.

\noi The vertical ideal projection by geodesics asymptotic to the point at infinity of  $\mathcal{P}^5_{\mathbb{H}(\mathbb{Z})}$ onto the ideal boundary  identified with $\mathbb{R}^{4}$ is a subset of a hypercube $\{4,3,3\}$. Moreover the tessellation on the ideal boundary is commensurable with the classical self--dual and right--angled
tessellation $\{4,3,3,4\}$. The extension of $PSL(2, \mathbb{H}(\mathbb{Z}))$ to
$\mathbf{H}^{5}_{\mathbb{R}}$ is commensurable with the ideal hyperbolic tessellation $\{3,4,3,3,4\}$. This remarks imply:
\begin{proposition} 
The extension to
$\mathbf{H}^{5}_{\mathbb{R}}$ of $PSL(2,\mathbb{H}(\mathbb{Z}))$ is a subgroup of the group of symmetries of the ideal hyperbolic tessellation or honeycomb $\{3,4,3,3,4\}$.
\end{proposition}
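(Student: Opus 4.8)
The plan is to identify the honeycomb $\{3,4,3,3,4\}$ with an explicit horoball packing of $\mathbf{H}^5_{\mathbb{R}}$ attached to the Lipschitz integers, and then to read off from the Poincar\'e extension formula just proved that every element of $PSL(2,\mathbb{H}(\mathbb{Z}))$ permutes the cells of this packing. Throughout I work in the upper half-space model $\mathbf{H}^5_{\mathbb{R}}=\{(\q,t):\q\in\mathbb{H},\ t>0\}$, whose ideal boundary is $\mathbb{H}\cup\{\infty\}$. Recall from the remarks preceding the statement that the cusp at $\infty$ has as vertex figure the self-dual right-angled tesseractic tessellation $\{4,3,3,4\}$ induced on the horospheres $\{t=\mathrm{const}\}\cong\mathbb{R}^4$, and that this tessellation has vertex set $\mathbb{H}(\mathbb{Z})$. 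Since the symbol $\{3,4,3,3,4\}$ is paracompact (all its vertices are ideal), its facets are horoballs, and so the honeycomb is completely determined by its collection of cells, which are horoballs centred at ideal points.

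First I would make the packing precise. Declare the cell at $\infty$ to be the Ford horoball $B_\infty:=\{(\q,t):t\ge1\}$, and let $\mathcal{H}$ be the family of horoballs obtained as the $PSL(2,\mathbb{H}(\mathbb{Z}))$-orbit of $B_\infty$. I would then verify that $\mathcal{H}$ is exactly the cell set of $\{3,4,3,3,4\}$. The decisive local check is at the cusp $\infty$: a matrix with $\gamma(\infty)=n\in\mathbb{H}(\mathbb{Z})$ can be chosen with unit denominator, so its Ford horoball has Euclidean diameter $1/|c|^2=1$ and is tangent to $B_\infty$ at $(n,1)$; two such integer horoballs at $n,m$ are tangent precisely when $|n-m|=1$, i.e. when $n-m\in\{\pm1,\pm\mathbf{i},\pm\mathbf{j},\pm\mathbf{k}\}$, giving each vertex its eight nearest neighbours. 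This is exactly the tangency pattern of the cubic honeycomb $\{4,3,3,4\}$ recorded as the vertex figure, so $\mathcal{H}$ reproduces the correct combinatorial type and the correct sizes, and hence coincides with the cell set of $\{3,4,3,3,4\}$.

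Next I would show that each $\overline\gamma$ with $\gamma\in PSL(2,\mathbb{H}(\mathbb{Z}))$ preserves $\mathcal{H}$. The key point is that the defining condition $A\in SL(8,\mathbb{R})$ forces $\mathrm{det}_{\mathbb{H}}(\gamma)=1$, so the height appearing in the extension formula of the previous theorem reduces to
\[
\overline\gamma(\q,t)=\left(\frac{(a\q+b)(\overline{\q}\,\overline{c}+\overline{d})+a\overline{c}\,t^2}{|c\q+d|^2+|c|^2t^2},\ \frac{t}{|c\q+d|^2+|c|^2t^2}\right).
\]
When $c=0$ the norm identity $|a||d|=1$ forces $a,d$ to be Lipschitz units, whence $\overline\gamma(\q,t)=(a\q d^{-1}+bd^{-1},t)$; left and right multiplication by units are signed coordinate permutations of $\mathbb{H}=\mathbb{R}^4$ and $bd^{-1}\in\mathbb{H}(\mathbb{Z})$, so $\overline\gamma$ fixes $\infty$, preserves heights, and acts on each horosphere as a symmetry of the tesseractic lattice, hence permutes $\mathcal{H}$. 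When $c\neq0$ the horoball $B_\infty$ is carried to a horoball centred at $\gamma(\infty)=ac^{-1}$ of Euclidean diameter $\mathrm{det}_{\mathbb{H}}(\gamma)/|c|^2=1/|c|^2$, which is again the Ford horoball at that cusp; since two matrices with the same image of $\infty$ differ by an element of the stabiliser of $\infty$, which preserves heights, this image depends only on the cusp, and the group law propagates the conclusion to all of $\mathcal{H}$. As an isometry carrying the packing $\mathcal{H}$ onto itself, $\overline\gamma$ preserves the honeycomb that $\mathcal{H}$ determines, so $\overline{PSL(2,\mathbb{H}(\mathbb{Z}))}\subseteq\mathrm{Sym}(\{3,4,3,3,4\})$.

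The hard part will be the identification in the second step: proving rigorously that the Ford packing $\mathcal{H}$ is the cell set of the \emph{regular} honeycomb $\{3,4,3,3,4\}$ rather than of some commensurable refinement, and that an isometry permuting $\mathcal{H}$ automatically preserves the honeycomb. I would settle the former with the tangency-and-size computation sketched above together with the vertex-figure identification inherited from the preceding paragraph, and the latter by noting that the face lattice is reconstructible from $\mathcal{H}$: the cells are the horoballs, and the lower-dimensional faces are the intersection patterns of their bounding horospheres, on each of which the induced Euclidean tessellation is the $\{4,3,3,4\}$ already pinned down. An alternative route, closer in spirit to Section~\ref{COXDEC}, would be to exhibit a Coxeter fundamental simplex for $\mathrm{Sym}(\{3,4,3,3,4\})$ refining $\mathcal{P}^5$ and to write the translations, the unit-diagonal matrices, and the inversion $T$ as products of reflections in its walls; here the reflection in the hemisphere $|\q|^2+t^2=1$, which is the fixed locus of $\overline T$ up to the conjugation $\q\mapsto\overline{\q}$, supplies the relevant inversive generator.
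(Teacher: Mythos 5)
Your overall strategy (exhibit a $PSL(2,\mathbb{H}(\mathbb{Z}))$-invariant horoball configuration and show that it determines the honeycomb) is a legitimate one, but the pivotal identification step is both unproved and, as stated, wrong. The cells of $\{3,4,3,3,4\}$ carry the Schl\"afli symbol $\{3,4,3,3\}$: each cell is an ideal polytope bounded by infinitely many totally geodesic ideal $24$-cells $\{3,4,3\}$, and two adjacent cells of the honeycomb share such a $4$-dimensional facet. In particular adjacent cells are never tangent at a single point, so no point-tangency pattern among round horoballs can be ``the tangency pattern of the cells''. What your computation actually detects is the \emph{vertex} structure: in the normalization you use (the same one the paper uses), where the induced tessellation of the boundary $\mathbb{R}^4$ is the cubic tessellation $\{4,3,3,4\}$ with vertices at $\mathbb{H}(\mathbb{Z})$, the points $\infty$ and the Lipschitz integers are ideal \emph{vertices} of $\{3,4,3,3,4\}$ --- that is exactly what ``vertex figure $\{4,3,3,4\}$'' means --- whereas the cells incident to $\infty$ are centred at the cube centres $\mathbb{H}(\mathbb{Z})+\frac{1}{2}(1+\mathbf{i}+\mathbf{j}+\mathbf{k})$, with cell stabilizer the affine Coxeter group $[3,4,3,3]$ of the Euclidean $24$-cell honeycomb, not the cubic symmetry your horoballs exhibit. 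So your family $\mathcal{H}$ is, at best, the analogue of the Ford circles sitting at the vertices of the Farey tessellation; calling its members ``cells'' and declaring the honeycomb ``completely determined by its collection of cells'' is the wrong dictionary.

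Even after relabelling $\mathcal{H}$ as a family of vertex horoballs, the reconstruction claim --- that the honeycomb is canonically determined by $\mathcal{H}$, so that any isometry permuting $\mathcal{H}$ is a symmetry of $\{3,4,3,3,4\}$ --- is exactly the hard content, and your ``decisive local check'' does not carry it. Since $\mathcal{H}$ is the full orbit of $B_\infty$, it contains a Ford horoball at every cusp $ac^{-1}$, a dense family with arbitrarily small diameters; you only examine the diameter-one horoballs over $\mathbb{H}(\mathbb{Z})$ and their mutual tangencies. You never show that the centres of $\mathcal{H}$ coincide with the vertex set of the honeycomb at every depth, nor that the full face lattice of $\{3,4,3,3,4\}$ can be recovered from the tangency combinatorics of $\mathcal{H}$ (the quaternionic analogue of ``the Ford circles determine the Farey tessellation'', which is a theorem, not a formality). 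Note also that the part you prove carefully --- that each $\overline{\gamma}$ permutes $\mathcal{H}$ --- is automatic, because $\mathcal{H}$ is by definition a group orbit and the Poincar\'e extensions act by isometries; the determinant and height computations add nothing there, so all the content of the proposition is concentrated in the identification you have not established. For comparison, the paper's own (admittedly brief) argument works on the vertex side from the start: it places $\infty$ as an ideal vertex, observes that the Lipschitz lattice produces the vertex figure $\{4,3,3,4\}$ on the ideal boundary, and deduces the proposition from this together with the fundamental domain $\mathcal{P}^5$. Your closing ``alternative route'' via a Coxeter fundamental simplex for $[3,4,3,3,4]$, with the generators of $PSL(2,\mathbb{H}(\mathbb{Z}))$ written as words in its wall reflections, is the approach most likely to close the gap, but in your write-up it remains a sketch.
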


\noi The cells of the hyperbolic tessellation $\{3,4,3,3,4\}$ are congruent copies of $\{3,4,3,3\}$ which is a regular Euclidean tessellation of $\mathbb{R}^4$. The ideal vertical figure is the regular self-dual Euclidean tessellation of $\mathbb{R}^4$ with symbol $\{4,3,3,4\}$.

\noi In the $\mathbb{H}ur$ case we obtain that the fundamental region has 24
faces and is the regular polytope $\{3,4,3\}$ and the tessellation in
$\mathbb{R}^{4}$ is $\{3,4,3,3\}$. 

\begin{proposition}
The extension to
$\mathbf{H}^{5}_{\mathbb{R}}$ of $PSL(2,\mathbb{H}ur(\mathbb{Z}))$ is a subgroup of the group of symmetries of the ideal self--dual hyperbolic tessellation or honeycomb $\{3,3,4,3,3\}$.
\end{proposition}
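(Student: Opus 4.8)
The plan is to realise the honeycomb $\{3,3,4,3,3\}$ explicitly in the upper half-space model $\mathbf{H}^5_{\mathbb{R}}=\{(\mathbf{q},t):\mathbf{q}\in\mathbb{H},\ t>0\}$ and then to verify invariance on a generating set of $PSL(2,\mathbb{H}ur(\mathbb{Z}))$. By the Poincar\'e extension theorem proved above, each $\gamma\in PSL(2,\mathbb{H}ur(\mathbb{Z}))$ acts as an orientation-preserving isometry of $\mathbf{H}^5_{\mathbb{R}}$, and by the preceding proposition this action is proper and discontinuous; so the only issue is to show that the orbit of a single honeycomb cell under the group consists again of honeycomb cells, equivalently that the vertex set of $\{3,3,4,3,3\}$ is permuted by the group.

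First I would pin down the combinatorics. The honeycomb $\{3,3,4,3,3\}$ is paracompact with all ideal vertices: its facets are copies of the Euclidean $16$-cell honeycomb $\{3,3,4,3\}$ and its vertex figure is the Euclidean $24$-cell honeycomb $\{3,4,3,3\}$, while the palindromic Schl\"afli symbol records that the honeycomb is self-dual. Restricting to a horosphere centred at the ideal vertex $\infty$, the vertex figure is realised as the tessellation $\{3,4,3,3\}$ of $\mathbb{R}^4\cong\mathbb{H}$, whose vertex set is exactly the Hurwitz lattice $\mathbb{H}ur(\mathbb{Z})$ --- the $\mathcal{F}_4$ (equivalently $D_4$) root lattice whose Voronoi cell is the $24$-cell. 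This is precisely the boundary tessellation identified in the remark above (fundamental region a chimney over $\{3,4,3\}$, ideal tessellation $\{3,4,3,3\}$), so the honeycomb I want is forced.

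Next I would check invariance on generators, taking the parabolic translations $\begin{pmatrix}1&\lambda\\0&1\end{pmatrix}$ with $\lambda\in\mathbb{H}ur(\mathbb{Z})$, the diagonal unit matrices $\begin{pmatrix}u&0\\0&v\end{pmatrix}$ with $u,v\in\mathfrak{H}_u$, and the inversion $T=\begin{pmatrix}0&1\\1&0\end{pmatrix}$. The translations act on every horosphere $\{t=\mathrm{const}\}$ as translations by the lattice $\mathbb{H}ur(\mathbb{Z})$ and hence preserve the horospherical tessellation $\{3,4,3,3\}$ and, with it, the whole honeycomb. The unit matrices act on the ideal boundary by $\mathbf{q}\mapsto u\mathbf{q}v^{-1}$, that is, as elements of the point group of the $24$-cell (the $F_4$ Weyl group generated from the Hurwitz units $\mathcal{U}(\mathfrak{H})$), so they fix $\infty$ and preserve the lattice together with its Voronoi tessellation. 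The decisive generator is $T$: its Poincar\'e extension is the inversion $(\mathbf{q},t)\mapsto\bigl(\overline{\mathbf{q}}/(|\mathbf{q}|^2+t^2),\ t/(|\mathbf{q}|^2+t^2)\bigr)$ in the unit $4$-sphere, fixing the hyperplane $|\mathbf{q}|^2+t^2=1$ and interchanging the ideal vertices $0$ and $\infty$. Self-duality of $\{3,3,4,3,3\}$ is exactly the statement that this inversion carries the honeycomb to itself, exchanging the facet structure (the $16$-cell honeycombs) with the vertex-figure structure (the $24$-cell honeycomb centred at $0$).

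Finally, to pass from generators to the whole group I would run a reduction argument in the spirit of the geometric characterization theorem: given arbitrary $\gamma\in PSL(2,\mathbb{H}ur(\mathbb{Z}))$, the finiteness behind the corollary on $\sup_{\gamma}\Re(\gamma(\mathbf{q}))$ lets me compose $\gamma$ with translations and with $T$ so as to return $\gamma(\infty)$ to $\infty$, reducing to the affine subgroup, whose elements are translations composed with unit matrices. \textbf{The main obstacle} I expect is exactly this last reduction --- showing that translations, Hurwitz units and $T$ generate all of $PSL(2,\mathbb{H}ur(\mathbb{Z}))$, a quaternionic Euclidean-algorithm statement complicated by non-commutativity and by the fact that Hurwitz-integer entries need not satisfy the (BG) conditions. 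If full generation proves awkward, the cleaner route is to argue directly that $\gamma$ permutes the vertex set of the honeycomb: the vertices are the cusps $\mathbb{H}ur(\mathbb{Z})$-equivalent to $\infty$ together with their images, any M\"obius transformation with Hurwitz-integer entries preserves this cusp set, and for a regular honeycomb an isometry fixing the vertex set fixes the honeycomb, giving the desired inclusion.
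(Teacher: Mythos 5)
Your overall strategy --- realize the honeycomb in the upper half-space model, check invariance on translations, Hurwitz units and $T$, then reduce an arbitrary element to these --- is a sharpened version of what the paper actually does: the paper's own ``proof'' is nothing more than the preceding remark that the fundamental region of the Hurwitz translation lattice is the $24$-cell $\{3,4,3\}$, that the induced tessellation of the ideal boundary $\mathbb{R}^4$ is $\{3,4,3,3\}$, and that the ideal hyperbolic honeycomb with this vertex figure is $\{3,3,4,3,3\}$; no verification beyond the affine subgroup is offered there. So your explicit check on generators, and your recognition that the real content lies in passing from those generators to \emph{all} of $PSL(2,\mathbb{H}ur(\mathbb{Z}))$ (whose entries need not satisfy the (BG) conditions), is exactly where the mathematics is. That generation step is in fact true and provable: the Hurwitz order is left and right norm-Euclidean (this is precisely why Hurwitz introduced it; the Lipschitz order is \emph{not} Euclidean, so the companion proposition for $PSL(2,\mathbb{H}(\mathbb{Z}))$ could not be handled this way), hence by the division algorithm every element of $GL(2,\mathbb{H}ur(\mathbb{Z}))$ is reduced, by left multiplication by matrices $\begin{pmatrix} 1 & \lambda \\ 0 & 1\end{pmatrix}$ with $\lambda\in\mathbb{H}ur(\mathbb{Z})$ and by the swap $T$, to an upper triangular matrix whose invertibility over the Hurwitz order forces its diagonal entries to be Hurwitz units. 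As written, however, your proposal leaves this step open, and without it nothing has been established beyond what the paper already asserts for the affine subgroup.

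The fallback you offer in its place is genuinely flawed, so it cannot serve as the escape route. The ideal vertex set of $\{3,3,4,3,3\}$ is \emph{dense} in $\mathbb{S}^4$: the vertex figures are Euclidean honeycombs, the symmetry group is a lattice in $\mathrm{Isom}(\mathbf{H}^5_{\mathbb{R}})$, and the vertex set is a union of orbits of boundary points, each of which is dense. An isometry that merely permutes a dense set of ideal points need not respect the cell structure; the two-dimensional picture already kills the claim: $z\mapsto 2z$ permutes $\mathbb{Q}\cup\{\infty\}$, the ideal vertex set of the Farey tessellation of $\mathbf{H}^2_{\mathbb{R}}$, yet it sends the Farey edge joining $0$ and $1$ to the geodesic joining $0$ and $2$, which is not a Farey edge. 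So ``for a regular honeycomb an isometry fixing the vertex set fixes the honeycomb'' is false in exactly the regime you need it. A smaller inaccuracy: the vertex set of the horospherical $\{3,4,3,3\}$ is not the Hurwitz lattice itself; the Hurwitz points are the \emph{centers} of its $24$-cells (it is their Voronoi--Dirichlet tessellation, as in the paper's remark), while its vertices are deep holes such as $\frac{1}{2}(1+\mathbf{i})$. This does not affect invariance under the lattice translations and unit rotations, but the identification as you state it is off. The honest conclusion: keep your main route and actually prove the Euclidean-algorithm generation statement; discard the cusp-set argument.
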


\section{Appendix: Orbifolds}

\subsection{Introduction to orbifolds}

For the sake of completeness, in this appendix we recall some basic facts about orbifolds (see also \cite{BMP}) .

\begin{definition}
A smooth $\mathrm{orbifold}$ modeled on the manifold $M$ is a
metrizable topological space $\mathcal{O}$ endowed with a collection
$\{(U_{i}, \tilde{U_{i}},\phi_{i},\Gamma_{i}\}_{i}$, called an {\it
  atlas}, where for each $i$,

\begin{enumerate}
\item $U_{i}$ is an open subset of $\mathcal{O}$, 
\item $\tilde{U_{i}}$ is an open subset of $M$, 
\item $\phi_{i}:\tilde{U_{i}} \to U_{i}$ is a continuous map (called a chart) and 
\item $\Gamma_{i}$ is a finite group of diffeomorphisms of $\tilde{U_{i}}$ satisfying the following conditions:
\begin{itemize}
\item The $\tilde{U_{i}}$'s cover $\mathcal{O}$.

\item Each $\phi_{i}$ factors through a homeomorphism between $\tilde{U_{i}}/\Gamma_{i}$ and $U_{i}$.

\item The charts are compatible in the following sense: for every
  $x\in \tilde{U_{i}}$ and $y\in \tilde{U_{j}}$ with
  $\phi_{i}(x)=\phi_{j}(y)$, there is a diffeomorphism $\psi$ between
  a neighborhood $V$ of $x$ and a neigborhood $W$ of $y$ such that
  $\phi_{j}(\psi(z))=\phi_{i}(z)$ for all $z\in V.$
\end{itemize}
\end{enumerate}
\end{definition} 

\begin{definition} 
The $\mathrm{local}$ $\mathrm{group}$ of $\mathcal{O}$ at a point
$x\in \mathcal{O}$ is the group $\Gamma_{x}$ defined as follows: let
$x\in U$ and $\phi:\tilde{U} \to U$ be a chart. Then $\Gamma_{x}$ is
the stabilizer of any point of $\phi^{-1}(x)$ under the action of
$\Gamma$.
\end{definition}

\noi It is well-defined up to isomorphism. Local groups are isomorphic
to subgroups of $O(n)$.

\begin{definition} 
If $\Gamma_{x}$ is trivial, we said that $x$ is $\mathrm{regular}$,
otherwise it is $\mathrm{singular}$. The $\mathrm{singular}$
$\mathrm{locus}$ of $\mathcal{O}$ is the set $\Sigma_{\mathcal{O}}$ of
singular points of $\mathcal{O}$.
\end{definition}

\begin{definition}
The $\mathrm{underlying}$ $\mathrm{space}$ of $\mathcal{O}$ is the
topological space $|\mathcal{O}|$ obtained by forgetting the orbifold
structure.
\end{definition}

\noi If $(M,g)$ is a smooth homogeneous space with respect to the
Riemannian metric $g$ and if $\Gamma$ is a discrete group of
isometries of $M$ acting properly and discontinuously on $M$, then the
orbit space $M/\Gamma$ is an orbifold and the canonical map $p:M\to
{M/\Gamma}$ provides us with a natural orbifold atlas. This is the
definition of a {\it good orbifold} modeled on the homogeneous
manifold $(M,g)$ in the sense of Thurston \cite{T}.

\noi There are many properties of the differential geometry and topology for manifolds which have counterparts for orbifolds. For example, the Euler characteristic of a manifold $M$ which is denoted by $\chi(M)$ or a CW complex defined as the integer number iqual to their alternating sum of the number of cells. A fundamental property of the Euler characteristic is that it is multiplicative under finite covers: If $M^{\prime} \to M$ is an $m-$fold cover, then
$$ \chi (M^{\prime})=m\chi (M).$$

\begin{definition} Let $\mathcal{O}$ be an orbifold with a decomposition as a CW complex so that the local group is constant on each open cell. Let $c$ be a cell, $\Gamma_c$ be the local group on $c$ and $|\Gamma_c |$ denote  its order. Then the Euler characteristic of the orbifold $\mathcal{O}$ is
$$\chi^{orb}(\mathcal{O}):=\Sigma_{c} \frac{(-1)^{\rm{dim} \, \it{c}}}{|\Gamma_c |}.$$
\end{definition}

The Euler characteristic of an orbifold is a rational number with a multiplicative property: If $M \to \mathcal{O}$ is an m-fold cover and $M$ is a manifold, then 
 $$ \chi (M)=m\chi^{orb} (\mathcal{O}).$$

\begin{remark} Most of the  orbifolds considered in this paper are good orbifolds 
and indeed they are mainly  modeled on 
\begin{itemize}
\item $\R^n$, the Euclidean $n-$space;
\item $\mathbb {S}^n$, the unitary sphere in $\R^{n+1}$ with the standard Riemannian metric;
\item $\mathbf H^n_{\R}$, the Siegel half space in $\R^{n}$, with the
  corresponding hyperbolic metric. In particular $\mathbf H_{\mathbb
    C}^1=\mathbf H_{\mathbb R}^2$ and $\mathbf H_{\mathbb H}^1=\mathbf
  H_{\mathbb R}^4$ (with the corresponding Poincar\'e metric) are the
  hyperbolic spaces which may be regarded as the half space of
  elements in $\mathbb{K}$ ($\mathbb{K}=\mathbb{C}$ or
  $\mathbb{K}=\mathbb{H}$) with positive real part or $\mathbf
  B_{\mathbb{K}} $ the unit ball in $\mathbb{K}$, with
  $\mathbb{K}=\mathbb{C},\mathbb{H}$
\end{itemize}
Correspondingly, $\R^n/\Gamma$ (where $\Gamma$ is a discrete subgroup
of isometries of $\R^n$ acting properly and discontinuously) are the
Euclidean orbifolds; similarly, $\mathbb S^n/\Gamma$ where $\Gamma$ is a finite
subgroup of $SO(n+1)$ are the spherical orbifolds and $\mathbf H^n/\Gamma$ are the hyperbolic orbifolds where $\Gamma$ is a finite subgroup of
$SO(n+1,1)$ and $\mathbf H^n_{\R}$ is the real hyperbolic $n-$space.
\end{remark}
\noi Taking into account the previous considerations, we have the following
\begin{proposition}\label{pp} If $\Gamma$
is a discrete subgroup of
isometries on $\mathbf{H}_\mathbb{H}^1$, then
$M:=\mathbf{H}_\mathbb{H}^1/\Gamma$ is a complete hyperbolic 4-real
orbifold.  If in addition $\Gamma$ acts freely, then $M$ is a
hyperbolic 4-real manifold.
\end{proposition}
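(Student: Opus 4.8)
The plan is to reduce the statement to the general construction of good orbifolds already recorded in this appendix, the only genuine work being to upgrade discreteness of $\Gamma$ to proper discontinuity of its action. First I would recall from Remark 2.7 that $\Hy$ is isometric to the real hyperbolic $4$-space $\mathbf{H}^4_{\R}$ with its Poincar\'e metric, which is a complete, simply connected Riemannian manifold of constant curvature $-1$ and a homogeneous space for its isometry group. Thus $(\Hy,g)$ satisfies the hypotheses of the model manifold in the definition of a good orbifold, and $\Gamma$ is by assumption a discrete subgroup of $\mathrm{Isom}(\Hy)$.

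The heart of the argument is the claim that a discrete group of isometries of a complete Riemannian manifold automatically acts properly discontinuously. For this I would invoke the Myers--Steenrod theorem: the full isometry group $G=\mathrm{Isom}(\mathbf{H}^4_{\R})$ is a Lie group and its action on $\mathbf{H}^4_{\R}$ is proper, i.e. the map $G\times \mathbf{H}^4_{\R}\to \mathbf{H}^4_{\R}\times\mathbf{H}^4_{\R}$, $(\gamma,x)\mapsto(\gamma x,x)$, is proper. Restricting to the closed subgroup $\Gamma$, the set $\{\gamma\in\Gamma : \gamma K\cap K\neq\emptyset\}$ is the intersection of $\Gamma$ with a compact subset of $G$ for every compact $K\subset\mathbf{H}^4_{\R}$; since $\Gamma$ is discrete this intersection is finite. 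This is exactly proper discontinuity, and in particular every point-stabilizer $\Gamma_x$ is finite and embeds into $O(4)$ via the isotropy representation, in accordance with the remark that local groups are subgroups of $O(n)$.

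Granting proper discontinuity, the conclusion follows from the construction given in this appendix right after the definition of a good orbifold: around each $x\in \Hy$ I would choose a metric ball $\tilde U_x$ small enough that $\gamma \tilde U_x\cap \tilde U_x\neq\emptyset$ forces $\gamma\in\Gamma_x$, which is possible precisely because $\Gamma$ acts properly discontinuously and $\Gamma_x$ is finite. The collection $\{(p(\tilde U_x),\tilde U_x,p|_{\tilde U_x},\Gamma_x)\}$ is then a smooth orbifold atlas on $M=\Hy/\Gamma$ modeled on $\mathbf{H}^4_{\R}$, and the canonical projection $p\colon\Hy\to M$ equips it with the structure of a good hyperbolic $4$-orbifold. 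Completeness is inherited from $\Hy$: the induced orbifold metric on $M$ is the quotient of a complete $\Gamma$-invariant metric by an isometric, properly discontinuous action, hence is itself complete.

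Finally, if $\Gamma$ acts freely then every stabilizer $\Gamma_x$ is trivial, so the orbifold has empty singular locus and each chart $(p(\tilde U_x),\tilde U_x,p|_{\tilde U_x},\{1\})$ is an ordinary smooth chart; equivalently $p\colon\Hy\to M$ becomes a covering map by the classical theory of free properly discontinuous actions, and $M$ inherits a complete hyperbolic metric making it a genuine hyperbolic $4$-manifold. I expect the only subtle point to be the passage from discreteness to proper discontinuity, which is where the completeness of $\Hy$ and the properness of the isometry-group action (Myers--Steenrod) are essential; the remaining steps are formal consequences of the orbifold machinery already set up above.
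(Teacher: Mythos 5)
Your proposal is correct, and it follows the same overall route as the paper: reduce to the construction of good orbifolds recorded in the appendix. In fact the paper gives no explicit proof at all --- the proposition is asserted with the phrase ``taking into account the previous considerations,'' referring to the paragraph stating that a quotient of a homogeneous Riemannian manifold by a discrete group of isometries \emph{acting properly and discontinuously} is a good orbifold. Note the mismatch: that general statement takes proper discontinuity as a hypothesis, while the proposition assumes only discreteness of $\Gamma$. The genuine content of your write-up is precisely the bridge across this gap: the Myers--Steenrod argument that $\mathrm{Isom}(\mathbf{H}^{4}_{\R})$ is a Lie group acting properly on $\mathbf{H}^{4}_{\R}$, so that for any compact $K$ the set $\{\gamma\in\Gamma:\gamma K\cap K\neq\emptyset\}$ is the intersection of the discrete set $\Gamma$ with a compact subset of the isometry group, hence finite. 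This step (together with the completeness of $\Hy$, which guarantees the properness of the isometry action) is exactly what the paper leaves implicit, and your proof makes the proposition self-contained where the paper's version relies on the reader supplying the same argument. The remaining steps --- building charts from small metric balls modulo the finite stabilizers, inheriting completeness, and the covering-space argument in the free case --- match the paper's intended construction.
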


\begin{definition} An orbifold of the form $M=\mathbf{H}_\mathbb{H}^1/\Gamma$ where $\Gamma$
is a discrete subgroup of
isometries  $\mathbf{H}_\mathbb{H}^1$
is called a {\it one dimensional quaternionic hyperbolic orbifold}.
\end{definition}

\subsection{Basic examples of orbifolds.}
For the sake of completeness we list here some classical examples of
orbifolds, whose constructions will be however useful in the sequel.

\begin{figure}
\centering
\includegraphics[scale=0.45]{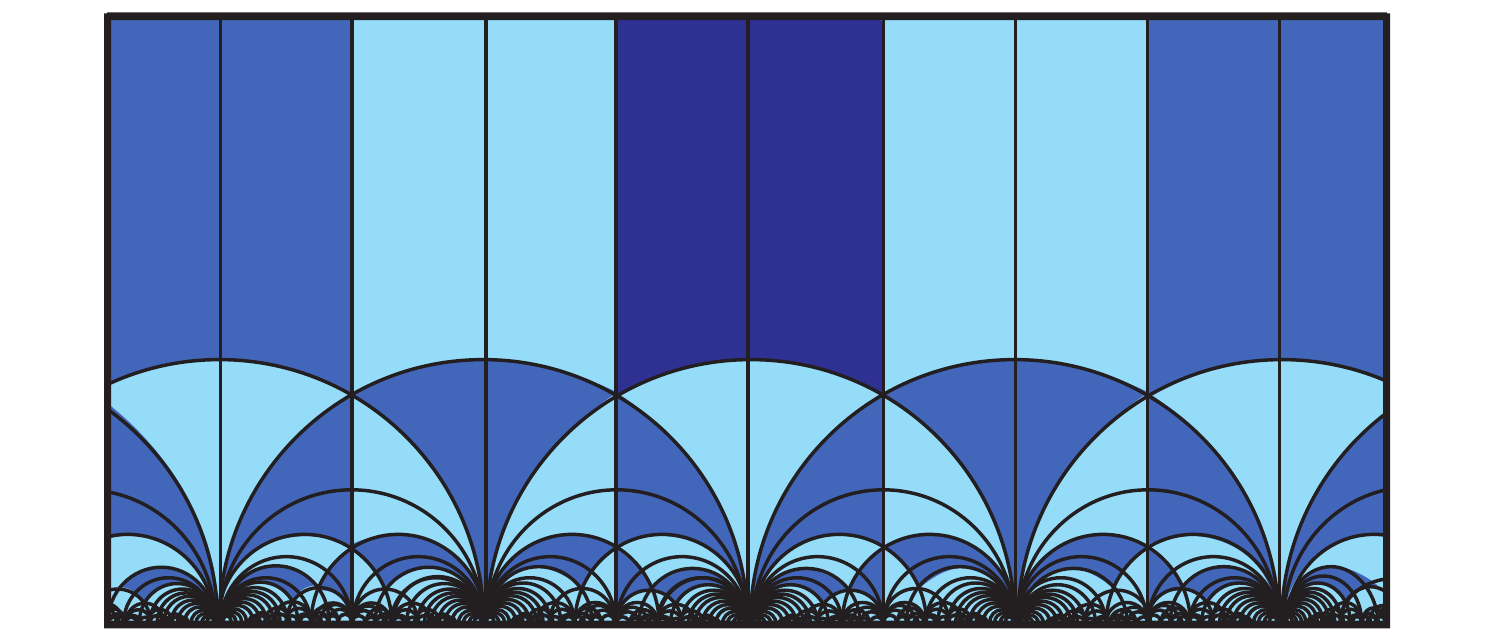}
\begin{center}{{\bf Figure 14.} A fundamental domain for the action of the modular group $PSL(2, \mathbb{Z})$ on the hyperbolic plane $\mathbf{H}^2_{\R}$
 .}
\end{center}
\end{figure}

\begin{enumerate}
\item {\bf The modular orbifold and the modular group $PSL(2, \mathbb{Z})$}
 We consider the action by isometries of the modular group $PSL(2, \mathbb{Z})$ on
the hyperbolic real plane $\textbf{H}^{2}_{\mathbb{R}}$. A fundamental domain is a triangle with one ideal point an two other vertices were the sides have an angle of $\pi/3$. This is the triangle with Coxeter notation $\triangle (3,3,\infty)$. See the figure . The modular group $PSL(2, \mathbb{Z})$ is a subgroup of the group of symmetries of the regular tessellation of $\textbf{H}^{2}$ whose tiles are congruent copies of the triangle $\triangle (3,3,\infty)$. We can describe the Cayley graph and a presentation of the group $PSL(2, \mathbb{Z})$ in terms of 2 generators  and 2 relations to obtain:

$$PSL(2, \mathbb{Z}) \, \,  = \,\, \langle a,b | a^2=(ab)^3=1\rangle \, \, =\,\, \Z/2\Z \ast \Z/3\Z.$$

\noi  The quotient
$\mathcal{O}:=\textbf{H}^{2}_{\mathbb{R}}/PSL(2,\mathbb{Z})$ has
underlying topological space the plane $\mathbb{R}^{2}$ (or $\C$) and $\Sigma_{\mathcal{O}}$
consists of 2 distinguished conical points. The local groups of the
singular points are $\mathbb{Z}/{2\Z}$ and $\mathbb{Z}/{3\Z}$ modeled on a
group of two and three elements, respectively, consisting of
hyperbolic rotations of angles $\pi$ and $2\pi/3$. The Euler characteristic of the orbifold $\mathcal{O}$ is -1/6. Thus a minimal Selberg cover is of order 6.

\item {\bf The orbifold associated to the Picard-Gauss group $PSL(2, \mathbb{Z}[\ii])$.}
 We consider the action by isometries of the modular group $PSL(2, \mathbb{Z}[\ii])$ on
the hyperbolic real space $\textbf{H}^{3}_{\mathbb{R}}$. The quotient
$\mathcal{O}:=\textbf{H}^{3}_{\mathbb{R}}/PSL(2,\mathbb{Z}[\ii])$ has
underlying space the 3-space $\mathbb{R}^{3}$. Its singular locus $\Sigma_{\mathcal{O}}$ is the 1-skeleton of a squared pyramid without the apex. The Euler characteristic of the orbifold $\mathcal{O}$ is 0.

\item \textbf{Carom or carambole billiard} For
each $n-$dimensional Euclidean space $\mathbb{R}^n$ there is a
self-dual, right-angled honeycomb whose cells are hypercubes and whose
Sch\"afli symbol is $\{4,3,...,3,4\}$. There is a subgroup $\Gamma$ of
their symmetries generated by reflections in the sides of their
hypercubical cells. The fundamental region of the action of $\Gamma$
in $\mathbb{R}^n$ is a cell. The quotient
$\mathcal{O}:=\mathbb{R}^n/\Gamma$ is an  Euclidean orbifold whose
shape is a solid hypercubical cell. The orbifold $\mathcal{O}$ has 
the $n$-ball as underlying space. Furthermore, 
$\Sigma_{\mathcal{O}}$ consists of the boundary of the
cell, this is the hypercube $\{4,3,...,3\}$ in the
$(n-1)-$sphere. There is a stratification of $\Sigma_{\mathcal{O}}$
and the local group of each singular point in the interior of the
sides of $\{4,3,...,3\}$ is the group with two elements
$\mathbb{Z}/{2\Z}$ whose representations consist of a
reflection. Others points in $\Sigma_{\mathcal{O}}$ are right-angled
corner points just as in the carom or carambole billiard table. The Euler characteristic of the orbifold $\mathcal{O}$ is 0. In general, the Euler characteristic of a compact Euclidean $n-$orbifold is 0.

\item \textbf{Real Kummer surface (a pillow)}. Let
$\mathbf{T}^{2}=\mathbb{S}^{1}\times \mathbb{S}^{1}=\{(z_1,z_2)
\in\C^2\, :\,\, |z_1|=|z_2|=1\}$ be the 2--torus.  Let $\tau$ be the
involution $\tau(z_1,z_2)=(\overline{z}_1,\overline{z}_2)$. Then $\tau$ has four
fixed points $(1,1), (1,-1), (-1,1), (-1,-1)$.  The quotient
$\mathcal{O}:=\mathbf{T}^{2}/\tau$ has 
the 2-sphere as underlying space 
and $\Sigma_{\mathcal{O}}$ consists of 4 distinguished conical
points. The local group of each singular point is the group with two
elements $\mathbb{Z}/2\Z$ whose representations consist of a rotation
of angle $\pi$. The Euler characteristic of the orbifold $\mathcal{O}$ is 0.

\item \textbf{Real Kummer $n$-orbifold} Let 
$$
\mathbf{T}^{n}=\mathbb{S}^{1}\times\cdots\times \mathbb{S}^{1}=\{(z_1,\ldots,z_n) \in\C^n\, :\,\,
|z_1|=\ldots=|z_n|=1\}
$$ be the $n$-torus.  \noi Let $\tau$ be the involution
$\tau(z_1,\cdots,z_n)=(\overline{z}_1,\cdots,\overline{z}_n)$. Then $\tau$ has
$2^n$ fixed points $(\pm 1,\cdots, \pm 1)$. Then the quotient
$\mathcal{O}:=\mathbf{T}^{n}/\tau$ is an orbifold.  The local group of each
singular point is the group with two elements $\mathbb{Z}/{2\Z}$ whose
representations consist of rotation $(z_1,\cdots,z_n)\mapsto
(-z_1,\cdots,-z_n)=(\overline{z}_1,\cdots,\overline{z}_n)$.  Thus at the
singular point the orbifold has a neighorhood homeomorphic to the cone
over $\mathbb P_{\R}^{n-1}$. The Euler characteristic of the orbifold $\mathcal{O}$ is 0. There is a figure of the Real Kummer 3-orbifold in section 6.

\item {\bf Real tori with conformal
  multiplication}. In analogy to the classical theory of abelian
manifolds with complex multiplication one can define a real torus with
conformal multiplication. Let ${{\mathbb T}^n}(\Lambda):={\mathbb
  T}^n/\Lambda$ be a real $n$-torus corresponding to the lattice
$\Lambda\subset{\mathbb R}^n$. Let $A:{\mathbb T}^n\to{\mathbb T}^n$
be an automorphism such that it lifts to a conformal isomorphism of
${\mathbb R}^n$. We say that ${\mathbb T}^n(\Lambda)$ is a torus with
{\it conformal multiplication}.  To admit a conformal automorphism
imposes symmetry restrictions on the lattice $\Lambda$. Since by a
classical theorem by Bieberbach (see \cite{Bie,Bie1,W}) the number of compact flat
manifolds and flat orbifolds of a given dimension is finite it follows
that the group $G$ of conformal automorphisms of a torus admiting
conformal multiplication is finite. Then ${\mathbb T}^n/G$ is a flat
orbifold. The Euler characteristic of this orbifold ${\mathbb T}^n/G$ is 0. 

\end{enumerate}
\bigskip

\begin{figure}
\centering
\includegraphics[scale=0.33]{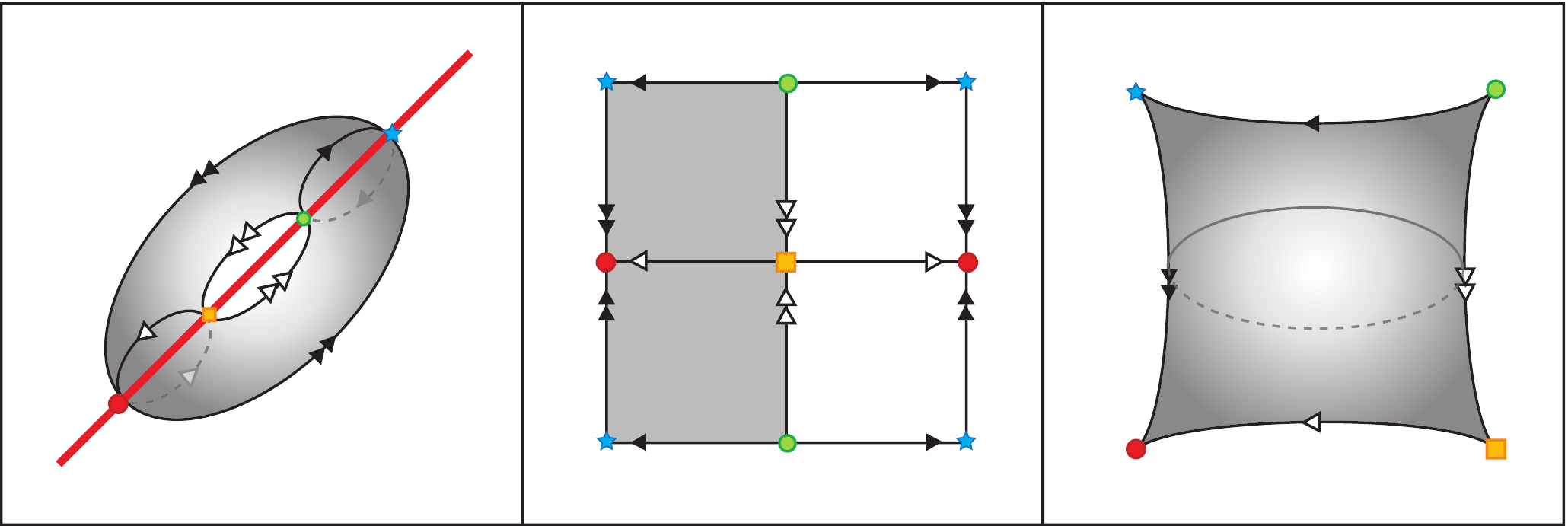}
\begin{center}{{\bf Figure 15.} The Kummer surface is a pillow, a 2-sphere with 4 conical points.}
\end{center}
\end{figure}

\noi {\bf Acknowledgments.} This research 
paper has been made possible  thanks to the financial support
generously given 
by the Italian institute
Gruppo Nazionale Strutture Algebriche, Geometriche e Applicazione (GNSAGA) of
the Istituto Nazionale di Alta Matematica (INdAM) ``F. Severi''; in particular
the first author received from GNSAGA a grant to visit the 
Dipartimento di Matematica e
Informatica (DiMaI) ``U. Dini'' (Florence) in Spring 2013 and 
the last author  was financially supported by GNSAGA to visit 
the Instituto de Matem\'aticas at  Universidad Nacional Aut\'onoma de M\'exico,
Unidad de Cuernavaca in Summer 2014.
Furthermore, the three authors are very grateful to Centro
Internazionale per la Ricerca Matematica (CIRM) in Trento and to 
the organizers of the Conference ``Complex Analysis and Geometry - XXI'' for
the invitation to deliver a talk in June 2013.
Finally special thanks go to the International Centre for Theoretical
Physics (ICTP) ``A. Salam'' of Trieste for the very nice hospitality
offered to the authors  in occasion of Schools/Workshops and Visits 
in the last two years.
The last author was also partially supported by 
Ministero Istruzione Universit\`a e Ricerca MIUR Progetto
di Ricerca di Interesse Nazionale PRIN â``Propriet\`a Geometriche delle Variet\`a Reali e Complesse''.

\noi The first and second named authors were financed by grant IN103914, PAPIIT, DGAPA,
Universidad Nacional Aut\'onoma de M\'exico.

\end{document}